\newtheorem{theo}{Theorem}
\newtheorem{theorem}[theo]{Theorem}
\newtheorem{lemma}[theo]{Lemma}
\newtheorem{proposition}[theo]{Proposition}
\newtheorem{corollary}[theo]{Corollary}
\theoremstyle{definition}
\newtheorem{remark}[theo]{Remark}
\newtheorem{assumption}[theo]{Assumption}
\newtheorem{example}[theo]{Example}
\newtheorem{definition}[theo]{Definition}
\renewcommand{\qed}{\ensuremath{\hfill \quad \rule{1.2ex}{1.2ex}}
\vspace{\baselineskip}}
\renewenvironment{proof}
{\vspace{0.5ex} \noindent  {\bfseries Proof.~}}
{\hfill \qed\vspace{1ex}}
\newcommand{\theoref}[1]{Theorem~\ref{#1}}
\newcommand{\thref}[1]{Theorem~\ref{#1}}
\newcommand{\lemref}[1]{Lemma~\ref{#1}}
\newcommand{\corref}[1]{Corollary~\ref{#1}}
\newcommand{\propref}[1]{Proposition~\ref{#1}}
\newcommand{\defref}[1]{Definition~\ref{#1}}
\newcommand{\secref}[1]{Section~\ref{#1}}
\newcommand{\ssecref}[1]{Subsection~\ref{#1}}
\newcommand{\RR}{\mathbf{R}} % warning bf
\newcommand{\NN}{\mathbb{N}}
\newcommand{\Mm}{\mathcal{M}} \newcommand{\Dd}{\mathcal{D}}
\newcommand{\Nn}{\mathcal{N}}
\newcommand{\al}{\alpha}
\newcommand{\la}{\lambda}
\newcommand{\de}{\delta}
\newcommand{\si}{\sigma}
\newcommand{\ph}{\varphi}
\newcommand{\De}{\Delta}
\numberwithin{equation}{section}
\numberwithin{theo}{section}
\newcommand{\qqandqq}{\qquad\text{and}\qquad}
\newcommand{\qandq}{\quad\text{and}\quad}
\newcommand{\kmin}{{\kappa_{\ast}}}
\newcommand{\bx}{\bar x}
\newcommand{\bX}{\bar X}
\newcommand{\On}{{\bf O}^n}
\newcommand{\Sn}{{\bf S}^n}
\newcommand{\Perm}{{\bf \Sigma}^n}
\newcommand{\trans}[1]{\ensuremath{{#1}^{\!\top}}}
\newcommand{\Diag}{\ensuremath{\mathrm{Diag\xspace}}}
\newcommand{\diag}{\ensuremath{\mathrm{diag\xspace}}}
\newcommand{\norm}[1]{\ensuremath{\Arrowvert #1 \Arrowvert}}
\newcommand{\ra}{\rightarrow}
\DeclareMathOperator*{\id}{id}
\DeclareMathOperator*{\reduc}{red}
\newcommand{\fonction}[5]{%
      \ensuremath{#1\colon
      \left\{\hskip -1.5 mm
      \begin{array}{c@{\ }c@{\ }l}
      \medskip #2 & \longrightarrow & #3 \\
      #4 & \longmapsto & #5 \\
      \end{array}
      \right .
      }}
\newcommand{\tr}{\mathrm{tr\,}}
\newcommand{\R}{\mathbf{R}}
\newcommand{\N}{\mathbb{N}}
\title{{\LARGE Locally symmetric submanifolds lift to spectral manifolds}}
\author{{\large {\sc Aris DANIILIDIS, J\'er\^ome MALICK, Hristo SENDOV}}}
\begin{document}
\allowdisplaybreaks

\maketitle

\medskip
\noindent \textbf{Abstract.} In this work we prove that every
locally symmetric smooth submanifold $\mathcal{M}$ of $\RR^n$ gives
rise to a naturally defined smooth submanifold of the space of
$n\times n$ symmetric matrices, called spectral manifold, consisting
of all matrices whose ordered vector of eigenvalues belongs to
$\mathcal{M}$. We also present an explicit formula for the dimension
of the spectral manifold in terms of the dimension and the intrinsic
properties of $\mathcal{M}$.

\bigskip

\noindent \textbf{Key words.} Locally symmetric set, spectral
manifold, permutation, symmetric matrix, eigenvalue.

\bigskip
\noindent \textbf{AMS Subject Classification.} {\it Primary} 15A18,
53B25 {\it Secondary} 47A45, 05A05.

\bigskip
\tableofcontents

%********************************************************************
%********************************************************************
\section{Introduction}
%********************************************************************
%********************************************************************

Denoting by $\Sn$ the Euclidean space of $n \times n$ symmetric
matrices with inner product $\langle X, Y\rangle = \tr(XY)$, we
consider the \textit{spectral mapping} $\la$, that is, a function
from the space $\Sn$ to $\RR^n$, which associates to $X\in \Sn$ the
vector $\la(X)$ of its eigenvalues. More precisely, for a matrix
$X\in \Sn$, the vector
$\lambda(X)=(\lambda_{1}(X),\ldots,\lambda_{n}(X))$ consists of the
eigenvalues of $X$ counted with multiplicities and ordered in a
non-increasing way:
$$
\lambda_{1}(X) \ge \lambda_2(X) \ge \cdots \ge \lambda_n(X).
$$
The object of study in this paper are \textit{spectral sets}, that
is, subsets of $\Sn$ stable under orthogonal similarity
transformations: a subset ${\bf M}\subset \Sn$ is a spectral set if
for all $X\in {\bf M}$ and $U\in \On$ we have $\trans{U}XU\in {\bf
M}$, where $\On$ is the set of $n\times n$ orthogonal matrices. In
other words, if a matrix $X$ lies in a spectral set ${\bf M}\subset
\Sn$, then so does its orbit under the natural action of the group
of $\On$
\[
  \On.X = \{\trans{U}XU : \ U \in \On\}.
\]

The spectral sets are entirely defined by their eigenvalues,
and can be equivalently defined as inverse images of subsets of
$\RR^n$ by the spectral mapping $\la$, that is,
$$
\la^{-1}(M):=\{X \in \Sn : \la(X) \in M\},\quad\text{for
some }\,M\subset\RR^n.
$$
For example, if $M$ is the Euclidean unit ball $B(0,1)$ of $\RR^n$,
then $\la^{-1}(M)$ is the Euclidean unit ball of $\Sn$ as well.
A spectral set can be written as union of orbits:
\begin{equation}\label{jm-adds}
  \la^{-1}(M)= \bigcup_{x\in M} \On.\Diag(x),
\end{equation}
where $\Diag(x)$ denotes the diagonal matrix with the vector $x\in
\RR^n$ on the main diagonal.

\smallskip

In this context, a general question arises:
What properties on $M$ remain true on the
corresponding spectral set $\la^{-1}(M)$?

\smallskip

In the sequel we often refer to this as the {\it transfer
principle}. The spectral mapping $\la$ has nice geometrical
properties, but it may behave very badly as far as, for example,
differentiability is concerned. This imposes intrinsic difficulties
for the formulation of a generic transfer principle. Invariance
properties of $M$ under permutations often correct such bad behavior
and allow us to deduce transfer properties between the sets $M$ and
$\la^{-1}(M)$. A set $M \subset \RR^n$ is $\textit{symmetric}$ if
$\sigma M = M$ for all permutations $\sigma$ on $n$ elements, where
the permutation $\sigma$ permutes the coordinates of vectors in
$\RR^n$ in the natural way. Thus, if the set $M \subset \RR^n$ is
symmetric, then properties such as closedness and convexity are
transferred between $M$ and $\la^{-1}(M)$. Namely, $M$ is closed
(respectively, convex \cite{Lewis:1996}, prox-regular \cite{DLMS08})
if and only if $\la^{-1}(M)$ is closed (respectively, convex,
prox-regular). The next result is another interesting example of
such a transfer.

\begin{proposition}[Transferring algebraicity]
\label{Trans-alg}
Let $\Mm \subset \RR^n$ be a symmetric algebraic variety. Then,
$\la^{-1}(\Mm)$ is an algebraic variety of $\Sn$.
\end{proposition}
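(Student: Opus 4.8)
The key idea is that the algebra of symmetric polynomials on $\RR^n$ is finitely generated — by the power sums $p_k(x) = x_1^k + \cdots + x_n^k$ for $k = 1, \ldots, n$ (or equivalently by the elementary symmetric polynomials) — and, crucially, each power sum $p_k$ is the restriction to diagonal matrices of the polynomial $X \mapsto \tr(X^k)$ on $\Sn$, which is a genuine polynomial in the entries of $X$. So the plan is: first, describe $\Mm$ as the common zero set of finitely many symmetric polynomials; second, symmetrize and use the fundamental theorem on symmetric polynomials to rewrite each generator as a polynomial $q_j(p_1(x), \ldots, p_n(x))$ in the power sums; third, pull these back to $\Sn$ via the substitution $p_k \mapsto \tr(X^k)$ and show the resulting polynomials in $X$ cut out exactly $\la^{-1}(\Mm)$.

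In more detail. Since $\Mm$ is an algebraic variety, write $\Mm = \{x : f_1(x) = \cdots = f_m(x) = 0\}$ for some polynomials $f_i$. Because $\Mm$ is symmetric, we may replace the ideal generated by the $f_i$ by its "symmetrization": $\Mm$ is also the zero set of all polynomials of the form $f_i(\si x)$, $\si \in \Perm$, and hence of the products $\prod_{\si} f_i(\si x)$ or of the symmetrized sums $\sum_{\si} (f_i(\si x))^2$ — in any case one obtains finitely many \emph{symmetric} polynomials $g_1, \ldots, g_N$ with $\Mm = \{x : g_1(x) = \cdots = g_N(x) = 0\}$. (The cleanest choice: $g(x) = \sum_{\si \in \Perm}\sum_{i} f_i(\si x)^2$, a single symmetric polynomial whose zero set is $\Mm$; that $g$ is real-valued and vanishes exactly on $\Mm$ is immediate.) By the fundamental theorem of symmetric polynomials there is a polynomial $q$ in $n$ variables with $g(x) = q(p_1(x), \ldots, p_n(x))$ where $p_k(x) = \sum_j x_j^k$. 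Now define the polynomial $G$ on $\Sn$ by $G(X) = q(\tr X, \tr X^2, \ldots, \tr X^n)$; this is a polynomial in the entries of $X$ since each $\tr X^k$ is. For any $X \in \Sn$ with eigenvalues $\la(X)$ we have $\tr X^k = p_k(\la(X))$, so $G(X) = q(p_1(\la(X)), \ldots, p_n(\la(X))) = g(\la(X))$. Therefore $G(X) = 0 \iff g(\la(X)) = 0 \iff \la(X) \in \Mm \iff X \in \la^{-1}(\Mm)$, which shows $\la^{-1}(\Mm)$ is the zero set of the single polynomial $G$, hence an algebraic variety of $\Sn$.

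The main point requiring care — the only real obstacle — is the passage from "defined by finitely many polynomials" to "defined by finitely many \emph{symmetric} polynomials", and then expressing these through the $p_k$. The symmetrization step is elementary once one commits to the sum-of-squares-over-the-orbit construction, which sidesteps any subtlety about whether the symmetrized \emph{ideal} is finitely generated by symmetric elements (it is, by Noetherianity, but we do not even need that). The only genuinely external input is the classical fundamental theorem of symmetric polynomials, together with the elementary observation that $X \mapsto \tr(X^k)$ restricts on $\Diag(\RR^n)$ to the $k$-th power sum; everything else is bookkeeping. One should also note the argument works over $\RR$ precisely because of the sum-of-squares trick (over $\CC$ one would instead take products over the orbit, $\prod_{\si} f_i(\si x)$), so in the real-algebraic-variety setting the construction above is the natural one.
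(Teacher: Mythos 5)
Your proof is correct and takes essentially the same route as the paper's: symmetrize a defining polynomial by summing squares over the $\Sigma^n$-orbit, invoke the fundamental theorem of symmetric polynomials, and observe that the generating symmetric polynomials compose with $\lambda$ to give honest polynomials on $\Sn$. The only cosmetic difference is that you use power sums $p_k(\la(X)) = \tr(X^k)$ where the paper uses elementary symmetric polynomials (the coefficients of the characteristic polynomial of $X$); these are equivalent via Newton's identities, and both yield the conclusion directly.
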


\begin{proof}
  Let $p$ be any polynomial equation of $\Mm$, that is $p(x)=0$ if and
  only if $x\in\Mm$. Define the symmetric polynomial
  $q(x):=\sum_{\sigma} p^2(\si x)$.  Notice that $q$ is again a
  polynomial equation of $\Mm$ and $q(\lambda(X))$ is an equation of
  $\la^{-1}(\Mm)$. We just have to prove that $q \circ \lambda$ is a
  polynomial in the entries of $X$.  It is known that $q$ can be written as a
  polynomial of the elementary symmetric polynomials
  $p_1,p_2,\ldots,p_n$. Each $p_j(\lambda(X))$, up to a sign, is  a
  coefficient of the characteristic polynomial of $X$, thus it is a polynomial in
  $X$. Thus we can complete the proof.
 \end{proof}

Concurrently, similar transfer properties hold for \textit{spectral
functions}, that is, functions $F\colon\Sn\ra\RR^n$ which are
constant on the orbits $\On.X$ or equivalently, functions $F$ that
can be written as $F = f\circ \la$ with $f\colon\RR^n\ra\RR$ being
symmetric, that is invariant under any permutation of entries of
$x$.  Since $f$ is symmetric, closedness and convexity are
transferred between $f$ and $F$ (see \cite{Lewis:1996} for details).
More surprisingly, some differentiability properties are also
transferred (see \cite{Lewis:1994b}, \cite{LewisSendov:2000a} and
\cite{sendov2007}). As established recently in \cite{DLMS08}, the
same happens for an important property of variational analysis, the
so-called prox-regularity (we refer to
\cite{poliquin-rockafellar-1996} for the definition).

\smallskip

In this work, we study the transfer of differentiable structure of a
submanifold $\Mm$ of $\RR^n$ to the corresponding spectral set. This
gives rise to an orbit-closed set $\lambda^{-1}(\Mm)$ of $\Sn$,
which, in case it is a manifold, will be called {\it spectral
manifold}. Such spectral manifolds often appear in engineering
sciences, often as constraints in feasibility problems (for example,
in the design of tight frames \cite{tropp-2005} in image processing
or in the design of low-rank controller \cite{helmke-2005} in
control). Given a manifold $\Mm$, the answer, however, to the
question of whether or not the spectral set $\lambda^{-1}(\Mm)$ is a
manifold of $\Sn$ is not direct: indeed, a careful glance at
\eqref{jm-adds} reveals that $\On.\Diag(x)$ has a natural (quotient)
manifold structure (we detail this in \secref{ssec-loc-sym}), but
the question is how the different strata combine as $x$ moves along
$\Mm$.

\smallskip

For functions, transferring local properties as differentiability
requires some symmetry, albeit not with respect to all permutations:
it turns out that many properties still hold under {\it local
symmetry}, that is, invariance under permutations that preserve
balls centered at the point of interest. We define precisely these
permutations in \secref{ssec:si}, and we state in
\thref{thm-26Nov2007-1} that the differentiability of spectral
functions is valid under this local invariance.
%Establishing that
%local properties on functions yield the transfer of
%differentiability to spectral functions is a secondary contribution
%of this paper.

\smallskip

The main goal here is to prove that local smoothness of $\Mm$ is
transferred to the spectral set $\la^{-1}(\Mm)$, whenever $\Mm$ is
locally symmetric. More precisely, our aim here~is

\begin{itemize}
\item to prove that every connected $C^k$ locally symmetric manifold
  $\mathcal{M}$ of $\mathbb{R}^{n}$ is {\it lifted} to a connected
  $C^k$ manifold $\lambda^{-1}(\mathcal{M})$ of ${\bf S}^{n}$, for $k
  \in \{2,\infty, \omega\}$;
\item to derive a formula for the dimension of
  $\lambda^{-1}(\mathcal{M})$ in terms of the dimension of
  $\mathcal{M}$ and some characteristic properties of $\Mm$.
\end{itemize}

This is eventually accomplished with \theoref{main-thm-4Dec2007-3}.
To get this result, we use extensively differential properties of
spectral functions and geometric properties of locally symmetric
manifolds. Roughly speaking, given a manifold $\Mm$ which is locally
symmetric around $\bar x$, the idea of the proof~is:

\begin{enumerate}
\item to exhibit a simple locally symmetric affine manifold $\Dd$, see
  \eqref{Dcal}, which will be used as a domain for a locally symmetric
  local equation for the manifold $\Mm$ around $\bar x$
  (Theorem~\ref{thm-40});
\item to show that $\la^{-1}(\Dd)$ is a smooth manifold
  (\thref{main-res-simple-form-2}) and use it as a domain for a local
  equation of $\la^{-1}(\Mm)$ (see definition in
  \eqref{aris-D11}), in order to establish that the latter is a
  manifold (Theorem~\ref{main-thm-4Dec2007-3}).
\end{enumerate}

The paper is organized as follows. We start with grinding our tools:
in \secref{sec:permut} we recall basic properties of permutations
and define a stratification of $\RR^n$ naturally associated to
them which will be used to study properties of locally
symmetric manifolds in \secref{sec:manifold}. Then, in
\secref{sec:spectral} we establish the transfer of the
differentiable structure from locally symmetric subsets of $\RR^n$ to
spectral sets of $\Sn$.

%********************************************************************
%********************************************************************
\section{Preliminaries on permutations}\label{sec:permut}
%********************************************************************
%********************************************************************

This section gathers several basic results about permutations that
are used extensively later. In particular, after defining order
relations on the group of permutations in \ssecref{ssec:si} and the
associated stratification of $\RR^n$ in \ssecref{ssec:strat}, we
introduce the subgroup of permutations that preserve balls centered
at a given point.

%********************************************************************
\subsection{Permutations and partitions}\label{ssec:si}
%********************************************************************

Denote by $\Sigma^{n}$ the group of permutations over
$\mathbb{N}_{n}:=\{1,\ldots,n\}$. This group has a natural action on
$\RR^n$ defined for $x=(x_{1},\ldots,x_{n})$ by
\begin{equation}
\label{eqn-sigma}
\sigma x:=(x_{\sigma^{-1}(1)},\ldots,x_{\sigma^{-1}(n)}).
\end{equation}

Given a permutation $\si\in\Sigma^n$, we define its support
$\mathrm{supp}(\sigma)\subset\mathrm{N}_n$ as the set of indices
$i\in\mathbb{N}_n$ that do not remain fixed under $\sigma$. Further,
we denote by $\RR^n_{\ge}$ the closed convex cone of all vectors
$x\in \RR^n$ with $x_1 \ge x_2 \ge \cdots \ge x_n$.

Before we proceed, let us recall some basic facts on permutations. A
cycle of length $k\in\N_n$ is a permutation
$\sigma\in\Sigma^{n}$ such that for $k$ distinct
elements $i_{1} ,\ldots,i_{k}$ in $\mathbb{N}_{n}$ we have
$\mathrm{supp}(\sigma)=\{i_{1},\ldots,i_{k}\},$ and
$\sigma(i_{j})=i_{j+1\;(\operatorname{mod}\;k)}$; we represent
$\sigma$ by $(i_{1},\ldots,i_{k})$. Every
permutation has a cyclic decomposition:
that is, every permutation $\sigma\in\Sigma^{n}$ can be represented (in
a unique way up to reordering) as a composition of disjoint cycles
\[
\sigma=\sigma_{1}\circ\cdots\circ\sigma_{m},
\qquad \textrm{where the $\sigma_{i}$'s are cycles.}
\]
It is easy to see that if the cycle decomposition
of $\sigma \in \Sigma^n$ is
$$
(a_1,a_2, \ldots,a_{k_1})(b_1,b_2, \ldots,b_{k_2})\cdots
$$
then for any $\tau \in \Sigma^n$ the cycle decomposition of $\tau
\sigma \tau^{-1}$ is
\begin{equation}\label{eqn-17Nov2007-1}
(\tau(a_1),\tau(a_2),\ldots,\tau(a_{k_1}))(\tau(b_1),\tau(b_2),\ldots,\tau(b_{k_2}))\cdots
\end{equation}

Thus, the support $\mathrm{supp}(\sigma)$ of the permutation $\sigma$ is
the (disjoint) union of the supports $I_{i}
=\mathrm{supp}(\sigma_{i})$ of the cycles $\sigma_{i}$ of length at least two (the non-trivial cycles) in its
cycle decomposition. The partition
$$
\{I_{1},\ldots,I_{m},\mathbb{N}_{n}\setminus
\mathrm{supp}(\sigma)\}
$$
of $\mathbb{N}_{n}$ is thus naturally
associated to the permutation $\sigma$. Splitting further the set
$\mathbb{N}_{n}\setminus \mathrm{supp}(\sigma)$ into the singleton
sets $\{j\}$ we obtain a refined partition of $\mathbb{N}_{n}$
\begin{equation}\label{Partition}
P(\sigma):=\{I_{1},\ldots,I_{\kappa+m}\},
\end{equation}
where $\kappa$ is the cardinality of the complement of the support
of $\si$ in $\N_n$, and $m$ is the number of non-trivial cycles in
the cyclic decomposition of $\sigma$. For example, for
$\sigma=(123)(4)(5)\in\Sigma^5$ we have $\kappa=2$, $m=1$ and the
partition of $\{\{1,2,3\},\{4\},\{5\}\}$ of $\NN_5$. Thus, we obtain
a correspondence from the set of permutations $\Sigma^{n}$ onto the
set of partitions of $\mathbb{N}_{n}$.

\begin{definition}
\label{definition_order}
\begin{description}
\item[An order on the partitions:] Given two partitions $P$ and $P'$
  of $\mathbb{N}_{n}$ we say that $P'$ is a {\it refinement} of $P$,
  written $P \subseteq P'$, if every set in $P$ is a (disjoint) union
  of sets from $P'$. If $P'$ is a refinement of $P$ but $P$ is not a
  refinement of $P'$ then we say that the refinement is {\it strict}
  and we write $P \subset P'$.  Observe this partial order is a {\it
    lattice}.
\item[An order on the permutations:] The permutation $\sigma^{\prime}$
  is said to be {\it larger than or equivalent to $\sigma$}, written
  $\sigma\precsim\sigma^{\prime}$, if $P(\sigma) \subseteq
  P(\sigma^{\prime})$. The permutation $\sigma^{\prime}$ is said to be
  {\it strictly larger than $\sigma$}, written
  $\sigma\prec\sigma^{\prime}$, if $P(\sigma) \subset
  P(\sigma^{\prime})$.
\item[Equivalence in \boldmath$\Sigma^{n}$:] The permutations
  $\sigma,\sigma^{\prime}\in\Sigma^{n}$ are said to be
  \emph{equivalent}, written $\sigma\sim\sigma^{\prime}$, if they
  define the same partitions, that is if
  $P(\sigma)=P(\sigma^{\prime})$.

\item[Block-Size type of a permutation:]\label{def-SBST} Two
  permutations $\sigma$, $\si'$ in $\Sigma^n$ are said to be of the
  same \textit{block-size type}, whenever the set of cardinalities,
  counting repetitions, of the sets in the partitions $P(\si)$ and
  $P(\si')$, see \eqref{Partition}, are in a one-to-one
  correspondence.  Notice that if $\sigma$ and $\si'$ are of the same
  block-size type, then they are either equivalent or non-comparable.

\end{description}
\end{definition}

We give illustrations (by means of simple examples) of the
above notions, which are going to be used extensively in the paper.

\begin{example}[Permutations vs Partitions]
  The following simple examples illustrate the notions defined in
  Definition~\ref{definition_order}.
  \begin{itemize}
  \item[(i)] The set of permutations of $\Sigma^3$ that are larger than
    or equivalent to
    $\sigma:=(1,2,3)$ is
    \[
    \{(1,2,3),(1,3,2),(1,2),(1,3),(2,3), \mbox{\rm id}_3\}.
    \]
  \item[(ii)] The following three permutations of $\Sigma^4$
    have the same block-size type:
    \[
    \si = (123)(4), \quad \si' = (132)(4), \quad \si'' =(124)(3)\,.
    \]
    Note that the first two permutations are equivalent and not
    comparable to the third one.
  \item[(iii)] The minimal elements of $\Sigma^{n}$ under the partial order relation $\precsim$
    are exactly the $n$-cycles, corresponding to the partition
    $\{\mathbb{N}_{n}\}$.
  \item[(iv)] The (unique) maximum element of $\Sigma^n$ under $\precsim$ is the identity permutation $\mathrm{id}_{n}$,
    corresponding to the discrete partition
    $\{\{i\}:i\in\mathbb{N}_{n}\}$. \qed
  \end{itemize}
\end{example}

Consider two permutations $\sigma,\sigma'\in\Sigma^n$
such that $\sigma'\precsim \sigma$; according to the above,
each cycle of $\sigma'$ is either a permutation of the
elements of a cycle in $\sigma$ (giving rise to the same set in
the corresponding partitions $P(\sigma)$ and $P(\sigma')$) or it is
formed by merging (and permuting) elements from several cycles of
$\sigma$. If no cycle of $\sigma'$ is of the latter type, then
$\sigma$ and $\sigma'$ define the same partition (thus they are
equivalent), while on the contrary, $\sigma'\prec\sigma$.
Later, in \ssecref{ssec:structure},
we will introduce a subtle refinement of the order relation
$\prec$, which will be of crucial importance in our development.

We also introduce another partition of $\NN_n$ depending on the point
$x\in \RR^n$ denoted $P(x)$ and defined by the indexes of the equal
coordinates of $x$. More precisely, for $i,j \in \N_n$ we have:
\begin{equation}\label{aris1}
i,j\mbox{ are in the same subset of } P(x) ~~\Longleftrightarrow~~
x_{i}=x_{j}.
\end{equation}
This partition will appear frequently in the sequel, when we
study subsets of $\RR^n$ that are symmetric around $x$.
For $\bar x\in \mathbb{R}^{n}$ and $\bar \sigma\in\Sigma^{n}$, we define two invariant sets
\[
\mathrm{Fix\,}(\bar \sigma):=\{x\in\mathbb{R}^{n}:\bar \sigma x=x\}
\qqandqq \mathrm{Fix\,}(\bar x):=\{\sigma\in\Sigma^{n}:\sigma \bar
x=\bar x\}.
\]
Then, in view of \eqref{aris1} we have
\begin{equation}\label{partition_r}
  \bar \sigma\in\mathrm{Fix\,}(\bar x)
  ~~\Longleftrightarrow~~ \bar x\in \mathrm{Fix\,}(\bar \sigma)
  ~~\Longleftrightarrow~~ P(\bar x) \subseteq P(\bar \sigma).
\end{equation}

%********************************************************************
\subsection{Stratification induced by the permutation group}\label{ssec:strat}
%********************************************************************

In this section, we introduce a stratification of $\RR^n$ associated
with the set of permutations $\Sigma^n$. In view of
\eqref{partition_r}, associated to a permutation $\si$ is the subset
$\Delta(\sigma)$ of $\RR^n$ defined by
\begin{equation}\label{def:delta}
\Delta(\sigma) := \{x\in \RR^n: \  P(\sigma)=P(x)\}.
\end{equation}
For $\sigma \in \Sigma^n$ and $P(\sigma)=\{I_{1},\ldots,I_{m}\}$,
we have the representation
\[
\Delta(\sigma)= \{x\in\RR^{n} : x_i = x_j \iff \exists\,
k\in\N_m\mbox{ with } i,j \in I_k\}.
\]
Obviously $\Delta(\sigma)$ is an affine manifold, not connected in general.
Note also that its orthogonal and bi-orthogonal spaces
have the following expressions, respectively,
\begin{equation}\label{eqn-arisd3}
\Delta(\sigma)^{\perp} = \Big\{x\in\mathbb{R}^{n} :
\sum\limits_{j\in
  I_{i}}x_{j} =0,\text{ for } i \in \N_m \Big\},
\end{equation}
\begin{equation}\label{eqn-arisd4}
\Delta(\sigma)^{\perp\perp}=
\{x\in\mathbb{R}^{n} :
x_i = x_j \mbox{ for any } i,j \in I_k,\ k \in \N_m
\}.
\end{equation}
Note that $\Delta(\sigma)^{\perp\perp} = \overline{\Delta(\sigma)}$,
where the latter set is the closure of  $\Delta(\sigma)$.
Thus, $\Delta(\sigma)^{\perp}$ is a vector space of dimension $n-m$
while $\Delta(\sigma)^{\perp\perp}$ is a vector space of
dimension~$m$.  For example, $\Delta({\rm id}_n)^{\bot}=\{0\}$ and
$\Delta({\rm id}_n)^{\bot\bot}=\mathbb{R}^n$. We show now, among other things, that
$\{\Delta(\sigma) : \sigma \in \Sigma^n\}$ is a {\it stratification}
of $\RR^n$, that is, a collection of disjoint smooth submanifolds of
$\RR^n$ with union $\RR^n$ that fit together in a regular way.
In this case, the submanifolds in the
stratification are affine.

\begin{figure}[!htb]
\begin{center}
\input{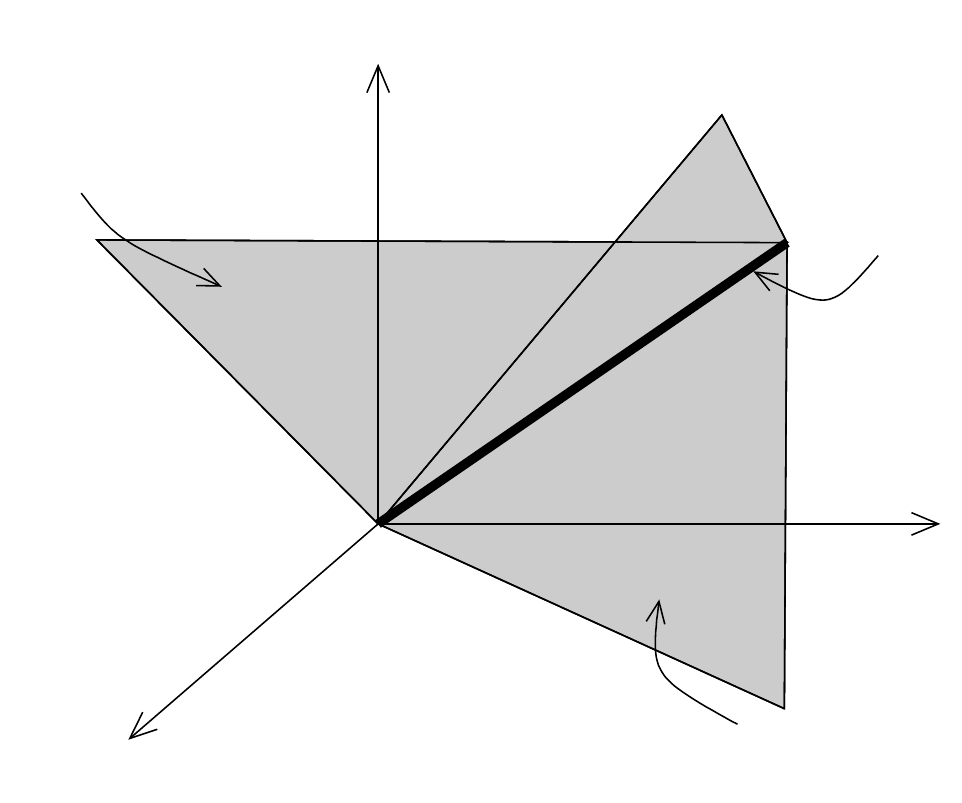_t}\\*[-3pt]
\caption{The affine stratification of $\RR^3$\label{strat}}
\end{center}
\end{figure}

\vspace*{-11pt}

\begin{proposition}[Properties of $\Delta(\si)$]
\label{prop:delta}
(i) Let $x\in\mathbb{R}^{n}$ and let $P$ be any partition of
$\mathbb{N}_{n}$. Then, $P(x) \subseteq P$ if and only if there is a
sequence $x_{n} \rightarrow x$ in $\R^n$ satisfying $P(x_{n})=P$ for
all $n\in\mathbb{N}$. \smallskip

 (ii) Let $\sigma,\sigma^{\prime}\in\Sigma^{n}$. Then,
\begin{equation}
\label{Ph1}
  \sigma\precsim \sigma^{\prime}\Longleftrightarrow\Delta(\sigma)\subset
  \Delta(\sigma^{\prime})^{\bot\bot},
\end{equation}
\begin{equation}
\label{Ph2}
  \sigma\sim\sigma^{\prime}\Longleftrightarrow\Delta(\sigma)\cap\Delta
  (\sigma^{\prime})\neq\emptyset\Longleftrightarrow\Delta(\sigma)=\Delta
  (\sigma^{\prime}).
\end{equation}

(iii) For any $\sigma \in \Sigma^n$ we have
\begin{equation}
  \label{Ph3}
  \Delta(\sigma)^{\bot\bot} = \bigcup_{\sigma^{\prime} \precsim \sigma} \Delta(\sigma^{\prime}).
\end{equation}

\pagebreak

(iv) Given $\sigma, \sigma' \in \Sigma^n$ let $\sigma \wedge
\sigma'$ be any infimum of $\si$ and $\si'$ $($notice this is unique modulo $\sim)$. Then
\begin{equation}
  \label{Ph4}
  \Delta(\sigma)^{\bot\bot} \cap \Delta(\sigma')^{\bot\bot} = \Delta(\sigma \wedge \sigma')^{\bot\bot}.
\end{equation}

(v) For any $\tau, \sigma \in \Sigma^n$ we have
\begin{equation*}\label{lem-17Nov2007}
  \tau \Delta(\sigma) = \Delta(\tau \sigma \tau^{-1}).
\end{equation*}
\end{proposition}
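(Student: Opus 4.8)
The plan is to prove each of the five items of Proposition~\ref{prop:delta} in turn, relying mostly on the explicit descriptions of $\Delta(\sigma)$, $\Delta(\sigma)^{\perp}$ and $\Delta(\sigma)^{\perp\perp}$ given in \eqref{def:delta}, \eqref{eqn-arisd3}, \eqref{eqn-arisd4}, together with the dictionary \eqref{partition_r} between permutations, points, and partitions. The unifying idea is that every object in sight is controlled by a partition of $\NN_n$: the assignment $\sigma\mapsto P(\sigma)$ and $x\mapsto P(x)$ turns all the set-theoretic relations ($\subset$, $\cap$, $\wedge$) into the lattice operations on partitions described in Definition~\ref{definition_order}, so most of the work is translating statements about affine subspaces into statements about the refinement order.

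For (i), I would argue directly from \eqref{aris1}: if $P(x)\subseteq P$, build $x_m\to x$ by starting from $x$ and, for each block of $P$ that splits a block of $P(x)$, adding a small perturbation of order $1/m$ that makes exactly the coordinates dictated by $P$ equal while keeping distinct everything that $P$ says is distinct; conversely, if $P(x_m)=P$ and $x_m\to x$, then any two coordinates equal in all the $x_m$ are equal in the limit, giving $P(x)\subseteq P$. For (ii): \eqref{Ph1} is immediate by combining $\Delta(\sigma')^{\perp\perp}=\overline{\Delta(\sigma')}$ (already noted after \eqref{eqn-arisd4}), part (i), and \eqref{partition_r}/the definition of $\precsim$; indeed $\Delta(\sigma)\subseteq\Delta(\sigma')^{\perp\perp}$ says every $x$ with $P(x)=P(\sigma)$ is a limit of points with partition $P(\sigma')$, i.e.\ $P(\sigma')\subseteq P(\sigma)$ wait --- here one must be careful about the direction of refinement, so I would phrase it as $P(\sigma)\subseteq P(\sigma')$ \emph{is} $\sigma\precsim\sigma'$ by definition, and $\Delta(\sigma)\subseteq\overline{\Delta(\sigma')}$ iff $P(\sigma)\subseteq P(\sigma')$ by (i); the strictness in \eqref{Ph1} (the $\subset$ rather than $\subseteq$ on the right) then follows because $\Delta(\sigma')^{\perp\perp}\setminus\Delta(\sigma')\neq\emptyset$ unless $\sigma'$ is already minimal, and in the degenerate equivalence case one checks $\Delta(\sigma)=\Delta(\sigma')\subsetneq\Delta(\sigma')^{\perp\perp}$ directly. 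For \eqref{Ph2}, two $\Delta$'s either coincide or are disjoint because $P(x)$ is uniquely determined by $x$, and they coincide exactly when $P(\sigma)=P(\sigma')$, i.e.\ $\sigma\sim\sigma'$.

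Item (iii) is a direct computation: by \eqref{eqn-arisd4}, $x\in\Delta(\sigma)^{\perp\perp}$ iff $x$ is constant on each block of $P(\sigma)$, which holds iff $P(\sigma)\subseteq P(x)$, i.e.\ (letting $\sigma'$ be any permutation with $P(\sigma')=P(x)$) iff $\sigma'\precsim\sigma$ and $x\in\Delta(\sigma')$; ranging over all such $x$ gives the union. Item (iv) then follows by intersecting two such descriptions: $x$ lies in both bi-orthogonal spaces iff it is constant on every block of $P(\sigma)$ and on every block of $P(\sigma')$, i.e.\ constant on every block of the coarsest common refinement $P(\sigma)\vee P(\sigma')$; since the lattice order here has $\sigma\wedge\sigma'$ corresponding precisely to that join of partitions (the infimum in $\precsim$ is the partition-join, as $\precsim$ is refinement), this is exactly $\Delta(\sigma\wedge\sigma')^{\perp\perp}$, and I should remark that the set is independent of the representative $\sigma\wedge\sigma'$ since it depends only on its partition. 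Finally (v): by \eqref{eqn-17Nov2007-1} the cycle decomposition of $\tau\sigma\tau^{-1}$ is obtained by applying $\tau$ to every entry, so $P(\tau\sigma\tau^{-1})=\tau\cdot P(\sigma)$ (the partition with blocks $\tau(I_k)$); and from \eqref{eqn-sigma} and \eqref{aris1} one has $P(\tau x)=\tau\cdot P(x)$, so $x\in\Delta(\sigma)\iff P(x)=P(\sigma)\iff \tau\cdot P(x)=\tau\cdot P(\sigma)\iff P(\tau x)=P(\tau\sigma\tau^{-1})\iff \tau x\in\Delta(\tau\sigma\tau^{-1})$.

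The only genuinely delicate point is the bookkeeping of the refinement order in (ii) and (iv): one must keep straight that ``larger'' permutation $\leftrightarrow$ ``finer'' partition $\leftrightarrow$ ``smaller'' bi-orthogonal space, and that the infimum $\sigma\wedge\sigma'$ in $\precsim$ corresponds to the \emph{join} of partitions (coarsest common refinement), which is where the intersection of subspaces lands. Everything else is routine unwinding of definitions, so I expect the write-up to be short once that correspondence is stated cleanly once and for all at the outset.
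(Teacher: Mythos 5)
Your proposal is correct and follows the same route as the paper: everything reduces, via the dictionary $x\mapsto P(x)$ and $\sigma\mapsto P(\sigma)$, to the refinement lattice of partitions, with (i) supplying the one topological input, (iv) reducing to the partition join, and (v) to the conjugation rule \eqref{eqn-17Nov2007-1}. Two small caveats: in (iii) the intermediate clause ``$P(\sigma)\subseteq P(x)$'' has the inclusion backwards (constancy of $x$ on the blocks of $P(\sigma)$ means $P(\sigma)$ refines $P(x)$, i.e.\ $P(x)\subseteq P(\sigma)$ in the paper's convention, which is what your very next line $\sigma'\precsim\sigma$ actually uses, so it is only a slip); and the parenthetical about strictness in (ii) is a red herring --- the paper's $\subset$ in \eqref{Ph1} is the non-strict inclusion, and strict inclusion genuinely fails when $\sigma'$ is an $n$-cycle, since then $\Delta(\sigma')=\Delta(\sigma')^{\perp\perp}$.
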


\begin{proof}
Assertion $(i)$ is straightforward. Assertion $(ii)$ follows from $(i)$,
\eqref{partition_r}, \eqref{def:delta} and \eqref{eqn-arisd4}.
Assertion~$(iii)$ is a direct consequence of $(i)$, $(ii)$ and
\eqref{eqn-arisd4}.

To show assertion $(iv)$, let first $x\in
\Delta(\sigma)^{\bot\bot} \cap \Delta(\sigma')^{\bot\bot}$. Then, in
view of $(iii)$, there exist $\tau_1\precsim\si$ and
$\tau_2\precsim\si'$ such that
$x\in\Delta(\tau_1)\cap\Delta(\tau_2)$. Thus, by \eqref{Ph2},
$\tau_1\sim\tau_2$ and by \eqref{Ph1} they are both smaller than or
equivalent to $\sigma \wedge\sigma'$. Thus, $x\in\Delta(\sigma
\wedge\sigma')^{\bot\bot}$ showing that $\Delta(\sigma)^{\bot\bot}\cap
\Delta(\sigma')^{\bot\bot}\subset\Delta(\si\wedge\si')^{\bot\bot}$.
Let now $x\in\Delta(\sigma\wedge\si')^{\bot\bot}$. Then, for some
$\tau\precsim\si\wedge\si'$ we have $x\in\Delta(\tau)$. Since
$\tau\precsim\si$ and $\tau\precsim\si'$ the inverse inclusion
follows from $(iii)$.

We finally prove $(v)$. We
have that $x \in \tau \Delta(\sigma)$ if and only if $\tau^{-1}x\in
\Delta(\sigma)$. This latter happens if and only if for all $i,j
\in\N_n$ one has $(\tau^{-1}x)_i=(\tau^{-1}x)_j$ precisely
when $i,j$ belong to the same cycle of $\sigma$. By
(\ref{eqn-sigma}), this is equivalent to $x_{\tau(i)}=x_{\tau(j)}$
precisely when $i,j$ are in the same cycle of $\sigma$ for all $i,j
\in \N_n$. In view of \eqref{eqn-17Nov2007-1}, $i,j$ are in the same
cycle of $\sigma$ if and only if $\tau(i)$, $\tau(j)$ are in the
same cycle of $\tau \sigma \tau^{-1}$. This completes the proof.
\end{proof}

\begin{corollary}[Stratification]\label{Corollary_stratification}
The collection $\{\Delta(\sigma):\sigma\in\Sigma^{n}\}$
is an affine stratification of $\mathbb{R}^{n}$.
\end{corollary}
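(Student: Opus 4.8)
The plan is to verify, one at a time, the conditions that define an affine stratification: the sets $\Delta(\sigma)$ must be pairwise disjoint affine submanifolds of $\RR^n$ whose union is $\RR^n$, the family must be locally finite, and it must satisfy the frontier condition --- the closure of each stratum is a union of strata, those other than the stratum itself forming its boundary and having strictly smaller dimension. Almost all the needed content is already in place in \ssecref{ssec:strat} and in \propref{prop:delta}, so the proof amounts to assembling these facts in the right order.

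First, the manifold structure and the dimension count. Writing $P(\sigma)=\{I_1,\dots,I_m\}$, the representation of $\Delta(\sigma)$ given after \eqref{def:delta}, together with \eqref{eqn-arisd4}, exhibits $\Delta(\sigma)$ as the complement, inside the $m$-dimensional linear subspace $\Delta(\sigma)^{\bot\bot}$, of the finite union of the proper linear subspaces $\{x\in\Delta(\sigma)^{\bot\bot}:x_i=x_j\}$ over pairs $i,j$ lying in distinct blocks of $P(\sigma)$. This complement is nonempty (assign distinct values to distinct blocks) and relatively open in $\Delta(\sigma)^{\bot\bot}$, so $\Delta(\sigma)$ is an affine --- hence smooth --- submanifold of $\RR^n$ of dimension $|P(\sigma)|$, although disconnected in general. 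Pairwise disjointness up to equality is exactly \eqref{Ph2}: $\Delta(\sigma)\cap\Delta(\sigma')\ne\emptyset$ implies $\Delta(\sigma)=\Delta(\sigma')$. The family covers $\RR^n$ because, given $x\in\RR^n$, a permutation $\sigma$ whose non-trivial cycles are the non-singleton blocks of the partition $P(x)$ from \eqref{aris1}, listed in any cyclic order, satisfies $P(\sigma)=P(x)$, hence $x\in\Delta(\sigma)$. Finally, the family has at most $|\Sigma^n|=n!$ distinct members, so it is trivially locally finite.

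It remains to check the frontier condition, the only step that calls for an argument. Fix $\sigma,\sigma'\in\Sigma^n$ with $\Delta(\sigma)\cap\overline{\Delta(\sigma')}\ne\emptyset$. By \eqref{Ph3} and the identity $\overline{\Delta(\sigma')}=\Delta(\sigma')^{\bot\bot}$ recorded in \ssecref{ssec:strat}, one has $\overline{\Delta(\sigma')}=\bigcup_{\tau\precsim\sigma'}\Delta(\tau)$, so $\Delta(\sigma)$ meets $\Delta(\tau)$ for some $\tau\precsim\sigma'$; by \eqref{Ph2} this forces $\Delta(\sigma)=\Delta(\tau)$, hence $P(\sigma)=P(\tau)\subseteq P(\sigma')$ and thus $\sigma\precsim\sigma'$. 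Then \eqref{Ph1} yields $\Delta(\sigma)\subseteq\Delta(\sigma')^{\bot\bot}=\overline{\Delta(\sigma')}$. Moreover $P(\sigma)\subseteq P(\sigma')$ gives $|P(\sigma)|\le|P(\sigma')|$, with equality precisely when $P(\sigma)=P(\sigma')$, i.e.\ precisely when $\Delta(\sigma)=\Delta(\sigma')$ by \eqref{Ph2}; since $\dim\Delta(\sigma)=|P(\sigma)|$, it follows that whenever $\Delta(\sigma)\ne\Delta(\sigma')$ the stratum $\Delta(\sigma)$ lies in $\overline{\Delta(\sigma')}\setminus\Delta(\sigma')$ and has strictly smaller dimension than $\Delta(\sigma')$. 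This is exactly the frontier condition, and the proof is complete.

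I do not anticipate a real obstacle: the substantive geometric and combinatorial work --- how the order $\precsim$ interacts with closures and intersections --- has already been done in \propref{prop:delta}, so the corollary is essentially the observation that those statements say precisely that the $\Delta(\sigma)$ form a stratification. If one wishes to read ``fit together in a regular way'' as the full Whitney conditions (a) and (b) rather than just the frontier condition, it suffices to add the classical remark that a finite arrangement of relatively open subsets of linear subspaces is automatically Whitney-regular, so no further argument is required.
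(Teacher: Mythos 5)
Your proposal is correct and follows the same route the paper takes: disjointness (up to equality) from \eqref{Ph2}, coverage of $\RR^n$ from the definition, and the frontier condition from $\overline{\Delta(\sigma')}=\Delta(\sigma')^{\bot\bot}$ together with \eqref{Ph3}. The paper's own proof is a one-paragraph sketch that simply cites these facts, whereas you spell out the chain of implications (in particular the dimension count $\dim\Delta(\sigma)=|P(\sigma)|$ and why strict refinement forces a strict drop in dimension), but the underlying argument is identical.
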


\begin{proof}
  Clearly, each $\Delta(\sigma)$ is an affine submanifold of $\RR^n$.
  By (\ref{Ph2}), for any $\sigma,\sigma^{\prime}\in\Sigma^{n}$, the
  sets $\Delta(\sigma)$ and $\Delta(\sigma^{\prime})$ are either
  disjoint or they coincide. Thus, the elements in the set
  $\{\Delta(\sigma):\sigma\in\Sigma^{n}\}$ are disjoint. By
  construction, the union of all $\Delta(\sigma)$'s equals $\RR^{n}$. The frontier condition
  of the stratification follows from \eqref{eqn-arisd4} and \eqref{Ph3}.
\end{proof}

We introduce an important set for our next development. Consider the
set of permutations that are larger than, or equivalent to a given
permutation $\sigma \in \Sigma^n$
\[
S^{\succsim}(\sigma):=\{\sigma^{\prime}\in\Sigma^{n}:\sigma^{\prime}
\succsim\sigma\}.
\]
Notice that $S^{\succsim}(\sigma)$ is a subgroup of $\Sigma^n$, and
that
\begin{equation}\label{cardS}
|S^{\succsim}(\sigma)| = (|I_{1}|)!\cdots (|I_{m}|)!,
\end{equation}
if $P(\sigma)=\{I_{1},\ldots,I_{m}\}$.
Observe then that $\si\sim
\si'$ if and only if $S^{\succsim}(\sigma) =
S^{\succsim}(\sigma')$. So we also introduce the corresponding set for
a point $x\in \RR^n$
\begin{equation}\label{Sx}
  S^{\succsim}(x) :=   S^{\succsim}(\si) \qquad \textrm{for any $\si$ such that $x\in
  \Delta(\si)$}\,,
\end{equation}
which is nothing else than the set $\mathrm{Fix\,}(x)$. The
forthcoming result shows that the above permutations are the only
ones preserving balls centered at $\bar x$.

\begin{lemma}[Local invariance and ball preservation]
  \label{key}
  For any $\bar x\in \RR^n$, we have the dichotomy:
  \begin{enumerate}[(i)]
  \item \label{key_pt1}  $\sigma\in S^{\succsim}(\bar x) \ \Longleftrightarrow \
    \forall \delta > 0: \ \sigma B(\bar{x},\delta) =B(\bar{x},\delta)$;
  \item \label{key_pt2} $\sigma \not\in S^{\succsim}(\bar x) \
    \Longleftrightarrow \ \exists \delta > 0: \  \sigma
    B(\bar{x},\delta)   \cap B(\bar{x},\delta )=\emptyset$.
  \end{enumerate}
\end{lemma}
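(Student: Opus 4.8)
The plan is to prove the two equivalences (which are logically dual—$(\ref{key_pt2})$ is the contrapositive packaging of the negation of $(\ref{key_pt1})$—so really one needs to establish: $\sigma\in S^{\succsim}(\bar x)$ iff $\sigma$ fixes every ball $B(\bar x,\delta)$, and $\sigma\notin S^{\succsim}(\bar x)$ iff \emph{some} ball is moved entirely off itself). The starting observation is that $\sigma$ acts on $\RR^n$ by a permutation of coordinates, hence by an orthogonal (in fact, signed-permutation-free) linear map; in particular $\|\sigma x-\sigma y\|=\|x-y\|$ for all $x,y$, so $\sigma B(z,\delta)=B(\sigma z,\delta)$ for every center $z$ and radius $\delta$. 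Thus $\sigma B(\bar x,\delta)=B(\sigma\bar x,\delta)$, and the whole lemma reduces to understanding the distance $\|\sigma\bar x-\bar x\|$: two balls of the same radius $\delta$ centered at $\bar x$ and $\sigma\bar x$ coincide iff $\sigma\bar x=\bar x$, and are disjoint iff $\|\sigma\bar x-\bar x\|>2\delta$.

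First I would handle $(\ref{key_pt1})$. If $\sigma\in S^{\succsim}(\bar x)$, then by the remark following \eqref{Sx} this set equals $\mathrm{Fix\,}(\bar x)$, so $\sigma\bar x=\bar x$ and therefore $\sigma B(\bar x,\delta)=B(\sigma\bar x,\delta)=B(\bar x,\delta)$ for every $\delta>0$. Conversely, if $\sigma B(\bar x,\delta)=B(\bar x,\delta)$ for some (equivalently, all) $\delta>0$, then since $\sigma B(\bar x,\delta)=B(\sigma\bar x,\delta)$ and a Euclidean ball determines its center uniquely, we get $\sigma\bar x=\bar x$, i.e. $\sigma\in\mathrm{Fix\,}(\bar x)=S^{\succsim}(\bar x)$. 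This gives the equivalence in $(\ref{key_pt1})$, and in passing shows that the universally quantified statement $\forall\delta>0$ is equivalent to its existential counterpart for this property.

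For $(\ref{key_pt2})$, suppose $\sigma\notin S^{\succsim}(\bar x)=\mathrm{Fix\,}(\bar x)$. Then $\sigma\bar x\neq\bar x$, so $r:=\|\sigma\bar x-\bar x\|>0$. Choosing any $\delta$ with $0<\delta<r/2$, we have for $u\in B(\bar x,\delta)$ that $\|\sigma u-\bar x\|\ge\|\sigma\bar x-\bar x\|-\|\sigma u-\sigma\bar x\|=r-\|u-\bar x\|>r-\delta>\delta$, so $\sigma u\notin B(\bar x,\delta)$; hence $\sigma B(\bar x,\delta)\cap B(\bar x,\delta)=\emptyset$. Conversely, if such a $\delta$ exists, then $\bar x\in B(\bar x,\delta)$ forces $\sigma\bar x=\sigma\cdot\bar x\in\sigma B(\bar x,\delta)$, which is disjoint from $B(\bar x,\delta)\ni\bar x$, so $\sigma\bar x\neq\bar x$ and $\sigma\notin\mathrm{Fix\,}(\bar x)=S^{\succsim}(\bar x)$.

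There is no real obstacle here: the only thing to be careful about is the bookkeeping that $S^{\succsim}(\bar x)=\mathrm{Fix\,}(\bar x)$ (already recorded after \eqref{Sx}) and the elementary metric estimate via the reverse triangle inequality; the "hard part", such as it is, is merely making explicit that isometry of the coordinate-permutation action lets one replace the set equation $\sigma B(\bar x,\delta)=B(\bar x,\delta)$ by the point equation $\sigma\bar x=\bar x$. One could alternatively phrase $(\ref{key_pt2})$ by noting that on $\RR^n$ the quantity $\|\sigma x - x\|$ is a continuous, $\sigma$-homogeneous-of-degree-one-flavored function whose zero set is exactly $\mathrm{Fix\,}(\sigma)$, but the direct triangle-inequality argument is the cleanest.
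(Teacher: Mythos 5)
Your proof is correct and takes essentially the same route as the paper: both arguments reduce the ball statements to the single point equation $\sigma\bar x=\bar x$ (equivalently $\sigma\in\mathrm{Fix\,}(\bar x)$) by using that coordinate permutations are isometries, and both establish $(ii)$ via the reverse triangle inequality after choosing $\delta$ a fixed fraction of $\|\sigma\bar x-\bar x\|$. Your phrasing via $\sigma B(z,\delta)=B(\sigma z,\delta)$ and uniqueness of ball centers is a slightly cleaner packaging of the same idea, but there is no substantive difference.
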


\begin{proof}
  Observe that $\si\in S^{\succsim}(\bar x)$ if and only if
  $P(\bar x) \subseteq P(\sigma)$ if and only if $\norm{\bar x- \si\bar x}=0$.
  So implication $\Leftarrow$ of $(i)$ follows by taking
  $\de\ra 0$. The implication $\Rightarrow$ of $(i)$ comes from the symmetry of
  the norm which yields for any $x\in \RR^n$
  \[
  \norm{x-\bar x} = \norm{\si x - \si \bar x} = \norm{\si x - \bar x}.
  \]
  
  \pagebreak
  
  \noindent
  To prove $(ii)$, we can just consider $\de=\norm{\bar x - \si\bar
    x}/3$ and note that $\de>0$ whenever $\si\notin
  S^{\succsim}(\bar x)$. Utilizing
  $$
  \norm{\bar x - \sigma x} \ge |\norm{\bar x- \si \bar x } - \norm{\si \bar x - \si x}  |
  =  \norm{\bar x- \si \bar x } - \norm{\bar x - x} \geq 2\de
  $$
  concludes the proof.
\end{proof}

In words, if the partition associated to $\si$ refines the partition of
$\bar x$, then $\si$ preserves all the balls centered at $\bar x$;
and this property characterizes those permutations. The next corollary goes
a bit further by saying that the preservation of only one ball, with a
sufficiently small radius, also characterizes $S^{\succsim}(\bar x)$.

\begin{corollary}[Invariance of one ball]
   For every $\bar x\in \RR^n$ there exists $r>0$ such that:
   \[
   \sigma\in S^{\succsim}(\bar x)  \Longleftrightarrow
   \sigma B(\bar{x},r) =B(\bar{x},r)
   \qqandqq
   \sigma \not\in S^{\succsim}(\bar x)
   \Longleftrightarrow \sigma
   B(\bar{x},r)   \cap B(\bar{x},r)=\emptyset.
   \]
\end{corollary}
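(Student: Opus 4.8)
The plan is to exploit the finiteness of the group $\Sigma^n$ together with the quantitative estimate already present in the proof of \lemref{key}. First I would dispose of the degenerate case in which $S^{\succsim}(\bar x)=\Sigma^n$ (equivalently, all coordinates of $\bar x$ are equal, so every permutation fixes $\bar x$): by part (i) of \lemref{key} every permutation then preserves every ball centered at $\bar x$, and the second equivalence holds vacuously, so any $r>0$ works. From now on I assume $\Sigma^n\setminus S^{\succsim}(\bar x)$ is nonempty; it is of course finite.

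Recall, as in the opening of the proof of \lemref{key}, that $\sigma\in S^{\succsim}(\bar x)$ if and only if $\norm{\bar x-\sigma\bar x}=0$; hence for every $\sigma\in\Sigma^n\setminus S^{\succsim}(\bar x)$ we have $\norm{\bar x-\sigma\bar x}>0$, and I would set
\[
r:=\tfrac{1}{2}\,\min\bigl\{\,\norm{\bar x-\sigma\bar x}:\sigma\in\Sigma^n\setminus S^{\succsim}(\bar x)\,\bigr\}>0 .
\]
The point is that the inequality used in the proof of part (ii) of \lemref{key}, namely $\norm{\bar x-\sigma x}\ge\norm{\bar x-\sigma\bar x}-\norm{\bar x-x}$ for all $x$, already works for this fixed $r$: if $\sigma\notin S^{\succsim}(\bar x)$ and $x\in B(\bar x,r)$, then $\norm{\bar x-\sigma x}>\norm{\bar x-\sigma\bar x}-r\ge r$, so $\sigma x\notin B(\bar x,r)$; that is, $\sigma B(\bar x,r)\cap B(\bar x,r)=\emptyset$.

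It then remains to assemble the two equivalences for this $r$. The implication $\sigma\in S^{\succsim}(\bar x)\Rightarrow\sigma B(\bar x,r)=B(\bar x,r)$ is exactly part (i) of \lemref{key}, and $\sigma\notin S^{\succsim}(\bar x)\Rightarrow\sigma B(\bar x,r)\cap B(\bar x,r)=\emptyset$ is the computation just made. The two converse implications follow by contraposition, using only that $B(\bar x,r)$ is nonempty: if $\sigma\notin S^{\succsim}(\bar x)$ then $\sigma B(\bar x,r)\cap B(\bar x,r)=\emptyset\ne B(\bar x,r)$, so $\sigma B(\bar x,r)\ne B(\bar x,r)$; and if $\sigma\in S^{\succsim}(\bar x)$ then $\sigma B(\bar x,r)=B(\bar x,r)$ meets itself. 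There is essentially no obstacle here: the only subtlety is the case $S^{\succsim}(\bar x)=\Sigma^n$, where the minimum above would be taken over the empty set and must be treated separately, as above.
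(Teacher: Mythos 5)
Your proof is correct and follows essentially the same strategy as the paper's: exploit the finiteness of $\Sigma^n\setminus S^{\succsim}(\bar x)$ to take a minimum of the positive quantities $\norm{\bar x-\sigma\bar x}$ (the paper uses $\delta_\sigma=\norm{\bar x-\sigma\bar x}/3$ inherited from the proof of Lemma~\ref{key}(ii) and sets $r=\min_\sigma\delta_\sigma$; you use the factor $1/2$ and unfold the same triangle-inequality estimate directly). Your explicit treatment of the degenerate case $S^{\succsim}(\bar x)=\Sigma^n$, where the minimum would run over the empty set, is a small tidiness the paper leaves implicit.
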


\begin{proof}
  For any $\si\notin S^{\succsim}(\bar x)$, Lemma~\ref{key}$(ii)$ gives
  a radius, that we denote here by $\de_{\si}>0$, such that $\sigma
  B(\bar{x},\delta_{\si}) \cap B(\bar{x},\delta_{\si})=\emptyset$.
  Note also that for all $\de\leq\de_{\si}$, there still holds $\sigma
  B(\bar{x},\delta) \cap B(\bar{x},\delta )=\emptyset$.  Set now
  \[
  r = \min\left\{ \de_{\si} : \ \si\notin  S^{\succsim}(\bar x)\right\} > 0.
  \]
  Thus $\sigma B(\bar{x},r) \cap B(\bar{x},r)=\emptyset$ for all
  $\si\notin S^{\succsim}(\bar x)$. This yields that if
  a permutation preserves the ball $B(\bar{x},r)$, then it lies in
  $S^{\succsim}(\bar x)$. The converse comes from Lemma~\ref{key}.
\end{proof}

We finish this section by expressing the orthogonal
projection of a point onto a given stratum using permutations. Letting
$P(\sigma)=\{I_{1},\ldots,I_{m}\}$, it is easy to see that
\begin{equation}\label{aris2}
y=\mathrm{Proj\,}_{\Delta(\sigma)^{\bot\bot}}(x)
\quad\Longleftrightarrow \quad y_{\ell}=\frac{1}{|I_{i}|}\sum\limits_{j\in
I_{i}}x_{j} \quad\text{for all \,}\, \ell \in
I_{i}\,\text{ with } i\in\N_m.
\end{equation}
Note also that if the numbers
$$
\frac{1}{{|I_i|}}\sum_{j \in I_i}\, x_j \qquad \mbox{for }
i\in\N_m
$$
are distinct, then this equality also provides
the projection of $x$ onto the (non-closed) set $\Delta(\sigma)$.
We can state the following result.

\begin{lemma}[Projection onto $\Delta(\si)^{\bot\bot}$]\label{Lemma_project_strata}
  For any $\sigma\in\Sigma^{n}$ and $x\in\mathbb{R}^{n}$ we have
  \begin{equation}\label{aris7}
    \mathrm{Proj\,}_{\Delta(\sigma)^{\bot\bot}}(x)=
    \frac{1}{|S^{\succsim}(\sigma)|}
    \sum\limits_{\sigma^{\prime}\succsim\sigma} \sigma^{\prime}x.
  \end{equation}
\end{lemma}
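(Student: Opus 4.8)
The plan is to compute the right-hand side of~\eqref{aris7} explicitly and check it coincides with the projection formula~\eqref{aris2}. Write $P(\sigma)=\{I_{1},\ldots,I_{m}\}$. Since $S^{\succsim}(\sigma)$ is precisely the subgroup of permutations whose partition refines $P(\sigma)$ --- that is, the permutations that act arbitrarily \emph{within} each block $I_{i}$ and fix the block structure --- it is the direct product of the symmetric groups on the blocks, $S^{\succsim}(\sigma)\cong \Sigma^{I_{1}}\times\cdots\times\Sigma^{I_{m}}$, which also re-proves~\eqref{cardS}. The key observation is then a per-coordinate one: fix $i\in\N_{m}$ and $\ell\in I_{i}$, and look at the $\ell$-th coordinate of $\sum_{\sigma'\succsim\sigma}\sigma' x$. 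By~\eqref{eqn-sigma}, the $\ell$-th coordinate of $\sigma' x$ is $x_{(\sigma')^{-1}(\ell)}$, and as $\sigma'$ ranges over $S^{\succsim}(\sigma)$ the index $(\sigma')^{-1}(\ell)$ ranges over $I_{i}$ (the block containing $\ell$), each value being attained exactly $|S^{\succsim}(\sigma)|/|I_{i}|$ times by a symmetry/counting argument on the product group.

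Concretely, for the first step I would argue that for each target index $j\in I_{i}$, the set $\{\sigma'\in S^{\succsim}(\sigma) : (\sigma')^{-1}(\ell)=j\}$ is in bijection with the stabilizer-type subgroup fixing the assignment $\ell\mapsto j$ and acting freely on the remaining $|I_{i}|-1$ slots of $I_{i}$ together with all of $I_{1},\ldots,\widehat{I_{i}},\ldots,I_{m}$; hence its cardinality is $(|I_{i}|-1)!\,\prod_{k\neq i}|I_{k}|! = |S^{\succsim}(\sigma)|/|I_{i}|$, independently of $j$. Summing over $j\in I_{i}$ gives
\[
\Bigl(\sum_{\sigma'\succsim\sigma}\sigma' x\Bigr)_{\ell}
= \frac{|S^{\succsim}(\sigma)|}{|I_{i}|}\sum_{j\in I_{i}} x_{j},
\]
and dividing by $|S^{\succsim}(\sigma)|$ yields exactly $\frac{1}{|I_{i}|}\sum_{j\in I_{i}}x_{j}$, which is the $\ell$-th coordinate of $\mathrm{Proj\,}_{\Delta(\sigma)^{\bot\bot}}(x)$ by~\eqref{aris2}. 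Since $\ell\in I_{i}$ and $i\in\N_{m}$ were arbitrary and the blocks partition $\N_{n}$, this establishes~\eqref{aris7} coordinate by coordinate.

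The main obstacle is the bookkeeping in the orbit-counting step --- making the claim ``each index $j$ in the block is hit equally often'' fully rigorous. A clean way to avoid heavy combinatorics is to invoke symmetry directly: the group $S^{\succsim}(\sigma)$ acts transitively on each block $I_{i}$, so the multiset $\{(\sigma')^{-1}(\ell) : \sigma'\in S^{\succsim}(\sigma)\}$ is invariant under this transitive action, hence uniform on $I_{i}$; its total size is $|S^{\succsim}(\sigma)|$, forcing each of the $|I_{i}|$ values to occur $|S^{\succsim}(\sigma)|/|I_{i}|$ times. Alternatively, one can sidestep coordinates entirely by noting that $\frac{1}{|S^{\succsim}(\sigma)|}\sum_{\sigma'\succsim\sigma}\sigma' x$ is, by~\eqref{partition_r}, fixed by every $\tau\succsim\sigma$ (the sum being over a group, left-multiplication by $\tau$ permutes the summands), hence lies in $\mathrm{Fix\,}(\sigma)=\Delta(\sigma)^{\bot\bot}$; that it is the \emph{orthogonal} projection then follows because averaging over the group is the orthogonal projection onto the fixed-point subspace of an orthogonal action, i.e.\ $x - \frac{1}{|S^{\succsim}(\sigma)|}\sum_{\sigma'\succsim\sigma}\sigma' x$ is orthogonal to $\Delta(\sigma)^{\bot\bot}$. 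Either route completes the proof; I would present the coordinate computation as the primary argument since it connects most transparently with~\eqref{aris2}.
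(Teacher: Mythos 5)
Your primary argument is exactly the paper's proof, just spelled out in more detail: the paper's one-liner is precisely the counting claim that for each $j,\ell\in I_i$ the coordinate $x_j$ appears $|S^{\succsim}(\sigma)|/|I_i|$ times in $\bigl(\sum_{\sigma'\succsim\sigma}\sigma' x\bigr)_{\ell}$, which then reduces to \eqref{aris2} via \eqref{cardS}. The coordinate-free alternative you sketch at the end (group averaging is orthogonal projection onto the fixed-point subspace, which is $\Delta(\sigma)^{\bot\bot}$ by \eqref{partition_r}) is a genuinely different and slightly more conceptual route, but your main line of reasoning matches the paper.
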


\begin{proof}
For every $j,\ell \!\in\! I_i$, the coordinate $x_j$ is repeated
$|S^{\succsim}(\sigma)|/|I_i|$ times in the sum
$\big(\sum_{\sigma^{\prime}\succsim\sigma}
\sigma^{\prime}x\big)_{\ell}$. Thus, \eqref{aris2} together with
\eqref{cardS} yields the result.
\end{proof}

%% In particular, the
%% projection operator is represented by the following $n \times
%% n$ symmetric matrix $Q$
%% \begin{equation}
%% \label{aris-Q}
%% Q^{lj} = \left\{
%% \begin{array}{ll}
%%   1/|I_i| & \mbox{\quad if } l,j  \in I_i, \\
%%   0 & \mbox{\quad if } l \in I_k, \, j \in I_i \mbox{ and } k \not= i,
%% \end{array}
%% \right.
%% \end{equation}
%% and formula \eqref{aris7} becomes
%% \begin{align}
%% \label{26Nov2008-hss}
%% \mbox{\rm Proj}_{\Delta(\sigma)^{\perp\perp}}\,(x) = Q \,x\,.
%% \end{align}

%********************************************************************
%********************************************************************
\section{Locally symmetric manifolds}\label{sec:manifold}
%********************************************************************
%********************************************************************

In this section we introduce and study the notion of locally
symmetric manifolds; we will then prove in Section~4 that these
submanifolds of $\RR^n$ are lifted up, via the mapping
$\lambda^{-1}$, to spectral submanifolds of $\Sn$.

After defining the notion of a locally symmetric manifold in
Subsection~3.1, we illustrate some intrinsic difficulties that
prevent a direct proof of the aforementioned result. In
\ssecref{ssec-tang} we study properties of the tangent and the
normal space of such manifolds. In Subsection~\ref{ssec:structure},
we specify the location of the manifold with respect to the
stratification, which leads in Subsection~\ref{ssec-simin} to the
definition of a {\it characteristic} permutation naturally
associated with a locally symmetric manifold. We explain in
\ssecref{sssec-341} that this induces a {\it canonical
decomposition} of $\RR^n$ yielding a reduction of the active normal
space in \ssecref{sssec-342}. Finally, in \ssecref{ssec-tangparam}
we obtain a very useful description of such manifolds by means of a
{\em reduced} locally symmetric local equation. This last step will
be crucial for the proof of our main result in
Section~\ref{sec:spectral}.

%********************************************************************
\subsection{Locally symmetric functions and manifolds}\label{ssec-loc-sym}
%********************************************************************

Let us start by refining the notion of symmetric function
employed in previous works (see \cite{LewisSendov:2000a},
\cite{DLMS08} for example).

\begin{definition}[Locally symmetric function]\label{Definition_locsymfun}
  A function $f\colon \RR^n\rightarrow\RR$ is called {\it locally symmetric}
  around a point $\bar x\in \RR^n$ if for any $x$ close to $\bar x$
  $$
  f(\si x)=f(x) \quad \text{for all } \si\in S^{\succsim}(\bar x)\,.
  $$
  Naturally, a vector-valued function $g\colon\RR^n\rightarrow\RR^p$ is called {\it locally symmetric}
  around $\bar x$ if each component function $g_i\colon \RR^n\ra \RR$
  is locally symmetric $(i=1,\dots,p$).
\end{definition}

In view of \lemref{key} and its corollary, locally symmetric
functions are those which are symmetric on an open ball centered at
$\bar x$, under all permutations of entries of $x$ that preserve
this ball. It turns out that the above property is the invariance
property needed on $f$ for transferring its differentiability
properties to the spectral function $f \circ \la$, as stating in the
next theorem. Recall that for any vector $x$ in $\RR^n$, Diag$\,x$
denotes the diagonal matrix with the vector $x$ on the main
diagonal, and diag$\,: \Sn \rightarrow \RR^n$ denotes its adjoint
operator, defined by diag$\,(X):=(x_{11},\ldots,x_{nn})$ for any matrix
$X=(x_{ij})_{ij} \in \Sn$.

\begin{theorem}[Derivatives of spectral functions]
  \label{thm-26Nov2007-1}
  Consider a function $f\colon \R^n \rightarrow \R$ and define the function
  $F\colon \Sn \rightarrow \R$ by
  $$
  F(X)= (f \circ \lambda)(X)
  $$
  in a neighborhood of $\bar X$. If $f$ is locally symmetric at $\bar x$, then
  \begin{itemize}
  \item[(i)] the function $F$ is $C^1$ at $\bar X$ if and only if $f$ is
    $C^1$ at $\lambda(\bar X)$;
  \item[(ii)] the function $F$ is $C^{2}$ at $\bar X$ if and only if $f$
    is $C^{2}$ at $\lambda(\bar X)$;
  \item[(iii)] the function $F$ is $C^{\infty}$ (resp. $C^{\omega}$) at
    $\bar X$ if and only if $f$ is $C^{\infty}$ (resp. $C^{\omega}$) at
    $\lambda(\bar X)$, where $C^{\omega}$ stands for the class of
    real analytic functions.
  \end{itemize}

  In all above cases we have
  $$
  \nabla F(\bar X) = \trans{\bar{U}}(\Diag \, \nabla f(\lambda(\bar X))) \bar{U}
  $$
  where $\bar U$ is any orthogonal matrix such that
  $X=\trans{\bar{U}}(\Diag\, \lambda(\bar X))\bar U$. Equivalently, for any
  direction $H \in \Sn$ we have
  \begin{equation}\label{H1}
    \nabla F(\bar X)[H] =  \nabla f(\lambda(\bar X)))[\diag\,(\bar{U}H \trans{\bar{U}})].
  \end{equation}
\end{theorem}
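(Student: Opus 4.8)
The plan is to reduce everything to the known derivative formula for symmetric spectral functions (the global case), using locality to trade the global symmetry hypothesis for the weaker local one.

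First I would fix $\bar X\in\Sn$ and write $\bar x=\lambda(\bar X)$, choosing an orthogonal $\bar U$ with $\bar X=\trans{\bar U}(\Diag\,\bar x)\bar U$. By \lemref{key} and its corollary there is a radius $r>0$ such that every $\sigma\in\Sigma^n$ either preserves $B(\bar x,r)$ (precisely when $\sigma\in S^{\succsim}(\bar x)=\mathrm{Fix}(\bar x)$) or moves it off itself. The idea is to replace $f$ by a genuinely symmetric function $\tilde f$ that agrees with $f$ near $\bar x$. Concretely, pick a smooth symmetric bump $\theta\colon\RR^n\to[0,1]$ supported in $B(\bar x,r)$ — this exists because the orbit $\{\sigma\bar x:\sigma\in\Sigma^n\}$ is finite and the balls $B(\sigma\bar x,r/3)$ around its distinct points are pairwise disjoint, so one can average a single bump over $\Sigma^n$ — and set
\begin{equation*}
\tilde f(x)=\frac{1}{|\Sigma^n|}\sum_{\sigma\in\Sigma^n}\theta(\sigma x)\,f(\sigma x).
\end{equation*}
Because $f$ is locally symmetric, $f(\sigma x)=f(x)$ for $\sigma\in\mathrm{Fix}(\bar x)$ and $x$ near $\bar x$, while for $\sigma\notin\mathrm{Fix}(\bar x)$ one has $\theta(\sigma x)=0$ for $x$ near $\bar x$; hence $\tilde f=f\cdot(\text{const})$ near $\bar x$, and after normalizing the constant away $\tilde f$ coincides with $f$ on a neighborhood of $\bar x$. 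Moreover $\tilde f$ is symmetric on all of $\RR^n$ and of the same smoothness class as $f$ near $\bar x$.

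Next I would invoke the classical results of Lewis, Lewis--Sendov, and Sendov (cited in the excerpt) on symmetric spectral functions: for the symmetric $\tilde f$, the spectral function $\tilde F=\tilde f\circ\lambda$ is $C^1$, $C^2$, $C^\infty$, or $C^\omega$ at $\bar X$ exactly when $\tilde f$ is so at $\bar x$, with $\nabla\tilde F(\bar X)=\trans{\bar U}(\Diag\,\nabla\tilde f(\bar x))\bar U$. Since $\lambda$ is continuous, $F=f\circ\lambda$ and $\tilde F=\tilde f\circ\lambda$ agree on a neighborhood of $\bar X$ (namely $\lambda^{-1}$ of a neighborhood of $\bar x$ where $f=\tilde f$); therefore $F$ has exactly the same local smoothness at $\bar X$ as $\tilde F$, which in turn matches that of $\tilde f=f$ at $\bar x$. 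This gives (i)--(iii) at once. The gradient formula transfers in the same way: $\nabla F(\bar X)=\nabla\tilde F(\bar X)=\trans{\bar U}(\Diag\,\nabla\tilde f(\bar x))\bar U=\trans{\bar U}(\Diag\,\nabla f(\bar x))\bar U$, and equation~\eqref{H1} is just the adjoint reformulation using $\langle \Diag\,v,\bar U H\trans{\bar U}\rangle=\langle v,\diag(\bar U H\trans{\bar U})\rangle$.

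The main obstacle — really the only point needing care — is the construction of the symmetric bump $\theta$ and the verification that $\tilde f\equiv f$ (up to a harmless normalizing constant) on a full neighborhood of $\bar x$: one must check that the disjointness of the balls $B(\sigma\bar x,r/3)$ over the \emph{distinct} orbit points, together with local symmetry, really does kill all the "wrong" terms $\theta(\sigma x)f(\sigma x)$ with $\sigma\notin\mathrm{Fix}(\bar x)$ while collapsing the "right" terms to $|\mathrm{Fix}(\bar x)|\,\theta(x)f(x)$. One should also note that in the $C^\omega$ case the bump trick must be applied with slightly more care — there is no analytic bump, so instead one argues directly: $f$ and any of its symmetrizations over $\mathrm{Fix}(\bar x)$ agree near $\bar x$, so one may first replace $f$ by $\frac{1}{|\mathrm{Fix}(\bar x)|}\sum_{\sigma\in\mathrm{Fix}(\bar x)}f\circ\sigma$ (analytic, $\mathrm{Fix}(\bar x)$-invariant, equal to $f$ near $\bar x$), and then appeal to the reduction results for spectral functions that are only invariant under the stabilizer of $\bar x$ — exactly the setting handled in the cited literature. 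Everything else is routine.
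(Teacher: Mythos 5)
Your strategy --- replace $f$ by a genuinely $\Sigma^n$-symmetric $\tilde f$ agreeing with $f$ near $\bar x$, then apply the classical theorems as a black box --- is a genuinely different and more self-contained route than the paper's, which simply asserts that the cited proofs use only stabilizer-invariance near $\lambda(\bar X)$ and therefore go through. Your reduction is sound for parts (i), (ii), and the $C^\infty$ part of (iii), once one real slip is repaired: the cutoff $\theta$ in $\tilde f=|\Sigma^n|^{-1}\sum_\sigma(\theta\circ\sigma)(f\circ\sigma)$ must \emph{not} itself be $\Sigma^n$-symmetric, contrary to what you write. The $\Sigma^n$-average of a bump supported in $B(\bar x,\rho)$ is supported on the whole orbit $\bigcup_\sigma B(\sigma\bar x,\rho)$, so for $\sigma\notin S^{\succsim}(\bar x)$ and $x$ near $\bar x$ you would have $\theta(\sigma x)\neq 0$ (since $\sigma x$ lies near $\sigma\bar x$, where such a $\theta$ is nonzero): the ``wrong'' terms survive and drag in values of $f$ far from $\bar x$, where no smoothness is assumed, destroying both the localization and the smoothness transfer. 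What you need is a bump supported only in a small ball around $\bar x$, identically $1$ near $\bar x$, and radial about $\bar x$ (hence automatically $S^{\succsim}(\bar x)$-invariant); then your cancellation holds, the surviving terms collapse to $|S^{\succsim}(\bar x)|\cdot f(x)$, and $\tilde f$ is $\Sigma^n$-symmetric purely because of the averaging over $\Sigma^n$, not because of any symmetry of $\theta$.

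The $C^\omega$ part of (iii) is a genuine gap. The bump technique has no analytic analogue, and the fallback you sketch --- symmetrize $f$ over $S^{\succsim}(\bar x)$ and then ``appeal to the reduction results for spectral functions that are only invariant under the stabilizer of $\bar x$'' --- is exactly the statement being proved for a stabilizer-invariant analytic $f$, hence circular unless such a result is already established in the literature. The references the paper cites for the analytic case (Dadok, Tsing--Fan--Verriest) are, in their standard formulations, for fully $\Sigma^n$-invariant functions. So for $C^\omega$ your proof reverts to the same bare assertion the paper itself makes, without the benefit of the reduction you achieve in the smooth cases.
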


\begin{proof}
  The proof of the results is virtually identical with the
  proofs in the case when $f$ is a symmetric function with respect to
  all permutations. For a proof of $(i)$ and the
  expression of the gradient, see \cite{Lewis:1994b}. For $(ii)$, see
  \cite{LewisSendov:2000a} (or \cite[Section~7]{sendov2007}), and for
  $(iii)$ \cite{Dadok:1982} and \cite{TsingFanVerriest:1994}.
\end{proof}

\pagebreak

The differentiability of spectral functions will be used intensively when
defining local equations of spectral manifolds.
Before giving the definition of spectral manifolds and locally symmetric manifolds,
let us first recall the definition of submanifolds.

\begin{definition}[Submanifold of $\R^n$]
  A nonempty set $\mathcal{M} \subset \R^n$ is a $C^k$ {\it submanifold} of
  dimension~$d$ (with $d\in \{0,\ldots,n\}$ and $k\in \NN\cup \{\omega\}$)
  if for every $\bar x \in \mathcal{M}$, there is a neighborhood $U \subset \R^n$
  of $\bar x$ and $C^k$ function $\varphi \colon U \rightarrow \R^{n-d}$ with Jacobian
  matrix $J\varphi(\bar x)$ of full rank, and such that for all $x \in U$ we
  have $x \in \mathcal{M} \Leftrightarrow \varphi(x)=0$.  The map $\varphi$
  is called {\it local equation} of $\mathcal{M}$ around $\bar x$.
\end{definition}

\begin{remark}[Open subset]\label{claude}
  Every (nonempty) open subset of $\RR^n$ is trivially a
  \mbox{$C^k$-submanifold} of $\RR^n$ (for any $k$) of dimension $d=n$.
\end{remark}

\begin{definition}[Locally symmetric sets]\label{defn-spectrman}
  Let $S$ be a subset of $\RR^n$ such that
  \begin{equation}
    S\,\cap\,\RR^n_{\,\ge}\,\neq\, \emptyset\,.
  \end{equation}
  The set $S$ is called {\it strongly locally symmetric} if
  $$
  \sigma S = S \qquad \mbox{ for all } \bar x
  \in S \mbox{ and all }  \sigma\in S^{\succsim}(\bar x).
  $$ The set $S$ is called {\it locally symmetric} if for every $x \in
  S$ there is a $\delta > 0$ such that $S \cap B(x, \delta)$ is
  strongly locally symmetric set. In other words, for every $x \in S$
  there is a $\delta > 0$ such that
  $$
  \sigma (S \cap B(x, \delta)) = S \cap B(x, \delta) \qquad \mbox{ for all }
  \bar x \in S \cap B(x, \delta) \mbox{ and all } \sigma \in S^{\succsim}(\bar x).
  $$
  In this case, observe that $S\cap B(x, \rho)$ for $\rho\leq
  \delta$ is a strongly locally symmetric set as well (as an easy consequence
  of \lemref{key}).
\end{definition}

\begin{example}[Trivial examples]
  Obviously the whole space $\RR^n$ is (strongly locally) symmetric.
  It is also easily seen from the definition that any stratum $\Delta(\si)$ is a strongly locally
  symmetric affine manifold. If $\bar x \in \Delta(\sigma)$ and the ball $B(\bar x, \delta)$
  is small enough so that it intersects only strata $\Delta(\sigma')$ with $\sigma' \succsim
  \sigma$, then $B(\bar x, \delta)$ is strongly locally symmetric. \qed
\end{example}

\begin{definition}[Locally symmetric manifold]
  A subset $\mathcal{M}$ of $\RR^n$ is said to be a {\it (strongly) locally symmetric manifold}
  if it is both a connected submanifold of $\RR^n$ without boundary and a (strongly) locally symmetric set.
\end{definition}

Our objective is to show that locally symmetric smooth submanifolds
of $\RR^n$ are lifted to (spectral) smooth submanifolds of $\Sn$.
Since the entries of the eigenvalue vector $\la(X)$ are
non-increasing (by definition of $\la$), in the above
definition we only consider the case where $\mathcal{M}$ intersects $\RR^n_{\,\ge}$.
Anyhow, this technical assumption is not restrictive
since we can always reorder the orthogonal basis of $\RR^n$ to get
this property fulfilled. Thus, our aim is to show that
$\lambda^{-1}(\Mm \cap\RR^n_{\ge})$ is a manifold, which will be
eventually accomplished by Theorem~\ref{main-thm-4Dec2007-3} in
Section~\ref{sec:spectral}.

\smallskip

Before we proceed, we sketch two simple approaches that could be
adopted, as a first try, in order to prove this result, and we
illustrate the difficulties that appear.

\smallskip

The first example starts with the expression \eqref{jm-adds} of the
manifold $\la^{-1}(\Mm)$. Introduce the {\it stabilizer} of a matrix
$X\in \Sn$ under the action of the orthogonal group $\On$
$$
\On_{X} :=\{U \in \On : \ \trans{U} X U = X\}.
$$
Observe that for $x\in \RR^n_{\geq}$, we have an exact description
of the stabilizer $\On_{\Diag(x)}$ of the matrix $\Diag(x)$. Indeed,
considering the partition $P(x) = \{I_1,\ldots,I_{\kappa+m}\}$ we
have that $U\in \On_{\Diag(x)}$ is a block-diagonal matrix, made of
matrices $U_i\in {\bf O}^{|I_i|}$. Conversely, every such
block-diagonal matrix belongs clearly to $\On_{\Diag(x)}$. In other
words, we have the identification
\[
\On_{\Diag(x)} \simeq {\bf O}^{|I_1|} \times \cdots \times {\bf
O}^{|I_{\kappa+m}|}.
\]

\pagebreak

\noindent
Since ${\bf O}^p$ is a manifold of dimension
$p(p-1)/2$, we deduce that $\On_{\Diag(x)}$ is a manifold of dimension
\[
\dim \On_{\Diag(x)} = \sum^{\kappa+m}_{i=1}
\frac{|I_i|(|I_i|-1)}{2}.
\]
It is a standard result that the orbit $\On\!.{\Diag(x)}$ is diffeomorphic to
the quotient manifold $\On\!/\On_{\!\Diag(x)\!}$. Thus, $\On.{\Diag(x)}$ is a submanifold of $\Sn$ of
dimension
\begin{align*}
\dim \On.{\Diag(x)} &=  \dim \On - \dim \On_{\Diag(x)} \\
  &= \frac{n(n-1)}{2} - \sum_{i=1}^{\kappa + m} \frac{|I_{i}|(|I_{i}|-1)}{2} \\
  &= \frac{n^{2} - \sum_{i=1}^{\kappa + m} |I_{i}|^{2}}{2} \\
  &= \sum_{1 \le i < j \le \kappa + m} |I_{i}||I_{j}|,
\end{align*}
where we used twice the fact that $n=\sum^{\kappa+m}_{i=1}|I_i|$. What
we need to show is that the (disjoint) union of these manifolds
$$
\lambda^{-1}(\mathcal{M}) =\bigcup_{x \in
  \mathcal{M}} \On.\Diag(x)
$$
is a manifold as well. We are not aware of a straightforward answer
to this question. Our answer,  developed in \secref{sec:spectral},
uses crucial properties of locally symmetric manifolds
derived in this section. We also exhibit explicit local equations
of the spectral manifold $\lambda^{-1}(\mathcal{M})$.

\smallskip

Let us finish this overview by explaining how a second
straightforward approach involving local equations of manifolds
would fail. To this end, assume that the manifold $\Mm$ of dimension
$d\in \{0,1,\dots, n\}$ is described by a smooth equation
$\ph\colon \RR^n\rightarrow\RR^{n-d}$ around the point
$\bx\in\Mm\cap\RR^n_{\,\ge}$.  This gives a function $\ph\circ \la$
whose zeros characterize $\la^{-1}(\Mm)$ around
$\bX\in\la^{-1}(\Mm)$, that is, for all $X\in\Sn$ around $\bX$
\begin{equation}\label{eq:equation}
X\in \la^{-1}(\Mm) \iff \la(X)\in \Mm \iff \ph(\la(X))=0.
\end{equation}
However we cannot guarantee that the function
$\Phi:=\varphi\circ\lambda$ is a smooth function unless
$\ph$ is locally symmetric (since in this case
\thref{thm-26Nov2007-1} applies). But in general, local equations
$\ph\colon\RR^n\ra\RR$ of a locally symmetric submanifold of $\RR^n$
might fail to be locally symmetric, as shown by the next easy
example.

\begin{example}[A symmetric manifold without symmetric
equations]\label{ex:jm}
  Let us consider the following symmetric (affine) submanifold
  of $\RR^2$ of dimension one:
  $$
  \Mm \ = \ \{(x,y)\in \RR^2: x=y\} \ = \ \Delta((12)).
  $$
  The associated spectral set 
  \[
  \la^{-1}(\Mm) = \{A\in \Sn: \la_1(A)=\la_2(A)\}=\{\al I_n: \ \al\in \RR\}
  \]
  is a submanifold of $\Sn$ around $I_n = \la^{-1}(1,1)$. It is
  interesting to observe that though $\la^{-1}(\Mm)$ is a (spectral)
  $1$-dimensional submanifold of $\Sn$, this submanifold cannot be
  described by local equation that is a composition of $\la$ with
  $\ph\colon\RR^2\ra\RR$ a symmetric local
  equation of $\Mm$ around $(1,1)$. Indeed, let
  us assume on the contrary that such a local equation of
  $\Mm$ exists, that is, there exists a smooth symmetric
  function $\ph\colon \RR^2\rightarrow\RR$ with surjective derivative
  $\nabla \ph(1,1)$ which satisfies
  \[
  \ph(x,y) = 0 \iff x=y\,.
  \]
  Consider now the two smooth paths $c_1\colon t\mapsto(t,t)$ and
  $c_2\colon t\mapsto(t,2-t)$. Since $\ph\circ c_1(t) = 0$ we infer
  \begin{equation}\label{eq-c1}
    \nabla\ph(1,1)(1,1)=0.
  \end{equation}
  On the other hand, since $c_2'(1)=(1,-1)$ is normal to $\Mm$ at
  $(1,1)$, and since $\ph$ is symmetric, we deduce that
  the smooth function $t \mapsto (\ph\circ c_2)(t)$ has a local extremum at $t=1$.
  Thus,
  \begin{equation}\label{eq-c2}
     0=(\ph\circ c_2)'(1)=\nabla\ph(1,1)(1,-1).
  \end{equation}
  Therefore, \eqref{eq-c1} and \eqref{eq-c2} imply that
  $\nabla\ph(1,1)=(0,0)$ which is a contradiction. This proves that
  there is no symmetric local equation $\varphi\colon \RR^2\ra \RR$
  of the symmetric manifold $\Mm$ around $(1,1)$. \qed
\end{example}

We close this section by observing that the property of local
symmetry introduced in Definition~\ref{defn-spectrman} is
necessary and in a sense minimal. In any case, it cannot easily be
relaxed as reveals the following examples.

\begin{example}[A manifold without symmetry]
  Let us consider the set
  \[
  \mathcal{N}=\{(t,0): t\in (-1,1)\}\,\subset\,\RR^2.
  \]
    We have an explicit expression of $\la^{-1}(\Nn)$
  \[
  \la^{-1}(\Nn) = \left\{
  \left[\begin{array}{cc}
      t\cos^2\theta  & t(\sin2\theta)/2\\
      t(\sin2\theta)/2 & t\sin^2\theta  \\
    \end{array}\right], \
  \left[\begin{array}{cc}
      -t\sin^2\theta  & t(\sin2\theta)/2\\
      t(\sin2\theta)/2 & -t\cos^2\theta  \\
    \end{array}\right],
  \ t\geq 0
  \right\}.
  \]
      \begin{figure}[h]%[!htb]
    \begin{center}
    \includegraphics[scale=0.75]{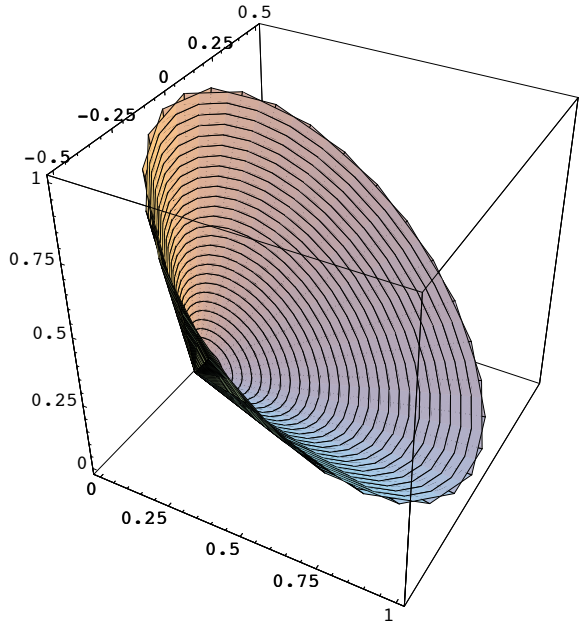}
    \caption{A spectral set of ${\bf S}_2$ represented in $\RR^3$\label{fig-ex3D}}
   \end{center}
  \end{figure}

\noindent
  It can be proved that this lifted set is not a submanifold of $S^2$
  since it has a sharp point at the zero matrix, as suggested
  by its picture in $\RR^3\simeq S^2$ (see Figure~\ref{fig-ex3D}).
%      \begin{figure}[h]%[!htb]
%    \begin{center}
%    \includegraphics[scale=0.75]{ex3D.pdf}
%    \caption{A spectral set of ${\bf S}_2$ represented in $\RR^3$\label{fig-ex3D}}
%   \end{center}
%  \end{figure}
   \qed
\end{example}

\begin{example}[A manifold without enough symmetry]
  Let us consider the set
  $$
  \mathcal{N}=\{(t,0,-t): t\in
  (-\epsilon,\epsilon)\}\,\subset\,\RR^3
  $$
  and let $\bar x
  =(0,0,0)\in\Nn$, $\si=(1,2,3)$. Then, $\Delta(\si) =
  \{(\alpha,\alpha,\alpha) : \alpha \in \R\}$ and $\Nn$ is a smooth
  submanifold of $\R^3$ that is
  symmetric with respect to the affine set $\Delta(\sigma)$, but it
  is not locally symmetric. It can be easily proved that the set
  $\lambda^{-1}(\Mm)$ is not a submanifold of ${\bf S}_3$ around the zero
  matrix. \qed
\end{example}

%********************************************************************
\subsection{Structure of tangent and normal space}\label{ssec-tang}
%********************************************************************

From now on
\begin{center}
{\em  $\mathcal{M}$ is a locally symmetric $C^2$-submanifold of $\R^n$ of dimension $d$,}
\end{center}
unless otherwise explicitly stated. We also denote by
$T_{\mathcal{M}}(\bar{x})$ and $N_{\mathcal{M}}(\bar{x})$ its
tangent and normal space at $\bar{x}\in\Mm$, respectively. In this subsection, we
derive several natural properties for these two spaces, stemming from the
symmetry of $\Mm$. The next lemma ensures
that the tangent and normal spaces at $\bar{x}\in\Mm$ inherit the
local symmetry of $\Mm$.

\begin{lemma}[Local symmetry of $T_{\Mm}(\bx)$, $N_{\Mm}(\bx)$] \label{Tinv}
If $\bar x \in \mathcal{M}$ then
\begin{enumerate}[(i)]
\item \label{Tinv-i}
  $\sigma T_{\mathcal{M}}(\bar{x})= T_{\mathcal{M}}(\bar{x}) \,\,\,
  \text{for all } \sigma \in S^{\succsim}(\bar x);$
\item \label{Tinv-ii}
  $\sigma N_{\mathcal{M}}(\bar{x})=
  N_{\mathcal{M}}(\bar{x}) \,\,\, \text{for all } \sigma
  \in S^{\succsim}(\bar x)\,.$
\end{enumerate}
\end{lemma}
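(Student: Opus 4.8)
The plan is to use that every $\sigma \in S^{\succsim}(\bar x)$ acts on $\RR^n$ as a \emph{linear} orthogonal isomorphism (its permutation matrix) which fixes $\bar x$ and carries a small piece of $\Mm$ around $\bar x$ onto itself; the differential of such a map at $\bar x$ is $\sigma$ itself, so it must be an automorphism of $T_{\Mm}(\bar x)$, and (ii) will then follow by passing to orthogonal complements.

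Concretely, I would first record the localization. Since $\Mm$ is a locally symmetric set, Definition~\ref{defn-spectrman} supplies a $\delta>0$ for which $\Mm\cap B(\bar x,\delta)$ is strongly locally symmetric; specializing the defining property to the base point $\bar x$ itself gives $\sigma(\Mm\cap B(\bar x,\delta))=\Mm\cap B(\bar x,\delta)$ for every $\sigma\in S^{\succsim}(\bar x)$ (consistently with Lemma~\ref{key}(\ref{key_pt1}), which says $\sigma$ preserves \emph{every} ball centered at $\bar x$). Shrinking $\delta$, I may also assume $\Mm\cap B(\bar x,\delta)$ is a single connected $C^2$ patch, so that $T_{\Mm}(\bar x)$ is computed from it.

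For (i), fix $\sigma\in S^{\succsim}(\bar x)$ and take $v\in T_{\Mm}(\bar x)$, written as $v=c'(0)$ for a $C^1$ curve $c$ in $\Mm\cap B(\bar x,\delta)$ with $c(0)=\bar x$. Then $t\mapsto\sigma c(t)$ is again a $C^1$ curve lying in $\Mm\cap B(\bar x,\delta)$ (by the invariance above) with value $\bar x$ at $t=0$, and since $\sigma$ is linear, $(\sigma c)'(0)=\sigma v$; hence $\sigma v\in T_{\Mm}(\bar x)$, i.e.\ $\sigma T_{\Mm}(\bar x)\subseteq T_{\Mm}(\bar x)$. Applying this to $\sigma^{-1}\in S^{\succsim}(\bar x)$ yields the reverse inclusion, so $\sigma T_{\Mm}(\bar x)=T_{\Mm}(\bar x)$.

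For (ii), I would simply note that the permutation matrix of $\sigma$ is orthogonal, hence commutes with the operation of taking orthogonal complements, so from (i),
\[
\sigma N_{\Mm}(\bar x)=\sigma\big(T_{\Mm}(\bar x)^{\perp}\big)=\big(\sigma T_{\Mm}(\bar x)\big)^{\perp}=T_{\Mm}(\bar x)^{\perp}=N_{\Mm}(\bar x).
\]
I do not expect a genuine obstacle here: the only point needing a moment of care is the localization step — making the ball small enough that $\Mm$ looks like a $C^2$ graph while keeping it invariant under $S^{\succsim}(\bar x)$ — and this is immediate from Definition~\ref{defn-spectrman} together with Lemma~\ref{key}. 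The substance of the lemma is essentially the naturality of the tangent (and normal) space under the diffeomorphism of a neighborhood of $\bar x$ induced by $\sigma$.
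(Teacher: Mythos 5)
Your proof is correct and takes essentially the same approach as the paper: part (i) via pushing curves in $\Mm$ forward by $\sigma$ and using that $S^{\succsim}(\bar x)$ is a group, and part (ii) via the orthogonality of the permutation action. The paper phrases (ii) as the inner-product identity $\langle \sigma v, w\rangle = \langle v,\sigma^{-1}w\rangle = 0$ for $w\in T_{\Mm}(\bar x)$, which is just an unwound version of your statement that an orthogonal map commutes with taking orthogonal complements.
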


\begin{proof}
  Assertion~$(\ref{Tinv-i})$ follows directly from the definitions since the elements of
   $T_{\Mm}(\bx)$ can be viewed as the differentials at $\bx$ of smooth paths on $\Mm$.
  Assertion~$(\ref{Tinv-ii})$ stems from the fact that
  $S^{\succsim}(\sigma)$ is a group, as follows: for any $w\in
  T_{\Mm}(\bx)$, $v \in N_{\mathcal{M}}(\bar{x})$, and $\sigma \in
  S^{\succsim}(\sigma)$ we have $\si^{-1}w\in T_{\Mm}(\bx)$ and
  $\langle \sigma v, w \rangle = \langle v, \sigma^{-1}w \rangle =
  0$, showing that $\si\,v\in [T_{\Mm}(\bx)]^{\bot}=N_{\Mm}(\bx)$.
\end{proof}

Given a set
$S\subset\mathbb{R}^{n}$, denote by $\mathrm{dist}_{S}(x):=\inf_{s\in S} \|x-s\|$ the
distance of $x\in\mathbb{R}^{n}$ to $S$.

\begin{proposition}[Local invariance of the distance]\label{Proposition_distance}
  If $\bar {x}\in\mathcal{M}$, then
  \[
  \mathrm{dist}_{(\bar{x}+T_{\mathcal{M}}(\bar{x}))}(x)=
  \mathrm{dist}_{(\bar{x}+T_{\mathcal{M}}(\bar{x}))}(\sigma x)
  \quad \text{for any \,}\,x\in\R^n
  \,\text{ and }\,\sigma \in S^{\succsim}(\bar x)\,.
  \]
\end{proposition}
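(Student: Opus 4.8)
The plan is to reduce the statement about the affine subspace $\bar x + T_{\mathcal M}(\bar x)$ to the corresponding orthogonal decomposition and then invoke two facts already at our disposal: the invariance of the tangent space under $S^{\succsim}(\bar x)$ (Lemma~\ref{Tinv}$(\ref{Tinv-i})$ and $(\ref{Tinv-ii})$), and the fact that every $\sigma\in S^{\succsim}(\bar x)$ fixes $\bar x$ (equivalently $\bar x\in\mathrm{Fix}(\sigma)$, by \eqref{partition_r} and \eqref{Sx}, since $S^{\succsim}(\bar x)=\mathrm{Fix}(\bar x)$). First I would recall the elementary identity, valid for any affine subspace $V=\bar x+W$ with $W$ a linear subspace,
\[
\mathrm{dist}_{\bar x+W}(x)=\|\,\proj{W^{\perp}}(x-\bar x)\,\|,
\]
so the quantity we must compare is $\|\proj{N_{\mathcal M}(\bar x)}(x-\bar x)\|$ with $\|\proj{N_{\mathcal M}(\bar x)}(\sigma x-\bar x)\|$.

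Next I would use that $\sigma$ is a permutation matrix, hence orthogonal, so $\sigma$ preserves norms and, being orthogonal, commutes with orthogonal projection onto any $\sigma$-invariant subspace: $\proj{N}(\sigma y)=\sigma\,\proj{N}(y)$ whenever $\sigma N=N$. By Lemma~\ref{Tinv}$(\ref{Tinv-ii})$ we have $\sigma N_{\mathcal M}(\bar x)=N_{\mathcal M}(\bar x)$ for $\sigma\in S^{\succsim}(\bar x)$, so this applies with $N=N_{\mathcal M}(\bar x)$. Then, using $\sigma\bar x=\bar x$,
\[
\proj{N_{\mathcal M}(\bar x)}(\sigma x-\bar x)=\proj{N_{\mathcal M}(\bar x)}(\sigma x-\sigma\bar x)=\proj{N_{\mathcal M}(\bar x)}\bigl(\sigma(x-\bar x)\bigr)=\sigma\,\proj{N_{\mathcal M}(\bar x)}(x-\bar x).
\]
Taking norms and using that $\sigma$ is an isometry gives
\[
\mathrm{dist}_{\bar x+T_{\mathcal M}(\bar x)}(\sigma x)=\bigl\|\sigma\,\proj{N_{\mathcal M}(\bar x)}(x-\bar x)\bigr\|=\bigl\|\proj{N_{\mathcal M}(\bar x)}(x-\bar x)\bigr\|=\mathrm{dist}_{\bar x+T_{\mathcal M}(\bar x)}(x),
\]
which is the claim.

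There is essentially no hard obstacle here: the proposition is a direct consequence of the two invariances already established plus the fact that permutation matrices are orthogonal and fix $\bar x$. The only mild point of care is the commutation of $\sigma$ with the projection, which requires the $\sigma$-invariance of the subspace onto which one projects — and that is exactly the content of Lemma~\ref{Tinv}$(\ref{Tinv-ii})$ (one could equally well phrase the argument through $T_{\mathcal M}(\bar x)$ and part $(\ref{Tinv-i})$, since $\sigma$-invariance of a subspace is equivalent to that of its orthogonal complement when $\sigma$ is orthogonal). Alternatively, and even more cheaply, one may avoid projections altogether: $\mathrm{dist}_{\bar x+T_{\mathcal M}(\bar x)}(\sigma x)=\inf_{t\in T_{\mathcal M}(\bar x)}\|\sigma x-\bar x-t\|=\inf_{t}\|\sigma x-\sigma\bar x-t\|=\inf_{t}\|\sigma(x-\bar x-\sigma^{-1}t)\|=\inf_{t'\in T_{\mathcal M}(\bar x)}\|x-\bar x-t'\|$, where the last equality uses $\sigma^{-1}T_{\mathcal M}(\bar x)=T_{\mathcal M}(\bar x)$ and that $\sigma$ is an isometry. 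Either route is short; I would present the second one as it uses only Lemma~\ref{Tinv}$(\ref{Tinv-i})$.
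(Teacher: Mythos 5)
Your proposal is correct, and the second (infimum change-of-variable) route you say you would present is essentially the paper's argument: the paper phrases it as a proof by contradiction, but the underlying step is the same substitution $z\mapsto\sigma z$ in $T_{\mathcal M}(\bar x)$ combined with $\sigma\bar x=\bar x$ and the isometry of $\sigma$, exactly as in your direct computation. Your first route via orthogonal projection onto $N_{\mathcal M}(\bar x)$ is a valid minor variant but is not needed.
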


\begin{proof}
  Assume that for some $x\in\R^n$ and $\sigma\in
  S^{\succsim}(\bar x)$ we have
  \[
  \mathrm{dist}_{(\bar{x}+T_{\mathcal{M}}(\bar{x}))}(x)<\mathrm{dist}_{(\bar
    {x}+T_{\mathcal{M}}(\bar{x}))}(\sigma x).
  \]
  Then, there exists $z\in T_{\mathcal{M}}(\bar{x})$ satisfying
  $||x-(\bar{x}+z)||<\mathrm{dist}_{(\bar{x}+T_{\mathcal{M}}(\bar{x}))}(\sigma x)$,
  which yields (recalling $\si\bx=\bx$ and the fact that the norm is symmetric)
  $$
  ||x-(\bar{x}+ z)||=||\sigma
  x-(\bar{x}+\sigma z)||<\mathrm{dist}_{(\bar{x}+T_{\mathcal{M}}(\bar
    {x}))}(\sigma x)
  $$
  contradicting the fact that $\sigma z\in T_{\mathcal{M}}(\bar{x})$.
  The reverse inequality can be established similarly.
\end{proof}

Let $\bar \pi_T\colon  \R^n \rightarrow \bar{x}+T_{\mathcal{M}}
(\bar{x})$ be the projection onto the affine space
$\bar{x}+T_{\mathcal{M}} (\bar{x})$, that is,
\begin{equation} \label{pi_T}
\bar{\pi}_T(x)=\mathrm{Proj\,}_{(\bar{x}+T_{\mathcal{M}}(\bar{x}))}(x),
\end{equation}
and similarly, let
\begin{equation}\label{pi_N}
\bar{\pi}_N(x)=\mathrm{Proj\,}_{(\bar{x}+N_{\mathcal{M}}(\bar{x}))}(x)
\end{equation}
denote the projection onto the affine space $\bx+N_{\Mm}(\bx)$.
We also introduce $\pi_{T}(\cdot)$ and $\pi_{N}(\cdot)$,
the projections onto the tangent and normal spaces
$T_{\mathcal{M}}(\bar x)$ and $N_{\mathcal{M}}(\bar x)$ respectively.
Notice the following relationships:
\begin{equation}\label{pi_TandN}
\bar{\pi}_T(x) + \bar{\pi}_N(x) = x + \bx \quad \mbox{and} \quad
\bar \pi_{T}(x) = \pi_{T}(x)+ \pi_{N}(\bx).
\end{equation}

\begin{corollary}[Invariance of projections]\label{corollary_pi}
  Let $\bar{x}\in\mathcal{M}$. Then, for all $x \in
  \R^n$ and all $\sigma\in S^{\succsim}(\bar x)$
  \begin{itemize}
  \item[(i)]  $\sigma \bar{\pi}_T(x)=\bar{\pi}_T(\sigma x),$
  \item[(ii)] $\sigma \bar{\pi}_N(x)=\bar{\pi}_N(\sigma x).$
  \end{itemize}
\end{corollary}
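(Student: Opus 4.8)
The plan is to reduce the statement to two facts: each $\sigma\in S^{\succsim}(\bar x)$ is a linear isometry of $\RR^n$ fixing $\bar x$, and it leaves the tangent and normal spaces at $\bar x$ invariant. The first holds because permutation matrices are orthogonal and because $S^{\succsim}(\bar x)=\mathrm{Fix}(\bar x)$, so $\sigma\bar x=\bar x$; the second is exactly \lemref{Tinv}.

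Granting these, I would argue as follows for $(i)$. Write $V:=T_{\Mm}(\bar x)$, a linear subspace with $\sigma V=V$. The orthogonal projection onto the affine space $\bar x+V$ is characterized by: $y=\bar\pi_T(x)$ if and only if $y\in\bar x+V$ and $x-y\perp V$. If $y=\bar\pi_T(x)$, then $\sigma y\in\sigma\bar x+\sigma V=\bar x+V$; moreover, for any $v\in V$ one has $v=\sigma v'$ with $v'\in V$, hence $\langle\sigma x-\sigma y,\,v\rangle=\langle\sigma(x-y),\,\sigma v'\rangle=\langle x-y,\,v'\rangle=0$, so $\sigma x-\sigma y\perp V$. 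Therefore $\sigma y$ satisfies the two defining properties of $\bar\pi_T(\sigma x)$, i.e.\ $\sigma\bar\pi_T(x)=\bar\pi_T(\sigma x)$. Part $(ii)$ is the same argument with $V:=N_{\Mm}(\bar x)$, whose invariance is the second item of \lemref{Tinv}.

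Equivalently and more compactly, one can note that $\bar\pi_T(x)=\bar x+\mathrm{Proj}_{V}(x-\bar x)$ and use that an orthogonal transformation preserving a linear subspace commutes with the orthogonal projection onto it, combined with $\sigma\bar x=\bar x$. It is also an immediate consequence of \propref{Proposition_distance}: since $\sigma$ preserves distances to $\bar x+V$ and $\sigma(\bar x+V)=\bar x+V$, it must carry the (unique) nearest point of $x$ in $\bar x+V$ to the nearest point of $\sigma x$ in that same affine space.

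No step here is a genuine obstacle — the entire content sits in \lemref{Tinv} together with the orthogonality of permutation matrices; the only minor point requiring care is the affine translation by $\bar x$, which causes no trouble precisely because $\bar x$ is a fixed point of every $\sigma\in S^{\succsim}(\bar x)$.
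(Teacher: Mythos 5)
Your proof is correct, and your lead argument takes a slightly different route than the paper's. For part $(i)$, the paper invokes Proposition~\ref{Proposition_distance} (the local invariance of the distance to $\bar x + T_{\Mm}(\bar x)$) to conclude that $\bar x + \sigma u$ must be the nearest point to $\sigma x$, whereas you verify directly that $\sigma\bar\pi_T(x)$ satisfies the two defining conditions of the orthogonal projection (it lies in $\bar x+V$ and its residual is orthogonal to $V$), which amounts to the general fact that an orthogonal map preserving a subspace commutes with projection onto it. Both routes reduce to \lemref{Tinv} plus orthogonality of permutation matrices and $\sigma\bar x = \bar x$, and you correctly note the distance-based variant as an equivalent reformulation. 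For part $(ii)$, you rerun the same argument with $V=N_{\Mm}(\bar x)$; the paper instead derives $(ii)$ from $(i)$ using the identity $\bar\pi_T(x)+\bar\pi_N(x)=x+\bar x$ from \eqref{pi_TandN}. Your treatment is more uniform across the two parts and arguably more self-contained, at the small cost of not exploiting the complementary-projection identity already on hand; the paper's is shorter for $(ii)$ once $(i)$ is established. Both are valid.
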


\begin{proof}
  Let $\bar{\pi}_T(x)=\bar{x}+u$ for some $u \in
  T_{\mathcal{M}}(\bar{x})$ and let $\sigma \in S^{\succsim}(\bar
  x)$. Since $\si\bx=\bx$, by Proposition~\ref{Proposition_distance},
  and the symmetry of the norm we obtain
  \[
  \mathrm{dist}_{(\bar x + T_{\mathcal{M}}(\bar{x}))}(x)
  = ||x-(\bar{x}+u)|| = ||\sigma x-(\bar{x}+\sigma u)||
  = \mathrm{dist}_{(\bar x + T_{\mathcal{M}}(\bar{x}))}(\sigma x).
  \]
  Since $\sigma u \in T_{\mathcal{M}}(\bar{x})$, we conclude
  $\bar{\pi}_T(\sigma x)=\bar x + \sigma u$ and assertion $(i)$ follows.

  Let us now prove
  the second assertion. Applying \eqref{pi_TandN} for the point
  $\si x\in\RR^n$, using $(i)$ and the fact that $\si \bx=\bx$ we deduce
  \begin{align*}
    \sigma x + \bx &= \bar{\pi}_T(\sigma x) + \bar{\pi}_N(\sigma x)
    = \sigma \bar{\pi}_T(x) + \bar{\pi}_N(\sigma x)\,.
  \end{align*}
  Applying $\sigma^{-1}$ to this equation, recalling that
  $\si^{-1}\bx=\bx$ and equating with \eqref{pi_TandN} we
  get~$(ii)$.
\end{proof}

The following result relates the tangent space to the
stratification.

\begin{proposition}[Tangential projection vs stratification]\label{Proposition_same_stratum}
  Let $\bar{x} \in\mathcal{M} \cap \Delta(\sigma)$. Then, there exists
  $\delta>0$ such that for any $x\in\mathcal{M}\cap B(\bar{x},\delta)$
  there exists $\sigma'\in S^{\succsim}(\sigma)$ such that
  \[
  x,\bar{\pi}_T(x) \in\Delta(\sigma').
  \]
\end{proposition}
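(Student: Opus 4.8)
The plan is to work locally and exploit the fact that $\Mm$ is a $C^2$ submanifold tangent to the affine space $\bar x + T_{\Mm}(\bar x)$ at $\bar x$, so that for $x \in \Mm$ near $\bar x$ the point $x - \bar{\pi}_T(x)$ is of order $o(\|x-\bar x\|)$. First I would fix $\bar x \in \Mm \cap \Delta(\sigma)$ and choose $\delta > 0$ small enough that the ball $B(\bar x, \delta)$ meets only strata $\Delta(\sigma')$ with $\sigma' \succsim \sigma$ (possible since $\Delta(\sigma)^{\perp\perp} = \bigcup_{\sigma' \precsim \sigma}\Delta(\sigma')$ is the union of strata having $\bar x$ in their closure, and the other strata are bounded away from $\bar x$); shrinking $\delta$ further I also use \propref{Proposition_distance} and \corref{corollary_pi} to guarantee that $\bar \pi_T$ commutes with all $\sigma' \in S^{\succsim}(\bar x)$. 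Now take $x \in \Mm \cap B(\bar x,\delta)$. Since $x \in B(\bar x,\delta)$, there is some $\sigma' \succsim \sigma$ with $x \in \Delta(\sigma')$; I must show that $\bar{\pi}_T(x)$ lies in the \emph{same} stratum $\Delta(\sigma')$.

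The core observation is that $\sigma' \in S^{\succsim}(\bar x) = \mathrm{Fix}(\bar x)$ (because $\sigma' \succsim \sigma$ and $\bar x \in \Delta(\sigma)$, so $P(\bar x) = P(\sigma) \subseteq P(\sigma')$), hence by \corref{corollary_pi}(i) we have $\sigma' \bar{\pi}_T(x) = \bar{\pi}_T(\sigma' x) = \bar{\pi}_T(x)$, using $\sigma' x = x$ since $x \in \Delta(\sigma') = \mathrm{Fix}(\sigma') \cap \{\text{strict equalities}\}$. Thus $\bar{\pi}_T(x) \in \mathrm{Fix}(\sigma') = \Delta(\sigma')^{\perp\perp}$, which gives $\bar{\pi}_T(x) \in \Delta(\tau)$ for some $\tau \precsim \sigma'$. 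It remains to upgrade this to $\tau \sim \sigma'$, i.e. to rule out the possibility that projecting creates \emph{extra} coordinate coincidences, merging blocks of $\sigma'$. This is where the $C^2$ (in fact $C^1$ suffices) regularity enters: if $(\bar{\pi}_T(x))_i = (\bar{\pi}_T(x))_j$ while $i,j$ lie in different blocks of $P(\sigma') = P(x)$, then $x_i \neq x_j$, and one estimates $|x_i - x_j| \le |x_i - (\bar{\pi}_T(x))_i| + |x_j - (\bar{\pi}_T(x))_j| \le 2\|x - \bar{\pi}_T(x)\| = o(\|x - \bar x\|)$; on the other hand the gap $|x_i - x_j|$ between coordinates in distinct blocks of $P(x)$, compared with $\bar x$ where these coordinates \emph{also} differ (as $i,j$ are in distinct blocks of $P(\bar x) = P(\sigma)$, since $\sigma' \succsim \sigma$ refines $P(\sigma)$, so blocks of $P(\sigma)$ are unions of blocks of $P(\sigma')$ — wait, one must be careful here: $i,j$ in distinct blocks of $P(\sigma')$ need not be in distinct blocks of $P(\sigma)$).

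So the delicate point — and the step I expect to be the main obstacle — is handling a pair $i,j$ in distinct blocks of $P(\sigma')$ but in the \emph{same} block of $P(\sigma) = P(\bar x)$, i.e. $\bar x_i = \bar x_j$ but $x_i \neq x_j$; here the naive gap estimate against $\bar x$ fails. The fix is to compare $x$ not with $\bar x$ but with a nearby reference point in $\Delta(\sigma')$, or better, to argue that $\bar{\pi}_T(x)$ and $x$ lie in a common small ball $B(\bar x, \delta')$ on which, by local symmetry and \lemref{key}, every permutation in $S^{\succsim}(\sigma')$ preserving that ball is exactly $S^{\succsim}(\sigma')$, and then observe that the partition of $\bar{\pi}_T(x)$, being a refinement-or-coarsening of $P(\sigma')$ forced to lie in $\Delta(\sigma')^{\perp\perp}$, must satisfy $P(\sigma') \subseteq P(\bar\pi_T(x))$; to get equality, I invoke that $\bar{\pi}_T$ restricted to the affine stratum $\Delta(\sigma')^{\perp\perp}$ is the identity on directions fixed by $S^{\succsim}(\sigma')$ and that $\Mm$ near $\bar x$ is transverse enough — concretely, one chooses $\delta$ so small that $\|x - \bar{\pi}_T(x)\| < \tfrac13 \min\{\|\bar\pi_T(x) - \sigma'' \bar\pi_T(x)\| : \sigma'' \notin S^{\succsim}(\bar\pi_T(x))\}$ is impossible unless $\bar\pi_T(x)$ already sits in $\Delta(\sigma')$, using the separation estimate from the proof of \lemref{key}(ii) applied at the moving point $\bar\pi_T(x)$ together with continuity of the stratum-separation function on the compact piece $\overline{\Mm \cap B(\bar x,\delta)}$. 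Packaging these estimates uniformly over $x \in \Mm \cap B(\bar x,\delta)$ — i.e. getting a \emph{single} $\delta$ that works — is the real content, and it relies precisely on the $C^2$ (or $C^1$) smoothness giving $\|x - \bar{\pi}_T(x)\| = o(\|x-\bar x\|)$ uniformly.
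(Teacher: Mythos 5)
Your setup is right: shrinking $\delta$ so that $B(\bar x,\delta)$ meets only strata $\Delta(\sigma')$ with $\sigma'\succsim\sigma$, writing $x\in\Delta(\sigma_2)$ and using Corollary~\ref{corollary_pi}(i) together with $\sigma_2 x=x$ to get $\sigma_2\bar\pi_T(x)=\bar\pi_T(x)$, hence $\bar\pi_T(x)\in\Delta(\sigma_1)$ for some $\sigma_1\precsim\sigma_2$. You also correctly identify the real obstacle: excluding $\sigma_1\prec\sigma_2$, i.e.\ ruling out that projection onto the tangent space \emph{merges} two blocks of $P(x)$ that already coincide at $\bar x$.

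But the route you propose to close this gap does not work. The $o(\|x-\bar x\|)$ estimate $\|x-\bar\pi_T(x)\|=o(\|x-\bar x\|)$ tells you that if $(\bar\pi_T(x))_i=(\bar\pi_T(x))_j$ then $|x_i-x_j|\le 2\|x-\bar\pi_T(x)\|=o(\|x-\bar x\|)$ — which is a positivity statement you have no lower bound against, since $\bar x_i=\bar x_j$ means the gap $|x_i-x_j|$ can itself be of order $o(\|x-\bar x\|)$ (or any order at all). Your ``fix'' — choosing $\delta$ small enough that $\|x-\bar\pi_T(x)\|$ is dominated by the separation of $\bar\pi_T(x)$ from the strata it is not in — is circular: the separation constant at $\bar\pi_T(x)$ degenerates exactly when $\bar\pi_T(x)$ approaches a coarser stratum, which is the event you are trying to exclude, and the claim that this ``is impossible unless $\bar\pi_T(x)$ already sits in $\Delta(\sigma')$'' is asserted, not proved. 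Moreover, this whole branch of your argument makes no use of the \emph{local symmetry} of $\Mm$ beyond the commutation of $\bar\pi_T$ with permutations; yet local symmetry is essential (the paper's counterexamples show the conclusion can fail without it), so any argument that does not invoke it in the crucial step must be incomplete.

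The ingredient you are missing is a reflection argument. Having established $\sigma_1\precsim\sigma_2$ with $\bar\pi_T(x)\in\Delta(\sigma_1)$, suppose $\sigma_1\prec\sigma_2$. Then $\sigma_1\in S^{\succsim}(\bar x)$, so by local symmetry $\sigma_1 x\in\Mm\cap B(\bar x,\delta)$, and $\sigma_1 x\ne x$ (since $\sigma_1 x=x$ would force $P(\sigma_1)\supseteq P(x)=P(\sigma_2)$, i.e.\ $\sigma_1\succsim\sigma_2$). On the other hand $\sigma_1\bar\pi_T(x)=\bar\pi_T(x)$ because $\bar\pi_T(x)\in\Delta(\sigma_1)$, so Corollary~\ref{corollary_pi}(i) gives $\bar\pi_T(\sigma_1 x)=\sigma_1\bar\pi_T(x)=\bar\pi_T(x)$. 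This contradicts the fact that $\bar\pi_T$ is injective on $\Mm\cap B(\bar x,\delta)$ (a $C^1$ fact from the inverse function theorem; the paper phrases the contradiction instead through Proposition~\ref{Proposition_distance}, using that the projected point would fail to be the unique minimizer of the distance, but the substance is the same). No Taylor estimate is needed; the proof works for $C^1$ manifolds.
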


\begin{proof}
  Choose $\delta > 0$ so that the ball $B(\bar{x},\delta)$ intersects
  only those strata $\Delta(\sigma')$ for which $\sigma' \in
  S^{\succsim}(\sigma)$ (see Lemma~\ref{key}$(ii)$). Shrinking
  $\delta>0$ further, if necessary, we may assume that the projection
  $\bar{\pi}_T$ is a one-to-one map between $\mathcal{M}\cap
  B(\bar{x},\delta)$ and its range. For any $x\in\mathcal{M}\cap
  B(\bar{x},\delta)$ let $u\in T_{\Mm}(\bar{x})\cap B(0,\,\delta)$ be
  the unique element of $T_{\mathcal{M}} (\bar{x})$ satisfying
  $\bar{\pi}_T(x)=\bar{x}+u$, or in other words such that
  \begin{equation}
    \mathrm{dist}_{\bar x + T_{\mathcal{M}}(\bar{x})}(x)
    =||x-(\bar{x}-u)||
    =\min_{z\in T_{\mathcal{M}}(\bar{x})}\,||(x-\bar{x})-z||\,.\label{fo}
  \end{equation}
  Then, for some $\sigma_{1},\sigma_{2}\in S^{\succsim}(\sigma)$ we
  have $\bar {x}+u\in\Delta(\sigma_{1})$ and $x\in\Delta(\sigma_{2})$.
  In view of Lemma~\ref{Tinv} and Lemma~\ref{key} we deduce
  \[
  \bx+\si_2 u=\sigma_{2}(\bar{x}+u)\in\left(
  \bar{x}+T_{\mathcal{M}}(\bar{x})\right) \cap B(\bar{x},\delta).
  \]
  We are going to show now that $\sigma_{1}\sim\sigma_{2}$.
  To this end, note first that
  \[
  ||x-(\bar{x}+\sigma_{2}u)||
  =||\sigma_{2}x-(\sigma_2\bar{x}+\sigma_{2}u)||
  =||(x-\bar{x})-u||.
  \]
  It follows from (\ref{fo}) that
  $\bar{\pi}_T(x)=\bar{x}+\sigma_{2}u,$ thus $\sigma _{2}u=u$, which
  yields $\si_2 (\bx+u)=\bx+u$, $\sigma_{1}\precsim\sigma_{2}$, by
  \eqref{partition_r}.  If we assume that $\sigma_{1}\prec\sigma_{2}$
  then $\sigma_1x \not= x$ (or else by \eqref{partition_r} $P(\si_1)\supseteq
  P(x)=P(\si_2)$ and $\si_1\succsim\si_2$, a contradiction).
  We have $\sigma_1 x \in \mathcal{M}\cap B(\bar{x},\delta)$, but
  $\si_1x \neq x$ yields $\bar{\pi}_T(x) \not= \bar{\pi}_T(\sigma_1x)$.
  Thus, there exists $v \in
  T_{\mathcal{M}}(\bar{x})$ with
  $$
  \|\sigma_1 x - (\bar x + v)\| < \|\sigma_1 x - (\bar x + u)\|  = \|x -(\bar x + u)\|,
  $$
  which contradicts Proposition~\ref{Proposition_distance}. Thus,
  $\si_1\sim\si_2$ and $x,\,\bx+u\,\in\,\Delta(\si_1)=\Delta(\si_2)$.
\end{proof}

We end this subsection by the following important property that
locates the tangent and normal spaces of $\Mm$ with respect to the
active stratum $\Delta(\si)$.

\begin{proposition}[Decomposition of $T_{\mathcal{M}}(\bar{x})$, $N_{\Mm}(\bx)$]\label{props-spectr-man}
  For any $\bar{x}\in\mathcal{M} \cap \Delta(\sigma)$ we have
  \[
  \mathrm{Proj\,}_{\Delta(\sigma)^{\bot\bot}}(T_{\mathcal{M}}(\bar{x}))=
  T_{\mathcal{M}}(\bar{x})\cap \Delta(\sigma)^{\bot\bot}
  \]
  which yields
  \begin{equation}\label{tang_decomp}
    T_{\mathcal{M}}(\bar{x})=(T_{\mathcal{M}}(\bar{x})\cap\Delta(\sigma)^{\bot\bot}
    )\oplus (T_{\mathcal{M}}(\bar{x}) \cap \Delta(\sigma)^{\bot}).
  \end{equation}
  Similarly,
  \begin{equation}\label{norm_decomp}
    N_{\mathcal{M}}(\bar{x})=(N_{\mathcal{M}}(\bar{x})\cap\Delta(\sigma)^{\bot\bot})
    \oplus (N_{\mathcal{M}}(\bar{x}) \cap \Delta(\sigma)^{\bot}).
  \end{equation}
\end{proposition}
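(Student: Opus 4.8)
The plan is to establish the first displayed equality and then deduce the two direct-sum decompositions as immediate consequences, the normal-space case following by a duality argument analogous to \lemref{Tinv}(\ref{Tinv-ii}).

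First I would prove that $\mathrm{Proj\,}_{\Delta(\sigma)^{\bot\bot}}(T_{\mathcal{M}}(\bar{x}))\subseteq T_{\mathcal{M}}(\bar{x})$. The key tool is \lemref{Lemma_project_strata}, which gives
\[
\mathrm{Proj\,}_{\Delta(\sigma)^{\bot\bot}}(v)=\frac{1}{|S^{\succsim}(\sigma)|}\sum_{\sigma'\succsim\sigma}\sigma' v .
\]
Since $\bar x\in\Delta(\sigma)$ we have $S^{\succsim}(\sigma)=S^{\succsim}(\bar x)$, so every $\sigma'$ appearing in this sum lies in $S^{\succsim}(\bar x)$, and by \lemref{Tinv}(\ref{Tinv-i}) each $\sigma' v\in T_{\mathcal{M}}(\bar x)$ whenever $v\in T_{\mathcal{M}}(\bar x)$. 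As $T_{\mathcal{M}}(\bar x)$ is a vector space, the average lies in $T_{\mathcal{M}}(\bar x)$ as well. On the other hand $\mathrm{Proj\,}_{\Delta(\sigma)^{\bot\bot}}(v)\in\Delta(\sigma)^{\bot\bot}$ by definition of the projection, so $\mathrm{Proj\,}_{\Delta(\sigma)^{\bot\bot}}(T_{\mathcal{M}}(\bar x))\subseteq T_{\mathcal{M}}(\bar x)\cap\Delta(\sigma)^{\bot\bot}$. The reverse inclusion is trivial: any $v\in T_{\mathcal{M}}(\bar x)\cap\Delta(\sigma)^{\bot\bot}$ is fixed by the projection onto $\Delta(\sigma)^{\bot\bot}$, hence belongs to the image.

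For the decomposition \eqref{tang_decomp}, I would argue that $T_{\mathcal{M}}(\bar x)$ is invariant under the orthogonal projection $\mathrm{Proj\,}_{\Delta(\sigma)^{\bot\bot}}$ (just shown) and therefore also under its complementary projection $\mathrm{Proj\,}_{\Delta(\sigma)^{\bot}}=\id-\mathrm{Proj\,}_{\Delta(\sigma)^{\bot\bot}}$, since $\Delta(\sigma)^{\bot\bot}$ and $\Delta(\sigma)^{\bot}$ are orthogonal complements in $\RR^n$. A subspace stable under a pair of complementary orthogonal projections splits as the direct sum of its intersections with the two ranges: writing $v=\mathrm{Proj\,}_{\Delta(\sigma)^{\bot\bot}}(v)+\mathrm{Proj\,}_{\Delta(\sigma)^{\bot}}(v)$ exhibits each $v\in T_{\mathcal{M}}(\bar x)$ as a sum of an element of $T_{\mathcal{M}}(\bar x)\cap\Delta(\sigma)^{\bot\bot}$ and one of $T_{\mathcal{M}}(\bar x)\cap\Delta(\sigma)^{\bot}$, and the sum is direct because the two subspaces already sit in orthogonal complements.

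Finally, for \eqref{norm_decomp} I would run the same argument on $N_{\mathcal{M}}(\bar x)$. By \lemref{Tinv}(\ref{Tinv-ii}), $N_{\mathcal{M}}(\bar x)$ is invariant under every $\sigma'\in S^{\succsim}(\bar x)$, hence (again via \lemref{Lemma_project_strata}) under $\mathrm{Proj\,}_{\Delta(\sigma)^{\bot\bot}}$ and under $\mathrm{Proj\,}_{\Delta(\sigma)^{\bot}}$; the same splitting principle then yields \eqref{norm_decomp}. Alternatively one can obtain \eqref{norm_decomp} from \eqref{tang_decomp} by taking orthogonal complements, using that $\Delta(\sigma)^{\bot\bot}$ and $\Delta(\sigma)^{\bot}$ are themselves orthogonal complements of each other, so that orthogonal complementation within $\RR^n$ swaps the two summands of the tangent decomposition into the two summands of the normal decomposition. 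I do not anticipate a serious obstacle here; the only point requiring a little care is the passage from "stable under a pair of complementary orthogonal projections" to the direct-sum decomposition, which is the elementary linear-algebra fact that a projection-invariant subspace is the direct sum of its images under the two projections.
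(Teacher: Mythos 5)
Your argument for the tangent-space claim is exactly the paper's: invoke Lemma~\ref{Lemma_project_strata} to write $\mathrm{Proj\,}_{\Delta(\sigma)^{\bot\bot}}$ as the average over $S^{\succsim}(\sigma)=S^{\succsim}(\bar x)$, use Lemma~\ref{Tinv}\,(\ref{Tinv-i}) to keep each term inside $T_{\mathcal{M}}(\bar x)$, observe the reverse inclusion is trivial, and then split via the complementary projections $\mathrm{Proj\,}_{\Delta(\sigma)^{\bot\bot}}$ and $\mathrm{Proj\,}_{\Delta(\sigma)^{\bot}} = \mathrm{id} - \mathrm{Proj\,}_{\Delta(\sigma)^{\bot\bot}}$. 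That part is correct and matches the paper.

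Where you diverge, mildly, is the normal-space decomposition. The paper does not re-run the averaging argument on $N_{\mathcal{M}}(\bar x)$; instead it starts from \eqref{tang_decomp}, writes an arbitrary $v\in N_{\mathcal{M}}(\bar x)$ as $v=v_\perp+v_{\perp\perp}$ (the ambient decomposition in $\Delta(\sigma)^\perp\oplus\Delta(\sigma)^{\perp\perp}$), and shows by a short chain of inner-product identities that each piece is orthogonal to all of $T_{\mathcal{M}}(\bar x)$, hence belongs to the normal space. Your primary route is cleaner and more symmetric: you just apply the same invariance argument to $N_{\mathcal{M}}(\bar x)$ directly, using Lemma~\ref{Tinv}\,(\ref{Tinv-ii}) in place of part (\ref{Tinv-i}), and then the same elementary splitting principle. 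Both are correct; yours avoids the explicit inner-product computation at no cost, since Lemma~\ref{Tinv}\,(\ref{Tinv-ii}) is already available. Your alternative route (passing to orthogonal complements in \eqref{tang_decomp}) is also correct and is closer in spirit to what the paper does, although the phrase ``swaps the two summands'' is imprecise: taking $\perp$ of \eqref{tang_decomp} gives $(T\cap\Delta(\sigma)^{\perp\perp})^\perp\cap(T\cap\Delta(\sigma)^{\perp})^\perp$, and one must unwind this within the orthogonal splitting $\RR^n=\Delta(\sigma)^{\perp}\oplus\Delta(\sigma)^{\perp\perp}$ to land on \eqref{norm_decomp}; the components are not literally exchanged. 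Since your primary route already does the job cleanly, this is not a gap, only a point where the wording would need tightening if that route were used.
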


\begin{proof}
  Lemma~\ref{Lemma_project_strata} and Lemma~\ref{Tinv} show that for any
  $u\in T_{\mathcal{M}}(\bar{x})$ we have
  \[
  \mathrm{Proj\,}_{\Delta(\sigma)^{\bot\bot}}(u)=\frac{1}{|S^{\succsim}(\sigma)|}
  \sum\limits_{\sigma^{\prime}\succsim\sigma} \sigma^{\prime}u \in
  T_{\mathcal{M}}(\bar{x}),
  \]
  which yields
  \[
  \mathrm{Proj\,}_{\Delta(\sigma)^{\bot\bot}}(T_{\mathcal{M}}(\bar{x}))
  \subseteq T_{\mathcal{M}}(\bar{x})\cap \Delta(\sigma)^{\bot\bot}.
  \]
  The opposite inclusion and decomposition (\ref{tang_decomp}) are
  straightforward.

  \smallskip

  Let us now prove the decomposition of $N_{\mathcal{M}}(\bar{x})$.
  For any $u \in T_{\mathcal{M}}(\bar{x})$, by (\ref{tang_decomp})
  there are (unique) vectors $u_{\bot} \in
  T_{\mathcal{M}}(\bar{x})\cap\Delta(\sigma)^{\bot}$ and $u_{\bot\bot}
  \in T_{\mathcal{M}}(\bar{x})\cap\Delta(\sigma)^{\bot\bot}$ such that
  $u=u_{\bot}+u_{\bot\bot}$. Since $\mathbb{R}^n =
  \Delta(\sigma)^{\bot} \oplus \Delta(\sigma)^{\bot\bot}$, we can
  decompose any $v \in N_{\mathcal{M}}(\bar{x})$ correspondingly as
  $v=v_{\bot} + v_{\bot\bot}$. Since $u_{\bot\bot},u_{\bot}\in
  T_{\Mm}(\bx)=N_{\Mm}(\bx)^{\bot}$ we have $\langle u_{\bot},v
  \rangle=0$ and $\langle u_{\bot\bot},v \rangle=0$. Using the fact
  that $\Delta(\sigma)^{\bot}$ and $\Delta(\sigma)^{\bot\bot}$ are
  orthogonal we get $\langle u_{\bot\bot},\,v_{\bot} \rangle=0$
  (respectively, $\langle u_{\bot},\,v_{\bot\bot} \rangle=0$) implying
  that $\langle u_{\bot\bot},v_{\bot\bot} \rangle=0$ (respectively,
  $\langle u_{\bot},v_{\bot} \rangle=0$), and finally $\langle
  u,\,v_{\bot}\rangle =0$ (respectively, $\langle
  u,\,v_{\bot\bot}\rangle=0$). Since $u \in T_{\mathcal{M}}(\bar{x})$
  has been chosen arbitrarily, we conclude $v_{\bot} \in
  N_{\mathcal{M}}(\bar{x}) \cap \Delta(\sigma)^{\bot}$ and
  $v_{\bot\bot} \in N_{\mathcal{M}}(\bar{x}) \cap
  \Delta(\sigma)^{\bot\bot}$. In other words,
  $N_{\mathcal{M}}(\bar{x})$ is equal to the (direct) sum of
  $N_{\mathcal{M}}(\bar{x}) \cap \Delta(\sigma)^{\bot}$ and
  $N_{\mathcal{M}}(\bar{x}) \cap \Delta(\sigma)^{\bot\bot}$.
\end{proof}

The following corollary is a simple consequence of the
fact that $T_{\mathcal{M}}(\bar{x}) \oplus N_{\mathcal{M}}(\bar{x})= \RR^n$.

\begin{corollary}[Decomposition of $\Delta(\si)^{\bot}$,
$\Delta(\si)^{\bot\bot}$]\label{Cor-23a} For any
$\bar{x}\in\mathcal{M} \cap \Delta(\sigma)$ we have
\begin{align*}
\Delta(\sigma)^{\bot}&=(\Delta(\sigma)^{\bot}\cap
  T_{\mathcal{M}}(\bar{x}))\oplus (\Delta(\sigma)^{\bot} \cap
  N_{\mathcal{M}}(\bar{x}))\\
\Delta(\sigma)^{\bot\bot}&=(\Delta(\sigma)^{\bot\bot}\cap
  T_{\mathcal{M}}(\bar{x}) )\oplus (\Delta(\sigma)^{\bot\bot} \cap
  N_{\mathcal{M}}(\bar{x})).
\end{align*}
\end{corollary}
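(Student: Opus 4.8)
The plan is to deduce both identities from \propref{props-spectr-man} together with the ambient direct sum $\RR^n = T_{\Mm}(\bx)\oplus N_{\Mm}(\bx)$. It is worth noting that this is not a formal consequence of having two pairs of complementary subspaces: for generic complementary pairs $A\oplus A^{\bot}$ and $B\oplus B^{\bot}$ one need not have $A=(A\cap B)\oplus(A\cap B^{\bot})$, so the input of \propref{props-spectr-man} is genuinely used. I focus on the first identity, since the second is obtained by interchanging $\Delta(\sigma)^{\bot}$ and $\Delta(\sigma)^{\bot\bot}$ throughout; the argument below is symmetric in these two subspaces.

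First I would dispose of the easy parts: the inclusion $\supseteq$ is immediate, and the sum on the right-hand side is direct because $(\Delta(\sigma)^{\bot}\cap T_{\Mm}(\bx))\cap(\Delta(\sigma)^{\bot}\cap N_{\Mm}(\bx))\subseteq T_{\Mm}(\bx)\cap N_{\Mm}(\bx)=\{0\}$. So the only content is the inclusion $\Delta(\sigma)^{\bot}\subseteq(\Delta(\sigma)^{\bot}\cap T_{\Mm}(\bx))\oplus(\Delta(\sigma)^{\bot}\cap N_{\Mm}(\bx))$.

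For this, fix $w\in\Delta(\sigma)^{\bot}$ and write $w=u+v$ with $u\in T_{\Mm}(\bx)$ and $v\in N_{\Mm}(\bx)$, using the ambient decomposition. Applying \eqref{tang_decomp} to $u$ and \eqref{norm_decomp} to $v$, split $u=u_{\bot}+u_{\bot\bot}$ and $v=v_{\bot}+v_{\bot\bot}$ with $u_{\bot},v_{\bot}\in\Delta(\sigma)^{\bot}$ and $u_{\bot\bot},v_{\bot\bot}\in\Delta(\sigma)^{\bot\bot}$. Then $w=(u_{\bot}+v_{\bot})+(u_{\bot\bot}+v_{\bot\bot})$ is a decomposition of $w$ along $\RR^n=\Delta(\sigma)^{\bot}\oplus\Delta(\sigma)^{\bot\bot}$; since $w\in\Delta(\sigma)^{\bot}$, uniqueness of this decomposition forces $u_{\bot\bot}+v_{\bot\bot}=0$. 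But $u_{\bot\bot}\in T_{\Mm}(\bx)$ while $v_{\bot\bot}\in N_{\Mm}(\bx)$, so $u_{\bot\bot}=v_{\bot\bot}=0$, again because $T_{\Mm}(\bx)\cap N_{\Mm}(\bx)=\{0\}$. Hence $u=u_{\bot}\in\Delta(\sigma)^{\bot}\cap T_{\Mm}(\bx)$ and $v=v_{\bot}\in\Delta(\sigma)^{\bot}\cap N_{\Mm}(\bx)$, which yields $w\in(\Delta(\sigma)^{\bot}\cap T_{\Mm}(\bx))\oplus(\Delta(\sigma)^{\bot}\cap N_{\Mm}(\bx))$ and finishes the argument.

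There is essentially no obstacle: the whole proof is the uniqueness of a direct-sum decomposition combined with $T_{\Mm}(\bx)\cap N_{\Mm}(\bx)=\{0\}$. The only point requiring a moment's care is to invoke the appropriate half of \propref{props-spectr-man} for $u$ (namely \eqref{tang_decomp}) and for $v$ (namely \eqref{norm_decomp}); everything else is formal.
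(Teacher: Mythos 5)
Your proof is correct and is exactly the argument the paper has in mind: the paper gives no explicit proof, only the remark that the corollary follows from $T_{\mathcal{M}}(\bar{x}) \oplus N_{\mathcal{M}}(\bar{x})= \RR^n$ combined (implicitly) with \propref{props-spectr-man}, which is precisely the decomposition-and-uniqueness argument you spell out.
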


The subspaces $\Delta(\sigma)^{\bot\bot} \cap
N_{\mathcal{M}}(\bar{x})$ and $T_{\mathcal{M}}(\bar{x}) \cap \Delta(\sigma)^{\bot}$ in the previous statements play an important role in \secref{sec:spectral} when
constructing adapted local equations.

%********************************************************************
\subsection{Location of a locally symmetric manifold}\label{ssec:structure}
%********************************************************************

Definition~\ref{defn-spectrman} yields
important structural properties on $\Mm$. These properties are
hereby quantified with the results of this section.

\smallskip

We need the following standard technical lemma about isometries
between two Riemannian manifolds. This lemma will be used in the
sequel as a link from local to global properties.  Given a Riemannian
manifold $M$ we recall that an open neighborhood $V$ of a point $p \in
M$ is called {\em normal} if every point of $V$ can be connected to
$p$ through a unique geodesic lying entirely in $V$. It is well-known
(see Theorem 3.7 in \cite[Chapter~3]{Carmo1993} for example) that
every point of a Riemannian manifold $\Mm$ (that is, $\Mm$ is at least
$C^2$) has a normal neighborhood. A more general version of
the following lemma can be found in \cite[Chapter~VI]{KobayashiNomizu1963},
we include its proof for completeness.

\begin{lemma}[Determination of isometries]\label{lem-isoms}
  Let $M$, $N$ be two connected Riemannian manifolds. Let $f_i\colon  M
  \rightarrow N$, $i\in\{1,2\}$ be two isometries and let $p \in M$ be
  such that
  $$f_1(p)=f_2(p) \qquad \mbox{and}\qquad df_1(v)=df_2(v) \quad
  \mbox{for every } \, v \in T_M(p)\,.$$ Then, $f_1=f_2$.
\end{lemma}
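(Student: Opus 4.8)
The plan is to prove that the set $E=\{q\in M: f_1(q)=f_2(q)\text{ and }df_1|_{T_M(q)}=df_2|_{T_M(q)}\}$ is nonempty, open and closed in $M$; since $M$ is connected this forces $E=M$, hence $f_1=f_2$. Nonemptiness is immediate from the hypothesis since $p\in E$. Closedness follows from a continuity argument: if $q_k\to q$ with $q_k\in E$, then by continuity of $f_i$ we get $f_1(q)=f_2(q)$, and the condition on the differentials passes to the limit because $df_i$ depends continuously on the base point (working in a common coordinate chart around $q$, the entries of the Jacobians are continuous). So the crux is openness.

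For openness, fix $q\in E$ and choose a normal neighborhood $V$ of $q$ in $M$, which exists because $M$ is at least $C^2$ (Theorem~3.7 in \cite[Chapter~3]{Carmo1993}). The key geometric fact is that isometries map geodesics to geodesics and preserve arc length, hence they commute with the Riemannian exponential map: for any isometry $f\colon M\to N$ one has $f\circ\exp^M_q = \exp^N_{f(q)}\circ\, df_q$ wherever both sides are defined. Apply this with $f=f_1$ and $f=f_2$: since $f_1(q)=f_2(q)=:q'$ and $df_1|_{T_M(q)}=df_2|_{T_M(q)}$, we obtain for every $v$ in a sufficiently small ball of $T_M(q)$
\[
f_1(\exp^M_q v)=\exp^N_{q'}(df_1(v))=\exp^N_{q'}(df_2(v))=f_2(\exp^M_q v).
\]
Because $V$ is normal, every point of $V$ is of the form $\exp^M_q v$, so $f_1=f_2$ on all of $V$. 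It remains to check that the differentials also agree throughout $V$, not just that the maps do; but two maps that coincide on an open set have equal differentials on that open set, so $df_1=df_2$ on $V$ as well. Hence $V\subseteq E$, proving $E$ is open.

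The one point that needs a little care — and is the main (mild) obstacle — is making the exponential-map identity rigorous in the stated regularity: one must ensure the geodesics through $q$ stay in the domain where $\exp$ is defined and that $V$ can be taken small enough that $\exp^M_q$ is a diffeomorphism from a star-shaped neighborhood of $0\in T_M(q)$ onto $V$; this is exactly the content of the normal-neighborhood theorem. The fact that an isometry of Riemannian manifolds (a priori only a metric-preserving diffeomorphism, or even just a distance-preserving bijection) sends geodesics to geodesics follows from the variational characterization of geodesics as locally length-minimizing curves, together with the Myers--Steenrod-type observation that such a map is automatically smooth; for our purposes $f_1,f_2$ are already smooth maps preserving the metric tensor, so the geodesic-preservation is immediate from the geodesic ODE being isometry-invariant. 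Once this is granted, the open-closed-nonempty trichotomy closes the argument.
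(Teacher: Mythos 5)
Your proposal is correct, and the local argument coincides with the paper's: both use a normal neighborhood and the fact that isometries send geodesics to geodesics (equivalently, intertwine the exponential maps), so that agreement of value and differential at one point propagates to the whole normal neighborhood. The difference lies in how the local statement is globalized. You let $E=\{q: f_1(q)=f_2(q),\ df_1|_{T_M(q)}=df_2|_{T_M(q)}\}$ and show $E$ is nonempty, open, and closed, so $E=M$ by connectedness; the paper instead picks an arbitrary $q$, joins $p$ to $q$ by a continuous path $\delta$, pulls the agreement set back to a closed subset of $[0,1]$, and argues that its supremum $t_0$ cannot be less than $1$ by re-applying the local step at $\delta(t_0)$. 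These are two standard phrasings of the same continuation argument; yours is a bit cleaner (it invokes connectedness directly rather than path-connectedness and avoids the supremum bookkeeping), while the paper's version is perhaps more explicit about where the local lemma is being reused. Either way the proof is complete, and your closing remarks about smoothness of the isometries and the domain of $\exp$ are appropriate caveats that the paper leaves implicit.
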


\begin{proof}
  Every isometry mapping between two Riemannian manifolds
  sends a geodesic into a geodesic. For any $p \in M$ and $v \in
  T_M(p)$, we denote by $\gamma_{v,p}$ (respectively by
  $\tilde\gamma_{\bar v, \bar p}$) the unique geodesic passing through
  $p\in M$ with velocity $v\in T_M(p)$ (respectively, through $\bar p
  \in N$ with velocity $\bar v \in T_N(\bar p)$). Using uniqueness of
  the geodesics, it is easy to see that for all $t$
  \begin{equation}\label{aris4}
    f_1( \gamma_{v,p}(t)) = \tilde\gamma_{df_1(v),f_1(p)}(t)
    =\tilde\gamma_{df_2(v),f_2(p)}(t) = f_2(\gamma_{v,p}(t)).
  \end{equation}
  Let $V$ be a normal neighborhood of $p$, let $q\in V$ and
  $[0,1]\backepsilon t \mapsto\gamma_{v,p}(t) \in M$ be the geodesic
  connecting $p$ to $q$ and having initial velocity $v \in T_M(p)$.
  Applying \eqref{aris4} for $t=1$ we obtain $f_1(q)=f_2(q)$. Since
  $q$ was arbitrarily chosen, we get $f_1=f_2$ on $V$. (Thus, since
  $V$ is open, we also deduce $df_1(v)=df_2(v)$ for every $v \in
  T_M(q)$.)

  \smallskip

  Let now $q$ be any point in $M$. Since connected manifolds are also
  path connected we can join $p$ to $q$ with a continuous path
  $t\in [0,1]  \mapsto \delta(t) \in M$. Consider the set
  \begin{equation}\label{aris5}
    \{t \in [0,1] : f_1(\delta(t))=f_2(\delta(t))\,\, \mbox{\rm and } \, df_1(v)=df_2(v)
    \,\mbox{\rm for every } v \in T_{M}(\delta(t))\}.
  \end{equation}
  Since $f_i\colon M\rightarrow N$ and $df_i: TM \rightarrow TN$
  ($i\in\{1,2\}$) are continuous maps, the above set is closed.
  further, since $f_1=f_2$ in a neighborhood of $p$ it follows that
  the supremum in \eqref{aris5}, denoted $t_0$, is strictly positive.
  If $t_0 \not=1$ then repeating the argument for the point
  $p_1=\delta(t_0)$, we obtain a contradiction. Thus, $t_0=1$ and
  $f_1(q)=f_2(q)$.
\end{proof}

The above lemma will now be used to obtain the following result
which locates the locally symmetric manifold $\Mm$ with respect to
the stratification.

\begin{corollary}[Reduction of the ambient space to $\Delta(\si)^{\bot\bot}$]\label{mc2}
  Let $\mathcal{M}$ be a locally symmetric manifold. If for some $\bar
  x \in \mathcal{M}$, $\sigma \in \Sigma^n$, and $\delta > 0$ we have
  $\mathcal{M} \cap B(\bar x, \delta) \subseteq \Delta(\sigma)$, then
  $\mathcal{M} \subseteq \Delta(\sigma)^{\bot\bot}$.
\end{corollary}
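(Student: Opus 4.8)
The plan is to exploit the rigidity of isometries provided by \lemref{lem-isoms}, viewing the manifold $\Mm$ as a Riemannian manifold with the metric induced from $\RR^n$. The key observation is that each permutation $\sigma' \in S^{\succsim}(\bar x)$ acts on $\RR^n$ as a linear isometry, and by local symmetry it restricts (locally, then globally) to an isometry of $\Mm$; since these isometries agree with the identity on a neighborhood of $\bar x$ (because $\Mm \cap B(\bar x,\delta) \subseteq \Delta(\sigma)$ forces $\sigma' x = x$ there for all $\sigma' \succsim \sigma$), \lemref{lem-isoms} will force $\sigma' = \id$ on all of $\Mm$. Pushing this through we obtain $\Mm \subseteq \mathrm{Fix\,}(\sigma') = \Delta(\sigma')^{\bot\bot}$ for every $\sigma' \succsim \sigma$, and intersecting over all such $\sigma'$ (using \propref{prop:delta}(iii) or directly \eqref{eqn-arisd4}) yields $\Mm \subseteq \Delta(\sigma)^{\bot\bot}$.

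\smallskip

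In more detail, first I would fix $\sigma' \in S^{\succsim}(\bar x)$. Shrinking $\delta$ if necessary so that $\Mm \cap B(\bar x, \delta)$ is strongly locally symmetric (using \defref{defn-spectrman}), the map $x \mapsto \sigma' x$ sends $\Mm \cap B(\bar x, \delta)$ into itself. Since $\Mm \cap B(\bar x, \delta) \subseteq \Delta(\sigma)$ and $\sigma' \succsim \sigma$, for every $x$ in this set we have $P(x) = P(\sigma) \subseteq P(\sigma')$, hence $\sigma' \in \mathrm{Fix\,}(x)$ by \eqref{partition_r}, i.e.\ $\sigma' x = x$. So the restriction of $\sigma'$ to $\Mm \cap B(\bar x,\delta)$ is the identity; in particular $\sigma'(\bar x) = \bar x$ and $d(\sigma')|_{T_{\Mm}(\bar x)} = \id$. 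Next I would argue that $\sigma'$ restricts to a well-defined global isometry of $\Mm$: this requires knowing $\sigma' \Mm = \Mm$, which follows from the strong local symmetry at every point of $\Mm$ — more precisely, the set $\{x \in \Mm : \sigma' x \in \Mm\}$ is open (local symmetry) and closed in $\Mm$, hence all of $\Mm$ by connectedness, and the same for $(\sigma')^{-1}$; so $\sigma'$ maps $\Mm$ bijectively onto $\Mm$, and it is an isometry because it is a linear isometry of the ambient space.

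\smallskip

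With $f_1 = \sigma'|_{\Mm}$ and $f_2 = \id_{\Mm}$ both isometries of the connected Riemannian manifold $\Mm$ agreeing with their differentials at $\bar x$, \lemref{lem-isoms} gives $f_1 = f_2$, i.e.\ $\sigma' x = x$ for all $x \in \Mm$. Equivalently $\Mm \subseteq \mathrm{Fix\,}(\sigma')$. Since this holds for every $\sigma' \in S^{\succsim}(\bar x) = S^{\succsim}(\sigma)$, we get
\[
  \Mm \subseteq \bigcap_{\sigma' \succsim \sigma} \mathrm{Fix\,}(\sigma').
\]
Finally, for $x$ in this intersection, writing $P(\sigma) = \{I_1,\dots,I_m\}$, for any $i,j$ in a common block $I_k$ the transposition $(ij) \succsim \sigma$ fixes $x$, so $x_i = x_j$; by \eqref{eqn-arisd4} this means exactly $x \in \Delta(\sigma)^{\bot\bot}$. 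Hence $\Mm \subseteq \Delta(\sigma)^{\bot\bot}$, as claimed.

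\smallskip

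I expect the main obstacle to be the careful verification that $\sigma'$ genuinely induces a \emph{global} isometry of $\Mm$ — i.e.\ that the local symmetry hypothesis propagates to $\sigma'\Mm = \Mm$ globally via a connectedness argument — rather than merely a local self-map near $\bar x$. The isometry property itself is immediate since $\sigma'$ is a linear isometry of $\RR^n$ preserving $\Mm$ (isometries of the ambient Euclidean space restrict to isometries of any submanifold it preserves, for the induced metric), and the rigidity conclusion is then a direct application of \lemref{lem-isoms}; the bookkeeping with partitions at the end is routine.
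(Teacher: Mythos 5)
Your overall strategy---globalize the local identity $\sigma' = \id$ via Lemma~\ref{lem-isoms} and read off $\Mm \subseteq \mathrm{Fix}(\sigma') = \Delta(\sigma')^{\bot\bot}$---is exactly the paper's, and the final partition bookkeeping is fine (though the intersection over all $\sigma' \succsim \sigma$ is redundant: since $\mathrm{Fix}(\sigma) = \Delta(\sigma)^{\bot\bot}$ and $\mathrm{Fix}(\sigma)\subseteq\mathrm{Fix}(\sigma')$ for every $\sigma'\succsim\sigma$, the single permutation $\sigma$ already gives the conclusion). You correctly flag the step that needs care, but the argument you offer for it has a gap.

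The gap is the openness claim in your connectedness argument. You assert that $A=\{x\in\Mm:\sigma' x\in\Mm\}$ is open in $\Mm$ ``by local symmetry'', but local symmetry only guarantees that $\Mm$ near a point $x_0$ is preserved by permutations in $S^{\succsim}(\bar y)$ for $\bar y\in\Mm$ near $x_0$. There is no a priori reason the \emph{fixed} permutation $\sigma'\in S^{\succsim}(\bar x)$ belongs to $S^{\succsim}(\bar y)$ for an arbitrary $\bar y\in\Mm$: the condition $\sigma'\in S^{\succsim}(\bar y)$ is equivalent to $\bar y\in\mathrm{Fix}(\sigma')\supseteq\Delta(\sigma)^{\bot\bot}$, which is precisely the conclusion you want, so assuming it is circular. (Closedness of $A$ in $\Mm$ is also not immediate, since $\Mm$ need not be closed in $\RR^n$.) The paper avoids this by a path-and-pieces argument: cover a path from $\bar x$ to a hypothetical $\bar{\bar{x}}\notin\Delta(\sigma)^{\bot\bot}$ by strongly locally symmetric balls $\Mm_0,\dots,\Mm_s$, take the first bad index $s'$, and note that the overlap $\Mm_{s'-1}\cap\Mm_{s'}$ already lies in $\Delta(\sigma)^{\bot\bot}$, so $\sigma\in S^{\succsim}(x')$ for $x'$ there; that is what makes $\Mm_{s'}$ invariant under $\sigma$, allowing Lemma~\ref{lem-isoms} to yield the contradiction. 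Your open/closed idea can be salvaged by instead tracking the set $\{x\in\Mm:\sigma' x=x\ \text{and}\ d(\sigma')|_{T_\Mm(x)}=\id\}$: at such a point one has $\sigma'\in S^{\succsim}(x)$, so strong local symmetry gives local invariance of $\Mm$ under $\sigma'$, and Lemma~\ref{lem-isoms} applied to that local piece gives openness---but this extra care is exactly what is missing from the proposal as written.
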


\begin{proof}
  Suppose first that $\Mm$ is strongly locally symmetric.
  Let $f_{1}\colon \mathcal{M}\rightarrow \mathcal{M}$ be the identity
  isometry on $\Mm$ and let $f_{2}\colon \mathcal{M}\rightarrow \mathcal{M}$
  be the isometry determined by the permutation $\sigma ,$ that is,
  $f_{2}(x)=\sigma x$ for all $x\in \mathcal{M}$. The assumption
  $\mathcal{M}\cap B(\bar{x},\delta )\subset \Delta (\sigma )$ yields
  that the isometries $f_{1}$ and $f_{2}$
  coincide around $\bar x$. Thus, by
  \lemref{lem-isoms} (with $ M=N=\mathcal{M}$) we conclude that $f_1$ and $f_2$ coincide on $\Mm$. This shows that $\Mm \subset \Delta(\sigma)^{\bot\bot}$.

  In the case when $\Mm$ is locally symmetric, assume, towards a contradiction, that there
  exists $\bar{ \bar{x}} \in \mathcal{M}\setminus \Delta (\sigma )^{\bot \bot }$
  Consider a continuous path
  $t\in [0,1]  \mapsto p(t) \in \Mm$ with $p(0)=\bar x$ and $p(1) = \bar{ \bar{x}}$. Find
  $0=t_0 < t_1 < \cdots < t_s = 1$ and $\{\delta_i > 0 : i=0,\ldots,s\}$ such that
  $\Mm_i:=\Mm \cap B(p(t_i), \delta_i)$ is strongly locally symmetric, the union of all $\Mm_i$
  covers the path $p(t)$, $\Mm_{i-1} \cap \Mm_{i} \not= \emptyset$, and $\Mm_0 \subset \Delta(\sigma)$. Let $s'$ be the first index such that $\Mm_{s'}
  \not\subset \Delta(\sigma)^{\bot\bot}$, clearly $s' >0$. Let $x' \in \Mm_{s'-1} \cap \Mm_{s'} \cap \Delta(\sigma)^{\bot\bot}$ and note that $x' \in \Delta(\sigma')$ for some $\sigma' \precsim \sigma$. By the strong local symmetry of $\Mm_{s'-1}$ and $\Mm_{s'}$,
 they are both invariant under the permutation $\sigma$. Since $\sigma$ coincides with the identity on $\Mm_{s'-1}$ and since $\Mm_{s'-1} \cap \Mm_{s'}$ is an open subset of $\Mm_{s'}$, we see by \lemref{lem-isoms} that $\sigma$ coincides with the identity on $\Mm_{s'}$. This contradicts the fact that  $\Mm_{s'}   \not\subset \Delta(\sigma)^{\bot\bot}$.
\end{proof}

In order to strengthen Corollary~\ref{mc2} we need to introduce a new notion.

\begin{definition}[{\it Much smaller} permutation]\label{def-smaller}
  For two permutations $\sigma,\sigma'\in\Sigma^n$.
  \begin{itemize}
  \item The permutation $\sigma'$ is called {\it much smaller} than
      $\sigma$, denoted $\sigma'\prec \hspace{-0.1cm} \prec \sigma$,
    whenever $\sigma' \prec \sigma$ and a set in $P(\sigma')$ is
    formed by merging at least two sets from $P(\sigma)$, of which
    at least one contains at least two elements.
  \item Whenever $\sigma'\prec\sigma$ but $\sigma'$ is not much
    smaller than $\sigma$ we shall write $\sigma'
    \prec \hspace{-0.1cm}\sim \sigma$. In other words, if $\sigma'\prec\sigma$ but $\sigma'$ is not much smaller than $\sigma$, then every set in $P(\sigma')$ that is not in
    $P(\sigma)$ is formed by merging one-element sets from $P(\sigma)$.
  \end{itemize}
\end{definition}

\begin{example}[{\it Smaller} vs {\it much smaller} permutations]\label{ex:ad}
  The following examples illustrate the notions of
  Definition~\ref{def-smaller}.  We point out that part
  (\ref{ex:ad-part-vii}) will be used frequently.
  \begin{enumerate}[(i)]
  \item \label{ex:ad-part-i}$(123)(45)(6)(7) \prec \hspace{-0.15cm} \prec (1)(23)(45)(6)(7)$.
  \item \label{ex:ad-part-ii} Consider $\si = (167)(23)(45)$ and $\si' =
    (1)(23)(45)(6)(7)$. In this case, $\si \prec \si' $ but $\si$ is
    not much smaller than $\si'$ because only cycles of length one are
    merged to form the cycles in $\si$. Thus,
    $\si\prec\hspace{-0,1cm}\sim\si'$.
  \item \label{ex:ad-part-iii} If $\sigma'' \precsim \sigma'$ and $\sigma'
    \prec \hspace{-0.1cm} \prec \sigma$ then $\sigma''
    \prec \hspace{-0.1cm} \prec \sigma$.
  \item \label{ex:ad-part-iv} It is possible to have $\sigma' \prec \hspace{-0.1cm}\sim
    \sigma$ and $\sigma'' \prec \hspace{-0.1cm}\sim\sigma$ but
    $\sigma'' \prec \hspace{-0.1cm} \prec \sigma'$, as shown by $\si =
    (1)(2)(3)(45)$, $\si' = (1)(23)(45)$, and $\si^{\prime\prime}=
    (123)(45)$.
  \item \label{ex:ad-part-v} If $\sigma' \prec \sigma$ and $\sigma$ fixes at most one
    element from $\N_n$, then $\sigma' \prec \hspace{-0.1cm} \prec
    \sigma$.
  \item \label{ex:ad-part-vi} If $\sigma \in \Sigma^n \setminus \mbox{\rm id}_n$ then
    $\sigma\prec \hspace{-0.1cm}\sim {\rm id}_n$.
  \item \label{ex:ad-part-vii} If $\sigma' \precsim \sigma$ and if $\sigma'$
    is not much smaller than $\sigma$, then either $\sigma' \sim
    \sigma$ or $\sigma' \prec \hspace{-0.1cm}\sim \sigma$.
   \item \label{ex:ad-part-viii} If $\sigma'' \prec \hspace{-0.1cm}\sim
    \sigma'$ and $\sigma' \prec \hspace{-0.1cm}\sim\sigma$, then $\sigma'' \prec \hspace{-0.1cm}\sim\sigma$. That is, the relationship `not much smaller' is transitive.  \qed
\end{enumerate}
\end{example}

We now describe a strengthening of Corollary~\ref{mc2}. It
lowers the number of strata that can intersect $\mathcal{M}$, hence
better specifies the location of the manifold $\mathcal{M}$.

\begin{corollary}[Inactive strata]
  \label{cor-20Nov2007-1} Let $\mathcal{M}$ be a locally symmetric
  manifold. If for some $\bar{x}\in \mathcal{M}$, $\sigma \in \Sigma
  ^{n}$ and $\delta >0$ we have $\mathcal{M}\cap B(\bar{x},\delta
  )\subseteq \Delta
  (\sigma )$ then%
  \begin{equation*}
    \mathcal{M}\subseteq \Delta (\sigma )^{\bot \bot }\setminus
    \bigcup_{\sigma ^{\prime }\prec \hspace{-0.1cm}\prec \sigma }\Delta
    (\sigma ^{\prime }).
  \end{equation*}
\end{corollary}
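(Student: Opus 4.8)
The plan is to proceed by contradiction, combining Corollary~\ref{mc2} with the refined combinatorics of the relation $\prec\hspace{-0.1cm}\prec$ and with \lemref{lem-isoms} once more. By Corollary~\ref{mc2} we already know $\mathcal{M} \subseteq \Delta(\sigma)^{\bot\bot}$, so the only thing left to rule out is that $\mathcal{M}$ meets some stratum $\Delta(\sigma')$ with $\sigma' \prec\hspace{-0.1cm}\prec \sigma$. So suppose, towards a contradiction, that there is a point $\bar{\bar x} \in \mathcal{M} \cap \Delta(\sigma')$ with $\sigma' \prec\hspace{-0.1cm}\prec \sigma$. Since $\mathcal{M}$ is connected and locally symmetric, I would connect $\bar x$ to $\bar{\bar x}$ by a continuous path in $\mathcal{M}$, cover it by finitely many strongly locally symmetric pieces $\mathcal{M}_i = \mathcal{M} \cap B(p(t_i),\delta_i)$ with consecutive pieces overlapping, $\mathcal{M}_0 \subseteq \Delta(\sigma)$, and pick the first index $i$ at which the piece $\mathcal{M}_i$ meets a stratum $\Delta(\sigma'')$ with $\sigma'' \prec\hspace{-0.1cm}\prec \sigma$ (such $i$ exists and is positive). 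Reducing to that first ``bad'' piece, I am reduced to proving the \emph{local} statement: a strongly locally symmetric manifold contained in $\Delta(\sigma)^{\bot\bot}$ and intersecting $\Delta(\sigma)$ cannot intersect any $\Delta(\sigma'')$ with $\sigma'' \prec\hspace{-0.1cm}\prec \sigma$.

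For this local core, suppose $y \in \mathcal{M}' \cap \Delta(\sigma'')$ with $\mathcal{M}'$ strongly locally symmetric, $\mathcal{M}'$ meeting $\Delta(\sigma)$, and $\sigma'' \prec\hspace{-0.1cm}\prec \sigma$. By definition of ``much smaller'', some block $I$ of $P(\sigma'')$ is obtained by merging at least two blocks of $P(\sigma)$, at least one of which, say $J \subseteq I$, has $|J| \ge 2$. Pick distinct $a,b \in J$ and some $c \in I \setminus J$; then the transposition $\tau = (a\,c)$ fixes $y$ (since $y_a = y_b = y_c$, all indices in $I$ giving equal coordinates of $y$), so $\tau \in S^{\succsim}(y)$ and, by strong local symmetry, $\tau$ maps $\mathcal{M}'$ into itself. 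On the other hand $\tau \notin S^{\succsim}(\sigma)$, because $a$ and $c$ lie in different blocks of $P(\sigma)$, so $\tau \Delta(\sigma) \ne \Delta(\sigma)$; by \propref{prop:delta}(v), $\tau \Delta(\sigma) = \Delta(\tau\sigma\tau^{-1})$ with $\tau\sigma\tau^{-1} \not\sim \sigma$. The key tension: $\tau$ acts as an isometry of $\mathcal{M}'$ fixing $y$, and $d\tau$ acts on $T_{\mathcal{M}'}(y)$; I want to show $d\tau$ acts as the identity on $T_{\mathcal{M}'}(y)$ and hence (by \lemref{lem-isoms}) that $\tau$ is the identity on $\mathcal{M}'$, which will contradict $\mathcal{M}'$ meeting $\Delta(\sigma)$ (since then $\mathcal{M}' \subseteq \mathrm{Fix}(\tau) = \Delta(\tau)^{\bot\bot}$, incompatible with $\mathcal{M}' \cap \Delta(\sigma) \ne \emptyset$ as $\sigma \not\precsim \tau$).

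So the heart of the matter is showing $d\tau = \mathrm{id}$ on $T_{\mathcal{M}'}(y)$. Here I would use \propref{props-spectr-man} applied at the point $y$ with its active stratum $\Delta(\sigma'')$: $T_{\mathcal{M}'}(y)$ decomposes as $(T_{\mathcal{M}'}(y) \cap \Delta(\sigma'')^{\bot\bot}) \oplus (T_{\mathcal{M}'}(y) \cap \Delta(\sigma'')^{\bot})$, and $\tau$ acts trivially on the first summand (the transposition $(a\,c)$ fixes $\Delta(\sigma'')^{\bot\bot}$ pointwise, since $a,c \in I$ and vectors there have equal $a$- and $c$-coordinates). For the normal summand $T_{\mathcal{M}'}(y) \cap \Delta(\sigma'')^{\bot}$ I would argue that, because $\mathcal{M}'$ is strongly locally symmetric and sits inside $\Delta(\sigma)^{\bot\bot}$, no tangent vector of $\mathcal{M}'$ at $y$ can actually ``feel'' the transposition $(a\,c)$: concretely, a tangent vector $v$ comes as the velocity of a path in $\mathcal{M}' \subseteq \Delta(\sigma)^{\bot\bot}$, so $v \in \Delta(\sigma)^{\bot\bot}$, i.e. $v_a = v_b$ (as $a,b \in J$, a block of $P(\sigma)$); combined with local symmetry under $S^{\succsim}(y)$ one pushes this to $v_a = v_c$ as well, so $\tau v = v$. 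This last implication --- propagating equality of coordinates from the $P(\sigma)$-blocks to the finer structure forced by $\Delta(\sigma'')$ --- is, I expect, the main obstacle: it requires carefully using that $\sigma'' \prec\hspace{-0.1cm}\prec \sigma$ (not merely $\prec$) so that the ``extra'' block $I$ genuinely merges a big block $J$ of $P(\sigma)$ with something else, and then invoking the full strength of strong local symmetry (invariance under \emph{all} of $S^{\succsim}(y)$, including transpositions within $I$ that mix $J$ with $I\setminus J$) to force the tangent space to be constant across $I$. Once $d\tau = \mathrm{id}$ on $T_{\mathcal{M}'}(y)$ is in hand, \lemref{lem-isoms} finishes the local core, the path-covering argument globalizes it, and the contradiction is complete.
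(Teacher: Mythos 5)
Your overall architecture — reduce via Corollary~\ref{mc2} and a path-covering argument to a local statement about a strongly locally symmetric piece $\mathcal{M}'$, then derive a contradiction from a single permutation in $S^{\succsim}(y)$ — is the same as the paper's, but your local core is genuinely different and arguably cleaner. The paper's local argument is purely combinatorial: after locating an overlap point $\bar z\in\Delta(\bar\sigma)$ (with $\bar\sigma\sim\sigma$ or $\bar\sigma\prec\hspace{-0.1cm}\sim\sigma$), it constructs a permutation $\sigma_\circ\succsim\tau$ by a three-case analysis on the cycle structure and shows that $\sigma_\circ\bar z$ — which strong local symmetry forces into $\mathcal{M}\subseteq\Delta(\sigma)^{\bot\bot}$ — has two coordinates unequal that membership in $\Delta(\sigma)^{\bot\bot}$ requires to be equal. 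You instead re-invoke \lemref{lem-isoms} with the single transposition $\tau'=(a\,c)$, avoiding the case distinctions entirely. The step you flag as the main obstacle does go through, and more simply than you sketch — Proposition~\ref{props-spectr-man} is not needed: for $v\in T_{\mathcal{M}'}(y)$ one has $v\in\Delta(\sigma)^{\bot\bot}$ (since $\mathcal{M}'\subseteq\Delta(\sigma)^{\bot\bot}$, a linear subspace), hence $v_a=v_b$; by \lemref{Tinv} also $\tau'v\in T_{\mathcal{M}'}(y)\subseteq\Delta(\sigma)^{\bot\bot}$, so $(\tau'v)_a=(\tau'v)_b$, i.e.\ $v_c=v_b$; thus $v_a=v_b=v_c$ and $\tau'v=v$.

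Two small repairs are needed to close the argument. First, your ``first bad piece'' $\mathcal{M}'$ need not meet $\Delta(\sigma)$ itself; the overlap point $\bar z\in\mathcal{M}'\cap\mathcal{M}_{i-1}$ only lies in some $\Delta(\bar\sigma)$ with $\bar\sigma\sim\sigma$ or $\bar\sigma\prec\hspace{-0.1cm}\sim\sigma$. This still suffices: since $|J|\ge 2$ and $\bar\sigma$ is not much smaller than $\sigma$, the set $J$ remains a block of $P(\bar\sigma)$, hence $\bar z_a\neq\bar z_c$, contradicting $\mathcal{M}'\subseteq\mathrm{Fix}(\tau')$. Second, \lemref{lem-isoms} requires connected manifolds, so one should pass to the connected component of $\mathcal{M}'$ containing both $y$ and $\bar z$ (arranged by choosing the covering so that the relevant arc of the path stays in~$\mathcal{M}'$).
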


\begin{proof}
  By Corollary~\ref{mc2}, we already have $\mathcal{M}\subseteq
  \Delta (\sigma )^{\bot \bot }$. Assume, towards a contradiction,
  that $\mathcal{M}\cap \Delta (\sigma' )\neq \emptyset$ for some $\sigma' \prec
  \hspace{-0.1cm}\prec \sigma$. This implies in particular that $\sigma$ is not the
  identity permutation, see Example~\ref{ex:ad}\,(\ref{ex:ad-part-vi}).
  Consider a continuous path connecting $\bar x$ with a point in $\mathcal{M}\cap \Delta (\sigma' )\neq \emptyset$. Let $z$ be the first point on that path such that
  $z \in \Delta(\tau) \mbox{ for some } \tau \prec \hspace{-0.1cm}\prec \sigma$.
 (Such a first point exists since whenever $\tau \prec \sigma$, the points in $\Delta(\tau)$ are boundary points of $\Delta(\sigma)$.) Let $\delta >0$ be such that $\Mm \cap B(z, \delta)$ is strongly locally symmetric. Let $\bar z \in \Mm \cap B(z, \delta)$ be a point on the path before $z$. That means $\bar z$ is in a stratum $\Delta(\bar \sigma)$ with $\bar \sigma \prec \hspace{-0.1cm}\sim \sigma$ or $\bar \sigma \sim \sigma$.
 To summarize:
 $$
 z \in \Mm \cap \Delta(\tau),  \mbox{ where } \tau \prec \hspace{-0.1cm}\prec \sigma
 \mbox{ and } \bar z \in \Mm \cap \Delta(\bar \sigma) \cap B(z, \delta) \not= \emptyset, \mbox{ where } \bar \sigma \prec \hspace{-0.1cm}\sim \sigma  \mbox{ or } \bar \sigma \sim \sigma.
 $$

  By Definition~\ref{def-smaller} and the fact $\tau \prec \hspace{-0.1cm}\prec \sigma$,
  we have that for some $2\leq \ell <k\leq n$, and some
  subset $\{a_{1},\ldots ,a_{k}\}$ of
  $\mathbb{N}_{n},$ the cycle $(a_{1}\ldots a_{\ell })$ belongs to
  the cycle decomposition of $\sigma$ while the set $\{a_{1},\ldots
  ,a_{\ell },a_{\ell +1},\ldots, a_{k}\}$ belongs to the partition $P(\tau)$.
  Now, since $\bar \sigma \prec \hspace{-0.1cm}\sim \sigma$ or $\bar \sigma \sim \sigma$,
   the cycle~$(a_{1}\ldots a_{\ell })$ belongs to the cycle decomposition of $\bar \sigma$ as
   well. In order to simplify notation, without loss of generality, we
  assume that $a_{i}=i$ for $i\in \{1,\ldots ,k\}$.

  Since $\bar{z}=(\bar z_{1},\ldots ,\bar z_{n})\in \Mm \cap \Delta (\bar \sigma )
  \cap B(z, \delta)$ we have $\bar z_{1}=\cdots =\bar z_{\ell }=\alpha $ and $\bar
  z_{i}\neq \alpha $ for $i\in \{\ell +1,\ldots ,n\}$. By the fact that $\Mm \cap B(z, \delta)$ is
  strongly locally symmetric, we deduce that
  \begin{equation}\label{Hristo-fix}
    y:=\sigma_\circ \bar z\in \mathcal{M}\subset \Delta (\sigma)^{\bot
      \bot}\, \mbox{ for every } \si_\circ\succ\tau.
  \end{equation}
  We consider separately three cases. In each one we
  define appropriately a permutation $\sigma_\circ\succ\tau$ in order
  to obtain a contradiction with \eqref{Hristo-fix}.\medskip

  \textit{Case} 1.\hspace{0.10cm} Assume $\ell>2$ and let
  $\sigma_\circ \in\Sigma ^{n}$ be constructed by exchanging the places
  of the elements $a_{\ell }$ and $a_{k}$ in the cycle decomposition
  of $\sigma$. Obviously, $\sigma_\circ\succ\tau$. Then, $y=\sigma_\circ
  \bar z=(y_{1},\ldots ,y_{n})=(\bar z_{\sigma_\circ^{-1}(1)},\ldots
  ,\bar z_{\sigma_\circ^{-1}(n)})$ and notice that we have
  $y_{1}= \bar z_{\si_\circ^{-1}(1)} = \bar z_{k}\neq \alpha$, while
  $y_{2}=\bar z_{\sigma_\circ^{-1}(2)}=\bar z_{1}=\alpha$. In view of
  \eqref{eqn-arisd4} we deduce that $y\notin \Delta
  (\sigma)^{\bot\bot}$, a contradiction.

  \textit{Case} 2.\hspace{0.10cm} Let $\ell=2$ and suppose that $a_3\equiv 3$
  belongs to a cycle of length one in the cycle decomposition of $\sigma$ (recall that we
  have assumed $a_{i}=i$, for all $i\in\{1,\dots,k\}$). In other words,
  $\sigma \,=\, (1\, 2)(3)\sigma'$, where $\sigma'$ is a permutation of $\{4,\ldots,n\}$.
  Then, defining $\sigma_\circ \,:=\, (1\, 3)(2)\sigma'$ we get $y_1=\bar
  z_3\neq\alpha$ and $y_2=\bar z_2=\alpha$, thus again $y\notin \Delta
  (\sigma)^{\bot\bot}$. \smallskip

  \textit{Case} 3.\hspace{0.10cm} Let
  $\ell=2$ and suppose that $a_3\equiv 3$ belongs to a cycle of length at least
  two in the cycle decomposition of $\sigma$. Then, $\sigma \,=\, (1\,2)\,(3\,p\,\ldots)\cdots(\ldots\, q)\,\sigma'$,
  where $\si'$ is a permutation of $\{k+1,\dots,n\}$, and where the union of the elements in the cycles $(1\, 2)\,(3\, p\,\ldots)\cdots(\ldots\, q)$ is precisely $\{1,2,\ldots,k\}$. We define $\sigma_\circ\,=\, (1\, 2\, 3)\,(p\,\ldots)\cdots(\ldots\, q)\,\sigma' \succ\tau$ and obtain $y_1=\bar z_3\neq\alpha$ and $y_2=\bar z_1=\alpha$, thus again $y\notin \Delta (\sigma)^{\bot\bot}$.\smallskip

  The proof is complete.
\end{proof}

%********************************************************************
\subsection[The characteristic permutation $\si_{\ast}$ of $\Mm$]{The characteristic permutation \boldmath$\si_{\ast}$ of \boldmath$\Mm$}\label{ssec-simin}
%********************************************************************

In order to better understand the structure of the
lo lly symmetric manifold $\mathcal{M}$, we exhibit a
permutation (more precisely, a set of equivalent permutations) that is
characteristic of $\Mm$. To this end, we introduce the following
sets of active permutations. (These two sets will be used only in
this and the next subsections.) Define
\[
  \Delta (\mathcal{M}):=\{\sigma \in \Sigma ^{n}:\mathcal{M}\cap
  \Delta (\sigma )\not=\emptyset \},
\]
and
\[
  \Sigma _{\mathcal{M}}:=\{\sigma \in \Sigma ^{n}:\exists (\bar{x}\in
  \mathcal{M},\delta >0)\,\,\mbox{\rm such that}\,\,\mathcal{M}\cap
  B(\bar{x},\delta )\subseteq \Delta (\sigma )\}\,.
\]
 We note that if $\sigma \in
\Delta(\mathcal{M})$ then $\sigma' \in \Delta(\mathcal{M})$ whenever
$\sigma \sim \sigma'$, and similarly for $\Sigma _{\mathcal{M}}$.
The following result is straightforward.

\begin{lemma}[Maximality of $\Sigma _{\Mm}$ in $\Delta(\Mm)$]
  \label{lemma_maximal} The elements of $\Sigma_{\mathcal{M}}$ are
  equivalent to each other and maximal in $\Delta (\mathcal{M})$.
\end{lemma}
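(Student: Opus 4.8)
The plan is to prove the two assertions of Lemma~\ref{lemma_maximal} separately: first that any two elements of $\Sigma_{\mathcal{M}}$ are equivalent, and then that every element of $\Sigma_{\mathcal{M}}$ is maximal in $\Delta(\mathcal{M})$.

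For the equivalence of elements of $\Sigma_{\mathcal{M}}$, suppose $\sigma_1,\sigma_2\in\Sigma_{\mathcal{M}}$, so that there exist $\bar x_1,\bar x_2\in\mathcal{M}$ and $\delta_1,\delta_2>0$ with $\mathcal{M}\cap B(\bar x_i,\delta_i)\subseteq\Delta(\sigma_i)$. Applying Corollary~\ref{mc2} twice, we get $\mathcal{M}\subseteq\Delta(\sigma_1)^{\bot\bot}$ and $\mathcal{M}\subseteq\Delta(\sigma_2)^{\bot\bot}$, hence $\mathcal{M}\subseteq\Delta(\sigma_1)^{\bot\bot}\cap\Delta(\sigma_2)^{\bot\bot}=\Delta(\sigma_1\wedge\sigma_2)^{\bot\bot}$ by \eqref{Ph4}. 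Now I would use that $\bar x_1\in\mathcal{M}\cap\Delta(\sigma_1)$ lies in $\Delta(\sigma_1\wedge\sigma_2)^{\bot\bot}$: by \eqref{Ph3} this forces $\sigma_1\wedge\sigma_2\sim\tau$ for some $\tau$ with $\bar x_1\in\Delta(\tau)$, i.e.\ $\tau\sim\sigma_1$, so $\sigma_1\wedge\sigma_2\succsim$ has the partition refining $P(\sigma_1)$ already, giving $\sigma_1\wedge\sigma_2\precsim\sigma_1$ and $\sigma_1\wedge\sigma_2\succsim$\dots. More cleanly: since $\bar x_1\in\Delta(\sigma_1)$ and $\bar x_1\in\Delta(\sigma_1\wedge\sigma_2)^{\bot\bot}=\bigcup_{\sigma'\precsim\sigma_1\wedge\sigma_2}\Delta(\sigma')$, there is $\sigma'\precsim\sigma_1\wedge\sigma_2$ with $\bar x_1\in\Delta(\sigma')$; by \eqref{Ph2} this $\sigma'\sim\sigma_1$, so $\sigma_1\precsim\sigma_1\wedge\sigma_2\precsim\sigma_1$, whence $\sigma_1\sim\sigma_1\wedge\sigma_2$. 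Symmetrically $\sigma_2\sim\sigma_1\wedge\sigma_2$, and therefore $\sigma_1\sim\sigma_2$.

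For maximality, let $\sigma\in\Sigma_{\mathcal{M}}$ and suppose $\sigma'\in\Delta(\mathcal{M})$ with $\sigma\precsim\sigma'$; I must show $\sigma\sim\sigma'$. Pick $x'\in\mathcal{M}\cap\Delta(\sigma')$. By Corollary~\ref{mc2}, $\mathcal{M}\subseteq\Delta(\sigma)^{\bot\bot}$, so $x'\in\Delta(\sigma')\cap\Delta(\sigma)^{\bot\bot}$; by \eqref{Ph3} applied to the right-hand side, $x'\in\Delta(\sigma'')$ for some $\sigma''\precsim\sigma$, and by \eqref{Ph2} this gives $\sigma''\sim\sigma'$, hence $\sigma'\precsim\sigma$. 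Combined with $\sigma\precsim\sigma'$ this yields $\sigma\sim\sigma'$, which is exactly maximality of $\sigma$ in $\Delta(\mathcal{M})$.

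The work here is essentially routine bookkeeping with the stratification identities \eqref{Ph2}, \eqref{Ph3}, \eqref{Ph4} and Corollary~\ref{mc2}; the one genuine input is Corollary~\ref{mc2}, which packages the global ``isometry rigidity'' argument, so there is no real obstacle beyond invoking it correctly. The only point requiring mild care is making sure that when I say $x'\in\Delta(\sigma)^{\bot\bot}$ lies in some $\Delta(\sigma'')$ with $\sigma''\precsim\sigma$, I am legitimately using \eqref{Ph3} rather than circularly re-deriving it; this is immediate from Proposition~\ref{prop:delta}(iii).
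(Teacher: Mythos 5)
Your argument is correct and rests on the same ingredients as the paper's proof, namely Corollary~\ref{mc2} together with Proposition~\ref{prop:delta}. The paper organizes it more economically: it shows directly that every $\tau\in\Delta(\mathcal{M})$ satisfies $\tau\precsim\sigma$ for any $\sigma\in\Sigma_{\mathcal{M}}$ (exactly your maximality step, and note you never actually use the hypothesis $\sigma\precsim\sigma'$ when deriving $\sigma'\precsim\sigma$), whereupon both maximality and pairwise equivalence of elements of $\Sigma_{\mathcal{M}}$ follow at once, making the separate wedge argument via~\eqref{Ph4} unnecessary.
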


\begin{proof}
  It follows readily that $\Delta (\mathcal{M})\neq \emptyset $ and
  $\Sigma _{\mathcal{M}}\subset \Delta (\mathcal{M})$.  Let $\tau \in
  \Delta (\mathcal{M})$ and $\sigma \in \Sigma _{\mathcal{M}}$. By
  Corollary~\ref{mc2} we deduce that $\mathcal{M}\subset \Delta
  (\sigma )^{\bot \bot }$ and by Proposition~\ref{prop:delta}$(iii)$
  that $\tau \precsim \sigma$. This proves maximality of $\sigma $ in
  $\Delta (\mathcal{M})$. The
  equivalence of the elements of $\Sigma_{\Mm}$ is obvious.
\end{proof}

The next lemma is, in a sense, a converse of Corollary~\ref{mc2}. It shows in
particular that $\Sigma _{\mathcal{M}}\neq \emptyset$.

\begin{lemma}[Optimal reduction of the ambient space]
  \label{lem-27Nov2007-a} For a locally symmetric manifold
  $\mathcal{M}$, there exists a permutation $\sigma_{\ast }\in \Sigma
  ^{n}$, such that
  \begin{equation}
    \Sigma_{\mathcal{M}}=\{\sigma \in \Sigma^{n}:\,\sigma \sim
    \sigma_{\ast }\}\,. \label{sigma_min}
  \end{equation}
  In particular, if $\mathcal{M}\subseteq \Delta (\bar{\sigma})^{\bot
    \bot}$ for some $\bar{\sigma}\in \Sigma^{n}$ then $\sigma_{\ast
  }\precsim \bar{\sigma}$.
\end{lemma}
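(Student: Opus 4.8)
The plan is to produce the permutation $\sigma_\ast$ as (a representative of) a \emph{minimal} element of the set $\Delta(\mathcal{M})$ of active permutations, and then to show that this minimal element sits \emph{below} every stratum that can contain a neighborhood of a point of $\mathcal{M}$. First I would fix any $\bar x\in\mathcal{M}$ and a $\delta>0$ small enough that $B(\bar x,\delta)$ meets only strata $\Delta(\sigma')$ with $\sigma'\succsim\sigma$, where $\sigma$ is the permutation with $\bar x\in\Delta(\sigma)$ (this exists by \lemref{key}$(ii)$). Among the finitely many such $\sigma'$ that actually intersect $\mathcal{M}\cap B(\bar x,\delta)$, pick one, call it $\sigma_\ast$, whose partition $P(\sigma_\ast)$ is minimal with respect to refinement — i.e. $\sigma_\ast$ is $\precsim$-minimal among the active permutations near $\bar x$. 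Shrinking $\delta$ if necessary, I would arrange that $\mathcal{M}\cap B(\bar x,\delta)\subseteq \Delta(\sigma_\ast)$: this is where \corref{cor-20Nov2007-1} does the work. Indeed, on $B(\bar x,\delta)$ the manifold $\mathcal{M}$ meets $\Delta(\sigma_\ast)$ and, by minimality, no strictly smaller stratum; the strata with $\sigma'\succ\sigma_\ast$ are boundary strata of $\Delta(\sigma_\ast)$ and, by the frontier/openness argument already used in the proofs of \corref{mc2} and \corref{cor-20Nov2007-1}, a locally symmetric manifold that touches $\Delta(\sigma_\ast)$ on an open piece cannot jump up to a higher stratum without leaving $\Delta(\sigma_\ast)^{\bot\bot}$ — more precisely, if near $\bar x$ one had points in $\Delta(\sigma')$ with $\sigma'\succ\sigma_\ast$ together with points in $\Delta(\sigma_\ast)$, the symmetry forced by \lemref{key} and the invariance of $\mathcal{M}$ under $S^{\succsim}(\sigma')$ would produce, via \corref{mc2} applied locally, the conclusion $\mathcal{M}\subseteq\Delta(\sigma')^{\bot\bot}$, contradicting $\bar x\in\Delta(\sigma_\ast)\setminus\Delta(\sigma')^{\bot\bot}$. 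Hence $\sigma_\ast\in\Sigma_{\mathcal{M}}$, so $\Sigma_{\mathcal{M}}\neq\emptyset$.

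Next I would prove the equality \eqref{sigma_min}. The inclusion $\{\sigma:\sigma\sim\sigma_\ast\}\subseteq\Sigma_{\mathcal{M}}$ is immediate, since $\Delta(\sigma)$ depends only on the $\sim$-class of $\sigma$ (see \eqref{Ph2}) and $\Sigma_{\mathcal{M}}$ is closed under $\sim$. For the reverse inclusion, take any $\tau\in\Sigma_{\mathcal{M}}$, so $\mathcal{M}\cap B(\bar y,\eta)\subseteq\Delta(\tau)$ for some $\bar y\in\mathcal{M}$, $\eta>0$. By \lemref{lemma_maximal} both $\sigma_\ast$ and $\tau$ are maximal in $\Delta(\mathcal{M})$, and by \corref{mc2} we get $\mathcal{M}\subseteq\Delta(\sigma_\ast)^{\bot\bot}$ and $\mathcal{M}\subseteq\Delta(\tau)^{\bot\bot}$; since $\mathcal{M}$ also meets both $\Delta(\sigma_\ast)$ and $\Delta(\tau)$, \propref{prop:delta}$(iii)$ forces $\sigma_\ast\precsim\tau$ and $\tau\precsim\sigma_\ast$, i.e.\ $\tau\sim\sigma_\ast$. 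This gives \eqref{sigma_min}.

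For the last assertion, suppose $\mathcal{M}\subseteq\Delta(\bar\sigma)^{\bot\bot}$. Since $\sigma_\ast\in\Delta(\mathcal{M})$ there is a point $x\in\mathcal{M}\cap\Delta(\sigma_\ast)\subseteq\Delta(\bar\sigma)^{\bot\bot}$, and \eqref{Ph3} in \propref{prop:delta}$(iii)$ says $\Delta(\bar\sigma)^{\bot\bot}=\bigcup_{\sigma'\precsim\bar\sigma}\Delta(\sigma')$, so $x\in\Delta(\sigma')$ for some $\sigma'\precsim\bar\sigma$; but $x\in\Delta(\sigma_\ast)$ too, whence $\sigma_\ast\sim\sigma'$ by \eqref{Ph2} and therefore $\sigma_\ast\precsim\bar\sigma$. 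The main obstacle is the first paragraph — verifying that one genuinely can shrink the neighborhood so that $\mathcal{M}$ is trapped inside the single minimal stratum $\Delta(\sigma_\ast)$, rather than merely inside $\Delta(\sigma_\ast)^{\bot\bot}$; this is exactly the content secured by \corref{cor-20Nov2007-1} (the "inactive strata" result), and the argument must invoke it carefully, using that strata strictly above $\sigma_\ast$ are "much smaller than" or comparable in the sense controlled there. Once that trapping is in place, the rest is bookkeeping with the order relations and \propref{prop:delta}.
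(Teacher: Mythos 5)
Your second and third paragraphs are fine (given $\Sigma_{\mathcal{M}}\ne\emptyset$, the equality \eqref{sigma_min} follows from \lemref{lemma_maximal} and the ``in particular'' follows from \propref{prop:delta}(iii) via the density argument you give). The gap is entirely in the first paragraph, which is precisely the part the paper flags as the only nontrivial content of the lemma: establishing $\Sigma_{\mathcal{M}}\ne\emptyset$.

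Two things go wrong there. First, the direction of the order is reversed. The characteristic permutation $\sigma_{\ast}$ is the one with the \emph{finest} active partition, i.e.\ a $\precsim$-\emph{maximal} element of $\Delta(\mathcal{M})$ (this is the content of \lemref{lemma_maximal}); you instead take a $\precsim$-minimal active permutation near $\bar x$, which is simply the $\sigma$ with $\bar x\in\Delta(\sigma)$. That choice is wrong in general: take $n=4$, $\sigma_*=(12)(3)(4)$, $\mathcal{M}=\Delta((12)(3)(4))\cup\Delta((12)(34))$, and $\bar x=(1,1,0,0)\in\Delta((12)(34))$. Near $\bar x$ both strata are active; the $\precsim$-minimal one is $(12)(34)$, yet no ball around $\bar x$ has $\mathcal{M}\cap B\subseteq\Delta((12)(34))$ (by \corref{density-hss} points in $\Delta(\sigma_*)$ accumulate at $\bar x$), and in fact $\Sigma_{\mathcal{M}}=\{\sigma:\sigma\sim(12)(3)(4)\}$. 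Second, the alleged contradiction is with an empty set: if $\sigma'\succ\sigma_*$, then by \propref{prop:delta}\eqref{Ph1} one has $\Delta(\sigma_*)\subseteq\Delta(\sigma')^{\bot\bot}$, so ``$\bar x\in\Delta(\sigma_*)\setminus\Delta(\sigma')^{\bot\bot}$'' is impossible and the sentence proves nothing. (Relatedly, strata $\Delta(\sigma')$ with $\sigma'\succ\sigma_*$ are not boundary strata of $\Delta(\sigma_*)$; it is the strata with $\sigma'\prec\sigma_*$ that lie in $\overline{\Delta(\sigma_*)}$, by \eqref{Ph3}.) Finally, \corref{cor-20Nov2007-1} cannot ``do the work'' here, since its hypothesis is exactly what we are trying to produce.

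Even after flipping minimal to maximal, you cannot keep the initial $\bar x$ fixed, because $\bar x$ may lie in a strictly coarser stratum than the maximal active one (as in the example), in which case every ball around $\bar x$ meets that coarser stratum. The paper's proof instead takes (a representative of) a maximal element $\sigma_{\circ}$ of $\Delta(\mathcal{M})$ lying just below the supremum $\sigma^{\circ}:=\bigvee\Delta(\mathcal{M})$, \emph{moves to a point} $\bar x\in\mathcal{M}\cap\Delta(\sigma_{\circ})$, shrinks $\delta$ so that $B(\bar x,\delta)$ meets only strata $\Delta(\sigma)$ with $\sigma\succsim\sigma_{\circ}$ (\lemref{key}\eqref{key_pt2}), and then rules out any $\sigma\succ\sigma_{\circ}$ being active near $\bar x$ by the ``no permutation strictly between $\sigma_{\circ}$ and $\sigma^{\circ}$'' gap condition, which would otherwise force $\sigma^{\circ}\in\Delta(\mathcal{M})$. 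That relocation-plus-gap argument is the ingredient missing from your plan.
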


\begin{proof}
  Assertion \eqref{sigma_min} follows directly from
  Lemma~\ref{lemma_maximal} provided one proves that
  $\Sigma_{\mathcal{M}}\neq \emptyset$. To do so, we assume that
  $\mathcal{M}\subseteq \Delta (\bar{\sigma})^{\bot \bot}$ for some
  $\bar{\sigma}\in \Sigma ^{n}$ (this is always true for
  $\bar{\sigma}=\mathrm{id}_{n}$) and we prove both that
  $\Sigma_{\Mm}\neq\emptyset$ as well as the second part of the
  assertion. Notice that $\sigma \precsim \bar{\sigma}$ for
  all $\sigma \in \Delta (\mathcal{M})$. Let us denote by $\sigma
  ^{\circ }:=\bigvee \Delta (\mathcal{M})$ any supremum of the
  nonempty set $\Delta (\mathcal{M})$ (that is, any permutation
  $\sigma ^{\circ }$ whose partition is the supremum of the partitions
  $P(\sigma )$ for all $\sigma \in \Delta (\mathcal{M})$). If $\sigma
  ^{\circ }\in \Delta (\mathcal{M}),$ then $\sigma ^{\circ }\in \Sigma
  _{\mathcal{M}}$, $\sigma ^{\circ }=\sigma _{\ast }$ and we are done. If $\sigma
  ^{\circ }\notin \Delta (\mathcal{M})$, then choose any permutation
  $\sigma _{\circ}\in \Delta (\mathcal{M})$ such that
  \begin{equation}
    \{\sigma \in \Delta (\mathcal{M}):\sigma^{\circ }\succ \sigma \succ
    \sigma _{\circ }\}=\emptyset.   \label{empty-set}
  \end{equation}
  Such a permutation $\sigma _{\circ }$ exists since
  $\Delta(\mathcal{M})$ is a finite partially ordered set. By the
  definition~of~$\sigma _{\circ }$ there exists $\bar{x}\in
  \mathcal{M}\cap \Delta (\sigma _{\circ })$, and by
  \lemref{key}$(\ref{key_pt2})$ we can find $\delta >0$ such that
  $B(\bar{x},\delta )$ intersects only strata $\Delta (\sigma )$
  corresponding to permutations $\sigma \succsim \sigma _{\circ }$. If
  there exists $x\in \mathcal{M}\cap B(\bar{x},\delta )$ such that
  $x\in \Delta (\sigma)$ for some permutation $\sigma \succ
  \sigma_{\circ}$, then $\sigma \in \Delta (\mathcal{M})$ and by
  (\ref{empty-set}) $\sigma \sim \sigma ^{\circ }$ contradicting the
  assumption that $\sigma ^{\circ }\notin \Delta (\mathcal{M}).$ Thus,
  $\mathcal{M}\cap B(\bar{x},\delta )\subseteq \Delta (\sigma_{\circ
  })$ and $\sigma_{\circ }=\sigma_{\ast }\in \Sigma_{\mathcal{M}}$.
\end{proof}

\begin{corollary}[Density of $\mathcal{M} \cap \Delta(\sigma_*)$ in $\mathcal{M}$]
  \label{density-hss}
  For every $\bar x \in \mathcal{M}$, every $\delta > 0$ and $\sigma_*
  \in \Sigma_{\mathcal{M}}$, we have
  $$
  \mathcal{M}  \cap \Delta(\sigma_*)\cap B(\bar x, \delta) \not= \emptyset.
  $$
\end{corollary}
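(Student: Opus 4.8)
The plan is to fix an arbitrary $\bar x\in\Mm$ and $\delta>0$, and to produce a point of $\mathcal{M}$ lying in $\Delta(\sigma_*)\cap B(\bar x,\delta)$. Consider the finite set $G$ of all permutations $\sigma\in\Sigma^n$ for which $\mathcal{M}\cap\Delta(\sigma)\cap B(\bar x,\delta)\neq\emptyset$. This set is nonempty, since $\bar x$ itself belongs to exactly one stratum $\Delta(\sigma_0)$ of the partition $\{\Delta(\sigma):\sigma\in\Sigma^n\}$ of $\RR^n$ (\corref{Corollary_stratification}), whence $\sigma_0\in G$. As $G$ is finite (and closed under $\sim$, by \eqref{Ph2}), it admits an element $\tau$ that is maximal for the preorder $\precsim$; fix a witness $\bar y\in\mathcal{M}\cap\Delta(\tau)\cap B(\bar x,\delta)$.

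The core of the argument is to show that $\tau\in\Sigma_{\mathcal{M}}$, that is, that $\mathcal{M}$ sits inside $\Delta(\tau)$ on a whole neighbourhood of $\bar y$. Choose $\delta''>0$ small enough that $B(\bar y,\delta'')\subseteq B(\bar x,\delta)$ and that $B(\bar y,\delta'')$ meets only strata $\Delta(\sigma)$ with $\sigma\succsim\tau$; this is possible by \lemref{key}$(ii)$ (cf. the opening of the proof of \propref{Proposition_same_stratum}), since $\bar y\in\Delta(\tau)$ so that $S^{\succsim}(\bar y)=S^{\succsim}(\tau)$. Now suppose some stratum $\Delta(\sigma)$ meets $\mathcal{M}\cap B(\bar y,\delta'')$. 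On the one hand $\sigma\succsim\tau$ by the choice of $\delta''$; on the other hand, since $B(\bar y,\delta'')\subseteq B(\bar x,\delta)$ we get $\mathcal{M}\cap\Delta(\sigma)\cap B(\bar x,\delta)\neq\emptyset$, i.e. $\sigma\in G$. Maximality of $\tau$ in $G$ then forces $\sigma\sim\tau$, hence $\Delta(\sigma)=\Delta(\tau)$ by \eqref{Ph2}. Therefore $\mathcal{M}\cap B(\bar y,\delta'')\subseteq\Delta(\tau)$, and this set is nonempty as it contains $\bar y$; thus $\tau\in\Sigma_{\mathcal{M}}$.

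It then suffices to invoke \lemref{lem-27Nov2007-a}: since $\sigma_*,\tau\in\Sigma_{\mathcal{M}}$, the characterization \eqref{sigma_min} yields $\tau\sim\sigma_*$, so $\Delta(\tau)=\Delta(\sigma_*)$ by \eqref{Ph2}. Consequently $\bar y\in\mathcal{M}\cap\Delta(\tau)\cap B(\bar x,\delta)=\mathcal{M}\cap\Delta(\sigma_*)\cap B(\bar x,\delta)$, which is therefore nonempty, as claimed. The only delicate point is the selection of $\tau$: one must take the stratum type that is $\precsim$-maximal among those occurring \emph{inside the prescribed ball} $B(\bar x,\delta)$ (not merely at $\bar x$), so that the local analysis around the representative $\bar y$ via \lemref{key} cannot leave $\Delta(\tau)$; the rest is routine bookkeeping with the order relations and with \lemref{lem-27Nov2007-a} (which itself rests on \corref{mc2}).
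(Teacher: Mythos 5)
Your proof is correct, and it does differ from the paper's in emphasis. The paper restricts to $\mathcal{M}' := \mathcal{M} \cap B(\bar x,\delta)$ for $\delta$ small enough that $\mathcal{M}'$ is still locally symmetric, then applies \lemref{lem-27Nov2007-a} directly to $\mathcal{M}'$ to obtain $\Sigma_{\mathcal{M}'}\neq\emptyset$, and finally argues $\Sigma_{\mathcal{M}'}=\Sigma_{\mathcal{M}}$ because $\Sigma_{\mathcal{M}'}\subset\Sigma_{\mathcal{M}}$ and all elements of $\Sigma_{\mathcal{M}}$ are equivalent. You instead select a $\precsim$-maximal stratum type $\tau$ occurring in $\mathcal{M}\cap B(\bar x,\delta)$ and re-derive, by a local argument at a witness $\bar y\in\Delta(\tau)$, that $\tau\in\Sigma_{\mathcal{M}}$, before invoking \lemref{lem-27Nov2007-a} only for the final identification $\tau\sim\sigma_*$. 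In substance you are re-proving, inside the prescribed ball, the ``existence of a suitable maximal element'' step that already appears in the proof of \lemref{lem-27Nov2007-a}; the paper's route is shorter because it exploits the purely local nature of the definitions to apply the lemma wholesale to the restricted manifold $\mathcal{M}'$. A small advantage of your version is that it handles an arbitrary $\delta>0$ head-on, whereas the paper fixes $\delta$ small at the outset and relies implicitly on the conclusion being monotone in $\delta$. Your step proving $\tau\in\Sigma_{\mathcal{M}}$ is sound: $\bar y$ lies in the interior of $B(\bar x,\delta)$, so one can shrink to $B(\bar y,\delta'')\subset B(\bar x,\delta)$ meeting only strata $\Delta(\sigma)$ with $\sigma\succsim\tau$, and then any such $\sigma$ with $\Delta(\sigma)\cap\mathcal{M}\cap B(\bar y,\delta'')\neq\emptyset$ is automatically in $G$, so maximality forces $\sigma\sim\tau$ and hence $\Delta(\sigma)=\Delta(\tau)$ by \eqref{Ph2}.
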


\begin{proof}
  Suppose $\bar x \in \mathcal{M} \cap \Delta(\sigma)$ and fix $\delta
  > 0$ small enough so that $B(\bar x, \delta)$ intersects only strata
  $\Delta(\sigma')$ for $\sigma' \succsim \sigma$. Then, by
  Lemma~\ref{key}, we have that the manifold
  $\mathcal{M}':=\mathcal{M} \cap B(\bar x, \delta)$ is locally
  symmetric.  By Lemma~\ref{lem-27Nov2007-a}, we obtain that
  $\Sigma_{\mathcal{M}'} \not= \emptyset$. Since
  $\Sigma_{\mathcal{M}'} \subset \Sigma_{\mathcal{M}}$, and
  all permutations in $\Sigma_{\mathcal{M}}$
  are equivalent, we have $\Sigma_{\Mm'} = \Sigma_{\Mm}$.
  Thus, $\Mm'\cap B(\bar y, \rho)\subset \De(\si_*)$
  for $\bar y\in \Mm'\subset \Mm$ and some $\rho>0$, whence the result follows.
\end{proof}

Clearly, if ${\rm id}_n \in \Sigma_{\mathcal{M}}$, then
$\Sigma_{\mathcal{M}}=\{{\rm id}_n\}$. In particular, we have the
following easy result.

\begin{corollary}\label{Ithoughtthiswasover}
  For a locally symmetric manifold $\mathcal{M} \subset \R^n$, we have
  $$
  \sigma_* = {\rm id}_n\,\,\, \Longleftrightarrow \,\,\,
  \mathcal{M} \cap \Delta({\rm id}_n) \not= \emptyset.
  $$
\end{corollary}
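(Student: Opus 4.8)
The plan is to prove both implications of the equivalence $\sigma_* = {\rm id}_n \Longleftrightarrow \mathcal{M} \cap \Delta({\rm id}_n) \neq \emptyset$ using the characterization of $\Sigma_{\mathcal{M}}$ obtained in \lemref{lem-27Nov2007-a}, together with \corref{density-hss}.

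For the forward implication, suppose $\sigma_* = {\rm id}_n$. Then ${\rm id}_n \in \Sigma_{\mathcal{M}}$, so by definition of $\Sigma_{\mathcal{M}}$ there exists $\bar x \in \mathcal{M}$ and $\delta > 0$ with $\mathcal{M} \cap B(\bar x, \delta) \subseteq \Delta({\rm id}_n)$. In particular $\bar x \in \Delta({\rm id}_n)$, so $\mathcal{M} \cap \Delta({\rm id}_n) \neq \emptyset$. (Alternatively, and even more directly, \corref{density-hss} applied with $\sigma_* = {\rm id}_n$ gives $\mathcal{M} \cap \Delta({\rm id}_n) \cap B(\bar x, \delta) \neq \emptyset$ for any $\bar x \in \mathcal{M}$.)

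For the reverse implication, suppose $\mathcal{M} \cap \Delta({\rm id}_n) \neq \emptyset$, i.e., there is $\bar x \in \mathcal{M}$ with all coordinates distinct, so $P(\bar x)$ is the discrete partition. Since $\Delta({\rm id}_n)$ is an open subset of $\RR^n$, a whole neighborhood $B(\bar x, \delta)$ is contained in $\Delta({\rm id}_n)$, hence $\mathcal{M} \cap B(\bar x, \delta) \subseteq \Delta({\rm id}_n)$. This shows ${\rm id}_n \in \Sigma_{\mathcal{M}}$. By \lemref{lem-27Nov2007-a}, $\Sigma_{\mathcal{M}} = \{\sigma : \sigma \sim \sigma_*\}$, so $\sigma_* \sim {\rm id}_n$, which forces $\sigma_* = {\rm id}_n$ since the only permutation equivalent to ${\rm id}_n$ (i.e. inducing the discrete partition) is ${\rm id}_n$ itself.

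I do not expect any serious obstacle here: the statement is essentially a direct unwinding of the definition of $\Sigma_{\mathcal{M}}$ and the already-established fact that $\Sigma_{\mathcal{M}}$ is a nonempty $\sim$-equivalence class (\lemref{lemma_maximal}, \lemref{lem-27Nov2007-a}). The only point deserving a word of care is the remark preceding the corollary — that ${\rm id}_n \in \Sigma_{\mathcal{M}}$ implies $\Sigma_{\mathcal{M}} = \{{\rm id}_n\}$ — which is exactly what makes the reverse direction clean; this in turn rests on the triviality that ${\rm id}_n$ is the unique permutation whose associated partition $P({\rm id}_n)$ is the discrete partition, so no permutation other than ${\rm id}_n$ is equivalent to it.
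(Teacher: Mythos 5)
Your proof is correct, and both directions match the paper's in substance. The only (minor) difference is in the reverse direction: the paper observes that $\mathcal{M} \cap \Delta(\mathrm{id}_n) \neq \emptyset$ places $\mathrm{id}_n$ in $\Delta(\mathcal{M})$ and then invokes Lemma~\ref{lemma_maximal} (maximality of the elements of $\Sigma_{\mathcal{M}}$ in $\Delta(\mathcal{M})$) together with the fact that $\mathrm{id}_n$ is the unique maximum of $\Sigma^n$; you instead exploit the openness of $\Delta(\mathrm{id}_n)$ to directly place $\mathrm{id}_n$ in the smaller set $\Sigma_{\mathcal{M}}$, and then read off $\sigma_* \sim \mathrm{id}_n$ from the equivalence-class description in Lemma~\ref{lem-27Nov2007-a}. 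Your route buys a small amount of self-containedness (you do not need the maximality statement, only the fact that $\Sigma_{\mathcal{M}}$ is a single $\sim$-class), at the cost of a topological remark (openness of the top stratum) that the paper leaves implicit; both are valid one-line arguments once the supporting lemmas are in hand.
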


\begin{proof}
  The necessity is obvious, while the sufficiency follows from
  Lemma~\ref{lemma_maximal}, since ${\rm id}_n \in \Delta (\mathcal{M})$
  is the unique maximal element of $\Sigma^n$.
\end{proof}

Thus, the permutation $\sigma_{\ast}$ is naturally associated with the locally
symmetric manifold $\Mm$ via the property
\begin{equation}
  \label{eqn-27Nov2007-d}
  \exists (\bar x \in \mathcal{M}, \delta >0) \,\, \mbox{\rm such that} \,\,
  \mathcal{M} \cap B(\bar x, \delta) \subseteq
  \Delta(\sigma_{\ast}).
\end{equation}
Notice that $\si_{\ast}$ is unique modulo $\sim$, and will be called
{\it characteristic} permutation of $\Mm$.  Even though the
definition of the characteristic permutation $\sigma_{\ast}$ is
local, it has global properties stemming from
Corollary~\ref{cor-20Nov2007-1}, that is,
\begin{equation} \label{eqn-28Nov2007-1}
  \mathcal{M}\  \subseteq\
  \Delta(\sigma_{\ast})^{\bot\bot} \setminus \!\bigcup_{\sigma \prec \hspace{-0.1cm} \prec \sigma_{\ast}} \Delta(\sigma)\
  = \bigcup_{{\scriptsize   \begin{array}{c}\sigma \sim \si_{\ast}\\[-0.5ex] \si\prec  \hspace{-0.1cm} \sim \sigma_{\ast}\end{array}}} \hspace*{-1ex}\Delta(\sigma)
  \ \subseteq\  \Delta(\sigma_{\ast})^{\bot\bot}\,,
\end{equation}
and $\sigma_*$ is the minimal permutation for which
(\ref{eqn-28Nov2007-1}) holds.  The above formula determines precisely
which strata can intersect $\mathcal{M}$. Indeed, if $ \sigma \in
\Delta(\mathcal{M})$ then necessarily either $\sigma \sim
\sigma_{\ast}$ or $\sigma \prec\hspace{-0.1cm}\sim
\sigma_{\ast}$. Notice also that when $\sigma \prec\hspace{-0.1cm}\sim
\sigma_{\ast}$, every set in $P(\sigma)$, which is not in
$P(\sigma_{\ast})$, is obtained by merging sets of length one from
$P(\sigma_{\ast})$. Another consequence is the following relation:
\begin{equation}
  \label{32}
  T_{\Mm}(\bx)\subset\Delta(\si_{\ast})^{\bot\bot}\qquad\text{for all }
  \bx\in\Mm\,.
\end{equation}

\begin{remark}
\label{rem-8Apr2009}
Observe that for any fixed permutation $\sigma_* \in
\Sigma^n$, the set
$$
\bigcup_{{\scriptsize \begin{array}{c}\sigma \sim \si_{\ast}\\[-0.5ex] \si\prec  \hspace{-0.1cm} \sim \sigma_{\ast}\end{array}}} \hspace*{-1ex}\Delta(\sigma)
$$
is a locally symmetric manifold with characteristic permutation
$\sigma_*$. On the other hand, \eqref{eqn-28Nov2007-1} shows that
the affine space $\Delta(\sigma)^{\bot\bot}$ is a locally symmetric
manifold if (and only if) $\sigma \in \Sigma^n$ is equal to
$\mbox{id}_n$ or is a cycle of length $n$.  \qed
\end{remark}

We conclude with another fact about the characteristic permutation,
that stems from the assumption $\mathcal{M} \cap \R^n_{\ge} \not=
\emptyset$ (see \defref{defn-spectrman}). Though
(\ref{eqn-28Nov2007-1}) describes well the strata that can intersect
the manifold $\mathcal{M}$ (which is going to be sufficient for most
of our needs) we still need to say more about a slightly finer issue
- a necessary condition for a stratum to intersect $\mathcal{M} \cap
\R^n_{\ge}$.

\begin{lemma}\label{consec}
  Suppose that $\bar x \in \mathcal{M} \cap \R^n_{\ge} \cap \Delta(\sigma)$.
  Then, every set $I_i$ of the partition
  $$
P(\sigma) = \{I_1,\ldots,I_{\kappa+m}\}
  $$
  contains consecutive integers from $\N_n$.
\end{lemma}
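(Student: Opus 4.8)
The plan is to exploit the single hypothesis that does the work here, namely $\bar x \in \RR^n_{\ge}$, together with the defining property of $\Delta(\sigma)$. By \eqref{def:delta}, the assumption $\bar x \in \Delta(\sigma)$ means exactly that $P(\sigma) = P(\bar x)$, so each block $I_i$ of $P(\sigma)$ is precisely a maximal set of indices on which the coordinates of $\bar x$ coincide; see \eqref{aris1}. Thus it suffices to show that for a non-increasing vector, the index sets of equal coordinates are intervals of consecutive integers.

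First I would fix a block $I_i \in P(\bar x)$ and take any two indices $p, q \in I_i$ with $p < q$, so that $\bar x_p = \bar x_q$. For any intermediate index $r$ with $p \le r \le q$, the monotonicity $\bar x_p \ge \bar x_r \ge \bar x_q$ combined with $\bar x_p = \bar x_q$ forces $\bar x_r = \bar x_p$, hence $r$ lies in the same block of $P(\bar x) = P(\sigma)$, i.e. $r \in I_i$. Therefore $I_i$ contains, along with any two of its elements, every integer lying between them; that is, $I_i$ is a set of consecutive integers from $\N_n$. Since $i$ was arbitrary, every block of $P(\sigma)$ has this property, which completes the proof.

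I do not expect any real obstacle: the statement is a direct consequence of the ordering convention on $\RR^n_{\ge}$ and the identification $P(\bar x) = P(\sigma)$ coming from $\bar x \in \Delta(\sigma)$. The only point to be careful about is not to confuse the partition $P(\sigma)$ attached to the permutation with the partition $P(\bar x)$ attached to the point; these agree precisely because $\bar x \in \Delta(\sigma)$, and it is the latter that is visibly built from consecutive runs of equal entries once $\bar x$ is non-increasing.
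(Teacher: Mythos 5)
Your proof is correct and is essentially the same argument as the paper's: both use the identification $P(\sigma)=P(\bar x)$ and then squeeze an intermediate index $r$ between two equal entries using the monotonicity of $\bar x$. The paper phrases it as a proof by contradiction while you give the direct version, but the underlying idea is identical.
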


\begin{proof}
  The lemma is trivially true, for sets $I_i$ with cardinality one.
  So, suppose on the contrary, that for some $\ell \in
  \{1,\ldots,\kappa+m\}$, the set $I_\ell$ contains at least two elements
  but does not contain consecutive numbers from $\N_n$. That
  is, there are three indexes $i,j,k \in \N_n$ with $i < j < k$ such
  that $i,k \in I_\ell$ but $j \not\in I_\ell$. Then, the fact $\bar x
  \in \Delta(\sigma)$ implies that $\bar x_i = \bar x_k$, while the
  fact that $\bar x \in \R^n_{\ge}$ implies that $\bar x_i \ge \bar
  x_j \ge \bar x_k$. We obtain $\bar x_i = \bar x_j = \bar x_k$, which
  contradicts the assumption $j \not\in I_\ell$.
\end{proof}

Lemma~\ref{consec} has consequences for the characteristic permutation
$\sigma_*$ of $\Mm$.

\begin{theorem}[Characteristic partition $P(\si_*)$]\label{consec-sets}
  Every set in the partition $P(\sigma_*)$
  contains consecutive integers from $\N_n$.
\end{theorem}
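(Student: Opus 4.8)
The plan is to deduce the statement from Lemma~\ref{consec}, applied at a well-chosen point of $\Mm\cap\R^n_{\ge}$, together with the global description of $\Mm$ recorded in \eqref{eqn-28Nov2007-1}. Since $\Mm$ is in particular a locally symmetric set, Definition~\ref{defn-spectrman} gives $\Mm\cap\R^n_{\ge}\neq\emptyset$; fix any $\bar x\in\Mm\cap\R^n_{\ge}$. Because the strata $\{\Delta(\sigma):\sigma\in\Sigma^n\}$ partition $\R^n$ (Corollary~\ref{Corollary_stratification}), there is $\sigma'\in\Sigma^n$ with $\bar x\in\Delta(\sigma')$, i.e. $P(\sigma')=P(\bar x)$. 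As $\bar x\in\Mm$ and the strata are pairwise disjoint, \eqref{eqn-28Nov2007-1} forces either $\sigma'\sim\sigma_{\ast}$ or $\sigma'\prec \hspace{-0.1cm}\sim \sigma_{\ast}$.

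Next I would isolate the purely combinatorial fact that, in either of these two cases, every block $B$ of $P(\sigma_{\ast})$ with $|B|\ge 2$ is already a block of $P(\sigma')$. If $\sigma'\sim\sigma_{\ast}$ this is immediate. If $\sigma'\prec \hspace{-0.1cm}\sim \sigma_{\ast}$, take such a $B$; since $P(\sigma_{\ast})$ refines $P(\sigma')$ there is $B'\in P(\sigma')$ with $B\subseteq B'$. Were $B'\notin P(\sigma_{\ast})$, then by Definition~\ref{def-smaller} (the description of the relation $\prec \hspace{-0.1cm}\sim$) the set $B'$ would be a union of one-element sets of $P(\sigma_{\ast})$, which cannot contain the block $B\in P(\sigma_{\ast})$ of size at least two — a contradiction. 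Hence $B'\in P(\sigma_{\ast})$, and then $B\subseteq B'$ with $B,B'\in P(\sigma_{\ast})$ gives $B=B'\in P(\sigma')$.

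Finally, since $\bar x\in\Mm\cap\R^n_{\ge}\cap\Delta(\sigma')$, Lemma~\ref{consec} shows that every block of $P(\sigma')$ consists of consecutive integers of $\N_n$; combined with the previous paragraph, every block of $P(\sigma_{\ast})$ of cardinality at least two consists of consecutive integers. One-element blocks of $P(\sigma_{\ast})$ trivially consist of consecutive integers, so the conclusion holds for all blocks of $P(\sigma_{\ast})$.

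The only genuine subtlety — the step I expect to need the most care — is the passage from the chosen point $\bar x\in\Mm\cap\R^n_{\ge}$, whose active stratum $\Delta(\sigma')$ need not be the "top" stratum $\Delta(\sigma_{\ast})$, to the characteristic permutation $\sigma_{\ast}$ itself. This is precisely where the refined relation $\prec \hspace{-0.1cm}\sim$ and the global inclusion \eqref{eqn-28Nov2007-1} are essential: they guarantee that dropping from $\sigma_{\ast}$ to the actual stratum of $\bar x$ only merges singleton blocks, hence leaves the multi-element blocks of $P(\sigma_{\ast})$ — the only ones that could possibly fail to be intervals — untouched, so that Lemma~\ref{consec} can be brought to bear on them.
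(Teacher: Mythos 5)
Your proof is correct and follows essentially the same route as the paper's: pick a point of $\Mm\cap\R^n_{\ge}$, use \eqref{eqn-28Nov2007-1} to see its active stratum is $\Delta(\sigma')$ with $\sigma'$ not much smaller than $\sigma_{\ast}$, and then apply Lemma~\ref{consec}. You merely spell out more explicitly than the paper does the short combinatorial step that every multi-element block of $P(\sigma_{\ast})$ is still a block of $P(\sigma')$.
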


\begin{proof}
  Let $\si_{\ast}\in\Sigma_{\Mm}$ be the characteristic permutation of
  $\Mm$.  Since $\mathcal{M} \cap \R^n_{\ge} \not= \emptyset$ by
  Definition~\ref{defn-spectrman}, there is a stratum
  $\Delta(\sigma)$ intersecting $\mathcal{M} \cap \R^n_{\ge}$.
  Formula~(\ref{eqn-28Nov2007-1}) implies that $\sigma$ is not much
  smaller than $\sigma_*$, {\it i.e.} we have $\sigma \sim
  \sigma_{\ast}$ or $\sigma \prec\hspace{-0.1cm}\sim \sigma_{\ast}$.
  If a set $I^*_i \in P(\sigma_*)$ has more than one element, then it
  must be an element of the partition $P(\sigma)$ as well, by the fact
  that $\sigma$ is not much smaller than $\sigma_*$. Thus, $I^*_i$ contains
  consecutive elements from $\N_n$, by Lemma~\ref{consec}.
\end{proof}

For example, according to Theorem~\ref{consec-sets}, the permutation
$(1)(274)(35)(6) \in \Sigma^7$ cannot be the characteristic
permutation of any locally symmetric manifold $\mathcal{M}$ in
$\R^7$ (that intersects $\R^7_{\ge}$).

Let us illustrate the limitations imposed by the previous result.
Suppose that $n=12$ and the partition $P(\si_*)$ of $\N_{12}$ corresponding to
$\sigma_* \in \Sigma^{12}$ is
$$
P(\sigma_*) = \{ \{1\}, \{2\}, \{3, 4, 5\}, \{6\}, \{7\}, \{8\}, \{9\}, \{10, 11, 12\}\}.
$$
Pick a permutation $\sigma \in \Sigma^{12}$ with partition
$$
P(\sigma) = \{\{1\}, \{2\}, \{3,4,5\}, \{6,8,9\}, \{7\}, \{10, 11,12\}\}.
$$
In comparison with Formula~(\ref{eqn-28Nov2007-1}), $\sigma$ is not
much smaller than $\sigma_*$ but the stratum $\Delta(\sigma)$ does not
intersect $\mathcal{M} \cap \R^n_{\ge}$. Thus, the set of strata that
may intersect with $\mathcal{M} \cap \R^n_{\ge}$ is further reduced.

%***************************************************************************
\subsection[Canonical decomposition induced by $\sigma_{\ast}$]{Canonical decomposition induced by \boldmath$\sigma_{\ast}$}\label{sssec-341}
%***************************************************************************

We explain in this subsection that the characteristic permutation
$\sigma_{\ast}$ of $\Mm$ induces a decomposition of the space
$\RR^n$ that will be used later to control the lift into the matrix
space $\Sn$. We consider the partition $P(\sigma_*)$ of $\N_n$
associated with $\si_*$, and we define
\begin{equation}\label{aris-m}
  m_{\ast}:=
  \mbox{ number of sets in $P(\sigma_{\ast})$ that have more than one element},
\end{equation}
and
\begin{equation}\label{aris-k}
  \kmin:=\mbox{ number of sets in $P(\sigma_{\ast})$ with exactly one element.}
\end{equation}
In other words, $\kmin$ is the number of elements of $\N_n$ that are
fixed by the permutation $\sigma_{\ast}$, or equivalently,
$\kmin:=|\mathbb{N}_{n}\setminus\mathrm{supp}(\sigma_{\ast})|$.
Hence, we have
\begin{align}\label{part-of-sigmamin}
  P(\sigma_{\ast})
  :=\{I_1^{\ast},\ldots,I_\kmin^{\ast}, I_{\kmin+1}^{\ast},\ldots,I_{\kmin+m_{\ast}}^{\ast}\},
\end{align}
where $\{I_1^{\ast},\ldots,I_\kmin^{\ast}\}$ are the blocks of size one.
The following example treats the particular case where $\si_{\ast}$
has at most one cycle of length one.

\begin{example}[Case: $\kmin=0$ or $1$]\label{ex-k01}
  The assumption $\kmin\in\{0,1\}$ means that the permutation~$\sigma_{\ast}$ fixes at most one element, or in other words, for
  every $x\in\Mm$ at most one coordinate of the vector
  $x=(x_1,\dots,x_n)$ is not repeated. In this case, by
  Example~\ref{ex:ad}(\ref{ex:ad-part-v}), every $\sigma$ that is smaller than
  $\sigma_*$ is much smaller than $\sigma_*$ and therefore
  \eqref{eqn-28Nov2007-1} together with
  Proposition~\ref{prop:delta}$(iii)$ yields $\Mm \subset
  \Delta(\sigma_{\ast})$.~\qed
\end{example}

The partition of the characteristic permutation $\sigma_{\ast}$ of
$\Mm$ yields a {\it canonical split} of $\RR^n$ associated to $\Mm$,
as a direct sum of two parts, the spaces $\RR^{\kappa_{\ast}}$ and
$\RR^{n-\kmin}$, as follows: any vector $x\in\RR^n$ is represented as
\begin{equation}
\label{eqn-arisd1}
x = x^F \otimes x^M
\end{equation}
where
\begin{itemize}
\item $x^F \in \R^{\kmin}$ is the subvector of $x \in \R^n$ obtained
  by collecting from $x$ the coordinates that have indices in
  $\mathbb{N}_{n}\setminus\mathrm{supp}(\sigma_{\ast})$ and preserving
  their relative order;
\item $x^M \in \R^{n-\kmin}$ is the subvector of $x \in \R^n$ obtained
  by collecting from $x$ the remaining $n-\kmin$ coordinates,
  preserving their order again.
\end{itemize}
It is readily seen that the canonical split is linear and also a
reversible operation. Reversibility means that given any two
vectors $x^F \in \R^{\kappa_*}$ and $x^M \in \R^{n-\kappa_*}$, there
is a unique vector $x^F \otimes x^M \in \R^n$, such that
$$
(x^F \otimes x^M)^F = x^F \quad \mbox{ and } \quad (x^F \otimes x^M)^M = x^M.
$$
This operation is called {\it canonical product}.

\begin{example}\label{example-order}
  If $\sigma_* = (1)(23)(4)(567)(8) \in \Sigma^8$ and $x \in \R^8$
  then, $x^F = (x_1,x_4,x_8)$ and $x^M = (x_2,x_3,x_5,x_6,x_7)$.
  Conversely, if
  $$
  x^F=(a_1,a_2,a_3) \qquad \mbox{and} \qquad x^M=(b_1,b_2,b_3,b_4,b_5)
  $$
  then
  $$
  x^F \otimes x^M = (a_1,b_1,b_2,a_2, b_3,b_4,b_5, a_3).
  $$ In addition, if $x \in \R^8_{\ge}$ then $x^F \in \R^3_{\ge}$ and
  $x^M \in \R^5_{\ge}$, but the converse is not true: if $x^F \in
  \R^3_{\ge}$ and $x^M \in \R^5_{\ge}$ then in general, $x^F \otimes
  x^M$ is not in $\R^8_{\ge}$.\qed
\end{example}

\medskip

Furthermore, if $\si\in\Sigma^n$ is any permutation whose cycles do not
contain elements simultaneously from $\mathrm{supp}(\sigma_*)$ and
$\mathbb{N}_n\setminus\mathrm{supp}(\sigma_*)$, then it can be decomposed as
\begin{equation}\label{eqn-arisd2}
  \sigma = \sigma^F \circ \sigma^M,
\end{equation}
where
\begin{itemize}
\item $\sigma^F\in\Sigma^\kmin$ is obtained by those cycles of
  $\sigma$ that contain only elements from
  $\mathbb{N}_{n}\setminus\mathrm{supp}(\sigma_{\ast})$,
\item $\sigma^M\in\Sigma^{n-\kmin}$ is obtained from the remaining
  cycles of $\sigma$ (those that do not contain any element of
  $\mathbb{N}_{n}\setminus\mathrm{supp}(\sigma_{\ast})$).
\end{itemize}

Observe that $\si$ is the infimum of $\si^F$ and $\si^M$ ($\si
=\si ^{F}\wedge \si ^{M}$). We refer to (\ref{eqn-arisd2}) as
the {\it $(F,M)$-decomposition} of the permutation~$\si$.  For
example, applying this decomposition to $\si_{\ast}$ yields
\begin{equation}\label{eqn-arisd5}
\si_{\ast}^F=\mathrm{id}_{\kmin},
\end{equation}
where $\mathrm{id}_{\kmin}$ is the identity permutation on the set
$\mathbb{N}_{n}\setminus\mathrm{supp}(\sigma_{\ast})$.  Note that in
the particular case~$\kmin=n$, we have $\sigma_{\ast}=\id_n$, all
coefficients of $x\in\Delta(\si_{\ast})$ are different, and $x=x^F$.

The following proposition is a straightforward consequence of
\eqref{eqn-arisd5} and Example~\ref{ex:ad}(\ref{ex:ad-part-v}).

\begin{proposition}[$(F,M)$-decomposition for $\sigma \prec\hspace{-0.1cm}\sim \sigma_{\ast}$]\label{prop-arisd1}
  The following equivalences hold:
  \[
  \sigma \sim \sigma_{\ast}
  \ \iff \ \sigma^F =\mathrm{id}_\kmin
  \qandq \sigma^M \sim {\sigma_{\ast}}^M
  \]
  and
  \[
  \sigma \prec \hspace{-0.1cm}\sim \sigma_{\ast}
  \ \iff \ \sigma^F \prec \mathrm{id}_\kmin
    \qandq \sigma^M \sim {\sigma_{\ast}}^M.
    \]
\end{proposition}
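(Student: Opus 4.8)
The plan is to reduce each of the two equivalences to a comparison, carried out separately on the two ``halves'' of the associated partitions, of $\sigma$ with $\sigma_{\ast}$; the statement makes sense only for permutations $\sigma$ admitting the $(F,M)$-decomposition \eqref{eqn-arisd2} (no cycle of $\sigma$ meeting both $\mathrm{supp}(\sigma_{\ast})$ and its complement), and in our intended range of application such decomposability is guaranteed by \eqref{eqn-28Nov2007-1}. The key elementary remark to record first is that, for such $\sigma$, the partition $P(\sigma)$ is the disjoint union, as a collection of blocks, of $P(\sigma^F)$ --- a partition of $\N_n\setminus\mathrm{supp}(\sigma_{\ast})$ --- and $P(\sigma^M)$ --- a partition of $\mathrm{supp}(\sigma_{\ast})$; the same holds for $\sigma_{\ast}$ itself, with $P(\sigma_{\ast}^F)=P(\mathrm{id}_\kmin)$ the discrete partition of $\N_n\setminus\mathrm{supp}(\sigma_{\ast})$ by \eqref{eqn-arisd5}, whereas every block of $P(\sigma_{\ast}^M)$ has at least two elements. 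From this remark one gets immediately that $P(\sigma)\subseteq P(\sigma_{\ast})$ if and only if $P(\sigma^F)\subseteq P(\mathrm{id}_\kmin)$ and $P(\sigma^M)\subseteq P(\sigma_{\ast}^M)$ (the first inclusion being automatic since $P(\mathrm{id}_\kmin)$ is discrete), and this refinement is an equality exactly when both component refinements are equalities.

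The first equivalence is then almost a restatement: $\sigma\sim\sigma_{\ast}$ means $P(\sigma)=P(\sigma_{\ast})$, which by the remark amounts to $P(\sigma^F)=P(\mathrm{id}_\kmin)$ together with $P(\sigma^M)=P(\sigma_{\ast}^M)$; the former says $\sigma^F\sim\mathrm{id}_\kmin$, hence $\sigma^F=\mathrm{id}_\kmin$ because a permutation with discrete associated partition has no non-trivial cycle, and the latter says precisely $\sigma^M\sim\sigma_{\ast}^M$. For the second equivalence, $\sigma\prec\hspace{-0.1cm}\sim\sigma_{\ast}$ means $\sigma\prec\sigma_{\ast}$ while $\sigma$ is not much smaller than $\sigma_{\ast}$. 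By the remark, $\sigma\prec\sigma_{\ast}$ forces $\sigma^M\precsim\sigma_{\ast}^M$ and makes at least one of the two half-refinements strict, so it only remains to locate that strictness.

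The one point that needs care --- and where Example~\ref{ex:ad}(\ref{ex:ad-part-v}) enters --- is that under the ``not much smaller'' hypothesis the strictness cannot sit in the $M$-half: since $\sigma_{\ast}^M$ fixes no point (all blocks of $P(\sigma_{\ast}^M)$ have size $\ge 2$), Example~\ref{ex:ad}(\ref{ex:ad-part-v}) shows that $\sigma^M\prec\sigma_{\ast}^M$ would already entail $\sigma^M\prec\hspace{-0.1cm}\prec\sigma_{\ast}^M$, and a block of $P(\sigma^M)\subseteq P(\sigma)$ witnessing this is verbatim a block witnessing $\sigma\prec\hspace{-0.1cm}\prec\sigma_{\ast}$; hence ``$\sigma$ not much smaller than $\sigma_{\ast}$'' yields $\sigma^M\sim\sigma_{\ast}^M$, and the strictness must live in the $F$-half, i.e.\ $\sigma^F\prec\mathrm{id}_\kmin$. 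Conversely, if $\sigma^F\prec\mathrm{id}_\kmin$ and $\sigma^M\sim\sigma_{\ast}^M$, then $P(\sigma)\subsetneq P(\sigma_{\ast})$, and every block of $P(\sigma)$ not already in $P(\sigma_{\ast})$ lies in $P(\sigma^F)$, hence is a union of (singleton) blocks of $P(\sigma_{\ast})$, so $\sigma\prec\hspace{-0.1cm}\sim\sigma_{\ast}$. I expect the only real chore to be keeping the direction of the refinement order straight throughout and confirming that the ambient $(F,M)$-decomposability is available where the proposition is invoked.
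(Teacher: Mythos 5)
Your proof is correct and follows exactly the route the paper intends: the paper offers no proof, merely asserting the proposition as "a straightforward consequence of \eqref{eqn-arisd5} and Example~\ref{ex:ad}(\ref{ex:ad-part-v})," and your argument is precisely an unpacking of those two ingredients — the block-disjointness of $P(\sigma^F)$ and $P(\sigma^M)$ inside $P(\sigma)$, the fact that $\sigma_{\ast}^F=\mathrm{id}_{\kmin}$ makes the $F$-half refinement automatic and discrete, and the observation via Example~\ref{ex:ad}(\ref{ex:ad-part-v}) that $\sigma_{\ast}^M$ fixing no point forces any strict $M$-half refinement to be a "much smaller" one. The handling of the $(F,M)$-decomposability hypothesis (guaranteed in context by \eqref{eqn-28Nov2007-1}) is also the right reading of the proposition's scope.
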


Note that the $(F,M)$-decomposition is not going to be applied to
permutations $\si\in\Sigma^n$ that are much smaller than $\si_{\ast}$,
since these permutations may have a cycle containing elements from
both $\mathrm{supp}(\sigma_{\ast})$ and
$\mathbb{N}_{n}\setminus\mathrm{supp}(\sigma_{\ast})$. In fact,
\eqref{eqn-arisd2} can be applied only to permutations $\tau \in
S^{\succsim}(\sigma)$ with $\si \in \Delta(\Mm)$, as explained in the
following result, whose proof is straightforward.

\begin{proposition}[$(F,M)$-decomposition for active permutations]\label{prop-arisd2}
  Let $\si\in \Delta(\Mm)$ and $\tau\in S^{\succsim}(\sigma)$.  Then,
  $\tau$ admits $(F,M)$-decomposition $\tau=\tau^F\circ \tau^M$
  given in \eqref{eqn-arisd2} with
  $$
  \si^F \precsim \tau^F \precsim \mathrm{id}_{\kmin}
  \qquad\text{and}\qquad
  \si_{\ast}^M\sim \si^M \precsim \tau^M.
  $$
\end{proposition}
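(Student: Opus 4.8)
The plan is to work entirely at the level of the partitions $P(\cdot)$ and to feed in the order relations already at our disposal. First I would observe that, since $\si\in\Delta(\Mm)$, formula~\eqref{eqn-28Nov2007-1} forces either $\si\sim\si_{\ast}$ or $\si\prec\hspace{-0.1cm}\sim\si_{\ast}$. In both cases Proposition~\ref{prop-arisd1} applies, so $\si$ admits the $(F,M)$-decomposition $\si=\si^F\circ\si^M$ with $\si^F\precsim\mathrm{id}_{\kmin}$ and $\si^M\sim\si_{\ast}^M$. Besides already giving two of the relations we are after, this tells us that no cycle of $\si$ meets both $\mathrm{supp}(\si_{\ast})$ and its complement; equivalently, every block of $P(\si)$ is contained entirely in $\N_n\setminus\mathrm{supp}(\si_{\ast})$ or entirely in $\mathrm{supp}(\si_{\ast})$.

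Next I would use that $\tau\in S^{\succsim}(\si)$ means precisely $P(\si)\subseteq P(\tau)$, i.e. $P(\tau)$ refines $P(\si)$, so that every block of $P(\tau)$ sits inside a single block of $P(\si)$. Combined with the previous paragraph, every block of $P(\tau)$ is then also contained entirely in $\N_n\setminus\mathrm{supp}(\si_{\ast})$ or entirely in $\mathrm{supp}(\si_{\ast})$. Rephrased in terms of cycles, this is exactly the hypothesis under which \eqref{eqn-arisd2} makes sense, and hence $\tau$ admits an $(F,M)$-decomposition $\tau=\tau^F\circ\tau^M$ with $\tau^F\in\Sigma^{\kmin}$ and $\tau^M\in\Sigma^{n-\kmin}$.

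It then remains to compare the two factors. Since no block of $P(\si)$ or of $P(\tau)$ straddles the set $\N_n\setminus\mathrm{supp}(\si_{\ast})$, restricting the refinement $P(\si)\subseteq P(\tau)$ to this index set is a legitimate operation and yields $P(\si^F)\subseteq P(\tau^F)$, that is $\si^F\precsim\tau^F$; restricting instead to $\mathrm{supp}(\si_{\ast})$ yields $\si^M\precsim\tau^M$. Finally $\tau^F\precsim\mathrm{id}_{\kmin}$ is automatic, since $\mathrm{id}_{\kmin}$ corresponds to the discrete partition and is thus the maximum element of $\Sigma^{\kmin}$, while $\si^M\sim\si_{\ast}^M$ was recorded in the first step. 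Concatenating, we obtain the two chains $\si^F\precsim\tau^F\precsim\mathrm{id}_{\kmin}$ and $\si_{\ast}^M\sim\si^M\precsim\tau^M$, which is the assertion.

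The whole argument is essentially bookkeeping, so I do not anticipate a genuine obstacle; the one point deserving an explicit sentence is the claim that the $(F,M)$-splitting of $\si$ is inherited by every $\tau\succsim\si$ (refinement can only break blocks into smaller pieces, never merge across the $\mathrm{supp}(\si_{\ast})$-boundary), since this is precisely what legitimizes restricting the partitions to the two halves of $\N_n$ in the final step.
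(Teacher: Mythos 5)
Your argument is correct and is essentially the one the authors intend when they label the proof ``straightforward'': once one sees that no block of $P(\si)$, and hence no block of $P(\tau)\supseteq P(\si)$, can straddle $\mathrm{supp}(\si_\ast)$ and its complement, the decomposition and the four order relations fall out by restricting the refinement $P(\si)\subseteq P(\tau)$ to each half of $\N_n$. One small reordering is worth making: Proposition~\ref{prop-arisd1} already presupposes that the $(F,M)$-decomposition of $\si$ exists, so you should first deduce its existence directly from~\eqref{eqn-28Nov2007-1} (a straddling block would force $\si\prec\hspace{-0.1cm}\prec\si_\ast$), and only then invoke Proposition~\ref{prop-arisd1} for the relations $\si^F\precsim\mathrm{id}_\kmin$ and $\si^M\sim\si_\ast^M$.
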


%***************************************************************************
\subsection{Reduction of the normal space} \label{sssec-342}
%*****************************************************************************

In this section we fix a point $\bar x$ and a permutation $\si$ such
that $\bx\in \Mm\cap\Delta (\si)$, and reduce the relevant (active) part of the
tangent and normal space with respect to the canonical split
\begin{equation}
  \RR^{n}=\RR^{\kmin}\otimes \RR^{n-\kmin}  \label{eqn-arisd7}
\end{equation}
induced by the characteristic permutation $\si_{\ast }$ of $\Mm$.

Let us consider any permutation $\tau \in \Sigma ^{n}$ for which the
decomposition (\ref{eqn-arisd2})
\begin{equation*}
\tau =\tau ^{F}\circ \tau ^{M}
\end{equation*}
makes sense (that is, $\tau \in S^{\succsim }(\sigma )$, where $\sigma
\sim \si_{\ast }$ or $\sigma \prec \hspace{-0.1cm}\sim \si_{\ast
}$). Then, we can either consider $ \tau ^{F}$ as an element of
$\Sigma ^{n}$ (giving rise to a stratum $\Delta (\tau ^{F})\subset
\RR^{n}$) or as an element of $\Sigma ^{\kappa _{\ast }}$ (acting on
the space $\mathbb{R}^{\kappa _{\ast }}$). In this latter case, and in
other to avoid ambiguities, we introduce the notation
\begin{equation}
\lbrack \Delta (\tau ^{F})_{\mathbb{R}^{\kappa _{\ast }}}]:=\{z\in
\mathbb{R} ^{\kappa _{\ast }}:P(z)=P(\tau ^{F})\} \label{eqn-arisd6}
\end{equation}
to refer to the corresponding stratum of $\mathbb{R}^{\kappa _{\ast
}}$. The notations $[\Delta (\tau ^{F})_{\mathbb{R}^{\kappa _{\ast
}}}]^{\bot }$, $ [\Delta (\tau ^{F})_{\mathbb{R}^{\kappa _{\ast
}}}]^{\bot \bot }$ refer thus to the corresponding linear subspaces
of $\mathbb{R}^{\kappa _{\ast }}$. We do the same for the stratum
$[\Delta (\tau ^{M})_{\mathbb{R}^{n-\kappa _{\ast }}}]$ (and the
linear subspaces $[\Delta (\tau ^{M})_{\mathbb{R}^{n-\kappa _{\ast
}}}]^{\bot }$, $[\Delta (\tau ^{M})_{\mathbb{R}^{n-\kappa _{\ast
}}}]^{\bot \bot }$), whenever the permutation $\tau ^{M}$ is
considered as an element of $\Sigma ^{n-\kappa _{\ast }}$ acting on
$\mathbb{R}^{n-\kappa _{\ast }}$. A careful glance at the formulas
(\ref{eqn-arisd3}) and (\ref{eqn-arisd4}) reveals the following
relations:
\begin{equation}
\Delta (\tau ^{F})^{\bot }=[\Delta (\tau ^{F})_{\mathbb{R}^{\kappa
_{\ast }}}]^{\bot }\otimes \{0\}_{n-\kappa _{\ast }}\qquad
\text{and}\qquad \Delta (\tau ^{M})^{\bot }=\{0\}_{\kappa _{\ast
}}\otimes \lbrack \Delta (\tau ^{M})_{\mathbb{R}^{n-\kappa _{\ast
}}}]^{\bot }\text{ ;}  \label{eqn-arisd10}
\end{equation}
and respectively,
\begin{equation} \Delta (\tau ^{F})^{\bot \bot
}=[\Delta (\tau ^{F})_{\mathbb{R}^{\kappa _{\ast }}}]^{\bot \bot
}\otimes \mathbb{R}^{n-\kappa _{\ast }}\qquad \text{ and}\qquad
\Delta (\tau ^{M})^{\bot \bot }=\mathbb{R}^{\kappa _{\ast }}\otimes
\lbrack \Delta (\tau ^{M})_{\mathbb{R}^{n-\kappa _{\ast }}}]^{\bot
\bot }\text{ .}  \label{eqn-arisd9}
\end{equation}
It follows easily from (\ref{Ph4}) and (\ref{eqn-arisd9}) that
\begin{equation}
\Delta (\tau )^{\bot \bot }=[\Delta (\tau ^{F})_{\mathbb{R}^{\kappa
_{\ast }}}]^{\bot \bot }\otimes \lbrack \Delta (\tau
^{M})_{\mathbb{R}^{n-\kappa _{\ast }}}]^{\bot \bot }.
\label{eqn-arisd11}
\end{equation}
It also follows easily that
\begin{equation}\label{eqn-arisd12}
\Delta (\tau )^{\bot }=[\Delta (\tau ^{F})_{\mathbb{R}^{\kappa
_{\ast }}}]^{\bot }\otimes \lbrack \Delta (\tau
^{M})_{\mathbb{R}^{n-\kappa _{\ast }}}]^{\bot }.
\end{equation}

In the sequel, we apply the canonical split \eqref{eqn-arisd7} to
the tangent space $T_{\mathcal{M}}(\bar{x})$. In view of \eqref{32}
and \eqref{eqn-arisd11} for $\tau=\si_{\ast}$ and the fact that
$\si_{\ast}^M\sim\si^M$ (see Proposition~\ref{prop-arisd1}),
we obtain that for every $w\in T_{\mathcal{M}}(\bar{x})$
\begin{equation}
w=w^{F}\otimes w^{M}\quad \text{ where }\quad w^{F}\in
\mathbb{R}^{\kmin} \qandq w^{M}\in \lbrack \Delta
(\sigma^M)_{\mathbb{R}^{n-\kappa _{\ast }}}]^{\bot \bot }\,\subset
\mathbb{R}^{n-\kappa _{\ast }}, \label{arisd10}
\end{equation}
where each coordinate of $w^{M}$ is repeated at least twice.

The following theorem reveals a analogous relationship
for the canonical split of the normal space
$N_{\Mm}(\bx)$ of $\mathcal{M}$ at $\bar{x}$. It is the culmination of
most of the developments up to now and thus the most important
auxiliary result in this work. We start by a technical result.

\begin{lemma}\label{lem-hss}
  Let $\bar x \in \mathcal{M} \cap \Delta(\sigma)$ and let the
  $(F,M)$-decomposition of $\sigma$ be $\sigma =\sigma ^{F}\circ
  \sigma ^{M}$. Let the partition of $\N_{\kappa_*}$ defined by
  $\sigma^{F}$ be $P(\sigma^{F}) = \{I_1,\ldots,I_m\}$. Then, for every
  $\epsilon > 0$, there exists $w\in T_{\mathcal{M}}(\bar{x}) \cap
  B(0, \epsilon)$, such that in vector $w^{F} \in \mathbb{R} ^{\kappa
    _{\ast }}$ every subvector $w^{F}_{I_i}$ has distinct coordinates,
  for $i \in \N_m$.
\end{lemma}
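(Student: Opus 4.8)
The plan is to exploit the density of $\mathcal{M}\cap\Delta(\sigma_*)$ near $\bar x$ (Corollary~\ref{density-hss}) together with the fact that the tangential projection $\bar\pi_T$ sends nearby points of $\mathcal{M}$ into the same stratum as those points (Proposition~\ref{Proposition_same_stratum}), in order to produce a tangent vector whose $F$-part has the required ``distinctness'' property. Fix $\epsilon>0$. Since $\bar x\in\mathcal{M}\cap\Delta(\sigma)$, choose $\delta>0$ small enough that $B(\bar x,\delta)$ meets only strata $\Delta(\sigma')$ with $\sigma'\succsim\sigma$, that $\bar\pi_T$ is a diffeomorphism from $\mathcal{M}\cap B(\bar x,\delta)$ onto its (open) image in $\bar x+T_{\mathcal{M}}(\bar x)$, and that $\bar\pi_T\big(\mathcal{M}\cap B(\bar x,\delta)\big)\subset B(\bar x,\epsilon)$.

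First I would apply Corollary~\ref{density-hss} to get a point $x\in\mathcal{M}\cap\Delta(\sigma_*)\cap B(\bar x,\delta)$. Next, by Proposition~\ref{Proposition_same_stratum}, there is a permutation $\sigma'\succsim\sigma$ with $x,\bar\pi_T(x)\in\Delta(\sigma')$; since $x\in\Delta(\sigma_*)$ and $\sigma_*$ is maximal in $\Delta(\mathcal{M})$ (Lemma~\ref{lemma_maximal}), in fact $\sigma'\sim\sigma_*$, so $\bar\pi_T(x)\in\Delta(\sigma_*)$. Write $\bar\pi_T(x)=\bar x+w$ with $w\in T_{\mathcal{M}}(\bar x)$; then $w\in B(0,\epsilon)$ by the choice of $\delta$. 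Now consider the $(F,M)$-decomposition. Because $\bar x\in\Delta(\sigma)$ and $\bar x+w\in\Delta(\sigma_*)$, and because $\sigma^F\precsim\mathrm{id}_{\kappa_*}$ while $\sigma_*^F=\mathrm{id}_{\kappa_*}$ (equation~\eqref{eqn-arisd5}), the coordinates of $(\bar x+w)^F=\bar x^F+w^F$ indexed by any fixed block $I_i\in P(\sigma^F)$ are \emph{pairwise distinct} — indeed $\Delta(\sigma_*)$ is characterised by the condition $P(\cdot)=P(\sigma_*)$, whose $F$-block is the discrete partition, so no two coordinates of $(\bar x+w)^F$ coincide, a fortiori not within $I_i$. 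Hence each subvector $w^F_{I_i}=(\bar x+w)^F_{I_i}-\bar x^F_{I_i}$ has distinct coordinates (subtracting the constant-on-$I_i$ vector $\bar x^F_{I_i}$, which is constant on $I_i$ precisely because $\bar x\in\Delta(\sigma)$ so $\bar x^F$ is constant on each block of $P(\sigma^F)$), and $w\in T_{\mathcal{M}}(\bar x)\cap B(0,\epsilon)$ is the desired vector.

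The step I expect to be the delicate one is the identification $\sigma'\sim\sigma_*$: one must combine Proposition~\ref{Proposition_same_stratum} (which only gives $\sigma'\succsim\sigma$) with the maximality of $\sigma_*$ among active permutations to pin down $\bar\pi_T(x)$ in the \emph{same} stratum as $x$, rather than in a possibly larger one. The bookkeeping with the $(F,M)$-decomposition — keeping straight that $\bar x^F$ is block-constant for $P(\sigma^F)$ while $(\bar x+w)^F$ is fully distinct — is routine once this is in place. Everything else is a direct consequence of the neighbourhood choices and of results already established above.
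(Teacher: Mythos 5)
Your proof is correct and follows essentially the same route as the paper: density of $\mathcal{M}\cap\Delta(\sigma_*)$ near $\bar x$, followed by Proposition~\ref{Proposition_same_stratum} to put $\bar\pi_T(x)$ in $\Delta(\sigma_*)$, then subtracting the block-constant $\bar x^F$. One small simplification: you invoke Lemma~\ref{lemma_maximal} to get $\sigma'\sim\sigma_*$, but this follows directly from $x\in\Delta(\sigma')\cap\Delta(\sigma_*)$ and the disjointness of strata (Proposition~\ref{prop:delta}$(ii)$), with no appeal to maximality.
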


\begin{proof}
 % We apply Proposition~\ref{Proposition_same_stratum} to $\bar x$.
 By Corollary~\ref{density-hss}, we can chose $x
  \in \Mm\cap \Delta (\si_{\ast })$ arbitrarily close to $\bar x$. Now
  apply Proposition~\ref{Proposition_same_stratum} to $\bar x$ and $x$
   to conclude that  $x,  \bar \pi_{T}(x) \in \Delta(\sigma')$ for some $\sigma' \succsim \sigma$. Necessarily, we have $\sigma' \sim \sigma_*$, implying that
   $x,  \bar \pi_{T}(x) \in \Delta(\sigma_*)$. This shows that $(\bar
  \pi_{T}(x))^F$ has distinct coordinates. In other words, there is a
  vector $w \in T_{\mathcal{M}}(\bar{x})$ such that $(\bar
  \pi_{T}(x))^F = (\bar x + w)^F = \bar{x}^F + w^F$ has distinct
  coordinates. Since $x$ can be chosen arbitrarily close to $\bar x$,
  we can assume that $w$ is arbitrarily close to $0$. Finally, since
  $\bar{x}^F \in [\Delta(\sigma^F)_{\R^{\kappa_*}}]$ and $w^F =
  (\bar{x}^F + w^F) - \bar{x}^F$ we conclude that $w^{F}_{I_i}$ has
  distinct coordinates, for $i \in \N_m$.
\end{proof}

\begin{theorem}[Reduction of the normal space]\label{jm-double}
  Let $\bar{x}\in \mathcal{M}\cap \Delta (\sigma
  )$ and $v\in N_{\mathcal{M}}(\bar{x})$. Let $v=v^{F}\otimes
  v^{M}$ and $\sigma =\sigma ^{F}\circ \sigma ^{M}$ be the canonical
  split and the $(F,M)$-decomposition defined in \eqref{eqn-arisd1}
  and \eqref{eqn-arisd2} respectively. Then,
  \begin{equation}
    v^{F}\in \lbrack \Delta (\sigma ^{F})_{\mathbb{R}^{\kappa _{\ast
          }}}]^{\bot \bot }\text{.}  \label{prop-30June07-b}
  \end{equation}
\end{theorem}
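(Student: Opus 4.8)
The plan is to prove \eqref{prop-30June07-b} by showing that the subvector $v^{F}\in\mathbb{R}^{\kappa_{\ast}}$ is constant on each block of the partition $P(\sigma^{F})$; by formula \eqref{eqn-arisd4} read inside $\mathbb{R}^{\kappa_{\ast}}$ this is exactly the assertion. Blocks of cardinality one impose no condition, so it is enough to fix two distinct indices $p,q$ lying in a common non-trivial cycle of $\sigma^{F}$ --- equivalently, in a common cycle of $\sigma$ with $p,q\in\mathbb{N}_n\setminus\mathrm{supp}(\sigma_{\ast})$ --- and to establish $v_p=v_q$.

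First I would observe that the transposition $\tau_{pq}:=(p\ q)$ lies in $S^{\succsim}(\bar x)=\mathrm{Fix}(\bar x)$: since $p$ and $q$ belong to the same cycle of $\sigma$ and $\bar x\in\Delta(\sigma)$, we have $\bar x_p=\bar x_q$, hence $\tau_{pq}\bar x=\bar x$ (one may also phrase this via $P(\sigma)=P(\bar x)\subseteq P(\tau_{pq})$ and \eqref{partition_r}). Therefore Lemma~\ref{Tinv}$(\ref{Tinv-ii})$ gives $\tau_{pq}v\in N_{\mathcal{M}}(\bar x)$, and since $N_{\mathcal{M}}(\bar x)$ is a linear subspace,
\[
v-\tau_{pq}v\,=\,(v_p-v_q)\,(e_p-e_q)\ \in\ N_{\mathcal{M}}(\bar x),
\]
where $e_1,\dots,e_n$ denotes the canonical basis of $\mathbb{R}^n$.

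The remaining ingredient is the already established Lemma~\ref{lem-hss}: applied with any $\epsilon>0$ it furnishes a vector $w\in T_{\mathcal{M}}(\bar x)$ whose subvector $w^{F}$ has pairwise distinct entries within each block of $P(\sigma^{F})$; in particular $w_p\neq w_q$, so $\langle e_p-e_q,\,w\rangle=w_p-w_q\neq 0$. Since $w\in T_{\mathcal{M}}(\bar x)=N_{\mathcal{M}}(\bar x)^{\perp}$, this shows $e_p-e_q\notin N_{\mathcal{M}}(\bar x)$, which, compared with the displayed membership above, forces $v_p-v_q=0$. Letting $p,q$ run over all such pairs yields that $v^{F}$ is constant on every block of $P(\sigma^{F})$, that is, $v^{F}\in[\Delta(\sigma^{F})_{\mathbb{R}^{\kappa_{\ast}}}]^{\bot\bot}$.

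I do not expect a serious obstacle in the write-up itself: the whole weight of the theorem has been pushed into Lemma~\ref{lem-hss} (and, through it, into Corollary~\ref{density-hss} and Proposition~\ref{Proposition_same_stratum}), the argument here being only the group invariance of $N_{\mathcal{M}}(\bar x)$ together with elementary linear algebra. The single point that needs care is the identification of the indices $p,q\in\mathbb{N}_n$ with their images in $\mathbb{R}^{\kappa_{\ast}}$ under the canonical product, so that "$w^{F}$ separating the coordinates of a block of $P(\sigma^{F})$" genuinely translates to "$w_p\neq w_q$", and dually "$v^{F}$ constant on that block" to "$v_p=v_q$".
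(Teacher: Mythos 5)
Your argument is correct, and it takes a genuinely different route from the paper. The paper's proof first splits $v=v_{\bot\bot}+v_{\bot}$ via Proposition~\ref{props-spectr-man}, notes that the $\bot\bot$-part already has its $F$-trace in $[\Delta(\sigma^F)_{\mathbb{R}^{\kappa_*}}]^{\bot\bot}$ by \eqref{eqn-arisd11}, and then kills $v_\bot^F$ by running $\tau$ over $S^{\succsim}(\sigma)$ and feeding the resulting family of constant inner products $\langle v_\perp^F,\tau^F w^F\rangle=-\langle v_\perp^M,w^M\rangle$ into Corollary~\ref{cor-29Jun07-1}, whose proof in turn rests on the full-rank-matrix Lemma~\ref{lem-29Jul07-2} of the Appendix. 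You bypass that entire machinery: rather than showing one orthogonal component of $v$ vanishes, you directly test pairs of coordinates. The transposition $\tau_{pq}$ lies in $S^{\succsim}(\bar x)$, Lemma~\ref{Tinv}$(\ref{Tinv-ii})$ puts $(v_p-v_q)(e_p-e_q)$ in $N_{\mathcal{M}}(\bar x)$, and Lemma~\ref{lem-hss} furnishes a tangent vector $w$ with $w_p\neq w_q$, so $e_p-e_q\notin N_{\mathcal{M}}(\bar x)$, forcing $v_p=v_q$. What your version buys is a strictly more elementary and self-contained argument: it needs only the group invariance of the normal space, Lemma~\ref{lem-hss}, and the characterisation \eqref{eqn-arisd4} of $\bot\bot$ as being constant on blocks; Corollary~\ref{cor-29Jun07-1}, Lemma~\ref{lem-29Jul07-2}, and the $v_{\bot\bot}/v_\bot$ decomposition are not used. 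The identification of indices under the canonical split that you flag at the end is indeed the only delicate notational point, and you have handled it correctly.
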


\begin{proof}
  Let us decompose $v\in N_{\mathcal{M}}(\bar{x})$
  according to Proposition~\ref{props-spectr-man}, that is, $v=v_{\bot
    \bot }+v_{\bot }$ where
  \begin{equation*}
    v_{\bot \bot }\in N_{\mathcal{M}}(\bar{x})\cap \Delta (\sigma
    )^{\bot \bot } \text{\quad and}\quad v_{\bot }\in
    N_{\mathcal{M}}(\bar{x})\cap \Delta (\sigma )^{\bot }.
  \end{equation*}
  Then,
  \begin{equation*}
    v^{F}=v_{\bot \bot }^{F}+v_{\bot }^{F}\text{\quad and}\quad
    v^{M}=v_{\bot \bot }^{M}+v_{\bot }^{M}.
  \end{equation*}
  Since $v_{\bot }\in \Delta (\sigma )^{\bot }$ it follows by
  \eqref{eqn-arisd12} that
  $v_{\bot }^{F}\in \lbrack \Delta (\sigma^{F})_{\mathbb{R }^{\kappa _{\ast }}}]^{\bot }$.
  Note further that
  since $\sigma \in \Delta (\mathcal{M})$, we have $\sigma \prec
  \hspace{-0.1cm}\sim \sigma_{\ast}$, see \eqref{eqn-28Nov2007-1}. Let
  now $ w=w^{F}\otimes w^{M}$ be any element of
  $T_{\mathcal{M}}(\bar{x})$ for which $w^{F}\in \mathbb{R}^{\kappa
    _{\ast }}$ has the property described in Lemma~\ref{lem-hss}. Pick
  any permutation $\tau \in S^{\succsim }(\sigma )$. Then, $\tau $
  admits a canonical decomposition $\tau =\tau ^{F}\circ \tau ^{M}$
  with $\tau ^{M}\succsim \sigma^{M}$ and $ \tau ^{F}\succsim \sigma
  ^{F}$ (Proposition~\ref{prop-arisd2}). It follows that $(\tau
  w)^{F}=\tau ^{F}w^{F}$, $(\tau w)^{M}=\tau ^{M}w^{M}=w^{M}$ (in view
  of (\ref{arisd10})) and $\tau w\in T_{\mathcal{M}}(\bar{x})$ (in
  view of Lemma~\ref{Tinv}$(\ref{Tinv-i})$). Thus, we deduce
  successively:
  \begin{equation*}
    0=\langle v_{\perp },\tau w\rangle=\langle v_{\perp }^{F},(\tau
    w)^{F}\rangle \,+\,\langle v_{\perp }^{M},(\tau w)^{M}\rangle
    =\langle v_{\perp }^{F},\tau ^{F}w^{F}\rangle \,+\,\langle v_{\perp
    }^{M},w^{M}\rangle .
  \end{equation*}
  This yields
  \begin{equation*}
    \langle v_{\perp }^{F},\tau ^{F}w^{F}\rangle \,=-\langle v_{\perp
    }^{M},w^{M}\rangle ,
  \end{equation*}
  which in view of Corollary~\ref{cor-29Jun07-1} in the Appendix
  (applied to $ x:=v_{\perp }^{F}\in \lbrack \Delta (\sigma
  ^{F})_{\mathbb{R}^{\kappa _{\ast }}}]^{\bot }$, $\sigma :=\sigma
    ^{F}$, $y:=w^{F}$, $\sigma' :=\tau^{F}$, and $\alpha :=-\langle v_{\perp
    }^{M},w^{M}\rangle $) yields $v_{\perp }^{F}=\{0\}_{\kappa _{\ast
    }} $. Finally, let us recall that $v_{\bot \bot }\in \Delta
    (\sigma )^{\bot \bot },$ which in view of (\ref{eqn-arisd11})
    yields $v_{\bot \bot }^{F}\in \lbrack \Delta (\sigma
    ^{F})_{\mathbb{R}^{\kappa _{\ast }}}]^{\bot \bot }$. Thus,
      $v^{F}=v_{\bot \bot }^{F}\in \lbrack \Delta (\sigma
      ^{F})_{\mathbb{R} ^{\kappa _{\ast }}}]^{\bot \bot }$. The proof
        is complete.
\end{proof}

%*******************************************************************************
\subsection{Tangential parametrization of a locally symmetric manifold}\label{ssec-tangparam}
%*******************************************************************************

In this subsection we consider a local equation of the
manifold, called {\it tangential parametrization}. We briefly
recall some general properties of this parametrization (for any
manifold $\Mm$) and then, we make use of \thref{jm-double} to
specify it to our context.

The local inversion theorem asserts that for some $\delta>0$
sufficiently small the restriction of $\bar{\pi}_T$ around $\bar x
\in\Mm$
\[
\bar{\pi}_T\colon \mathcal{M}\cap
B(\bar{x},\delta)\rightarrow \bar{x}+T_{\mathcal{M}} (\bar{x})
\]
is a diffeomorphism of $\mathcal{M}\cap B(\bar{x},\delta)$ onto its
image (which is an open neighborhood of $\bar x$ relatively to the
affine space $\bar{x}+T_{\mathcal{M}}(\bar{x})$). Then, there exists a
smooth map
\begin{equation}\label{arisd-phi}
\phi\colon (\bar x +T_{\mathcal{M}}(\bar x))
\cap B(\bar x, \delta) \rightarrow N_{\mathcal{M}}
(\bar x),
\end{equation}
such that
\begin{equation}\label{prop-30June07-2}
\mathcal{M} \cap B(\bar x, \delta) = \{y \in \R^n : y=x + \phi(x), \,
x \in (\bar x+T_{\mathcal{M}}(\bar x)) \cap B(\bar x, \delta)\}.
\end{equation}
In words, the function $\phi$ measures the difference between the
manifold and its tangent space. Obviously, $\phi \equiv 0$ if $\Mm$ is an
affine manifold around $\bar x$.  Note that, technically, the domain of the map
$\phi$ is the open set $\bar \pi_T(\mathcal{M} \cap B(\bar x,
\delta))$, which may be a proper subset of $(\bar x
+T_{\mathcal{M}}(\bar x)) \cap B(\bar x, \delta)$. Even though we
keep this in mind, it will not have any bearing on the developments
in the sequel. Thus, for sake of readability we will avoid
introducing more precise but also more complicated notation, for
example, rectangular neighborhoods around $\bar x$.

We say that the map $\psi\colon (\bar x+T_{\mathcal{M}}(\bar x)) \cap
B(\bar x, \delta) \rightarrow \mathcal{M} \cap B(\bar x, \delta)$
defined by
\begin{equation}\label{prop-30June07-5}
\psi(x) = x + \phi(x)
\end{equation}
is the tangential parametrization of $\Mm$ around $\bar x$. This
function is indeed smooth, one-to-one and onto, with a full rank
Jacobian matrix $J \psi(\bar x)$: it is a local diffeomorphism at
$\bar x$, and more precisely its inverse is $\bar \pi_T$, that is,
locally $\bar \pi_T( \psi(x) ) = x$. The above properties of $\psi$
hold for any manifold.

Let us return to the situation where $\Mm$ is a locally symmetric
manifold. We consider its characteristic permutation
$\si_{\ast}$, and we make the
following assumption on the neighborhood.

\begin{assumption}\label{assmpt-21Nov2008}
  Let $\Mm$ be a locally symmetric $C^2$-submanifold of $\R^n$ of dimension $d$
  and of characteristic permutation $\si_*$.
  We consider $\bar x\in \Mm\cap \De(\si)$ and we take $\delta > 0$ small enough so that:
  \begin{enumerate}
    \item $B(\bar x, \delta)$ intersects only strata $\Delta(\sigma')$
      with $\sigma' \succsim \sigma$ (recall \lemref{key});
    \item $\Mm \cap B(\bar x, \delta)$ is a strongly locally symmetric manifold;
    \item $\mathcal{M}\cap B(\bar{x},\delta)$ is diffeomorphic to its
      projection on $\bar{x}+T_{\mathcal{M}} (\bar{x})$; in other
      words, the tangential parametrization holds.
  \end{enumerate}
  This ensures that
  $$
  \Delta(\sigma)^{\bot\bot} \cap B(\bar x, \delta) = \Delta(\sigma)
  \cap B(\bar x, \delta).
  $$
\end{assumption}

This situation enables us to specify the general properties of the
tangential parametrization.

\begin{lemma}[Tangential parametrization]\label{important-observation}
  Let $\bar x \in \mathcal{M} \cap \Delta(\sigma)$.  Then, the
  function $\phi$ in the tangential parametrization satisfies
  \begin{equation}\label{aris100}
    \phi(x) \in N_{\mathcal{M}}(\bar x) \cap \Delta(\sigma)^{\bot\bot}.
  \end{equation}
  Moreover, for all $x \in (\bar x+T_{\mathcal{M}}(\bar x)) \cap B(\bar
  x, \delta)$ and for all $\sigma' \in S^{\succsim}(\sigma)$ we have
  \begin{equation}\label{5Dec07-2}
    \psi(\sigma'  x) = \sigma' \psi(x)
  \end{equation}
  and
  \begin{equation}\label{5Dec07-3}
    \phi(\sigma' x) = \si' \phi(x) = \phi(x).
  \end{equation}
\end{lemma}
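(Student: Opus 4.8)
The plan is to prove the three claims in the stated order, deriving \eqref{5Dec07-3} from \eqref{aris100} and \eqref{5Dec07-2}.

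\textbf{Step 1 (the inclusion \eqref{aris100}).} The membership $\phi(x)\in N_{\Mm}(\bx)$ is built into the definition \eqref{arisd-phi}, so the real content is $\phi(x)\in\Delta(\si)^{\bot\bot}$. I would argue as follows. Write $\phi(x)=\psi(x)-x$. Since $\psi(x)\in\Mm$, formula \eqref{eqn-28Nov2007-1} gives $\psi(x)\in\Delta(\si_{\ast})^{\bot\bot}$; likewise $\bx\in\Mm\subseteq\Delta(\si_{\ast})^{\bot\bot}$, and by \eqref{32} we have $x-\bx\in T_{\Mm}(\bx)\subseteq\Delta(\si_{\ast})^{\bot\bot}$, whence $x\in\Delta(\si_{\ast})^{\bot\bot}$ and therefore $\phi(x)\in\Delta(\si_{\ast})^{\bot\bot}$. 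Now I would pass to the canonical product and the $(F,M)$-decomposition of \ssecref{sssec-341}: because $\bx\in\Mm\cap\Delta(\si)$ we have $\si\in\Delta(\Mm)$, so $\si\sim\si_{\ast}$ or $\si\prec\hspace{-0.1cm}\sim\si_{\ast}$ by \eqref{eqn-28Nov2007-1}, the decomposition $\si=\si^{F}\circ\si^{M}$ is legitimate, and $\si^{M}\sim\si_{\ast}^{M}$ by Proposition~\ref{prop-arisd1}. Writing $\phi(x)=\phi(x)^{F}\otimes\phi(x)^{M}$ and using $\si_{\ast}^{F}=\mathrm{id}_{\kmin}$ together with \eqref{eqn-arisd11} applied to $\si_{\ast}$, the membership $\phi(x)\in\Delta(\si_{\ast})^{\bot\bot}=\mathbb{R}^{\kmin}\otimes[\Delta(\si_{\ast}^{M})_{\mathbb{R}^{n-\kmin}}]^{\bot\bot}$ forces $\phi(x)^{M}\in[\Delta(\si_{\ast}^{M})_{\mathbb{R}^{n-\kmin}}]^{\bot\bot}=[\Delta(\si^{M})_{\mathbb{R}^{n-\kmin}}]^{\bot\bot}$. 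For the other component, Theorem~\ref{jm-double} (reduction of the normal space), applied to $v:=\phi(x)\in N_{\Mm}(\bx)$, yields $\phi(x)^{F}\in[\Delta(\si^{F})_{\mathbb{R}^{\kmin}}]^{\bot\bot}$. Recombining these two facts through \eqref{eqn-arisd11} applied to $\si$ gives $\phi(x)\in[\Delta(\si^{F})_{\mathbb{R}^{\kmin}}]^{\bot\bot}\otimes[\Delta(\si^{M})_{\mathbb{R}^{n-\kmin}}]^{\bot\bot}=\Delta(\si)^{\bot\bot}$, which finishes \eqref{aris100}.

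\textbf{Step 2 (the equivariance \eqref{5Dec07-2}).} Fix $\si'\in S^{\succsim}(\si)=S^{\succsim}(\bx)$ and $x$ in the (effective) domain $\bar{\pi}_T(\Mm\cap B(\bx,\delta))$ of $\psi$. Since $\bx\in\Mm\cap B(\bx,\delta)$ and this set is strongly locally symmetric by Assumption~\ref{assmpt-21Nov2008}, we have $\si'(\Mm\cap B(\bx,\delta))=\Mm\cap B(\bx,\delta)$, so $\si'\psi(x)\in\Mm\cap B(\bx,\delta)$. By Corollary~\ref{corollary_pi}(i) and the identity $\bar{\pi}_T\circ\psi=\mathrm{id}$, we get $\bar{\pi}_T(\si'\psi(x))=\si'\bar{\pi}_T(\psi(x))=\si' x$; hence $\si' x$ lies in the domain of $\psi$, and by Lemma~\ref{Tinv}(i) together with $\si'\bx=\bx$ it lies in $\bx+T_{\Mm}(\bx)$. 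As $\psi$ is the inverse of the bijection $\bar{\pi}_T\colon\Mm\cap B(\bx,\delta)\to\bar{\pi}_T(\Mm\cap B(\bx,\delta))$, the element $\si'\psi(x)$, which lies in $\Mm\cap B(\bx,\delta)$ and is sent by $\bar{\pi}_T$ to $\si' x$, must coincide with $\psi(\si' x)$; this is \eqref{5Dec07-2}.

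\textbf{Step 3 (the identity \eqref{5Dec07-3}).} Combining \eqref{prop-30June07-5} with \eqref{5Dec07-2},
\[
\phi(\si' x)=\psi(\si' x)-\si' x=\si'\psi(x)-\si' x=\si'(\psi(x)-x)=\si'\phi(x),
\]
which is the first equality. For the second, \eqref{aris100} gives $\phi(x)\in\Delta(\si)^{\bot\bot}$, and since $\si'\succsim\si$, formulas \eqref{partition_r} and \eqref{eqn-arisd4} show that every vector of $\Delta(\si)^{\bot\bot}$ is fixed by $\si'$; hence $\si'\phi(x)=\phi(x)$.

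\textbf{Main obstacle.} The delicate point is Step~1, namely obtaining the \emph{sharp} inclusion $\phi(x)\in\Delta(\si)^{\bot\bot}$ rather than merely $\phi(x)\in\Delta(\si')^{\bot\bot}$ for the possibly strictly larger stratum $\Delta(\si')$ containing $\psi(x)$. This is exactly where the $(F,M)$-split along the canonical product is needed: the $M$-component of $\phi(x)$ is controlled for free by \eqref{eqn-28Nov2007-1} and \eqref{32}, whereas the $F$-component requires the full strength of Theorem~\ref{jm-double}. Once Step~1 is in place, Steps~2 and~3 are routine bookkeeping with the symmetry of $\bar{\pi}_T$ and of $T_{\Mm}(\bx)$.
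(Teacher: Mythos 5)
Your proof is correct, and your Step~2 takes a genuinely different (and cleaner) route than the paper's proof. The paper establishes the implications in a different order: it first deduces the second equality of \eqref{5Dec07-3}, namely $\sigma'\phi(x)=\phi(x)$, directly from \eqref{aris100}; it then writes $\sigma'\psi(x)=\sigma' x+\phi(x)$, invokes strong local symmetry to obtain some $x_\circ$ with $\sigma'\psi(x)=\psi(x_\circ)=x_\circ+\phi(x_\circ)$, and finally uses the orthogonality $T_{\Mm}(\bx)\perp N_{\Mm}(\bx)$ to force $x_\circ=\sigma' x$ and $\phi(x_\circ)=\phi(x)$, which yields both \eqref{5Dec07-2} and the first equality of \eqref{5Dec07-3} at once. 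Your argument bypasses this detour: you push $\sigma'$ through the bijection $\bar\pi_T$ via Corollary~\ref{corollary_pi}(i) and read off $\psi(\sigma'x)=\sigma'\psi(x)$ directly from uniqueness of the inverse, and only afterward derive both equalities of \eqref{5Dec07-3}. A pleasant feature of your ordering is that \eqref{5Dec07-2} comes out logically independent of \eqref{aris100}, whereas in the paper it is downstream of it. Your Step~1 is the same argument as the paper's in substance --- \eqref{eqn-28Nov2007-1} and \eqref{32} control the $M$-part, Theorem~\ref{jm-double} controls the $F$-part --- with slightly different bookkeeping: you first place $\phi(x)$ in $\Delta(\sigma_*)^{\bot\bot}$ and then split along $(F,M)$, while the paper first splits $\phi(x)=\phi_{\bot\bot}(x)+\phi_\bot(x)$ along Proposition~\ref{props-spectr-man} and shows each $F$/$M$ trace of $\phi_\bot$ vanishes. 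Both reach \eqref{aris100} by the same two key inputs, so this difference is cosmetic.
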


\begin{proof}
  Recalling the direct decomposition of the normal space (see
  Proposition~\ref{props-spectr-man}) we define the mappings
  $\phi_{\bot\bot}(x)$ and $\phi_{\bot}(x)$ as the projections of
  $\phi(x)$ onto $N_{\mathcal{M}}(\bar
  x)\cap\Delta(\sigma)^{\bot\bot}$ and $N_{\mathcal{M}}(\bar x) \cap
  \Delta(\sigma)^{\bot}$ respectively. Thus, (\ref{prop-30June07-5})
  becomes
  \begin{equation}\label{eqn-5Dec07-3}
    \psi(x) = x + \phi_{\bot\bot}(x)+\phi_{\bot}(x).
  \end{equation}
  Splitting each term in both sides of Equation~(\ref{eqn-5Dec07-3})
  in view of the canonical split defined in \eqref{eqn-arisd1}, we
  obtain
  $$
  \left(\begin{array} {c}
    \psi^F(x) \smallskip \\
    \psi^M(x)
  \end{array}\right) =
  \left(\begin{array} {c}
    x^F \smallskip \\
    x^M
  \end{array}\right) +
  \left(\begin{array} {c}
    \phi_{\bot\bot}^F(x)\smallskip \\
    \phi_{\bot\bot}^M(x)
  \end{array}\right) +
  \left(\begin{array} {c}
    \phi_{\bot}^F(x) \smallskip \\
    \phi_{\bot}^M(x)
  \end{array}\right).
  $$
  We look at the second line of this vector equation.
  Since
  $$
  \phi_{\bot\bot}(x) \in N_{\mathcal{M}}(\bar x) \cap \Delta(\sigma)^{\bot\bot}
  \, \mbox{ and } \,
  \phi_{\bot}(x) \in N_{\mathcal{M}}(\bar x) \cap \Delta(\sigma)^{\bot}
  $$
  we deduce from \eqref{eqn-arisd11} and \eqref{eqn-arisd12} that
  $$
  \phi_{\bot\bot}^M(x) \in [\Delta
    (\si^M)_{\mathbb{R}^{n-\kmin}}]^{\bot\bot} \qquad \text{ and }
  \qquad \phi_{\bot}^M(x) \in [\Delta
    (\si^M)_{\mathbb{R}^{n-\kmin}}]^{\bot}.
  $$ Since $x \in \bar x +T_{\mathcal{M}}(\bar x)$ and $\psi(x) \in
  \mathcal{M}$ we deduce from \eqref{eqn-28Nov2007-1} and
  \eqref{32} that
  $x^M,\psi^M(x)\in [\Delta(\sigma^M)_{\mathbb{R}^{n-\kmin}}]^{\bot\bot}$
  (recall that $\si^M\sim\si_{\ast}^M$), yielding $\phi_{\bot}^M(x) \in
  [\Delta (\si^M)_{\mathbb{R}^{n-\kmin}}]^{\bot\bot}$ and thus
  $\phi_{\bot}^M(x)=0$.  In addition, by \thref{jm-double} we
  have $\phi_{\bot}^F(x)=0$. Thus, $\phi_{\bot}(x)=0$, which completes
  the proof of \eqref{aris100}.

  \bigskip

  We now show local invariance. Choose any permutation $\sigma'
  \succsim \sigma$. Since $\phi(x)\in\Delta(\si)^{\bot\bot}$, it
  follows that $\si'\phi(x)=\phi(x)$. Thus,
  \begin{equation}\label{epi}
    \sigma' \psi(x) = \sigma' x + \sigma' \phi(x) = \sigma' x + \phi(x).
  \end{equation}
  Since $\mathcal{M} \cap B(\bar x, \delta)$ is locally symmetric, we
  have $\sigma' \psi(x) \in \mathcal{M} \cap B(\bar x, \delta)$. Thus,
  there exists $x_{\circ} \in (\bar x+T_{\mathcal{M}}(\bar x)) \cap
  B(\bar x, \delta)$ such that
  \begin{equation}\label{epi1}
    \sigma' \psi(x) = \psi(x_{\circ}) = x_{\circ} + \phi(x_{\circ}).
  \end{equation}
  Combining \eqref{epi} with \eqref{epi1} we get
  $$
  x_{\circ} - \sigma' x   = \phi(x) - \phi(x_{\circ}).
  $$
  The left-hand side is an element of $T_{\mathcal{M}}(\bar x)$, by
  \lemref{Tinv}, while the right-hand side is in $N_{\mathcal{M}}(\bar
  x)$. Thus, $x_{\circ} = \sigma' x$ and $\phi(x) = \phi(x_{\circ})$,
  showing the local symmetry of $\phi$ which implies \eqref{5Dec07-2}.
\end{proof}

%********************************************************************************
%********************************************************************************
\section{Spectral manifolds}\label{sec:spectral}
%********************************************************************************
%********************************************************************************

We have now enough material on locally symmetric manifolds to tackle
the smoothness of spectral sets associated to them. Before
continuing the developments, we present the particular
case when $\Mm$ is (a relatively open subset of) a stratum
$\Delta(\si)$. In this case, basic algebraic arguments allow to
conclude directly.

\begin{example}[Lift of stratum $\Delta(\si)$]\label{jerome-lift}
  We develop here the case when $\Mm$ is the connected component of $\Delta(\si)$
  which intersects $\RR^n_{\geq}$. More precisely, we consider
  $\si\in \Sigma^n$, $\bar x\in \Delta(\si)\cap \RR^n_{\geq}$ and $\delta>0$,
  and we assume
  \[
  \Mm = \Delta(\si) \cap B(\bar x,\de)\,.
  \]
  In this case, we show directly that the spectral set
  $$
  \lambda^{-1}(\mathcal{M}) =\bigcup_{x \in
    \mathcal{M}} \On.\Diag (x)
  $$
  is an analytic (fiber) manifold using basic
  arguments exposed in \ssecref{ssec-loc-sym}.
  We stated therein that the orbit $\On_{\Diag(x)}$ is a submanifold
  of $\Sn$ with dimension
  \[
  \sum_{1 \le i < j \le \kappa + m} |I_{i}||I_{j}|
  \]
  where $P(x) = \{I_1,\ldots,I_{\kappa + m}\}$.
  The key is to observe that, in this example,
  for any $x \in \mathcal{M}$ we have
  \[
  \On_{\Diag(x)} \ = \  \On_{\Diag(\bar x)}
  \ \simeq \ {\bf O}^{|I_1|} \times \cdots \times {\bf O}^{|I_{m+\kappa}|}
  \]
  and $P(x) = P(\si)$ (thus also $\si_*=\si$). Then all the orbits
  $\On.\Diag(x)$ are manifolds diffeomorphic to $\On/\On_{\Diag(\bar x)}$ (fibers), whence
  of the same dimension. We deduce that $\lambda^{-1}(\mathcal{M})$ is a submanifold of $\Sn$
  diffeomorphic to the direct product $\Mm\times\big(\On/
  \On_{\Diag(\bx)}\big)$, with dimension
  \begin{align}
  \dim \lambda^{-1}(\mathcal{M})  &=  d + \sum_{1 \le i < j \le \kappa+m}
  |I_{i}||I_{j}|\,. \label{aris-dime}
  \end{align}
  The proof is complete. \qed
\end{example}

The proof of the general situation (that is, $\Mm$ arbitrary locally
symmetric manifold) is a generalization of the above
arguments, albeit a nontrivial one.
%In this section, we use
%the decomposition and the diffeomorphism introduced in the previous
%section to adapt arguments similar to the ones given in
%Example~\ref{jerome-lift} in the general case.
The strategy is more
precisely explained in \secref{ssec-strategy}. Before this, in
Subsection~\ref{sec-split-of-Sn} we
introduce the block-diagonal
decomposition of $\Sn$, and then we show in \secref{ssec-42} that,
 in the special case $\si_*={\rm id}_n$,
locally symmetric manifolds lift through this decomposition.

%%**********************************************************************
\subsection[Split of $\Sn$ induced by an ordered partition]{Split of \boldmath$\Sn$ induced by an ordered partition}\label{sec-split-of-Sn}
%%**********************************************************************

In this section, we introduce a notion of split of the space of
symmetric matrices, associated to an ordered partition. We use later
the canonical split associated to the partition induced by the
characteristic permutation $\si_*$ of the manifold.

\begin{definition}[Ordered partition]
  \label{partition-order}
  Given a partition $P=\{I_1,\ldots,I_m\}$ of $\N_n$ we say that $P$ is
  {\it ordered} if for any $1 \le i < j \le m$ the smallest element in
  $I_i$ is (strictly) smaller than the smallest element in $I_j$. We
  use parenthesis $P = (I_1,\ldots,I_m)$ to indicate that the sets
  $I_1,\ldots,I_m$ in the partition $P$ are ordered. For example, the
  partition $\{\{4\}, \{3, 2\}, \{1, 5\}\}$ of $\N_5$ gives the
  ordered partition $(\{1, 5\}, \{3, 2\}, \{4\})$.
\end{definition}

Now we consider the following linear spaces, defined as direct products
\begin{align}
\label{direct-prods}
{\bf S}_{\sigma}^n :=  {\bf S}^{|I_1|} \times \cdots \times {\bf S}^{|I_m|}
\qquad \mbox{and} \qquad {\bf O}_{\sigma}^n :=  {\bf O}^{|I_1|} \times \cdots \times {\bf O}^{|I_m|},
\end{align}
for the given ordered partition $P(\sigma) = (I_1,\ldots,I_m)$.  We
denote by $X_{\si}=X_1 \times \cdots \times X_m \in {\bf
  S}_{\sigma}^n$ an element of ${\bf S}_{\sigma}^n$, where $X_i \in
{\bf S}^{|I_i|}$.  We can interpret $X_{\si}\in {\bf S}_{\sigma}^n$
as the $n \times n$ block-diagonal matrix with the blocks $X_1,\ldots,X_m$
on the diagonal. This is formalized by the linear embedding
\begin{equation}\label{reve-fred}
  \fonction{i}{\Sn_\si}{\Sn}{X_\si}{X = \Diag(X_1,\ldots,X_m).}
\end{equation}
The product of two elements $A_\si$ and $B_\si$ of
${\bf S}_{\sigma}^n$ is defined component-wise in the natural way.
Clearly, we have $\Diag(X_\si):=\Diag(i(X_\si))$.
%as follows
%\[
%A_\si B_\si := i^{-1}(i(A_{\si})i(B_{\si})).
%\]
For any $X_{\si}=X_1 \times \cdots \times X_m \in  {\bf S}_{\sigma}^n$,
we introduce
$$
\lambda_\sigma (X_{\si}) := \lambda(X_1) \times \cdots \times  \lambda(X_m) \in \RR^n.
$$
Recall that $\la(X) \in \RR^n$ is the ordered vector of eigenvalues
of $X\in {\bf S}^n$. Note the difference between
$\lambda_\sigma (X_{\si})$ and $\la(i(X_\si))$: the coordinates of
the vector $\lambda_\sigma (X_{\si})$ are ordered within each block
while those of $\la(i(X_\si))$ are ordered globally. Nonetheless
they coincide in the following case.

\begin{lemma}\label{jerome-Ialsocangivecrazynames}
  Assume $\la_\si(\bar X_\si)\in \De(\si)\cap\RR_{\geq}^n$.
  If $X_\si$ is close to $\bar X_\si$, then
  $\la(i(X_\si))=\la_{\si}(X_{\si})$.
\end{lemma}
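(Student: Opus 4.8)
The plan is to show that the condition $\la_\si(\bar X_\si) \in \De(\si) \cap \RR^n_{\geq}$ already forces the block-wise ordered eigenvalue vector to be \emph{globally} ordered, and that this property is stable under small perturbations.

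\textbf{Step 1: Analyze the hypothesis at $\bar X_\si$.} Write $P(\si) = (I_1,\ldots,I_m)$ and $\bar X_\si = \bar X_1 \times \cdots \times \bar X_m$, so that $\la_\si(\bar X_\si)$ is the concatenation of the blocks $\la(\bar X_i) \in \RR^{|I_i|}$, each internally non-increasing. The hypothesis $\la_\si(\bar X_\si) \in \De(\si)$ means, by the description of $\De(\si)$ in terms of $P(\si)$, that two coordinates of $\la_\si(\bar X_\si)$ are equal precisely when their indices lie in a common block $I_k$; in particular all entries \emph{within} a single block $\la(\bar X_i)$ are equal to a common value, say $\mu_i$, and $\mu_i \neq \mu_j$ for $i \neq j$. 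The hypothesis $\la_\si(\bar X_\si) \in \RR^n_{\geq}$ then says $\mu_1 > \mu_2 > \cdots > \mu_m$ (strict, since the $\mu_i$ are pairwise distinct and the vector is non-increasing). Consequently $\la_\si(\bar X_\si)$ is itself already a non-increasing vector, hence it coincides with $\la(i(\bar X_\si))$: reordering its entries globally changes nothing. So the claim holds at $\bar X_\si$ itself.

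\textbf{Step 2: Perturb.} Eigenvalues depend continuously on the matrix (for instance, each $\la_j$ is a continuous, indeed $1$-Lipschitz, function of a symmetric matrix by Weyl's inequalities), so $X_\si \mapsto \la_\si(X_\si)$ is continuous. Fix $\eps := \min_{1 \le i < m}(\mu_i - \mu_{i+1})/3 > 0$. For $X_\si$ close enough to $\bar X_\si$, every coordinate of $\la_\si(X_\si)$ lies within $\eps$ of the corresponding coordinate of $\la_\si(\bar X_\si)$. Hence, for $X_\si$ in a suitable neighborhood, every entry of the block $\la(X_i)$ lies in the interval $(\mu_i - \eps, \mu_i + \eps)$, and these intervals are pairwise disjoint and arranged in the correct order: $\sup(\mu_{i+1}-\eps,\mu_{i+1}+\eps) < \inf(\mu_i - \eps, \mu_i+\eps)$. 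Therefore every entry of $\la(X_i)$ is strictly larger than every entry of $\la(X_{i+1})$, while within each block $\la(X_i)$ is already non-increasing by definition of $\la$. It follows that the concatenation $\la_\si(X_\si)$ is non-increasing, i.e. it lies in $\RR^n_{\geq}$, and therefore equals $\la(i(X_\si))$, since the two vectors have the same multiset of entries (the eigenvalues of the block-diagonal matrix $i(X_\si)$ are exactly the union of the eigenvalues of the blocks) and both are ordered non-increasingly. This proves the lemma.

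\textbf{Main obstacle.} There is no real obstacle here; the only point requiring a little care is making precise the passage from ``close to $\bar X_\si$'' to a quantitative separation of the block spectra, which is handled by the uniform continuity (Lipschitz) estimate on eigenvalues and the choice of $\eps$ as a third of the minimal gap between the distinct values $\mu_i$. One should also note explicitly why the spectrum of the block-diagonal matrix $i(X_\si)$ is the union of the block spectra — this is immediate from block-diagonal structure — so that $\la(i(X_\si))$ and $\la_\si(X_\si)$ agree as soon as the latter is sorted.
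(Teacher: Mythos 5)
Your proof is correct and follows essentially the same route as the paper's: you unpack the hypothesis to get the strict gaps $\mu_1 > \cdots > \mu_m$ between the block spectra (the paper phrases this directly as $\la_{\min}(\bar X_\ell) > \la_{\max}(\bar X_{\ell+1})$), then invoke continuity of eigenvalues to show the gaps persist for nearby $X_\si$, so that $\la_\si(X_\si)$ is already globally sorted. The extra detail you supply (deriving the strict gaps explicitly from membership in $\De(\si)\cap\RR^n_\geq$, and choosing $\eps$ as a third of the minimal gap) is a welcome elaboration of the paper's terser statement, not a different argument.
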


\begin{proof}
  The assumption
  $\la_\si(\bar X_\si)\in \De(\si)\cap\RR_{\geq}^n$
  yields, for $1\leq \ell\leq m-1$,
  \[
  \la_{\min}(\bar X_\ell) > \la_{\max}(\bar X_{\ell+1}).
  \]
  The continuity of the eigenvalues
  implies that for $X_\si$ close to $\bar X_\si$,
  $\la_{\min}(X_\ell) > \la_{\max}(X_{\ell+1})$.
  Since by construction
  $\la_{\si}(X_\si)$ is ordered within each block,
  we get that $\la_{\si}(X_\si)$ is ordered globally and thus
  equal to $\la(i(X_\si))$.
\end{proof}

This permits to differentiate easily functions defined
as a composition with $\la_{\si}$.

\begin{lemma}\label{jerome-Ialsocangivefreakingnames}
  Assume $\la_\si(\bar X_\si)\in \De(\si)\cap\RR_{\geq}^n$.
  If $f\colon \RR^n\ra\RR^n$ is locally symmetric around $\la_\si(\bar X_\si)$,
  that is
  \[
  f(\si'x) = f(x) \qquad \text{for all }\si'\in S^{\succsim}(\si),
  \]
  then $f\circ \la_{\si}$ is $C^{1}$ around $\bar X_\si$, provided
  $f$ is $C^{1}$ around $\la_\si(\bar X_\si)$. Moreover, the Jacobian of $f\circ
  \la_{\si}$ at $\bar X_\si$ applied to $H_{\si}\in {\bf S}_{\si}^n$ is
  \[
  J(f\circ \la_{\si})(\bar X_\si)[H_\si]
  =J(f\circ \la)(i(\bar X_\si))[i(H_\si)].
  \]
\end{lemma}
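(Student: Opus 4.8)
The plan is to reduce the claim to \thref{thm-26Nov2007-1} applied componentwise to the blocks, using \lemref{jerome-Ialsocangivecrazynames} to convert $\la_\si$ into the globally-ordered spectral map $\la\circ i$. First I would observe that the hypothesis that $f$ is locally symmetric around $\la_\si(\bar X_\si)$ exactly means that $f$ is invariant under $S^{\succsim}(\si)$; by \eqref{cardS} and the identification after \eqref{direct-prods}, the subgroup $S^{\succsim}(\si)$ is precisely the group of permutations acting blockwise within the sets $I_1,\dots,I_m$ of the ordered partition $P(\si)$. In particular, for each block index $\ell$ the restriction of $f$ to the $\ell$-th block of coordinates is symmetric with respect to all permutations of those $|I_\ell|$ coordinates (keeping the other coordinates fixed).

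Next I would apply \lemref{jerome-Ialsocangivecrazynames}: for $X_\si$ in a small enough neighborhood of $\bar X_\si$ we have $\la(i(X_\si)) = \la_\si(X_\si)$, so the two maps $f\circ\la_\si$ and $(f\circ\la)\circ i$ agree on that neighborhood. Since $i\colon \Sn_\si\to\Sn$ is a linear embedding, the $C^1$-smoothness of $f\circ\la_\si$ around $\bar X_\si$ is equivalent to the $C^1$-smoothness of $f\circ\la$ (restricted to the affine subspace $i(\Sn_\si)$) around $i(\bar X_\si)$, and the chain rule gives
\[
J(f\circ\la_\si)(\bar X_\si)[H_\si] = J\big((f\circ\la)\circ i\big)(\bar X_\si)[H_\si] = J(f\circ\la)(i(\bar X_\si))[i(H_\si)],
\]
which is the asserted formula. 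So the only thing left is to establish that $f\circ\la$ is $C^1$ at $i(\bar X_\si)$ whenever $f$ is $C^1$ at $\la_\si(\bar X_\si)=\la(i(\bar X_\si))$.

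For this last point I would invoke \thref{thm-26Nov2007-1}(i): that theorem requires $f$ to be locally symmetric at $\la(\bar X)$ with $\bar X = i(\bar X_\si)$, i.e. invariant under $S^{\succsim}(\la(i(\bar X_\si)))= \mathrm{Fix}(\la(i(\bar X_\si)))$. The point $\la(i(\bar X_\si)) = \la_\si(\bar X_\si)$ lies in $\De(\si)$, so its partition is $P(\si)$ and $\mathrm{Fix}(\la_\si(\bar X_\si)) = S^{\succsim}(\si)$, which is exactly the invariance we assumed on $f$. Hence \thref{thm-26Nov2007-1}(i) applies at $\bar X = i(\bar X_\si)$ and yields that $f\circ\la$ is $C^1$ there (applied to each component $f_i$ of the vector-valued $f$), finishing the proof. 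The one subtlety to be careful about — the main obstacle, such as it is — is making sure the neighborhoods match up: \thref{thm-26Nov2007-1} is a statement about $f\circ\la$ on a neighborhood of $\bar X$ in all of $\Sn$, so I would note that restricting a $C^1$ function on an open subset of $\Sn$ to the linear subspace $i(\Sn_\si)$ keeps it $C^1$, and shrink the neighborhood given by \lemref{jerome-Ialsocangivecrazynames} so that both conclusions hold simultaneously.
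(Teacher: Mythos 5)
Your proof is correct and takes essentially the same route as the paper: convert $\la_\si$ to $\la\circ i$ via Lemma~\ref{jerome-Ialsocangivecrazynames}, invoke Theorem~\ref{thm-26Nov2007-1}(i) componentwise for the $C^1$ transfer, and finish with the chain rule applied to the linear embedding $i$. The paper's proof is terser, but your additional verification that $\la_\si(\bar X_\si)\in\De(\si)$ forces $S^{\succsim}(\la(i(\bar X_\si)))=S^{\succsim}(\si)$ — so that the local symmetry hypothesis of Theorem~\ref{thm-26Nov2007-1} is genuinely met — is exactly the check that the paper leaves implicit.
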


\begin{proof}
  \lemref{jerome-Ialsocangivecrazynames} gives that around $\bar X_\si$, we have
  $f\circ \la_{\si} = f \circ \la \circ i$. Apply \thref{thm-26Nov2007-1} to all of its components,
  we get that the function $f \circ \la_\si$ is $C^1$.
  The expression of the Jacobian follows from the chain rule.
\end{proof}

Let us come back now to the locally symmetric manifold $\Mm$.  We
fix a point $\bx\in \mathcal{M} \cap \RR_{\geq}^n$, and a
permutation $\sigma\in \Perm$ such that $\bar x \in \Delta(\si)$. We
also consider $\si_{\ast}$ be the characteristic permutation of~$\Mm$ (see Subsection~\ref{ssec-simin}). By (\ref{eqn-28Nov2007-1}),
we have $\sigma \prec \hspace{-0.1cm} \sim \sigma_{\ast}$ or
$\sigma\sim\si_{\ast}$, and thus the $(F,M)$-decomposition can be
applied to $\sigma$, {\it i.e.} $\sigma = \sigma^F \circ \sigma^M$
(recall \secref{sssec-341}). Consider now the ordered partitions of
$\sigma^F$ and $\sigma^M$
\begin{equation}\label{F1}
P(\sigma^F) = (I_1,\ldots,I_{\kappa})  \qqandqq
P(\sigma^M) = (I_{\kappa+1},\ldots,I_{\kappa+m})\,=P(\si_{\ast}^M),
\end{equation}
where $\kappa$ (resp.\ $m$) stands for the cardinality of the partition
$P(\si^F)$ (resp.\  $P(\si^M)$). Recalling the definitions of
$P(\si_{\ast})$, $m_{\ast}$ and $\kappa_{\ast}$ (see respectively
\eqref{part-of-sigmamin}, \eqref{aris-m} and \eqref{aris-k}), we
observe that $\kappa\le\kappa_{\ast}$, $m=m_{\ast}$ by
Proposition~\ref{prop-arisd2}, as well as the equalities
\begin{align}\label{equal-sets}
\big| \cup_{i=1}^\kappa I_i \big| = \big| \cup_{i=1}^{\kappa_*} I_i \big| = \kappa_* \quad \mbox{and} \quad (I_{\kappa+1},\ldots,I_{\kappa+m}) = (I_{\kmin+1}^{\ast},\ldots,I_{\kmin+m_{\ast}}^{\ast}).
\end{align}
The main result of this subsection (forthcoming \propref{real-Fred})
is about the spaces ${\bf S}^{\kappa_*}_{\sigma^F}$ and ${\bf
  S}^{n-\kappa_*}_{\sigma^M}$ defined by \eqref{direct-prods} for
$\si^F$ and $\si^M$ respectively.
Before going any further, let us make more precise a point about notation. Recall from
Example~\ref{example-order} that two vectors $x^F \in \R^{\kappa_*}$ and
$x^M \in \R^{n-\kappa_*}$ give rise~to
\begin{itemize}
\item the usual {\it direct} product $x^F \times x^M$ that corresponds
  to the ordered pair $(x^F, x^M)$ considered as a vector in
  $\R^n$,
\item the canonical product $x^F \otimes x^M$ which {\it intertwines}
  the vectors $x^F$ and $x^M$ into a vector of~$\R^n$. The canonical
  product depends on $\sigma_*$, while the direct product does not.
\end{itemize}

We now recall a general result quoted from Example~3.98 of \cite{BonShap2000}.

\begin{lemma}\label{lem-fred}
  Let $\bar Y \in \mathbf{S}^n$ have eigenvalues
  \[
    \lambda_1(\bar Y) \ge \cdots \ge \lambda_{k-1}(\bar Y) >
    \lambda_{k}(\bar Y) = \dots =
    \lambda_{k+r-1}(\bar Y) > \lambda_{k+r}(\bar Y)
    \ge \cdots \ge \ \lambda_n(\bar Y).
  \]
  Then, there exist an open neighborhood $W\subset \mathbf{S}^n$ of $\bar Y$ and
  an analytic map $\Theta\colon W \rightarrow {\bf S}^r$
  such that
  \begin{itemize}
  \item[(i)] for all $Y\in W$, we have
    $ \{\lambda_{k}(Y),\ldots,\lambda_{k+r-1}(Y)\}=
    \{\lambda_{1}(\Theta(Y)),\ldots,\lambda_{r}(\Theta(Y))\},$
  \item[(ii)] the Jacobian of $\Theta$ has full rank at $\bar Y$.
  \end{itemize}
\end{lemma}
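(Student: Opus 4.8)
The plan is to exhibit the $r$-element cluster $\{\lambda_k(Y),\ldots,\lambda_{k+r-1}(Y)\}$ as the spectrum of the compression of $Y$ onto its associated invariant subspace, using the Riesz (holomorphic) functional calculus to make the construction analytic. Since $\lambda_{k-1}(\bar Y)>\lambda_k(\bar Y)>\lambda_{k+r}(\bar Y)$, fix a positively oriented circle $\Gamma\subset\CC$ enclosing $\lambda_k(\bar Y)$ and no other eigenvalue of $\bar Y$. For $Y$ in a neighbourhood $W_0$ of $\bar Y$ the resolvent $(zI-Y)^{-1}$ exists for all $z\in\Gamma$, with entries rational in $(z,Y)$ and denominator $\det(zI-Y)\neq 0$ on $W_0\times\Gamma$; hence
$$
P(Y):=\frac{1}{2\pi i}\oint_{\Gamma}(zI-Y)^{-1}\,dz
$$
is analytic in the entries of $Y$ on $W_0$. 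Diagonalising the symmetric matrix $Y$ shows that $P(Y)$ is the orthogonal projector onto the sum of the eigenspaces of $Y$ for the eigenvalues inside $\Gamma$; by continuity of the spectrum these eigenvalues are exactly $\lambda_k(Y)\ge\cdots\ge\lambda_{k+r-1}(Y)$ for $Y$ near $\bar Y$, so $\rank P(Y)=r$.

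Next I would build an analytic orthonormal frame of $\range P(Y)$. Fix an orthonormal basis $\bar q_1,\ldots,\bar q_r$ of $\range P(\bar Y)$, set $v_i(Y):=P(Y)\bar q_i$ and $V(Y):=[\,v_1(Y)\mid\cdots\mid v_r(Y)\,]$. Since $\trans{V(\bar Y)}V(\bar Y)=I_r$, the Gram matrix $\trans{V(Y)}V(Y)$ stays positive definite on a neighbourhood $W\subset W_0$, and because the square root of a positive definite matrix is analytic,
$$
Q(Y):=V(Y)\,\bigl(\trans{V(Y)}V(Y)\bigr)^{-1/2}
$$
is analytic on $W$, satisfies $\trans{Q(Y)}Q(Y)=I_r$, and has $\range Q(Y)=\range V(Y)=\range P(Y)$. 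Put $\Theta(Y):=\trans{Q(Y)}\,Y\,Q(Y)\in\mathbf{S}^r$. Since $\range Q(Y)$ is $Y$-invariant we have $YQ(Y)=Q(Y)\Theta(Y)$, so the eigenvalues of $\Theta(Y)$ form a sub-multiset of those of $Y$; counting dimensions, they are precisely $\lambda_k(Y),\ldots,\lambda_{k+r-1}(Y)$, which is (i).

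For (ii) I would first conjugate by a fixed orthogonal matrix (an analytic isometry of $\Sn$ preserving eigenvalues, hence harmless) so that $\bar Y=\Diag(\lambda(\bar Y))$ and $\bar q_i=e_{k-1+i}$, giving $Q(\bar Y)=[\,e_k\mid\cdots\mid e_{k+r-1}\,]$. Take any $H\in\Sn$ with $H_{ij}=0$ whenever $\{i,j\}\not\subseteq\{k,\ldots,k+r-1\}$. Then $\mathrm{span}(e_k,\ldots,e_{k+r-1})$ is $(\bar Y+tH)$-invariant, the eigenvalues of $\bar Y+tH$ on it remain near $\lambda_k(\bar Y)$ (inside $\Gamma$) for small $t$ while the others are unchanged (outside $\Gamma$), so $P(\bar Y+tH)$ is the constant projector onto that span and $Q(\bar Y+tH)=Q(\bar Y)$. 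Hence $\Theta(\bar Y+tH)=\lambda_k(\bar Y)I_r+t\,H_{\{k,\ldots,k+r-1\}}$, so $D\Theta(\bar Y)[H]=H_{\{k,\ldots,k+r-1\}}$; as $H$ ranges over such directions this hits every element of $\mathbf{S}^r$, so $D\Theta(\bar Y)$ is surjective and its Jacobian has full rank.

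The only genuinely delicate step is (ii): one must single out perturbation directions along which the invariant subspace, and therefore the frame $Q(Y)$, does not move to first order, so that $D\Theta(\bar Y)$ reduces to the evidently surjective ``principal submatrix'' map. Everything else --- analyticity of the Riesz projection and of the symmetric orthonormalisation, and the identification of $\range P(Y)$ with the eigenvalue cluster --- is routine perturbation theory.
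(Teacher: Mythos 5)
The paper itself offers no proof of this lemma; it is simply quoted from Example~3.98 of Bonnans--Shapiro, so there is no ``paper's proof'' to compare against. Your argument is a correct, self-contained proof, and it is in fact the standard one: the holomorphic (Riesz) functional calculus gives an analytic spectral projector $P(Y)$ onto the invariant subspace associated with the eigenvalue cluster, symmetric orthonormalisation of $P(Y)\bar q_i$ gives an analytic isometric frame $Q(Y)$, and the compression $\Theta(Y)=\trans{Q(Y)}YQ(Y)$ has exactly the cluster as its spectrum. Your full-rank argument is also sound: after reducing by a fixed orthogonal conjugation to $\bar Y=\Diag(\lambda(\bar Y))$, perturbations $H$ supported on the $\{k,\ldots,k+r-1\}$ block keep $\bar Y+tH$ block-diagonal, hence leave $P$ and $Q$ frozen, so $D\Theta(\bar Y)[H]=H_{\{k,\ldots,k+r-1\}}$, which already exhausts $\mathbf{S}^r$. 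The only thing worth saying explicitly (you gesture at it) is why the conjugation reduction is legitimate: replacing $Y$ by $\trans{\bar U}Y\bar U$ is a linear isomorphism of $\mathbf{S}^n$ and replaces $\Theta$ by $\Theta\circ(\text{that isomorphism})$, so analyticity and full rank of the Jacobian are preserved. With that remark added, the proof is complete and matches what the cited reference does.
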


With the help of the previous lemma,
we obtain the following result, used later in
\thref{main-res-simple-form-2}.

\begin{proposition}[Local canonical split of $\Sn$ induced by $\si_*$]\label{real-Fred}
  With the notation of this subsection, there exist an open neighborhood $W\subset
  \Sn$ of $\bar X \in \lambda^{-1}(\bar x)$ and two analytic maps
  $$
  \Theta^F\colon  W \rightarrow {\bf S}^{\kappa_*}_{\sigma^F} \qquad \mbox{and} \qquad
  \Theta^M\colon  W \rightarrow {\bf S}^{n-\kappa_*}_{\sigma^M},
  $$
  such that
  \begin{itemize}
  \item[(i)] $\lambda(X) = \lambda_{\sigma^F}(\Theta^F(X)) \otimes
    \lambda_{\sigma^M}(\Theta^M(X)) \quad \mbox{for all } X\in W;$
  \item[(ii)] the Jacobians of the analytic maps $\Theta^F$ and $\Theta^M$
    have full ranks at $\bar X$.
  \end{itemize}
\end{proposition}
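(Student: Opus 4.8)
The plan is to apply Lemma~\ref{lem-fred} to each eigenvalue cluster of $\bar X$ separately and then to reassemble the resulting maps according to the $(F,M)$-decomposition of $\sigma$.

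First I would record the combinatorial picture. Since $\bar x\in\Delta(\sigma)\cap\RR^n_{\ge}$, Lemma~\ref{consec} shows that every block of $P(\sigma)$ consists of consecutive integers, so write $P(\sigma)=(\hat I_1,\dots,\hat I_{\kappa+m})$ in ordered form; moreover $\sigma=\sigma^F\circ\sigma^M$ admits its $(F,M)$-decomposition, the $\kappa$ blocks contained in $\N_n\setminus\mathrm{supp}(\sigma_*)$ being exactly the blocks of $\sigma^F$ and the remaining $m$ blocks being those of $\sigma^M=\sigma_*^M$. Because $\bar x\in\Delta(\sigma)$, consecutive blocks carry distinct values, hence the eigenvalues $\lambda_1(\bar X)\ge\cdots\ge\lambda_n(\bar X)$ of any $\bar X\in\lambda^{-1}(\bar x)$ split into $\kappa+m$ clusters, the $\ell$-th one occupying the positions $\hat I_\ell$ and being constant equal to a value $\mu_\ell$, with $\mu_1>\mu_2>\cdots>\mu_{\kappa+m}$. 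For each $\ell$ I apply Lemma~\ref{lem-fred} to $\bar X$, with $k,r$ matching the position range and the cardinality $r_\ell:=|\hat I_\ell|$ of the $\ell$-th cluster; this yields an open neighbourhood $W_\ell$ of $\bar X$ and an analytic map $\Theta_\ell\colon W_\ell\to{\bf S}^{r_\ell}$ with full-rank Jacobian at $\bar X$ such that, for $X\in W_\ell$, the eigenvalues of $\Theta_\ell(X)$ and the numbers $\{\lambda_i(X):i\in\hat I_\ell\}$ coincide as multisets. Put $W:=\bigcap_\ell W_\ell$, and shrink $W$ (using continuity of eigenvalues and $\mu_1>\cdots>\mu_{\kappa+m}$) so that for $X\in W$ the numbers $\lambda_i(X)$, $i\in\hat I_\ell$, remain in pairwise disjoint neighbourhoods of the $\mu_\ell$. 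Finally let $\Theta^F$ be the direct product of the $\Theta_\ell$ over the $F$-blocks (ordered by smallest element) and $\Theta^M$ the direct product over the $M$-blocks; they are analytic on $W$ with values in ${\bf S}^{\kappa_*}_{\sigma^F}$ and ${\bf S}^{n-\kappa_*}_{\sigma^M}$ respectively.

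For (i): fix $X\in W$. For each $\ell$ the numbers $\{\lambda_i(X):i\in\hat I_\ell\}$ occupy consecutive positions of $\lambda(X)$ (Lemma~\ref{consec}) and are therefore already non-increasing, so they form exactly the vector $\lambda(\Theta_\ell(X))$. Concatenating over the $F$-blocks in order yields $\lambda_{\sigma^F}(\Theta^F(X))=\lambda(X)^F$, and likewise $\lambda_{\sigma^M}(\Theta^M(X))=\lambda(X)^M$; since by definition of the canonical split one always has $\lambda(X)=\lambda(X)^F\otimes\lambda(X)^M$, assertion (i) follows.

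The crux is (ii): the statement of Lemma~\ref{lem-fred} only gives full rank of each single $\Theta_\ell$, not joint full rank of $\Theta^F$ (or $\Theta^M$), so here I would exploit the orthogonality of the eigenspaces of $\bar X$. Choose $\bar Q_\ell\in\RR^{n\times r_\ell}$ whose columns form an orthonormal basis of $\ker(\bar X-\mu_\ell I)$, so that $\trans{\bar Q_j}\bar Q_\ell=\delta_{j\ell}I$ and these ranges span $\RR^n$. Given a block $\ell$ and $B\in{\bf S}^{r_\ell}$, the curve $X_t:=\bar X+t\,\bar Q_\ell B\trans{\bar Q_\ell}$ stays in $W$ for $|t|$ small and, in the orthogonal eigenspace decomposition of $\bar X$, equals $\bar X$ on $(\range\bar Q_\ell)^\perp$ and $\mu_\ell I+tB$ on $\range\bar Q_\ell$. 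Hence for $j\ne\ell$ the $j$-th cluster of $X_t$ is still constant equal to $\mu_j$, so $\Theta_j(X_t)=\mu_j I$ is independent of $t$ (a symmetric matrix all of whose eigenvalues are equal is scalar), giving $J\Theta_j(\bar X)[\bar Q_\ell B\trans{\bar Q_\ell}]=0$; while the $\ell$-th cluster of $X_t$ is the spectrum of $\mu_\ell I+tB$, so writing $\Theta_\ell(X_t)=\mu_\ell I+tL_\ell(B)+O(t^2)$ and comparing first-order eigenvalue shifts shows that the linear map $L_\ell\colon B\mapsto J\Theta_\ell(\bar X)[\bar Q_\ell B\trans{\bar Q_\ell}]$ preserves the spectrum on ${\bf S}^{r_\ell}$. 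A spectrum-preserving linear endomorphism is injective, hence (equal finite dimension) bijective. Therefore $J\Theta^F(\bar X)$ maps the directions $\bar Q_\ell B\trans{\bar Q_\ell}$, with $\ell$ an $F$-block and $B\in{\bf S}^{r_\ell}$, onto all of ${\bf S}^{\kappa_*}_{\sigma^F}$ (the product of the ${\bf S}^{r_\ell}$ over the $F$-blocks), so it is surjective, i.e.\ of full rank; the same argument applies to $\Theta^M$. I expect this last step — passing from the black-box Lemma~\ref{lem-fred} to joint surjectivity — to be the only genuinely delicate point; the rest is bookkeeping around the $(F,M)$-decomposition and Lemma~\ref{consec}.
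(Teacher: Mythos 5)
Your proof is correct and follows the same construction as the paper: apply Lemma~\ref{lem-fred} once per eigenvalue cluster of $\bar X$, then assemble the resulting maps into $\Theta^F$ and $\Theta^M$ according to the $(F,M)$-split of the blocks, using Lemma~\ref{consec} to know the clusters occupy consecutive positions. The added value is in part (ii): the paper's proof dismisses the joint full rank of $J\Theta^F(\bar X)$ and $J\Theta^M(\bar X)$ with the bare phrase ``we observe that the above functions satisfy the desired properties,'' while Lemma~\ref{lem-fred} as stated only yields full rank of each single $J\Theta_\ell(\bar X)$ --- and surjectivity of each factor does not automatically give surjectivity of the product. You correctly flag this as the delicate step and fill it with a valid argument: the eigenspace-aligned directions $\bar Q_\ell B\trans{\bar Q_\ell}$ are annihilated by $J\Theta_j(\bar X)$ for $j\neq\ell$ (since those clusters of $X_t$ stay frozen at $\mu_j$, forcing $\Theta_j(X_t)=\mu_j I$), while $L_\ell\colon B\mapsto J\Theta_\ell(\bar X)[\bar Q_\ell B\trans{\bar Q_\ell}]$ is a spectrum-preserving linear endomorphism of ${\bf S}^{r_\ell}$, hence injective (only the zero symmetric matrix has all eigenvalues zero), hence bijective, and joint surjectivity follows block by block by linearity. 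One remark: the explicit construction behind Lemma~\ref{lem-fred} (compression of $X$ onto the analytically varying $\ell$-th eigenspace) gives the formula $J\Theta_\ell(\bar X)[H]=\trans{\bar Q_\ell}H\bar Q_\ell$, from which $L_\ell=\mathrm{id}$ and the conclusion is immediate; your more abstract spectrum-preservation argument is a legitimate alternative that does not rely on that formula.
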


\begin{proof}
  We are going to apply Lemma~\ref{lem-fred}
  for each block (so $(\kappa+m)$ times).
  To have the right order, we start by renumbering the blocks $I_i$:
  since the blocks in the ordered partitions (\ref{F1}) are
  made of consecutive numbers (by Lemma~\ref{consec} ---recall $\bar x\in\Mm\cap\RR^n_{\ge}$),
  there exists a permutation~$\tau\in \Sigma^{\kappa+m}$, such that
  for all $1\le \ell_1 < \ell_2 \le \kappa+m$
  \[
  i \in I_{\tau(\ell_1)}, \,\, j \in I_{\tau(\ell_2)}
  \,\,\Longrightarrow \,\, i < j \ \ (\text{in other words }
  \la_i(\bar{X})>\la_j(\bar{X})).
  \]
  The permutation $\tau$ describes how the canonical product intertwines the blocks of the vectors on the right-hand side of $(i)$.
  So we apply \lemref{lem-fred} for all $\ell = 1,\ldots,\kappa+m$ to
  get open neighborhoods
  $W_\ell \subset \Sn$ of $\bar X$ and analytic maps with Jacobians having full rank
  $$
  \Theta_{\tau(\ell)}\colon W_{\ell} \rightarrow {\bf S}^{|I_{\tau(\ell)}|}.
  $$
  Set $W=\bigcap^{\kappa+m}_{\ell=1}W_\ell$ and
  put the $F$-pieces and the $M$-pieces together, that is, define
  \begin{align*}
    \Theta^F := \Theta_1 \times \cdots \times \Theta_{\kappa} \qquad \mbox{and} \qquad
    \Theta^M := \Theta_{\kappa+1} \times \cdots \times \Theta_{\kappa+m},
  \end{align*}
  restricting the $\Theta_\ell$ to $W$.
  We observe that the above functions satisfy the desired properties.
\end{proof}

%********************************************************************************
\subsection[The lift-up into $\Sn_{\si}$ in the case $\si_*=\mathrm{id}_n$]{The lift-up into \boldmath$\Sn_{\si}$ in the case \boldmath$\si_*=\mathrm{id}_n$}
\label{ssec-42}
%********************************************************************************

In this section, we consider the case when $\kmin=n$ (that is
$\si_*=\id_n$, or again $\Sigma_{\Mm}=\{\mathrm{id}_n\}$).  Let $\bar
x$ and $\si$ such that $\bar x \in \Mm\cap \Delta(\si)$; we have
obviously $\si^F=\si$ (see \propref{prop-arisd1}).
The important property in this case is the
simplification given by \thref{jm-double} which yields
\begin{equation}\label{eqn-4Dec07-1}
  N_{\mathcal{M}}(\bar x)  \subseteq \Delta(\sigma)^{\bot\bot}.
\end{equation}
The goal here is to establish that
the set $\lambda^{-1}_{\sigma}(\mathcal{M})$ is a submanifold
of ${\bf  S}^n_{\sigma}$, and to calculate its dimension.
This is an intermediate step in our way to prove that
$\lambda^{-1}(\mathcal{M})$ is a submanifold
of ${\bf  S}^n$ (in the general case).
This also enables us to grind our strategy: the succession of arguments
will be similar for the general case.

From \eqref{eqn-4Dec07-1}, we can exhibit easily a locally symmetric
equation of $\Mm$. We first recall from \eqref{pi_T} and \eqref{pi_N}
the definitions of $\bar \pi_T(x)$ and $\bar \pi_N(x)$ respectively,
as well as the definition of $\phi$ by \eqref{arisd-phi}. Consider the
ball $\mathcal{B}(\bar x, \delta)$ satisfying
Assumption~\ref{assmpt-21Nov2008}, and define the function
\begin{equation}\label{aris-C1}
\fonction{\bar \phi}{B(\bar x, \delta) \subset \RR^n}
         {N_{\mathcal{M}}(\bar x)\subset \RR^n}
         {x}{\bar x + \phi(\bar \pi_T(x))-\bar \pi_{N}(x).}
\end{equation}

\begin{lemma}[Existence of a locally symmetric local equation in the case $\si_*=\mathrm{id}_n$]\label{lemma-37}
  The function~$\bar \phi$ defined by \eqref{aris-C1} is a local
  equation of $\Mm$ around $\bar x \in \Mm \cap \Delta(\sigma)$
  that is locally symmetric, in other words
  $$
  \bar \phi (\sigma' x) = \sigma' \bar \phi(x) = \bar \phi(x)
  \qquad \mbox{for all } \sigma' \in S^{\succsim}(\sigma).
  $$
\end{lemma}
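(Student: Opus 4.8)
The plan is to verify two things separately: first, that $\bar\phi$ is a genuine local equation of $\Mm$ around $\bar x$ (i.e.\ it is $C^k$, vanishes exactly on $\Mm\cap B(\bar x,\delta)$, and has full-rank Jacobian at $\bar x$); second, that it is locally symmetric in the stated sense. For the first part I would exploit that in the case $\si_*=\mathrm{id}_n$ we have $N_{\mathcal M}(\bar x)\subseteq\Delta(\sigma)^{\bot\bot}$ by \eqref{eqn-4Dec07-1}, and combine it with Lemma~\ref{important-observation}, which gives $\phi(x)\in N_{\mathcal M}(\bar x)\cap\Delta(\sigma)^{\bot\bot}$; in the present situation this intersection is simply $N_{\mathcal M}(\bar x)$. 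The identity \eqref{prop-30June07-2} says $\mathcal M\cap B(\bar x,\delta)=\{x+\phi(x):x\in(\bar x+T_{\mathcal M}(\bar x))\cap B(\bar x,\delta)\}$, so a point $y$ lies on $\mathcal M$ near $\bar x$ precisely when its tangential component $\bar\pi_T(y)$ and its normal part match through $\phi$, i.e.\ when $\bar x+\phi(\bar\pi_T(y))-\bar\pi_N(y)=0$ (using the relations \eqref{pi_TandN} between $\bar\pi_T$, $\bar\pi_N$ and $\pi_T,\pi_N$). That is exactly $\bar\phi(y)=0$, which proves $\bar\phi^{-1}(0)=\mathcal M\cap B(\bar x,\delta)$. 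Smoothness of $\bar\phi$ follows from smoothness of $\phi$ (it is $C^{k-1}$, but here $C^k$ suffices for our purposes as in the standard tangential-parametrization setup) and linearity of $\bar\pi_T$, $\bar\pi_N$. For the rank, I would compute $J\bar\phi(\bar x)$: since $\phi(\bar x)=0$ and $J\phi(\bar x)=0$ (the tangent space is tangent to $\Mm$), the derivative of $x\mapsto\phi(\bar\pi_T(x))$ vanishes at $\bar x$, so $J\bar\phi(\bar x)=-J\bar\pi_N(\bar x)=-\pi_N$, the orthogonal projection onto $N_{\mathcal M}(\bar x)$, which has rank $n-d$ and is surjective onto $N_{\mathcal M}(\bar x)\cong\RR^{n-d}$. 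Hence $\bar\phi$ is a $C^k$ local equation.

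For the local symmetry, fix $\sigma'\in S^{\succsim}(\sigma)$. The middle equality $\sigma'\bar\phi(x)=\bar\phi(x)$ is immediate once we know $\bar\phi(x)\in N_{\mathcal M}(\bar x)$: by construction $\bar\phi$ takes values in $N_{\mathcal M}(\bar x)$, and in the case $\si_*=\mathrm{id}_n$ we have $N_{\mathcal M}(\bar x)\subseteq\Delta(\sigma)^{\bot\bot}$, on which every $\sigma'\succsim\sigma$ acts as the identity (by \eqref{eqn-arisd4}, vectors in $\Delta(\sigma)^{\bot\bot}$ are constant on the blocks of $P(\sigma)$, hence fixed by $S^{\succsim}(\sigma)$). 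For the left equality $\bar\phi(\sigma'x)=\sigma'\bar\phi(x)$, I would unwind the definition \eqref{aris-C1}:
\[
\bar\phi(\sigma'x)=\bar x+\phi(\bar\pi_T(\sigma'x))-\bar\pi_N(\sigma'x).
\]
Now invoke Corollary~\ref{corollary_pi}, which gives $\bar\pi_T(\sigma'x)=\sigma'\bar\pi_T(x)$ and $\bar\pi_N(\sigma'x)=\sigma'\bar\pi_N(x)$; then use \eqref{5Dec07-3} from Lemma~\ref{important-observation}, namely $\phi(\sigma' u)=\sigma'\phi(u)=\phi(u)$ for $u$ in the tangent affine space, applied to $u=\bar\pi_T(x)$. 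Finally, using $\sigma'\bar x=\bar x$ (since $\sigma'\in S^{\succsim}(\sigma)=\mathrm{Fix}(\bar x)$, recalling $\bar x\in\Delta(\sigma)$), we get
\[
\bar\phi(\sigma'x)=\sigma'\bar x+\sigma'\phi(\bar\pi_T(x))-\sigma'\bar\pi_N(x)=\sigma'\bigl(\bar x+\phi(\bar\pi_T(x))-\bar\pi_N(x)\bigr)=\sigma'\bar\phi(x),
\]
which completes the chain of equalities.

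The only genuinely delicate point is making sure every ingredient is legitimately available in the case at hand. In particular one must confirm that $\phi(\bar\pi_T(x))\in N_{\mathcal M}(\bar x)$ so that \eqref{aris-C1} is well-typed and the values of $\bar\phi$ really land in $N_{\mathcal M}(\bar x)\subseteq\Delta(\sigma)^{\bot\bot}$; this is where \eqref{eqn-4Dec07-1} (i.e.\ Theorem~\ref{jm-double} specialized to $\si_*=\mathrm{id}_n$) does the essential work, since in general $\phi$ only takes values in $N_{\mathcal M}(\bar x)\cap\Delta(\sigma)^{\bot\bot}$, and it is precisely the hypothesis $\si_*=\mathrm{id}_n$ that collapses this to all of $N_{\mathcal M}(\bar x)$. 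Everything else — the rank computation, the applicability of Corollary~\ref{corollary_pi} and of \eqref{5Dec07-3} — is routine given Assumption~\ref{assmpt-21Nov2008} on $\delta$, which guarantees the tangential parametrization holds on $B(\bar x,\delta)$ and that this ball meets only strata $\Delta(\sigma')$ with $\sigma'\succsim\sigma$.
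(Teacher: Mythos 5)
Your proof is correct and follows essentially the same route as the paper: verify the zero set using \eqref{pi_TandN} and \eqref{prop-30June07-2}, verify surjectivity of the Jacobian, then establish local symmetry via Corollary~\ref{corollary_pi} and Lemma~\ref{important-observation}. The one small divergence is in the rank computation: you invoke the (true, but slightly stronger) fact that $J\phi(\bar x)=0$ to conclude $J\bar\phi(\bar x)=-\pi_N$, whereas the paper simply observes that for $h\in N_{\mathcal M}(\bar x)$ one has $\pi_T(h)=0$, so $J\bar\phi(\bar x)[h]=-h$ without needing to know $J\phi(\bar x)$ at all; both give surjectivity onto $N_{\mathcal M}(\bar x)$. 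A minor inaccuracy: $\phi$ is $C^k$ (not $C^{k-1}$) when $\mathcal M$ is $C^k$, since the implicit function theorem preserves the differentiability class, but this does not affect the argument.
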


\begin{proof}
  For $x \in \mathcal{B}(\bar x, \delta)$ we have that
  \[
  \bar \phi(x)=0 \iff
  \bar \pi_{N}(x) = \bar x + \phi(\bar \pi_T(x))\iff
  x = \bar \pi_T(x) + \phi(\bar \pi_T(x))\iff
  x \in \mathcal{M} \cap B(\bar x, \delta),
  \]
  using successively (\ref{pi_TandN}) and (\ref{prop-30June07-2}). The
  Jacobian mapping $J\bar\phi(\bx)$ of $\bar \phi$ at $\bx$ is a
  linear map from~$\R^n$ to~$N_{\mathcal{M}}(\bar x)$, which, when
  applied to any direction $h$, yields
  \begin{align*}
    J \bar \phi(x)[h] &= J \phi(\bar \pi_T(x))[\pi_T(h)]- \pi_{N}(h).
  \end{align*}
  Clearly, for $h \in N_{\mathcal{M}}(\bar x)$ we have $J
  \bar\phi(\bar x)[h]=-h$ showing that the Jacobian in onto and hence
  of full rank. Thus, $\bar\phi$ is a local equation of $\Mm$ around
  $\bx$. Finally, Corollary~\ref{corollary_pi}$(i)$ and
  Lemma~\ref{important-observation} show that for any $\sigma'
  \succsim \sigma$ and any $x \in B(\bar x, \delta)$ we have $(\phi \circ
  \bar \pi_T)(\sigma'x)= (\phi \circ \bar \pi_T)(x)$. Thus, in view of
  Corollary~\ref{corollary_pi}$(ii)$ and Lemma~\ref{important-observation} again,
  for $\sigma' \in S^{\succsim}(\sigma)$, we have
  $$
  (\sigma')^{-1} \bar \phi(\sigma' x) = (\sigma')^{-1} (\bar x + (\phi \circ \bar \pi_T)(x) -
  \sigma' \bar \pi_N(x)) = \bar \phi(x).
  $$
  Since $\bar \phi(x) \in N_{\mathcal{M}}(\bar x)\subset \Delta(\si)^{\bot\bot}$, we
  obtain the second equality $\sigma' \bar \phi(x) = \bar \phi(x)$.
\end{proof}

Let us consider the map
\begin{equation}\label{aris-C11}
  \fonction{\bar \Phi}
           {\lambda^{-1}_{\sigma}(\mathcal{B}(\bar x, \delta))
             \subset {\bf S}^n_{\sigma}}
           {N_{\mathcal{M}}(\bar x)\subset \RR^n}
           {X_{\si}}
           {(\bar \phi \circ \lambda_{\sigma})(X_{\si}) = \bar x + \phi(\bar
             \pi_T(\lambda_{\sigma}(X_{\si})))-\bar \pi_{N}(\lambda_{\sigma}(X_{\si})).}
\end{equation}
Since $\bar \phi$ is a local equation of $\Mm$ around $\bar x$,
we deduce for $X_{\si} \in {\bf S}^n_{\sigma}$
\begin{equation}\label{aris-C2}
  X_{\si} \in \lambda^{-1}_{\sigma}(\mathcal{M}\cap B(\bar x, \delta)) \iff
  \lambda_{\sigma}(X_{\si}) \in \mathcal{M}\cap B(\bar x, \delta) \iff \bar
  \Phi(X_{\si})=0.
\end{equation}
Thus, it suffices to show that $\bar\Phi$ is differentiable and that
its Jacobian $J\bar \Phi$ has full rank at $\bar X_{\si} \in
\lambda^{-1}_{\sigma}(\bar x)$. This is the role of forthcoming
Theorem~\ref{main-thm-4Dec2007-1}. We shall first need the following
lemma.

\begin{lemma}\label{lem-26Nov2007-4}
  The function $\bar\pi_N\circ\lambda_{\sigma}$ is differentiable
  at $\bar X_{\si} \in \lambda^{-1}_{\sigma}(\bar x)$. Moreover,
  for any direction $H_{\si} \in \Sn_{\sigma}$ we have
  \[
  J (\bar \pi_{N}\circ \lambda_{\sigma})(\bar X_{\si})[H_{\si}]
  = \pi_{N}( \diag\, (\bar{U}_{\si}H\trans{\bar{U}_{\si}})),
  \]
  where $\bar U_{\si} \in \On_{\sigma}$ is such that $\bar X_{\si} =
  \trans{\bar U_{\si}}\big(\Diag\,\lambda_{\sigma}(\bar X_{\si})\big)
  \bar U_{\si}$, recalling the embedding \eqref{reve-fred}.
\end{lemma}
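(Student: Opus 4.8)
The plan is to decompose $\bar\pi_N\circ\lambda_\sigma$ as $\bar\pi_N\circ\lambda_\sigma = \bar\pi_N\circ\lambda\circ i$, which is valid in a neighborhood of $\bar X_\sigma$ by Lemma~\ref{jerome-Ialsocangivecrazynames} (the assumption there, $\lambda_\sigma(\bar X_\sigma)\in\Delta(\sigma)\cap\RR^n_{\ge}$, holds because $\bar X_\sigma\in\lambda_\sigma^{-1}(\bar x)$ with $\bar x\in\mathcal M\cap\Delta(\sigma)\cap\RR^n_{\ge}$). Since $\bar\pi_N$ is an affine map, $x\mapsto\bar\pi_N(x)=\bar x + \pi_N(x)-\pi_N(\bar x)$ by \eqref{pi_TandN}, so $\bar\pi_N$ is locally symmetric as a function of $x$ around $\bar x$: indeed $\pi_N$ is linear and $\sigma'$-equivariant for $\sigma'\in S^{\succsim}(\sigma)$ by Corollary~\ref{corollary_pi}, while $\bar\pi_N(\sigma' x)=\sigma'\bar\pi_N(x)$, and one further checks that each component function is invariant (this uses that the affine normal space $\bar x + N_{\mathcal M}(\bar x)$ is $\sigma'$-invariant, Lemma~\ref{Tinv}, together with $\sigma'\bar x = \bar x$). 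The key point is that the coordinate functions of $\bar\pi_N$, composed with a permutation $\sigma'\in S^{\succsim}(\sigma)$, are permuted among themselves in exactly the way required so that $\bar\pi_N\circ\lambda$ is a well-defined smooth map by the transfer principle.

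Concretely, I would first record that $\bar\pi_N$ is an affine function $\RR^n\to\RR^n$ whose $i$-th component $\ell_i$ is an affine functional, and that for $\sigma'\in S^{\succsim}(\sigma)$ one has $\ell_{\sigma'(i)}(\sigma' x) = \ell_i(x)$ — this is just the coordinate form of $\bar\pi_N(\sigma' x)=\sigma'\bar\pi_N(x)$ from Corollary~\ref{corollary_pi}$(ii)$. Hence each $\ell_i\circ\lambda$ is, after the standard averaging/symmetrization argument, smooth; more directly, one applies the component-wise differentiation statement of Lemma~\ref{jerome-Ialsocangivefreakingnames} (with $f=\bar\pi_N$, which is locally symmetric around $\lambda_\sigma(\bar X_\sigma)=\bar x$ in the sense required there) to conclude that $\bar\pi_N\circ\lambda_\sigma = \bar\pi_N\circ\lambda\circ i$ is $C^1$ at $\bar X_\sigma$, with
\[
J(\bar\pi_N\circ\lambda_\sigma)(\bar X_\sigma)[H_\sigma] = J(\bar\pi_N\circ\lambda)(i(\bar X_\sigma))[i(H_\sigma)].
\]

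It then remains to evaluate the right-hand side. Writing $\bar\pi_N(y)=\bar x + \pi_N(y) - \pi_N(\bar x)$, the Jacobian of $\bar\pi_N$ at any point is the constant linear map $\pi_N$. Applying the chain rule together with the gradient formula \eqref{H1} of Theorem~\ref{thm-26Nov2007-1} — applied coordinate by coordinate to the components of $\lambda$, with $\bar U := i(\bar U_\sigma)\in\On$ an orthogonal matrix diagonalizing $\bar X := i(\bar X_\sigma)$, which exists precisely because $\bar U_\sigma\in\On_\sigma$ realizes $\bar X_\sigma = \trans{\bar U_\sigma}(\Diag\lambda_\sigma(\bar X_\sigma))\bar U_\sigma$ — gives that $J\lambda(\bar X)[i(H_\sigma)]$ equals $\diag(\bar U_\sigma H \trans{\bar U_\sigma})$ up to the identification via $i$; composing with the linear map $\pi_N$ yields exactly
\[
J(\bar\pi_N\circ\lambda_\sigma)(\bar X_\sigma)[H_\sigma] = \pi_N\big(\diag(\bar U_\sigma H \trans{\bar U_\sigma})\big),
\]
as claimed. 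The main obstacle I anticipate is the bookkeeping needed to make the component-wise application of Theorem~\ref{thm-26Nov2007-1} legitimate: one must check that $\bar\pi_N$, a vector-valued affine map, is locally symmetric around $\bar x$ in the precise sense of Definition~\ref{Definition_locsymfun} (each component invariant under $S^{\succsim}(\bar x)$), rather than merely equivariant — but this follows from Lemma~\ref{Tinv}$(\ref{Tinv-ii})$ and Corollary~\ref{corollary_pi}, since a permutation in $S^{\succsim}(\sigma)$ fixes $\bar x$ and preserves both the affine normal space and the partition structure. Everything else is a routine chain-rule computation built on Lemma~\ref{jerome-Ialsocangivefreakingnames} and formula~\eqref{H1}.
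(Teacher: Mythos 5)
The plan correctly identifies what is needed (apply Lemma~\ref{jerome-Ialsocangivefreakingnames}, which requires $\bar\pi_N$ to be locally symmetric around $\bar x$), and the chain-rule computation at the end is fine. But the argument that $\bar\pi_N$ \emph{is} locally symmetric — in the sense of Definition~\ref{Definition_locsymfun}, i.e.\ $\bar\pi_N(\sigma' x)=\bar\pi_N(x)$ for $\sigma'\in S^{\succsim}(\sigma)$ — has a genuine gap. You derive only equivariance $\bar\pi_N(\sigma' x)=\sigma'\bar\pi_N(x)$ (Corollary~\ref{corollary_pi}$(ii)$), and you claim the upgrade to component-wise invariance follows from the facts that $\sigma'$ fixes $\bar x$ and that $\sigma'$ preserves the set $\bar x+N_{\mathcal M}(\bar x)$ (Lemma~\ref{Tinv}$(\ref{Tinv-ii})$). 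That reasoning does not work: a permutation preserving an affine subspace as a set need not fix its points. For example, if $N_{\mathcal M}(\bar x)$ contained a vector $v$ with $v_i\neq v_j$ for some $i,j$ in the same block of $P(\sigma)$, then the transposition $(ij)\in S^{\succsim}(\sigma)$ would map $N_{\mathcal M}(\bar x)$ to itself but move $v$, and $\bar\pi_N$ would then be equivariant but not invariant.

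The ingredient you are missing is precisely the hypothesis that puts us in the special case $\si_*=\id_n$, namely the inclusion \eqref{eqn-4Dec07-1}: $N_{\mathcal M}(\bar x)\subseteq\Delta(\sigma)^{\bot\bot}$. Combined with $\bar x\in\Delta(\sigma)^{\bot\bot}$, it gives $\bar x+N_{\mathcal M}(\bar x)\subseteq\Delta(\sigma)^{\bot\bot}$, so every point in the range of $\bar\pi_N$ is \emph{pointwise fixed} by every $\sigma'\in S^{\succsim}(\sigma)$. Only then does equivariance collapse to invariance, $\bar\pi_N(\sigma'x)=\sigma'\bar\pi_N(x)=\bar\pi_N(x)$, and Lemma~\ref{jerome-Ialsocangivefreakingnames} applies. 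This is exactly how the paper's proof proceeds, and it is where the reduction theorem \ref{jm-double} (packaged as \eqref{eqn-4Dec07-1}) earns its keep; without citing it, the statement you need is simply false for a general locally symmetric manifold, and your proof would not go through.
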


\begin{proof}
  The fact that $\bar x \in \Delta(\sigma)^{\perp\perp}$ together with
  \eqref{eqn-4Dec07-1} gives that $\bar x + N_{\mathcal{M}}(\bar x)
  \subseteq \Delta(\sigma)^{\perp\perp}$. Therefore
  $\bar \pi_{N}(x) \in \Delta(\sigma)^{\perp\perp}$, and consequently
  \[
  \si'\bar \pi_{N}(x) = \bar \pi_{N}(x)
  \qquad \text{for all }\si'\in S^{\succsim}(\si).
  \]
  Together with \corref{corollary_pi}, this gives that $\bar \pi_{N}$
  is locally symmetric around $\bar x$.
  So we can apply
  \lemref{jerome-Ialsocangivefreakingnames} to get
  that $\bar \pi_{N}\circ \lambda_{\sigma}$ is differentiable at $\bar X_{\si}$.

  We also get the expression of its Jacobian at $\bar X_{\si}$
  applied to the direction $H_{\si} \in \Sn_{\sigma}$
  by applying \thref{thm-26Nov2007-1} on each component:
  \begin{eqnarray*}
    J(\bar\pi_N\circ\lambda_{\sigma})(\bar X_{\si}) [H_\si]
    &=& J(\bar\pi_N\circ \la)(i(\bar X_\si))[i(H_\si)]\\[1ex]
    &=& J(\bar\pi_N(\la(i(\bar X_\si)))
    [\diag(i(\bar U_{\si})i(H_\si)\trans{i(\bar U_{\si}))}]\\[1ex]
    &=&\pi_N(\diag \,(\bar U_{\si} H_{\si} \trans{\bar U_{\si}})),
  \end{eqnarray*}
  the last equality following by definition of the objects in $\Sn_{\si}$.
  This finishes the proof.
\end{proof}

\begin{theorem}[Local equation of $\lambda^{-1}_{\sigma}(\mathcal{M})$
    in the case $\si_*=\mathrm{id}_n$]\label{main-thm-4Dec2007-1}
  Let $\mathcal{M}$ be a locally symmetric $C^2$ submanifold
  of $\R^n$ around $\bar x \in\Mm\cap \RR^n_{\ge} \cap \Delta(\sigma)$
  of dimension $d$. If $\si_*=\id_n$, then
  $\lambda^{-1}_{\sigma}(\mathcal{M})$ is a $C^2$~submanifold of $\Sn_{\sigma}$ around
  $\bar X_{\si} \in \lambda^{-1}_{\sigma}(\bar x)$,
  whose codimension in $\Sn_{\sigma}$ is $n-d$.
\end{theorem}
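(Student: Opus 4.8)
The plan is to show that the map $\bar\Phi$ defined in \eqref{aris-C11} is a local equation of $\lambda^{-1}_\sigma(\mathcal{M})$ around $\bar X_\si$, i.e.\ that $\bar\Phi$ is $C^2$ and its Jacobian $J\bar\Phi(\bar X_\si)$ has full rank $n-d$. The equivalence \eqref{aris-C2} already tells us that the zero set of $\bar\Phi$ near $\bar X_\si$ is exactly $\lambda^{-1}_\sigma(\mathcal{M})\cap \mathcal{B}$; so everything reduces to these two analytic claims, after which the conclusion $\dim \lambda^{-1}_\sigma(\mathcal{M}) = \dim\Sn_\sigma - (n-d)$ is immediate.

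First I would establish differentiability. Write $\bar\Phi = \bar\phi\circ\lambda_\sigma$ where $\bar\phi(x) = \bar x + \phi(\bar\pi_T(x)) - \bar\pi_N(x)$. By \lemref{lemma-37}, $\bar\phi$ is a locally symmetric local equation of $\Mm$ around $\bar x$; since $\bar x\in\Mm\cap\RR^n_{\ge}\cap\Delta(\sigma)$, we have $\lambda_\sigma(\bar X_\si)\in\Delta(\sigma)\cap\RR^n_{\ge}$, so \lemref{jerome-Ialsocangivefreakingnames} applies and gives that $\bar\Phi = \bar\phi\circ\lambda_\sigma$ is $C^1$ at $\bar X_\si$ with
\[
J\bar\Phi(\bar X_\si)[H_\si] = J(\bar\phi\circ\lambda)(i(\bar X_\si))[i(H_\si)].
\]
(For the $C^2$ statement one invokes the $C^2$ part of \thref{thm-26Nov2007-1} componentwise in the same way.) Concretely, decomposing $\bar\phi = (\bar x + \phi\circ\bar\pi_T) - \bar\pi_N$ and using \lemref{lem-26Nov2007-4} for the $\bar\pi_N\circ\lambda_\sigma$ term, the Jacobian applied to $H_\si\in\Sn_\sigma$ becomes
\[
J\bar\Phi(\bar X_\si)[H_\si] = J\phi(\bar x)\bigl[\pi_T(\diag(\bar U_\si H_\si \trans{\bar U_\si}))\bigr] - \pi_N\bigl(\diag(\bar U_\si H_\si \trans{\bar U_\si})\bigr),
\]
where $\bar U_\si\in\On_\sigma$ satisfies $\bar X_\si = \trans{\bar U_\si}(\Diag\,\lambda_\sigma(\bar X_\si))\bar U_\si$. (Here I use $J\phi(\bar x) = 0$ is \emph{not} assumed; rather $\phi(\bar x)=0$ and $J\phi(\bar x)$ maps into $N_\Mm(\bar x)$; actually since $\phi$ is the tangential parametrization, $J\phi(\bar x)=0$, which simplifies the first term — but the argument does not need this.)

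The main obstacle is the surjectivity of $J\bar\Phi(\bar X_\si)$ onto $N_\Mm(\bar x)$. The natural idea is: given $v\in N_\Mm(\bar x)$, find $H_\si\in\Sn_\sigma$ with $\diag(\bar U_\si H_\si\trans{\bar U_\si}) = v'$ for a suitable $v'$ whose normal component is $-v$ (the tangential component contributes to the first term and can be absorbed). The key point, and the reason $\si_*=\id_n$ matters, is \eqref{eqn-4Dec07-1}: $N_\Mm(\bar x)\subseteq\Delta(\sigma)^{\bot\bot}$, so $v$ has the block-constant structure matching the partition $P(\sigma)$. Since $\bar U_\si$ is block-diagonal conformally with $P(\sigma)$, taking $H_\si$ to be (the block-diagonal matrix whose blocks are the diagonal matrices carrying the appropriate constant values) gives $\bar U_\si H_\si\trans{\bar U_\si}$ with the right diagonal — one checks that $\diag(\trans{\bar U_i}\Diag(w_{I_i})\bar U_i)$ can be made to realize any vector constant on $I_i$, e.g.\ by choosing $H_i$ a scalar multiple of the identity on block $I_i$. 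This produces, up to the tangential correction handled by the first summand, every $v\in N_\Mm(\bar x)$; hence $J\bar\Phi(\bar X_\si)$ is onto $N_\Mm(\bar x)$, which is $(n-d)$-dimensional. Therefore $\bar\Phi$ is a $C^2$ local equation and $\lambda^{-1}_\sigma(\mathcal{M})$ is a $C^2$ submanifold of $\Sn_\sigma$ of codimension $n-d$, completing the proof.
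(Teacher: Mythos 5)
Your proposal is essentially correct and follows the same strategy as the paper: establish differentiability of $\bar\Phi$ via Lemma~\ref{jerome-Ialsocangivefreakingnames} and Lemma~\ref{lem-26Nov2007-4}, compute the Jacobian, then exhibit a preimage of each $h\in N_\Mm(\bar x)$ to get full rank. The one place you diverge is the surjectivity step: you take $H_\si$ to be scalar multiples of the identity on each block (which requires first observing that $h$ is block-constant), whereas the paper's choice $H_\si:=\trans{\bar U_\si}(\Diag\,h)\bar U_\si$ is simpler and works for arbitrary $h\in\R^n$, since $\Diag\,h$ is automatically block-diagonal and hence $H_\si\in\Sn_\si$ regardless of the structure of $h$. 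Both choices yield $\diag(\bar U_\si H_\si\trans{\bar U_\si})=h$ and then $J\bar\Phi(\bar X_\si)[H_\si]=J\phi(\cdot)[\pi_T(h)]-\pi_N(h)=-h$ because $\pi_T(h)=0$. Consequently, your assertion that the inclusion $N_\Mm(\bar x)\subseteq\Delta(\sigma)^{\bot\bot}$ from \eqref{eqn-4Dec07-1} is ``the reason $\si_*=\id_n$ matters'' for surjectivity is a slight misattribution: surjectivity goes through without it via the paper's choice of $H_\si$. Where $\si_*=\id_n$ is genuinely indispensable is earlier, in making $\bar\pi_N$ locally symmetric so that $\bar\pi_N\circ\lambda_\si$ is differentiable (Lemma~\ref{lem-26Nov2007-4}), and in ensuring the range of $\bar\phi$ lies in $\Delta(\si)^{\bot\bot}$ so that Lemma~\ref{lemma-37} applies — both of which you do invoke implicitly.
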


\begin{proof}
  By \corref{corollary_pi} and \lemref{important-observation}, the function
  $\phi\circ \bar \pi_T$ is locally symmetric. Therefore
  \lemref{jerome-Ialsocangivefreakingnames} yields that
  $\phi\circ \bar \pi_T \circ \la_{\si}$ is differentiable at $\bar X_{\si}$.
  Combining this with \lemref{lem-26Nov2007-4}, we deduce
  that the function $\bar\Phi$ defined in \eqref{aris-C11} is
  differentiable at $\bar X_{\si}$.

  Let us now show that the Jacobian $J\bar\Phi$ has full rank at $\bar X_{\si}$.
  The gradient of the $i$-th coordinate function
  $(\phi_i \circ \bar \pi_T)$ at $\bar x$ applied to the direction $h$ is
  \[
  \nabla (\phi_i \circ \bar
  \pi_T)(\bar x)[h] = \nabla\phi_i(\bar \pi_T(\bar x))[\pi_T(h)].
  \]
  Thus for $i\in\{1,\dots,n\}$, \lemref{jerome-Ialsocangivefreakingnames}
  and Theorem~\ref{thm-26Nov2007-1} give
  that the gradient of $\phi_i \circ \bar \pi_T \circ \lambda_{\sigma}$
  at $\bar X_{\si}$ in the direction
  $H_{\si}\in {\bf S}^n_{\sigma}$ is
  $$
  \nabla (\phi_i \circ \bar \pi_T \circ \lambda_{\sigma})(\bar X_{\si})[H_{\si}] =
  \nabla \phi_i(\bar \pi_T(\lambda_{\sigma}(\bar X_\si)))[\pi_T(\diag
    \,(\bar{U}_{\si}H_{\si}\trans{\bar{U}_{\si}}))].
  $$
  %where $\bar U \in \On_{\sigma}$ is such that $\bar X = \trans{\bar
  %  U}(\Diag\,\lambda_{\sigma}(\bar X)) \bar U$.
  Combining this with
  Lemma~\ref{lem-26Nov2007-4} we obtain the following expression for
  the derivative of the map~$\bar \Phi$ at $\bar X_{\si}$ in the direction
  $H_{\si} \in \Sn_{\sigma}$:
  \begin{align*}
    J \bar \Phi(\bar X_{\si})[H_{\si}]
    =  J\phi(\bar \pi_T(\lambda_{\sigma}(\bar X_{\si})))
    [\pi_T(\diag \,(\bar{U}_{\si}H_{\si}\trans{\bar{U}_{\si}}))] -
    \pi_{N}( \diag\, (\bar{U}_{\si}H_{\si}\trans{\bar{U}_{\si}})).
  \end{align*}
  Notice that for any $h \in N_{\mathcal{M}}(\bar x)$ defining
  $H_{\si}:=\trans{\bar{U}_{\si}}(\Diag\, h)\bar{U}_{\si} \in \Sn_{\sigma}$
  we have
  $$
  J \bar \Phi(\bar X_{\si})[H_{\si}] = -h,
  $$
  which shows that the linear map $J \bar \Phi(\bar X) \colon \Sn_{\sigma}
  \rightarrow N_{\mathcal{M}}(\bar x)$ is onto and thus has full rank.
  In view of~\eqref{aris-C2}, $\bar\Phi$ is a local equation of
  $\Mm$ around $\bar X_{\si}$.

  Recall that
  $d=\mathrm{dim}\,(\Mm)=\mathrm{dim}\,(T_{\Mm}(\bx)$) and
  $\mathrm{dim}\,(N_{\Mm}(\bx))=n-d$.  Since $\bar\phi$ and $\bar\Phi$
  are local equations of $\Mm$ and $\lambda^{-1}_{\sigma}(\Mm)$
  respectively, the manifolds have the same codimension $n-d$.
\end{proof}

\begin{remark}\label{anotherremark}
  Theorem~\ref{main-thm-4Dec2007-1} remains true if $C^2$ is replaced
  everywhere by $C^{\infty}$ or $C^{\omega}$, see
  Theorem~\ref{thm-26Nov2007-1}. Note however that the statement
  only asserts that $\lambda^{-1}_{\si}(\Mm)$ is a submanifold of
  $\Sn_{\si}$. Nothing is claimed about $\lambda^{-1}(\Mm)$, even in this
  particular case. Nonetheless, this important intermediate result
  will be a basic ingredient in the proof of the main result
  (see proof of \lemref{main-res-simple-form-jeromeF}).
\end{remark}

%**********************************************************************
\subsection{Reduction the ambient space in the general case}\label{ssec-strategy}
%**********************************************************************

We now return to the general case and recall the situation in
Assumption~\ref{assmpt-21Nov2008}. The active space is thus reduced,
as follows:
\[
\mathcal{M} \cap B(\bar x, \delta) \subset \Big( \bar x +
\,T_{\mathcal{M}}(\bar x) \oplus \big( N_{\mathcal{M}}(\bar x) \cap
\Delta(\sigma)^{\bot\bot} \big) \,\Big) \cap B(\bar x,\delta),
\]
where (\ref{prop-30June07-2}) and \eqref{aris100} have been used. To
define a local equation of $\Mm$ in the appropriate space, we
introduced the reduced tangent and normal spaces.
\begin{equation}\label{defn-n1}
  N_{\mathcal{M}}^{\reduc}(\bar x)
  := N_{\mathcal{M}}(\bar x) \cap \Delta(\sigma)^{\bot\bot}
  \qqandqq T_{\mathcal{M}}^{\reduc}(\bar x)
  :=T_{\mathcal{M}}(\bar x) \cap \Delta(\sigma)^{\bot}.
\end{equation}
Note that theses spaces are invariant under permutations $\sigma
^{\prime }\succsim \sigma $ (see Lemma~\ref{Tinv} and
Lemma~\ref{key}). For later use when calculating the dimension of
spectral manifolds, we denote the dimension of
$N_{\mathcal{M}}^{\reduc}(\bar{x})$~by
\begin{align}\label{defn-dim-n1}
  n^{\reduc}:= \dim  N_{\mathcal{M}}^{\reduc}(\bar x).
\end{align}

Let us now define the set on which the local equation of $\la^{-1}(\Mm)$
will be defined.  Let $\bar x = \bar x^F \otimes \bar x^M$ be the
canonical splitting of $\bar x$ in $\R^n$. Naturally
$B(\bar{x}^{F},\delta_1)$ denotes the open ball in
$\mathbb{R}^{\kappa_{\ast }}$ centered at~$\bar{x}^{F}$ with radius
$\delta_1$, and $B(\bar{x}^{M},\delta_2)$ denotes the open ball in
$\mathbb{R}^{n-\kappa _{\ast }}$ centered at $\bx^{M}$ with radius
$\delta_2$. Define the following rectangular neighborhood of $\bar
x$
$$
\mathcal{B}(\bar x, \delta_1, \delta_2) := B(\bar{x}^{F},\delta_1)
\otimes B(\bar{x}^{M},\delta_2).
$$
Choose $\delta_1, \delta_2 > 0$ so that $\mathcal{B}(\bar x,
\delta_1, \delta_2)  \subset B(\bar x, \delta).$ By Assumption~\ref{assmpt-21Nov2008} and Proposition~\ref{prop-arisd2}, the ball~$B(\bar{x}^{F},\delta_1)$ intersects only strata $\Delta(\sigma') \subset \RR^{\kappa_*}$ for $\sigma' \succsim \sigma^F$, and similarly for the ball $B(\bar{x}^{M},\delta_2)$. The key element in
our next development is going to be the set
\begin{equation}\label{Dcal}
  \mathcal{D}:= \big(\bar x + T_{\mathcal{M}}(\bar x) \oplus
  N_{\mathcal{M}}^{\reduc}(\bar x) \big) \cap \mathcal{B}(\bar x, \delta_1, \delta_2),
\end{equation}
which plays the role of a new ambient space (affine subspace of
$\RR^n$ containing all information about~$\Mm$).
We gather properties of $\Dd$ in the next proposition.

\begin{proposition}[Properties of $\Dd$]\label{jerome-all-about}
   In the situation above, there holds
   \begin{equation}\label{aris-new}
     \bar{x}+ T_{\mathcal{M}}(\bar{x})\oplus
     N_{\mathcal{M}}^{\reduc}(\bar{x}) \,= \,
     T_{\mathcal{M}}^{\reduc}(\bar{x})\oplus \Delta (\sigma )^{\bot \bot }.
   \end{equation}
   Hence, we can reformulate
   \[
   \mathcal{D}= \big( T_{\mathcal{M}}^{\reduc}(\bar{x})\oplus \Delta
   (\sigma )^{\bot \bot }\big)\,\cap\,\mathcal{B}(\bar x, \delta_1, \delta_2).
   \]
   This set is relatively open in the affine space
   \[
   \mathcal{R}^{d+n^{\reduc}} :=
   \bar x + T_{\mathcal{M}}(\bar x) \oplus N_{\mathcal{M}}^{\reduc}(\bar x).
   \]
   Moreover, the set $\Dd$ is invariant under all permutations
   $\sigma' \succsim \sigma$, and hence a locally symmetric set.
\end{proposition}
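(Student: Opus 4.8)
The plan is to derive all four assertions from the linear identity \eqref{aris-new}; once it is established the other three follow with little extra work. I would first prove \eqref{aris-new} as an equality of \emph{linear} subspaces, the affine statement then being immediate because $\bar x$ lies on both sides: indeed $\bar x\in\Delta(\sigma)\subset\Delta(\sigma)^{\bot\bot}\subset T_{\mathcal M}^{\reduc}(\bar x)\oplus\Delta(\sigma)^{\bot\bot}$, while decomposing $\bar x\in\Delta(\sigma)^{\bot\bot}$ via Corollary~\ref{Cor-23a} and comparing with the orthogonal splitting $\bar x=\pi_T(\bar x)+\pi_N(\bar x)$ shows $\pi_N(\bar x)\in N_{\mathcal M}(\bar x)\cap\Delta(\sigma)^{\bot\bot}=N_{\mathcal M}^{\reduc}(\bar x)$, hence $\bar x\in T_{\mathcal M}(\bar x)\oplus N_{\mathcal M}^{\reduc}(\bar x)$. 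For the linear equality I substitute into $T_{\mathcal M}(\bar x)\oplus N_{\mathcal M}^{\reduc}(\bar x)$ the decomposition $T_{\mathcal M}(\bar x)=T_{\mathcal M}^{\reduc}(\bar x)\oplus\big(T_{\mathcal M}(\bar x)\cap\Delta(\sigma)^{\bot\bot}\big)$, which is \eqref{tang_decomp} read through \eqref{defn-n1}, obtaining
\[
T_{\mathcal M}(\bar x)\oplus N_{\mathcal M}^{\reduc}(\bar x)
= T_{\mathcal M}^{\reduc}(\bar x)\oplus\Big[\big(T_{\mathcal M}(\bar x)\cap\Delta(\sigma)^{\bot\bot}\big)\oplus\big(N_{\mathcal M}(\bar x)\cap\Delta(\sigma)^{\bot\bot}\big)\Big],
\]
and the bracket collapses to $\Delta(\sigma)^{\bot\bot}$ by Corollary~\ref{Cor-23a}. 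All sums are direct: the two summands of $\Delta(\sigma)^{\bot\bot}$ by Corollary~\ref{Cor-23a}; the pair $T_{\mathcal M}^{\reduc}(\bar x),\Delta(\sigma)^{\bot\bot}$ because $T_{\mathcal M}^{\reduc}(\bar x)\subset\Delta(\sigma)^{\bot}$; and the pair $T_{\mathcal M}(\bar x),N_{\mathcal M}^{\reduc}(\bar x)$ because $T_{\mathcal M}(\bar x)$ and $N_{\mathcal M}(\bar x)$ are orthogonal complements.

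Plugging \eqref{aris-new} into the definition \eqref{Dcal} immediately gives the stated reformulation $\Dd=\big(T_{\mathcal M}^{\reduc}(\bar x)\oplus\Delta(\sigma)^{\bot\bot}\big)\cap\mathcal B(\bar x,\delta_1,\delta_2)$. For relative openness, the canonical product is a linear bijection $\R^{\kappa_*}\times\R^{n-\kappa_*}\to\R^n$, hence a homeomorphism, so the rectangular neighborhood $\mathcal B(\bar x,\delta_1,\delta_2)=B(\bar x^F,\delta_1)\otimes B(\bar x^M,\delta_2)$ is open in $\R^n$; since $\Dd=\mathcal{R}^{d+n^{\reduc}}\cap\mathcal B(\bar x,\delta_1,\delta_2)$ with $\mathcal{R}^{d+n^{\reduc}}=\bar x+T_{\mathcal M}(\bar x)\oplus N_{\mathcal M}^{\reduc}(\bar x)$ an affine subspace, $\Dd$ is relatively open (and nonempty, as $\bar x\in\Dd$) in $\mathcal{R}^{d+n^{\reduc}}$.

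It remains to show $\sigma'\Dd=\Dd$ for every $\sigma'\succsim\sigma$. As $\bar x\in\Delta(\sigma)$ we have $\sigma'\in S^{\succsim}(\bar x)$, so $\sigma'\bar x=\bar x$ by Lemma~\ref{key}; together with the invariance of $T_{\mathcal M}(\bar x)$ and of $N_{\mathcal M}^{\reduc}(\bar x)$ under $\sigma'$ (Lemma~\ref{Tinv}, Lemma~\ref{key}) this yields $\sigma'\mathcal{R}^{d+n^{\reduc}}=\mathcal{R}^{d+n^{\reduc}}$. For the rectangular neighborhood, Proposition~\ref{prop-arisd2} (valid since $\sigma\in\Delta(\Mm)$ and $\sigma'\in S^{\succsim}(\sigma)$) gives the $(F,M)$-decomposition $\sigma'=(\sigma')^F\circ(\sigma')^M$ with $(\sigma')^F\succsim\sigma^F$ and $(\sigma')^M\succsim\sigma^M$, whence $\sigma'(x^F\otimes x^M)=\big((\sigma')^Fx^F\big)\otimes\big((\sigma')^Mx^M\big)$ by \eqref{eqn-arisd1}--\eqref{eqn-arisd2}. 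Since $\bar x^F$ lies in the stratum $[\Delta(\sigma^F)_{\mathbb{R}^{\kappa_*}}]$ of $\R^{\kappa_*}$ and $\bar x^M$ in $[\Delta(\sigma^M)_{\mathbb{R}^{n-\kappa_*}}]$, Lemma~\ref{key} used inside $\R^{\kappa_*}$ and inside $\R^{n-\kappa_*}$ shows $(\sigma')^FB(\bar x^F,\delta_1)=B(\bar x^F,\delta_1)$ and $(\sigma')^MB(\bar x^M,\delta_2)=B(\bar x^M,\delta_2)$, so $\sigma'\mathcal B(\bar x,\delta_1,\delta_2)=\mathcal B(\bar x,\delta_1,\delta_2)$. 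Combining the two invariances, $\sigma'\Dd=\Dd$. Finally, every $\bar y\in\Dd\subset B(\bar x,\delta)$ lies in some $\Delta(\tau)$ with $\tau\succsim\sigma$ by the first condition in Assumption~\ref{assmpt-21Nov2008}, so $S^{\succsim}(\bar y)\subseteq S^{\succsim}(\sigma)$ and hence $\sigma''\Dd=\Dd$ for every $\sigma''\in S^{\succsim}(\bar y)$; thus $\Dd$ is strongly locally symmetric, and, containing $\bar x\in\R^n_{\ge}$, it is in particular a locally symmetric set.

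The manipulations for \eqref{aris-new} and the openness claim are routine bookkeeping with the decomposition results already proved. The one step that needs care is the last one: showing that a permutation $\succsim\sigma$ preserves the \emph{rectangular} neighborhood $\mathcal B(\bar x,\delta_1,\delta_2)$ rather than merely the round ball $B(\bar x,\delta)$. This is exactly where one must split $\sigma'$ along its $(F,M)$-decomposition (Proposition~\ref{prop-arisd2}) and apply the ball-preservation dichotomy of Lemma~\ref{key} separately in the two reduced coordinate spaces $\R^{\kappa_*}$ and $\R^{n-\kappa_*}$.
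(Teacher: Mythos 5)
Your proof is correct and follows essentially the same route as the paper's: the chain of equalities combining \eqref{defn-n1}, \eqref{tang_decomp}, and Corollary~\ref{Cor-23a} for the linear identity \eqref{aris-new}, then invariance of $\Dd$ via Lemma~\ref{key}, Lemma~\ref{Tinv}, and Proposition~\ref{prop-arisd2}. The paper's own proof is terse on the last point (it merely cites the three lemmas), and your $(F,M)$-split of $\sigma'$ with Lemma~\ref{key} applied separately in $\R^{\kappa_*}$ and $\R^{n-\kappa_*}$ is exactly the right way to make explicit why a permutation $\succsim\sigma$ preserves the \emph{rectangular} neighborhood $\mathcal B(\bar x,\delta_1,\delta_2)$, not just a round ball.
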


\begin{proof}
  The above formula follows directly by combining \eqref{defn-n1},
  \eqref{tang_decomp} and Corollary~\ref{Cor-23a}. Indeed, we obtain
  successively
  \begin{align}
    \bar{x}+& T_{\mathcal{M}}(\bar{x})\oplus
    N_{\mathcal{M}}^{\reduc}(\bar{x}) \notag
    \\
    & =\bar{x}+ T_{\mathcal{M}}(\bar{x})\oplus \big( N_{\mathcal{M}}(\bar{x})\cap
    \Delta (\sigma )^{\bot \bot }\big) \notag
    \\
    & =\bar{x}+(T_{\mathcal{M}}(\bar{x})\cap \Delta (\sigma )^{\bot
    })\oplus (T_{\mathcal{M}}(\bar{x})\cap \Delta (\sigma )^{\bot
      \bot }) \oplus (N_{\mathcal{M}}(\bar{x})\cap \Delta (\sigma )^{\bot
      \bot }) \notag
    \\
    & =\bar{x}+ \big(T_{\mathcal{M}}(\bar{x})\cap \Delta (\sigma
    )^{\bot } \big)\oplus \Delta (\sigma )^{\bot \bot } \notag
    \\
    &=\bar{x}+ T_{\Mm}^{\reduc}(\bar{x})\oplus \Delta (\sigma )^{\bot
      \bot } \notag,
  \end{align}
  which yields \eqref{aris-new} since $\bx\in\Delta (\sigma )^{\bot
    \bot}$ and $0 \in T_{\Mm}^{\reduc}(\bar{x})$. The reformulation of
  $\Dd$ is then obvious.  Note that by Lemma~\ref{key},
  Lemma~\ref{Tinv}, and Proposition~\ref{prop-arisd2}, the set
  $\mathcal{D}$ is invariant under permutations~$\sigma' \succsim \sigma$, and hence is
  locally invariant.
\end{proof}

Let us introduce the projections onto the reduced spaces
\[
\bar \pi^{\reduc}_{N}(x) = \mbox{Proj\,}_{\bar x + N_{\mathcal{M}}^{\reduc}(\bar x)}(x)
\qqandqq
\pi^{\reduc}_{N}(x) = \mbox{Proj\,}_{N_{\mathcal{M}}^{\reduc}(\bar x)}(x).
\]
Note that there holds $\bar \pi^{\reduc}_{N}(x) = \pi^{\reduc}_{N}(x)+
\bar \pi^{\reduc}_{N}(0)$ and $\bar \pi_{T}(x) = \pi_{T}(x)+ \bar
\pi_{T}(0)$ as well as
\begin{align}\label{5Dec2007-4}
  \bar x+x &= \bar \pi_{T}(x)+\bar \pi^{\reduc}_{N}(x)\qquad
  \mbox{ for all } \, x \in \bar x +  T_{\Mm}(\bar x) \oplus
  N_{\Mm}^{\reduc}(\bar x).
\end{align}
Similarly to \eqref{aris-C1}, we define the map
\begin{equation}\label{aris-D1}
  \fonction{\bar \phi}{\mathcal{D} \subset  \mathcal{R}^{d+n^{\reduc}}}
           {N_{\mathcal{M}}^{\reduc}(\bar x)\ \subset \ \mathcal{R}^{d+n^{\reduc}}}
           {x}{\bar x + \phi(\bar \pi_T(x)) - \bar \pi^{\reduc}_{N}(x),}
\end{equation}
and we show that this function is a locally symmetric local equation of $\Mm$.
This is the content of the following result, analogous to Lemma~\ref{lemma-37}.

\begin{theorem}[Existence of a locally symmetric local equation]\label{thm-40}
  The map $\bar \phi$ is well-defined and locally symmetric, and provides a
  local equation of $\Mm$ around $\bar x$.
\end{theorem}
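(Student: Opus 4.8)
Since the hard geometric work---the reduction of the normal space (\thref{jm-double}), the local symmetry of the tangential parametrization (\lemref{important-observation}), and the reformulation of $\Dd$ (\propref{jerome-all-about})---is already in place, the plan is simply to transcribe the proof of \lemref{lemma-37} into the reduced ambient space $\mathcal{R}^{d+n^{\reduc}}=\bx+T_{\mathcal{M}}(\bx)\oplus N_{\mathcal{M}}^{\reduc}(\bx)$, with $N_{\mathcal{M}}^{\reduc}(\bx)$ playing the role of $N_{\mathcal{M}}(\bx)$ and $\Dd$ that of $B(\bx,\delta)$. First I would rewrite the two projections affinely, $\bar\pi_T(x)=\bx+\pi_T(x-\bx)$ and $\bar\pi^{\reduc}_{N}(x)=\bx+\pi^{\reduc}_{N}(x-\bx)$, so that \eqref{aris-D1} becomes $\bar\phi(x)=\phi(\bar\pi_T(x))-\pi^{\reduc}_{N}(x-\bx)$. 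Since $x\in\Dd\subset B(\bx,\delta)$ and $\bar\pi_T$ is nonexpansive and fixes $\bx$, the point $\bar\pi_T(x)$ lies in the domain of $\phi$; then \eqref{aris100} gives $\phi(\bar\pi_T(x))\in N_{\mathcal{M}}(\bx)\cap\De(\si)^{\bot\bot}=N_{\mathcal{M}}^{\reduc}(\bx)$, while $\pi^{\reduc}_{N}(x-\bx)\in N_{\mathcal{M}}^{\reduc}(\bx)$ by construction, so $\bar\phi$ is well defined with values in $N_{\mathcal{M}}^{\reduc}(\bx)$---the correct codomain, matching $\dim\mathcal{R}^{d+n^{\reduc}}-d=n^{\reduc}$. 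Smoothness of $\bar\phi$ (of the class of $\Mm$, i.e.\ $C^2$, resp.\ $C^\infty$ or $C^\omega$) is immediate since $\bar\pi_T$, $\bar\pi^{\reduc}_{N}$ are affine and $\phi$ has that class.

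Next I would verify the equation property by decomposing an arbitrary $x\in\Dd$ as $x=\bx+t+v$ with $t\in T_{\mathcal{M}}(\bx)$, $v\in N_{\mathcal{M}}^{\reduc}(\bx)$; as $N_{\mathcal{M}}^{\reduc}(\bx)\subset N_{\mathcal{M}}(\bx)\perp T_{\mathcal{M}}(\bx)$, this gives $\bar\pi_T(x)=\bx+t$ and $\bar\pi^{\reduc}_{N}(x)=\bx+v$, whence $\bar\phi(x)=\phi(\bx+t)-v$. Recalling from \eqref{prop-30June07-2} and \eqref{aris100} that near $\bx$ the manifold is the graph $\psi(y)=y+\phi(y)$ over $\bx+T_{\mathcal{M}}(\bx)$, with $\phi$ valued in $N_{\mathcal{M}}^{\reduc}(\bx)$ and local inverse $\bar\pi_T$ (in particular $\Mm\cap B(\bx,\delta)\subset\mathcal{R}^{d+n^{\reduc}}$), uniqueness of the splitting of $x-\bx$ along $T_{\mathcal{M}}(\bx)\oplus N_{\mathcal{M}}^{\reduc}(\bx)$ yields, for $x\in\Dd$, the equivalences $\bar\phi(x)=0\iff v=\phi(\bx+t)\iff x=\psi(\bar\pi_T(x))\iff x\in\Mm$. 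For the rank, differentiating gives $J\bar\phi(x)[h]=J\phi(\bar\pi_T(x))[\pi_T(h)]-\pi^{\reduc}_{N}(h)$ on $T_{\mathcal{M}}(\bx)\oplus N_{\mathcal{M}}^{\reduc}(\bx)$; at $x=\bx$ and $h\in N_{\mathcal{M}}^{\reduc}(\bx)$ the first term vanishes ($\pi_T(h)=0$) and the second is $-h$, so $J\bar\phi(\bx)$ restricts to $-\mathrm{id}$ on $N_{\mathcal{M}}^{\reduc}(\bx)$, hence is onto and of full rank. This establishes that $\bar\phi$ is a local equation of $\Mm$ around $\bx$.

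Finally, for local symmetry, I would fix $\si'\in S^{\succsim}(\si)$ and $x\in\Dd$ (so that $\si'x\in\Dd$ by \propref{jerome-all-about}) and chain three identities: $\bar\pi_T(\si'x)=\si'\bar\pi_T(x)$ from \corref{corollary_pi}(i); $\phi(\si'\bar\pi_T(x))=\phi(\bar\pi_T(x))$ from \eqref{5Dec07-3}; and $\bar\pi^{\reduc}_{N}(\si'x)=\si'\bar\pi^{\reduc}_{N}(x)$, which holds since the isometry $\si'$ maps $\bx+N_{\mathcal{M}}^{\reduc}(\bx)$ onto itself---indeed $\si'\bx=\bx$ and $N_{\mathcal{M}}^{\reduc}(\bx)=N_{\mathcal{M}}(\bx)\cap\De(\si)^{\bot\bot}$ is $\si'$-invariant by \lemref{Tinv}(ii) and the fact, apparent from \eqref{eqn-arisd4}, that $\De(\si)^{\bot\bot}$ is pointwise fixed by every permutation $\succsim\si$. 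Together these give $\bar\phi(\si'x)=\bx+\phi(\bar\pi_T(x))-\si'\bar\pi^{\reduc}_{N}(x)=\si'\bar\phi(x)$, and since $\bar\phi(x)\in N_{\mathcal{M}}^{\reduc}(\bx)\subset\De(\si)^{\bot\bot}$ is itself fixed by $\si'$, also $\si'\bar\phi(x)=\bar\phi(x)$. The main obstacle here is not conceptual but bookkeeping: handling the exact (non-rectangular) domain of $\phi$ inside $\Dd$, which the paper has already chosen to suppress, and observing that the identity for $\bar\pi^{\reduc}_{N}$ is not literally \corref{corollary_pi} but follows verbatim by the same argument (or via \propref{Proposition_distance} applied in the reduced space).
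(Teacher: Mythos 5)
Your proof is correct and follows the same route as the paper's: both transcribe the argument of Lemma~\ref{lemma-37} into the reduced affine ambient space $\mathcal{R}^{d+n^{\reduc}}$, using \eqref{aris100} for the codomain, the orthogonal decomposition $x=\bar x+t+v$ for the equivalence $\bar\phi(x)=0\iff x\in\Mm$, the restriction $J\bar\phi(\bar x)\restriction_{N^{\reduc}_\Mm(\bar x)}=-\mathrm{id}$ for full rank, and Corollary~\ref{corollary_pi} together with Lemma~\ref{important-observation} and the $\sigma'$-invariance of $N^{\reduc}_\Mm(\bar x)$ for local symmetry. Your explicit remark that the equivariance of $\bar\pi^{\reduc}_N$ is not literally Corollary~\ref{corollary_pi}(ii) but follows by the same argument is exactly the point the paper handles (tersely) by invoking Lemma~\ref{Lemma_project_strata}; there is no substantive divergence.
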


\begin{proof}
  The set $\Dd$ is chosen so that $\phi$ is well-defined.
  Thanks to \lemref{important-observation} and the fact
  that $\bar x -\bar \pi^{\reduc}_{N}(x) \in N^{\reduc}_{\mathcal{M}}(\bar x)$,
  the range of $\bar \phi(x)$ is in $N_{\mathcal{M}}^{\reduc}(\bar
  x)$. The remainder
  of the proof follows closely the proof of
  Lemma~\ref{lemma-37}. For all $x \in \mathcal{D}$, in view of
  \eqref{5Dec2007-4}, \eqref{prop-30June07-2} and
  Lemma~\ref{important-observation} we obtain
  \[
  \bar \phi(x)=0 \iff \bar \pi^{\reduc}_{N}(x) = \bar x + \phi(\bar
  \pi_T(x)) \iff x = \bar \pi_T(x) + \phi(\bar\pi_T(x))\iff x \in
  \mathcal{M}\cap B(\bx,\delta).
  \]
  The Jacobian of $\bar \phi$ at $x$ is a linear map from
  $T_{\mathcal{M}}(\bar x) \oplus N_{\mathcal{M}}^{\reduc}(\bar x)$ to
  $N^{\reduc}_{\mathcal{M}}(\bar x)$, which applied to any direction
  $h$ yields
  \begin{align*}
    J \bar \phi(x)[h] &= J \phi(\bar \pi_T(x))[\pi_T(h)]- \pi^{\reduc}_{N}(h).
  \end{align*}
  Clearly, for $h \in N^{\reduc}_{\mathcal{M}}(\bar x)$ we have
  $J\bar\phi(\bar x)[h]=-h$ showing that the Jacobian $J\bar \phi$ at
  $\bx$ is onto and has a full rank. Thus, $\bar \phi$ is a local
  equation of $\Mm$ around $\bx$. Finally
  Corollary~\ref{corollary_pi}, Lemma~\ref{important-observation}, and
  Lemma~\ref{Lemma_project_strata} show that for any $\sigma' \succsim
  \sigma$ and any $x \in \mathcal{D}$ we have $(\phi \circ \bar
  \pi_T)(\sigma'x)= (\phi \circ \bar \pi_T)(x)$. This yields the local
  symmetry of $\bar \phi$.
\end{proof}

We introduce the spectral function $\bar \Phi$ associated with $\bar \phi$
\begin{equation}\label{aris-D11}
  \fonction{\bar \Phi}{\lambda^{-1}(\mathcal{D}) \subset \Sn}
           {N^{\reduc}_{\mathcal{M}}(\bar x) \ \subset \ \mathcal{R}^{d+n^{\reduc}}}
           {X}
           {(\bar \phi \circ \lambda)(X)
             = \bar x + \phi(\bar \pi_T(\lambda(X)))-\bar \pi^{\reduc}_{N}(\lambda(X)).}
\end{equation}
By construction, we get that the zeros of $\bar \Phi$ characterize $\Mm$, since
\begin{equation}\label{local}
  X \in \lambda^{-1}(\mathcal{M}\cap B(\bx,\delta)) \iff \lambda(X)
  \in \mathcal{M}\cap B(\bx,\delta) \iff \bar \Phi(X)=0.
\end{equation}
At this stage, let us compare \eqref{aris-D11} with \eqref{aris-C11}
and the particular treatment in Subsection~\ref{ssec-42}. In
Subsection~\ref{ssec-42} we had $N_{\mathcal{M}}(\bar x)  \subseteq
\Delta(\sigma)^{\bot\bot}$ yielding $N_{\mathcal{M}}^{\reduc}(\bar
x) = N_{\mathcal{M}}(\bar x)$ and thus $\mathcal{D} =
\mathcal{B}(\bar x, \delta_1, \delta_2)$, an open subset of $\RR^n$.
Unfortunately, in the general case, there is an extra
difficulty, which stems from the fact that $\Dd$ is not open in
$\RR^n$, but only relatively open with respect to the affine
subspace~$\mathcal{R}^{d+n^{\reduc}}$, and consequently the function
$\bar\Phi$ is defined in a subset of $\Sn$ of lower dimension
(namely, $\lambda^{-1}(\mathcal{D})$). For this reason, we shall
successively establish the following properties.
\begin{enumerate}
\item {\it Transfer of local approximation.} We show that the set
  $\lambda^{-1}(\mathcal{D})$ is an analytic manifold locally around $\bar X \in
  \lambda^{-1}(\bar x)$ and we calculate its dimension;
\item {\it Transfer of local equation.} We show that the function
  $\bar \Phi$ defined on $\lambda^{-1}(\mathcal{D})$ is
  differentiable and its differential at $\bar X$ (a linear map
  on the tangent space of $\lambda^{-1}(\mathcal{D})$) has a full rank.
\end{enumerate}

%**********************************************************************
\subsection{Transfer of the local approximation} \label{ssec-431}
%**********************************************************************

The goal of this section is to show that locally around $\bar X \in
\lambda^{-1}(\bar x)$ the set $\lambda^{-1}(\mathcal{D})$ is an
analytic submanifold of $\Sn$. We do this in two steps: the first
step consists of showing that both the $M$-part and the $F$-part of
$\Dd$ give rise to two analytic submanifolds in the spaces ${\bf
S}^{n-\kappa_{\ast}}_{\sigma^M}$ and ${\bf
S}^{\kappa_{\ast}}_{\sigma^F}$ correspondingly, while the second
step shows that intertwining the two parts preserves this property
in the space $\Sn$. Throughout this section, we consider that
Assumption~\ref{assmpt-21Nov2008} is in force (and recall \eqref{F1}
and \eqref{equal-sets}).

\begin{lemma}[Decomposition of $\Dd$]\label{jerome-facto}
  Applying the (F,M)-decomposition to the affine manifold $\Dd$,
  we get
  \[
  \mathcal{D}=\big\{x^{F}\otimes x^{M} : \quad x^{F}\in \Dd^F,\ x^{M}\in \Dd^M\big\},
  \]
  where $\Dd^F$ and $\Dd^M$ are affine manifolds defined by:
  \begin{align*}
  \Dd^M &:=
  \lbrack \Delta (\sigma^{M})_{\mathbb{R}^{n-\kappa _{\ast }}}]
    \cap B(\bar{x}^{M},\delta_2), \mbox{ and } \\[0.1cm]
  \Dd^F&:= \big(
     [T_{\mathcal{M}}^{\reduc}(\bar{x})]^{F}
     \oplus
     \lbrack \Delta (\sigma ^{F})_{\mathbb{R}^{\kappa_{\ast }}}] \big)
       \, \cap \, B(\bx^{F},\delta_1),
   \end{align*}
   where $\lbrack T_{\mathcal{M}}^{\reduc}(\bar{x})]^{F}$
   is the $F$-part of the reduced space $T_{\mathcal{M}}^{\reduc}(\bar{x})$.
   The sets $\Dd^M$ and $\Dd^F$ are locally symmetric.
  Moreover, the dimension of $\Dd^M$ is $n-\kappa_*$, while the dimension of
   $\Dd^F$ is
   $$
   \dim \mathcal{D}^F = d+ n^{\reduc} - m.
   $$
\end{lemma}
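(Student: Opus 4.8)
The plan is to prove the decomposition $\mathcal{D}=\{x^F\otimes x^M : x^F\in\mathcal{D}^F,\ x^M\in\mathcal{D}^M\}$ by manipulating the reformulation $\mathcal{D}=(T^{\reduc}_{\mathcal{M}}(\bar x)\oplus\Delta(\sigma)^{\bot\bot})\cap\mathcal{B}(\bar x,\delta_1,\delta_2)$ from Proposition~\ref{jerome-all-about} under the canonical split $\RR^n=\RR^{\kappa_*}\otimes\RR^{n-\kappa_*}$. First I would observe that the canonical product is linear (as noted after \eqref{eqn-arisd1}) and that $\mathcal{B}(\bar x,\delta_1,\delta_2)=B(\bar x^F,\delta_1)\otimes B(\bar x^M,\delta_2)$ splits by construction; so it suffices to show that the affine space $\mathcal{R}^{d+n^{\reduc}}=\bar x+T_{\mathcal{M}}(\bar x)\oplus N^{\reduc}_{\mathcal{M}}(\bar x)$ splits as an $F$-part times an $M$-part. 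Using \eqref{aris-new} I rewrite this affine space as $T^{\reduc}_{\mathcal{M}}(\bar x)\oplus\Delta(\sigma)^{\bot\bot}$. Now $\Delta(\sigma)^{\bot\bot}$ splits perfectly: by \eqref{eqn-arisd11} (applied to $\tau=\sigma$), $\Delta(\sigma)^{\bot\bot}=[\Delta(\sigma^F)_{\RR^{\kappa_*}}]^{\bot\bot}\otimes[\Delta(\sigma^M)_{\RR^{n-\kappa_*}}]^{\bot\bot}$, and since $\sigma^M\sim\sigma_*^M$ the second factor is all of $[\Delta(\sigma^M)_{\RR^{n-\kappa_*}}]$. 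The remaining ingredient is that $T^{\reduc}_{\mathcal{M}}(\bar x)=T_{\mathcal{M}}(\bar x)\cap\Delta(\sigma)^{\bot}$ has trivial $M$-part: indeed $\Delta(\sigma)^{\bot}=\{0\}_{\kappa_*}\otimes[\text{something}]$? No — more carefully, by \eqref{eqn-arisd12} $\Delta(\sigma)^{\bot}=[\Delta(\sigma^F)_{\RR^{\kappa_*}}]^{\bot}\otimes[\Delta(\sigma^M)_{\RR^{n-\kappa_*}}]^{\bot}$, but $[\Delta(\sigma^M)_{\RR^{n-\kappa_*}}]^{\bot}=\{0\}$ because $\sigma^M\sim\sigma_*^M$ and every coordinate block of $\sigma_*^M$ has size $\ge 2$ — wait, that's not right either; the point is that $T_{\mathcal{M}}(\bar x)\subset\Delta(\sigma_*)^{\bot\bot}$ by \eqref{32}, so the $M$-part of any tangent vector lies in $[\Delta(\sigma^M)_{\RR^{n-\kappa_*}}]^{\bot\bot}$, and intersecting with $\Delta(\sigma)^{\bot}$ (whose $M$-part is $[\Delta(\sigma^M)_{\RR^{n-\kappa_*}}]^{\bot}$) forces the $M$-part to be $0$. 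Hence $[T^{\reduc}_{\mathcal{M}}(\bar x)]^M=\{0\}$, so the whole space is $[T^{\reduc}_{\mathcal{M}}(\bar x)]^F\otimes\RR^{n-\kappa_*}$ intersected appropriately, yielding the stated product formula for $\mathcal{D}$, with $\mathcal{D}^F$ and $\mathcal{D}^M$ as claimed.

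Next I would verify that $\mathcal{D}^M$ and $\mathcal{D}^F$ are locally symmetric sets: $\mathcal{D}^M$ is (an open subset of) a stratum of $\RR^{n-\kappa_*}$, hence strongly locally symmetric, and $\mathcal{D}^F$ is invariant under all $\sigma'\succsim\sigma^F$ in $\Sigma^{\kappa_*}$ because $T^{\reduc}_{\mathcal{M}}(\bar x)$ (hence its $F$-part) is invariant under $S^{\succsim}(\sigma)$ by Lemma~\ref{Tinv} and Lemma~\ref{key}, and $[\Delta(\sigma^F)_{\RR^{\kappa_*}}]$ is a stratum; the ball $B(\bar x^F,\delta_1)$ only meets strata $\Delta(\sigma')$ with $\sigma'\succsim\sigma^F$ by the choice of $\delta_1$ recalled just before \eqref{Dcal}. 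This gives local symmetry in the sense of Definition~\ref{defn-spectrman}.

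Finally, the dimension count. For $\mathcal{D}^M$: it is relatively open in the affine space $[\Delta(\sigma^M)_{\RR^{n-\kappa_*}}]$, which equals $[\Delta(\sigma_*^M)_{\RR^{n-\kappa_*}}]=\RR^{n-\kappa_*}$ since $\sigma_*^M$ fixes no element (every block of $P(\sigma_*)$ used in the $M$-part has size $\ge 2$, so $[\Delta(\sigma_*^M)_{\RR^{n-\kappa_*}}]^{\bot\bot}$ has dimension equal to the number of blocks, but $[\Delta(\sigma^M)_{\RR^{n-\kappa_*}}]$ itself is an open dense subset of $\RR^{n-\kappa_*}$) — more simply, $\mathcal{D}^M$ is open in $\RR^{n-\kappa_*}$, so $\dim\mathcal{D}^M=n-\kappa_*$. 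For $\mathcal{D}^F$: it is relatively open in $[T^{\reduc}_{\mathcal{M}}(\bar x)]^F\oplus[\Delta(\sigma^F)_{\RR^{\kappa_*}}]$, so its dimension is $\dim[T^{\reduc}_{\mathcal{M}}(\bar x)]^F+\dim[\Delta(\sigma^F)_{\RR^{\kappa_*}}]^{\bot\bot}=\dim T^{\reduc}_{\mathcal{M}}(\bar x)+\kappa$, using $[T^{\reduc}_{\mathcal{M}}]^M=\{0\}$ so the $F$-part preserves dimension, and that $[\Delta(\sigma^F)_{\RR^{\kappa_*}}]$ has dimension equal to the number $\kappa$ of blocks in $P(\sigma^F)$. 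Now I need $\dim T^{\reduc}_{\mathcal{M}}(\bar x)+\kappa=d+n^{\reduc}-m$. Since $T_{\mathcal{M}}(\bar x)=T^{\reduc}_{\mathcal{M}}(\bar x)\oplus(T_{\mathcal{M}}(\bar x)\cap\Delta(\sigma)^{\bot\bot})$ by \eqref{tang_decomp} and \eqref{defn-n1}, we get $d=\dim T^{\reduc}_{\mathcal{M}}(\bar x)+\dim(T_{\mathcal{M}}(\bar x)\cap\Delta(\sigma)^{\bot\bot})$; and $\Delta(\sigma)^{\bot\bot}=(T_{\mathcal{M}}(\bar x)\cap\Delta(\sigma)^{\bot\bot})\oplus(N_{\mathcal{M}}(\bar x)\cap\Delta(\sigma)^{\bot\bot})$ by Corollary~\ref{Cor-23a}, with $\dim\Delta(\sigma)^{\bot\bot}=\kappa+m$ (number of blocks in $P(\sigma)=P(\sigma^F)\cup P(\sigma^M)$) and $\dim(N_{\mathcal{M}}(\bar x)\cap\Delta(\sigma)^{\bot\bot})=n^{\reduc}$ by \eqref{defn-n1}, \eqref{defn-dim-n1}. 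Combining: $\dim T^{\reduc}_{\mathcal{M}}(\bar x)=d-(\kappa+m-n^{\reduc})$, whence $\dim\mathcal{D}^F=\dim T^{\reduc}_{\mathcal{M}}(\bar x)+\kappa=d+n^{\reduc}-m$, as required. The main obstacle I expect is keeping the bookkeeping straight between the three orthogonal decompositions (tangent/normal, $\Delta(\sigma)^{\bot}$/$\Delta(\sigma)^{\bot\bot}$, and $F$/$M$) and correctly invoking \eqref{32} to kill the $M$-part of the reduced tangent space; everything else is routine linear algebra with the formulas \eqref{eqn-arisd9}--\eqref{eqn-arisd12}.
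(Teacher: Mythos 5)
Your proof of the decomposition $\Dd = \Dd^F\otimes\Dd^M$, of the local symmetry, and of the formula $\dim\Dd^F = d + n^{\reduc}-m$ is correct and follows the same route as the paper (splitting the affine space $T^{\reduc}_{\Mm}(\bx)\oplus\Delta(\si)^{\bot\bot}$ via \eqref{eqn-arisd11} and \eqref{eqn-arisd12}, and killing the $M$-part of $T^{\reduc}_{\Mm}(\bx)$ using \eqref{32} together with $[\Delta(\si^M)]^{\bot}\cap[\Delta(\si^M)]^{\bot\bot}=\{0\}$, which is precisely how \eqref{arisd10} is used in the paper). Your bookkeeping for $\dim\Dd^F$ is a bit more explicit than the paper's one-line argument but it is the same linear algebra.

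There is, however, a genuine error in your treatment of $\dim\Dd^M$. You assert that $[\Delta(\si^M)_{\RR^{n-\kappa_*}}]$ is ``an open dense subset of $\RR^{n-\kappa_*}$'' and conclude $\dim\Dd^M = n-\kappa_*$. This is false: since $\si^M\sim\si_*^M$, every block of $P(\si^M)$ has size $\ge 2$, so $[\Delta(\si^M)_{\RR^{n-\kappa_*}}]=\{z: P(z)=P(\si^M)\}$ is a proper stratum, open and dense in $[\Delta(\si^M)_{\RR^{n-\kappa_*}}]^{\bot\bot}$ (which has dimension $m$), not in $\RR^{n-\kappa_*}$. Indeed you even write in the same sentence that $[\Delta(\si_*^M)]^{\bot\bot}$ has dimension equal to the number of blocks --- the two halves of your parenthetical contradict each other. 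The correct dimension is $\dim\Dd^M = m$, which is the only value compatible with the additivity $\dim\Dd^F+\dim\Dd^M = \dim\Dd = d+n^{\reduc}$, with the codimension formula in Lemma~\ref{main-res-simple-form-jeromeM} (where $\Sscr^M=\la_{\si^M}^{-1}(\Dd^M)$ is shown to have dimension $m$), and with the dimension count in Theorem~\ref{main-res-simple-form-2}. To be fair, the lemma statement itself asserts ``the dimension of $\Dd^M$ is $n-\kappa_*$,'' which appears to be a typo, and the paper's own proof never addresses $\dim\Dd^M$ at all; but your attempt to justify the typo'd value by claiming openness in $\RR^{n-\kappa_*}$ is an incorrect step that you should replace by the one-line observation that $\Dd^M$ is relatively open in $[\Delta(\si^M)_{\RR^{n-\kappa_*}}]^{\bot\bot}$, which is $m$-dimensional.
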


\begin{proof}
  We deduce from the definition of $T_{\mathcal{M}}^{\reduc}(\bar{x})$
  in \eqref{defn-n1} and by \eqref{arisd10} that for every
  $x=x^{F}\otimes x^{M} \in T_{\mathcal{M}}^{\reduc}(\bar{x})$ we have
  $x^{M}=0$. According to (\ref{eqn-arisd11})
  \[
  \Delta (\sigma )^{\bot \bot } =
  [\Delta  (\sigma^{F})_{\mathbb{R}^{\kappa _{\ast }}}]^{\bot \bot }\otimes
  \lbrack \Delta (\sigma^{M})_{\mathbb{R}^{n-\kappa _{\ast }}}]^{\bot
  \bot },
  \]
  which combined with \propref{jerome-all-about} yields
  \begin{align*}
    &\mathcal{D}=\big\{x^{F}\otimes x^{M} : x^{F}\in \big(\lbrack
    T_{\mathcal{M}}^{\reduc}(\bar{x})]^{F}\oplus \lbrack \Delta (\sigma
      ^{F})_{\mathbb{R}^{\kappa _{\ast }}}]^{\bot \bot }\big)\,\cap
        B(\bar{x}^{F},\delta_1 ), \\
        & \hspace{10cm} x^{M}\in \lbrack \Delta (\sigma
        ^{M})_{\mathbb{R}^{n-\kappa _{\ast }}}]^{\bot \bot }\cap B(\bar{x}^{M},\delta_2
          )\big\}.
  \end{align*}
  Now, in view of Assumption~\ref{assmpt-21Nov2008},
  the closure of the affine space (that is the sign `{\scriptsize $\bot\bot$}')
  is not needed in the above representation; in other terms:
  \begin{align*}
    \big(\lbrack
    T_{\mathcal{M}}^{\reduc}(\bar{x})]^{F}\oplus \lbrack \Delta (\sigma
      ^{F})_{\mathbb{R}^{\kappa _{\ast }}}]^{\bot \bot }\big)\,\cap
        B(\bar{x}^{F},\delta_1 ) &\ =\   \big(\lbrack
        T_{\mathcal{M}}^{\reduc}(\bar{x})]^{F}\oplus \lbrack \Delta (\sigma
          ^{F})_{\mathbb{R}^{\kappa _{\ast }}}]\big)\,\cap
            B(\bar{x}^{F},\delta_1 )\\
            \lbrack \Delta (\sigma^{M})_{\mathbb{R}^{n-\kappa _{\ast }}}]^{\bot \bot }\cap B(\bar{x}^{M},\delta_2) &\ =\   \lbrack \Delta (\sigma
              ^{M})_{\mathbb{R}^{n-\kappa _{\ast }}}]\cap B(\bar{x}^{M},\delta_2).
  \end{align*}
    Hence, we get the desired expressions for $\mathcal{D}^F$ and $\mathcal{D}^M$.
    By Proposition~\ref{jerome-all-about}, the set $\mathcal{D}$ is invariant under all
    permutations in $S^{\succsim}(\sigma)$. Thus, by Proposition~\ref{prop-arisd2}, being the $F$- and $M$-parts of
    $\mathcal{D}$, the sets
    $\mathcal{D}^F$ and $\mathcal{D}^M$ invariant with respect to the permutations
   in $S^{\succsim}(\si^F)$ and $S^{\succsim}(\si^M)$, respectively. We now compute
    the dimension of $\Dd^F$. Observe that
    Proposition~\ref{jerome-all-about} yields
  \begin{align*}
    \bar x + T_{\mathcal{M}}(\bar{x})\,\oplus\, N_{\Mm}^{\reduc}(\bar{x})
    &=T^{\reduc}_{\mathcal{M}}(\bar{x})\,\oplus \,\Delta (\sigma)^{\bot \bot } \\
    &=\big( [T_{\mathcal{M}}^{\reduc}(\bar{x})]^{F}\oplus
    \lbrack \Delta (\sigma ^{F})_{\mathbb{R}^{\kappa _{\ast }}}]^{\bot
        \bot }\big) \otimes \big( \{0\}_{n-\kappa _{\ast}}\oplus \lbrack
      \Delta (\sigma ^{M})_{\mathbb{R}^{n-\kappa _{\ast }}}]^{\bot \bot}\big).
  \end{align*}
  Thus, using \eqref{aris-new}, \eqref{defn-dim-n1} and the fact that
  $m=\dim \left( [\Delta (\sigma^{M})_{\mathbb{R}^{n-\kappa_{\ast }}}]^{\bot \bot }\right)$,
  we get
  \[
  d\,+\,n^{\reduc}\,=\dim \mathcal{D}^F\,+\,m\,,
  \]
  which ends the proof.
\end{proof}

In the following two lemmas, we show that the two parts of $\Dd$
lift up to two manifolds $\lambda^{-1}_{\sigma^M}\big(\Dd^M\big)$
and $\lambda^{-1}_{\sigma^F}\big(\Dd^F\big)$. Let us start with the
easier case concerning the $M$-part.

\begin{lemma}[The analytic manifold $\mathcal{S}^{M}$]
  \label{main-res-simple-form-jeromeM}
  Let $\bar x \in \mathcal{M} \cap \Delta(\sigma)$ and
  let $\sigma = \sigma^F \circ \sigma^M$ be the $(F,M)$-decomposition
  of $\sigma$. Then, the
  set
  \[
    \mathcal{S}^{M} :=\lambda^{-1}_{\sigma^M}\big(\Dd^M\big)
    \quad \subset {\bf S}^{n-\kappa_{\ast}}_{\sigma^M}
  \]
  is an analytic submanifold of ${\bf S}^{n-\kappa_{\ast}}_{\sigma^M}$ around
  $\bar X_{\si^M}^M \in \lambda^{-1}_{\sigma^M}(\bar x^M)$,
  whose codimension is
  \[
  \sum_{i=1}^{m} \frac{|I_{\kappa+i}|(|I_{\kappa+i}|+1)}{2} - m.
  \]
\end{lemma}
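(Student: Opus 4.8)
The plan is to recognize $\mathcal{S}^M$, after unwinding the definitions, as a relatively open piece of one explicit $m$-dimensional linear subspace of ${\bf S}^{n-\kappa_{\ast}}_{\sigma^M}$; analyticity and the codimension formula will then follow at once. By Lemma~\ref{jerome-facto} we have $\mathcal{D}^M=[\Delta(\sigma^M)_{\mathbb{R}^{n-\kappa_{\ast}}}]\cap B(\bar x^M,\delta_2)$, a relatively open subset of the stratum of $\mathbb{R}^{n-\kappa_{\ast}}$ attached to $\sigma^M$, and $\bar x^M\in\mathcal{D}^M$. Since $\bar x\in\mathcal{M}\cap\mathbb{R}^n_{\ge}\cap\Delta(\sigma)$, Lemma~\ref{consec} shows that the blocks of $P(\sigma^M)$ consist of consecutive integers, so, viewing $\sigma^M$ in $\Sigma^{n-\kappa_{\ast}}$, a vector $z\in\mathbb{R}^{n-\kappa_{\ast}}$ lies in $[\Delta(\sigma^M)_{\mathbb{R}^{n-\kappa_{\ast}}}]$ precisely when, splitting $z$ into $m$ consecutive sub-blocks of sizes $|I_{\kappa+1}|,\ldots,|I_{\kappa+m}|$, each sub-block is a constant vector and the $m$ corresponding constants are pairwise distinct. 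The key (elementary) observation is then: for $X_{\sigma^M}=X_{\kappa+1}\times\cdots\times X_{\kappa+m}\in{\bf S}^{n-\kappa_{\ast}}_{\sigma^M}$, the vector $\lambda_{\sigma^M}(X_{\sigma^M})=\lambda(X_{\kappa+1})\times\cdots\times\lambda(X_{\kappa+m})$ lies in that stratum if and only if each $\lambda(X_{\kappa+i})$ is a constant vector --- equivalently, by the spectral theorem, $X_{\kappa+i}=\alpha_i\,\mathrm{Id}_{|I_{\kappa+i}|}$ for some $\alpha_i\in\mathbb{R}$ --- and the scalars $\alpha_1,\ldots,\alpha_m$ are pairwise distinct.

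Next I would package this into a global parametrization. Let $\ell\colon\mathbb{R}^m\to{\bf S}^{n-\kappa_{\ast}}_{\sigma^M}$ be the injective linear map $(\alpha_1,\ldots,\alpha_m)\mapsto(\alpha_1\,\mathrm{Id}_{|I_{\kappa+1}|})\times\cdots\times(\alpha_m\,\mathrm{Id}_{|I_{\kappa+m}|})$, and $\mu\colon\mathbb{R}^m\to\mathbb{R}^{n-\kappa_{\ast}}$ the injective linear map writing the constant $\alpha_i$ on the $i$-th consecutive sub-block. Since the spectrum of $\alpha_i\,\mathrm{Id}$ is the constant (hence already non-increasing) vector, no reordering occurs and $\lambda_{\sigma^M}\circ\ell=\mu$. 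The observation above then gives $\mathcal{S}^M=\lambda^{-1}_{\sigma^M}(\mathcal{D}^M)=\ell(V)$, where $V=\{\alpha\in\mathbb{R}^m:\alpha_1,\ldots,\alpha_m\ \text{pairwise distinct and}\ \mu(\alpha)\in B(\bar x^M,\delta_2)\}$; from $\bar x^M\in\mathcal{D}^M$ one reads off that $\bar x^M$ is block-constant with pairwise distinct values $\bar\alpha_1,\ldots,\bar\alpha_m$ (forcing $\bar X^M_{\sigma^M}=\ell(\bar\alpha)$, $\bar\alpha=(\bar\alpha_1,\ldots,\bar\alpha_m)$) and that $V$ is an open neighbourhood of $\bar\alpha$ in $\mathbb{R}^m$. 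As $\ell$ is an analytic embedding onto the $m$-dimensional linear subspace $\ell(\mathbb{R}^m)\subset{\bf S}^{n-\kappa_{\ast}}_{\sigma^M}$ and $\ell(V)$ is relatively open in it, $\mathcal{S}^M$ is an analytic submanifold of ${\bf S}^{n-\kappa_{\ast}}_{\sigma^M}$ of dimension $m$, i.e.\ of codimension $\dim{\bf S}^{n-\kappa_{\ast}}_{\sigma^M}-m=\sum_{i=1}^m\frac{|I_{\kappa+i}|(|I_{\kappa+i}|+1)}{2}-m$. If a local-equation formulation is preferred, the same conclusion follows from the linear submersion $X_{\kappa+1}\times\cdots\times X_{\kappa+m}\mapsto\bigl(X_{\kappa+1}-\tfrac{\tr X_{\kappa+1}}{|I_{\kappa+1}|}\mathrm{Id}_{|I_{\kappa+1}|},\ldots,X_{\kappa+m}-\tfrac{\tr X_{\kappa+m}}{|I_{\kappa+m}|}\mathrm{Id}_{|I_{\kappa+m}|}\bigr)$ restricted to the open set $\lambda^{-1}_{\sigma^M}(B(\bar x^M,\delta_2))$, whose rank equals exactly that codimension.

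I do not expect a genuine obstacle here: the whole content reduces to the fact that the symmetric matrices with a single eigenvalue form a line, together with $(F,M)$-bookkeeping. The one point requiring a little care is making sure that the blocks of $\sigma^M$ are consecutive, so that $[\Delta(\sigma^M)_{\mathbb{R}^{n-\kappa_{\ast}}}]$ admits the clean block-constant description above --- which is precisely where the standing hypothesis $\bar x\in\mathbb{R}^n_{\ge}$ enters, through Lemma~\ref{consec}.
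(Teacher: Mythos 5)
Your proof is correct and follows the same route as the paper's: identify $\mathcal{S}^M=\lambda^{-1}_{\sigma^M}(\Dd^M)$ with a relatively open piece of the $m$-dimensional linear subspace of block-scalar matrices $\alpha_1\mathrm{Id}\times\cdots\times\alpha_m\mathrm{Id}$, and read off the codimension. You are more explicit on two points the paper leaves implicit --- the use of Lemma~\ref{consec} to ensure the blocks of $P(\sigma^M)$ are consecutive in $\N_{n-\kappa_*}$ (so that the block-constant description of $[\Delta(\sigma^M)_{\R^{n-\kappa_*}}]$ lines up with the consecutive blocks produced by $\lambda_{\sigma^M}$), and the pairwise-distinctness of the scalars coming from being in the stratum rather than its closure --- but these are elaborations, not a different argument.
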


\begin{proof}
  According to the partition $P(\si^M) =\{I_{\kappa+1},\ldots,I_{\kappa+m}\}$,
  a vector in $[\Delta (\sigma^{M})_{\mathbb{R}^{n-\kappa_{\ast }}}]$ has equal
  coordinates within each block $I_{\kappa+i}$.  Each
  block lifts to a multiple of the identity matrix (in the appropriate
  space).  Since the lifting $\lambda^{-1}_{\sigma^M}$ is
  block-wise, $\mathcal{S}^{M}$ is then a direct product of multiples
  of identity matrices, and thus an analytic submanifold of ${\bf
    S}^{n-\kappa_{\ast}}_{\sigma^M}$ with dimension $m$.
 \end{proof}

Let us now deal with the $F$-part.

\begin{lemma}[The analytic manifold $\mathcal{S}^{F}$]\label{main-res-simple-form-jeromeF}
  Let $\bar x \in \mathcal{M} \cap \Delta(\sigma)$,
  and $\sigma = \sigma^F \circ \sigma^M$ be the $(F,M)$-decomposition
  of $\sigma$. Then the set
  \[
  \mathcal{S}^{F} :=\lambda ^{-1}_{\sigma^F}(\Dd^F)
  \quad \subset {\bf S}^{\kappa_{\ast}}_{\sigma^F}
  \]
  is an analytic submanifold around $\bar X_{\si^F}^F \in \lambda^{-1}_{\sigma^F}(\bar x^F)$
  of codimension $\kappa_{\ast}-(d+ n^{\reduc} - m)$.
\end{lemma}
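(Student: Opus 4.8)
The plan is to mimic the structure of Lemma~\ref{lemma-37} and Theorem~\ref{main-thm-4Dec2007-1}, but now working inside the reduced $F$-space rather than all of $\RR^n$. Recall that $\Dd^F$ is the affine manifold $\big([T_{\mathcal{M}}^{\reduc}(\bar x)]^F \oplus [\Delta(\sigma^F)_{\RR^{\kappa_*}}]\big) \cap B(\bx^F,\delta_1)$, and that \thref{jm-double} gives $v^F \in [\Delta(\sigma^F)_{\RR^{\kappa_*}}]^{\bot\bot}$ for every $v \in N_{\mathcal{M}}(\bx)$. This is precisely the $F$-analogue of the hypothesis $N_{\mathcal{M}}(\bx) \subseteq \Delta(\sigma)^{\bot\bot}$ that made the case $\si_*=\id_n$ work in \ssecref{ssec-42}. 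So the first step is to set up the reduced tangential parametrization: restricting the map $\phi$ from \eqref{arisd-phi} and the projections $\bar\pi_T$, $\bar\pi^{\reduc}_N$ to the $F$-coordinates, I would define a map $\bar\phi^F$ on $\Dd^F$, valued in $[N_{\mathcal{M}}^{\reduc}(\bx)]^F$, by $\bar\phi^F(x^F) = \bx^F + \phi^F(\bar\pi_T^F(x^F)) - \bar\pi_N^{\reduc,F}(x^F)$, mirroring \eqref{aris-D1}. Using \thref{jm-double}, \lemref{important-observation} (in particular \eqref{aris100} and \eqref{5Dec07-3}), and \corref{corollary_pi}, I would check that $\bar\phi^F$ is well-defined, is a local equation of $\Dd^F$... wait, more precisely a local equation whose zero set inside $\Dd^F$ is the $F$-shadow of $\Mm$, i.e. $x^F \otimes x^M \in \Mm$ iff $\bar\phi^F(x^F)=0$ (for the relevant fixed $x^M$-range), and that $\bar\phi^F$ is locally symmetric with respect to $S^{\succsim}(\sigma^F)$.

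The second step transfers this to the matrix side. I would consider $\bar\Phi^F := \bar\phi^F \circ \lambda_{\sigma^F}$ defined on $\lambda_{\sigma^F}^{-1}(\Dd^F) \subset {\bf S}^{\kappa_*}_{\sigma^F}$. Since $\la_{\sigma^F}(\bar X^F_{\si^F}) \in \Delta(\si^F)\cap\RR^{\kappa_*}_{\ge}$ (using Lemma~\ref{consec}, as $\bar x\in\Mm\cap\RR^n_{\ge}$, so the $F$-blocks are intervals and ordered), Lemma~\ref{jerome-Ialsocangivecrazynames} and Lemma~\ref{jerome-Ialsocangivefreakingnames} apply: each piece of $\bar\phi^F$ is locally symmetric, hence $\bar\Phi^F$ is $C^\omega$ (the class is preserved because $\phi$ is analytic when $\Mm$ is, and in general we get the same differentiability class as $\Mm$; but here $\Dd^F$ is an affine manifold so $\phi^F$ restricted appropriately is actually as smooth as needed — in fact $\bar\phi^F$ is affine-plus-$\phi^F$, and since we only need $C^2$, or analytic if $\Mm$ is, this is fine — actually for the manifold $\mathcal{S}^F$ to be analytic one uses that $\Dd^F$ is affine, so the composition reduces to Proposition~\ref{real-Fred}-type arguments; let me instead rely directly on the structure). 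I would then compute the Jacobian $J\bar\Phi^F(\bar X^F_{\si^F})$ exactly as in the proof of Theorem~\ref{main-thm-4Dec2007-1}, via Theorem~\ref{thm-26Nov2007-1} applied blockwise and Lemma~\ref{lem-26Nov2007-4} (its $F$-analogue), obtaining for $H_{\si^F} \in {\bf S}^{\kappa_*}_{\sigma^F}$
\[
J\bar\Phi^F(\bar X^F_{\si^F})[H_{\si^F}] = J\phi^F(\bar\pi_T^F(\cdot))[\pi_T^F(\diag(\bar U_{\si^F} H_{\si^F} \trans{\bar U_{\si^F}}))] - \pi_N^{\reduc,F}(\diag(\bar U_{\si^F} H_{\si^F} \trans{\bar U_{\si^F}})).
\]
Choosing $H_{\si^F} := \trans{\bar U_{\si^F}}(\Diag\, h)\bar U_{\si^F}$ for arbitrary $h \in [N_{\mathcal{M}}^{\reduc}(\bx)]^F$ gives $J\bar\Phi^F[H_{\si^F}] = -h$, so the Jacobian is onto, hence of full rank. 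By \eqref{local}-style equivalence, $\bar\Phi^F$ is a local equation of $\mathcal{S}^F$, which is therefore a submanifold of ${\bf S}^{\kappa_*}_{\sigma^F}$ with codimension equal to $\dim [N_{\mathcal{M}}^{\reduc}(\bx)]^F$.

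The last step is the dimension count. The codimension of $\mathcal{S}^F$ in ${\bf S}^{\kappa_*}_{\sigma^F}$ equals the number of scalar equations, namely $\dim[N_{\mathcal{M}}^{\reduc}(\bx)]^F$. From Lemma~\ref{jerome-facto} we know $\dim\Dd^F = d + n^{\reduc} - m$, and $\Dd^F$ lives in the affine space $\mathcal{R}^F := [T_{\mathcal{M}}^{\reduc}(\bx)]^F \oplus [\Delta(\sigma^F)_{\RR^{\kappa_*}}]$; the ambient $F$-equation $\bar\phi^F$ is built to cut out the $\phi^F$-graph inside this, and the remaining codimension of $\mathcal{S}^F$ inside ${\bf S}^{\kappa_*}_{\sigma^F}$ picks up the extra directions. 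Concretely, I would argue: $\dim\mathcal{S}^F = \dim{\bf S}^{\kappa_*}_{\sigma^F} - \dim[N_{\mathcal{M}}^{\reduc}(\bx)]^F$, and by \thref{jm-double} together with the decomposition $[N_{\mathcal{M}}^{\reduc}(\bx)]^F = [N_{\mathcal{M}}(\bx)\cap\Delta(\sigma)^{\bot\bot}]^F$, and the relation $\dim N_{\mathcal{M}}^{\reduc}(\bx) = n^{\reduc}$, plus the canonical-split bookkeeping $\dim[N_{\mathcal{M}}^{\reduc}(\bx)]^F = \kappa_* - \dim[T_{\mathcal{M}}^{\reduc}(\bx)]^F - (\text{dimension absorbed by the }\Delta(\sigma^F)^{\bot}\text{ directions})$, one arrives at codimension $\kappa_* - (d + n^{\reduc} - m)$ as claimed. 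I expect the main obstacle to be precisely this dimension arithmetic — keeping straight which subspaces of $\RR^{\kappa_*}$ are "inside $\mathcal{R}^F$" versus "transverse to $\Dd^F$ in ${\bf S}^{\kappa_*}_{\sigma^F}$," and correctly matching $\dim[N_{\mathcal{M}}^{\reduc}(\bx)]^F$ to the codimension formula; the differentiability and full-rank parts are essentially a verbatim rerun of Lemma~\ref{lemma-37} and Theorem~\ref{main-thm-4Dec2007-1} in the reduced $F$-space, made legitimate by \thref{jm-double}.
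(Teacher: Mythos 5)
Your high-level instinct is the right one and matches the paper: view $\Dd^F$ as a locally symmetric affine manifold inside $\RR^{\kappa_*}$ and rerun the $\si_*=\mathrm{id}_n$ machinery (Lemma~\ref{lemma-37} / Theorem~\ref{main-thm-4Dec2007-1} / Remark~\ref{anotherremark}) on it. But the hypothesis that machinery needs is that the \emph{characteristic permutation of $\Dd^F$ in $\RR^{\kappa_*}$} is $\mathrm{id}_{\kappa_*}$ — equivalently $N_{\Dd^F}(\bar x^F)\subset[\Delta(\sigma^F)_{\RR^{\kappa_*}}]^{\bot\bot}$ — and you never verify it. The fact you invoke from Theorem~\ref{jm-double}, namely $v^F\in[\Delta(\sigma^F)_{\RR^{\kappa_*}}]^{\bot\bot}$ for $v\in N_{\mathcal{M}}(\bar x)$, concerns the normal space of $\Mm$ in $\RR^n$, not the normal space of the different manifold $\Dd^F$ in $\RR^{\kappa_*}$, and the bridge between the two is not made. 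The paper's proof spends almost all of its effort on exactly this missing step: using Lemma~\ref{lem-hss} (which comes from Corollary~\ref{density-hss}, the density of $\Mm\cap\Delta(\si_*)$) to produce a tangent vector $\omega\in T_{\Mm}(\bar x)$ whose $F$-part has distinct coordinates inside each block of $P(\sigma^F)$, then showing $\omega_\bot^F$ inherits this property, hence $\Dd^F\cap[\Delta(\mathrm{id}_{\kappa_*})_{\RR^{\kappa_*}}]\neq\emptyset$; Corollary~\ref{Ithoughtthiswasover} then gives the characteristic permutation of $\Dd^F$. Only after this can Theorem~\ref{main-thm-4Dec2007-1} be invoked, and the codimension $\kappa_*-(d+n^{\reduc}-m)$ falls out immediately as $\kappa_*-\dim\Dd^F$.

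A secondary but real problem is in the construction of $\bar\phi^F$ and $\bar\Phi^F$. What the lemma requires is a local equation of $\Dd^F$ (as a submanifold of $\RR^{\kappa_*}$) defined on an ambient ball $B(\bar x^F,\delta')\subset\RR^{\kappa_*}$, to be composed with $\lambda_{\sigma^F}$ so that the resulting map is defined on an open neighborhood of $\bar X^F_{\sigma^F}$ in $\mathbf{S}^{\kappa_*}_{\sigma^F}$ with zero set $\mathcal{S}^F$. Your $\bar\phi^F$ is instead a function on $\Dd^F$ itself valued in $[N_{\mathcal{M}}^{\reduc}(\bar x)]^F$, cutting out the $F$-shadow of $\Mm$ — that is an equation for the wrong set, living in the wrong space — and your $\bar\Phi^F:=\bar\phi^F\circ\lambda_{\sigma^F}$ has domain $\lambda_{\sigma^F}^{-1}(\Dd^F)=\mathcal{S}^F$ itself, so its Jacobian having full rank cannot exhibit $\mathcal{S}^F$ as a submanifold of the ambient $\mathbf{S}^{\kappa_*}_{\sigma^F}$. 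Once the characteristic-permutation fact for $\Dd^F$ is established, none of this local-equation bookkeeping is needed: Theorem~\ref{main-thm-4Dec2007-1} and Remark~\ref{anotherremark} apply directly to $\Dd^F$ in $\RR^{\kappa_*}$ and finish the proof, which is precisely how the paper concludes.
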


\begin{proof}
  Recall that by \lemref{jerome-facto}, $\Dd^F$ is a locally
  symmetric, affine submanifold of $\RR^{\kappa_{\ast}}$.
  Our first aim here is to show that
  \begin{equation}\label{claim}
  N_{\Dd^F}(\bar x^F) \subset [\Delta(\sigma^F)_{\R^{\kappa_*}}]^{\bot\bot}.
  \end{equation}
  (Compare \eqref{claim} with \eqref{eqn-4Dec07-1}.)
  To this end, fix $\epsilon > 0$ and let $\omega \in
  T_{\mathcal{M}}(\bar x) \cap B(0, \epsilon)$ be a vector with
  the properties stated in Lemma~\ref{lem-hss}.  By
  (\ref{tang_decomp}), there is a unique representation $\omega =
  \omega_{\bot} + \omega_{\bot\bot}$ for some $\omega_{\bot} \in
  T_{\mathcal{M}}^{\reduc}(\bar{x})$ and $\omega_{\bot \bot} \in
  T_{\mathcal{M}}(\bar{x}) \cap \Delta(\sigma)^{\bot\bot}$.
  Taking the $F$-trace of $w$, we have $\omega^F = \omega_{\bot}^F +
  \omega_{\bot\bot}^F$ with
  $$
  \omega_{\bot}^F \in [T_{\mathcal{M}}^{\reduc}(\bar{x})]^F
  $$
  and $\omega_{\bot\bot}^F \in
  [\Delta (\sigma^{F})_{\mathbb{R}^{\kappa_{\ast }}}]^{\bot \bot }$.
  Let $P(\sigma^F)=\{I_1,\ldots,I_{\kappa}\}$ be the partition determined
  by $\sigma^F$. Note that $\omega_{\bot}^F = \omega^F -
  \omega_{\bot\bot}^F$. Since subvector $\omega_{I_i}^F$
  has distinct coordinates, while
  $(\omega_{\bot\bot}^F)_{I_i}$ has equal coordinates (definition
  of $\lbrack \Delta(\sigma ^{F})_{\mathbb{R}^{\kappa_{\ast }}}]^{\perp\perp}$),
  we conclude that the subvector $(\omega_{\bot}^F)_{I_i}$ has
  distinct coordinates, for all $i \in \N_m$.

  Let us now consider $\Dd^F$. Fix any $x^F \in
  \lbrack \Delta (\sigma ^{F})_{\mathbb{R}^{\kappa_{\ast }}}] \cap B(\bar x^F,\delta_1)$.
  Taking $\omega$ close enough
  to $0$ ensures that $\omega_{\bot}^F$ is close enough to $0$
  so that all of the coordinates of the vector
  $\omega_{\bot}^F + x^F$ are distinct, and moreover $\omega_{\bot}^F + x^F \in
  \Dd^F$.
  All that shows
  \[
  \Dd^F \cap [\Delta({\rm id}_{\kappa_*})_{\R^{\kappa_*}}] \not= \emptyset.
  \]
  Thus, applying Corollary~\ref{Ithoughtthiswasover} (for
  $n=\kappa^*$), we see that the characteristic permutation of the
  affine manifold is $\Dd^F$ is ${\rm id}_{\kappa_*}$ entailing a
  trivial $(F,M)$-decomposition of $\RR^{\kappa_*}$.  The inclusion
  \eqref{claim} now follows from \thref{jm-double} applied to $\RR^{\kappa_*}$.

  To conclude, we apply
  Theorem~\ref{main-thm-4Dec2007-1} and Remark~\ref{anotherremark}
  to $\Dd^F$ to get that the set
  $\mathcal{S}^{F}$
  is an analytic submanifold of
  ${\bf S}^{\kappa_{\ast}}_{\sigma^F}$ of
  codimension $\kappa_{\ast}-(d+ n^{\reduc} - m)$ there.
\end{proof}

\begin{theorem}[$\lambda^{-1}(\mathcal{D})$ is a manifold in $\Sn$]
  \label{main-res-simple-form-2}
  Under Assumption~\ref{assmpt-21Nov2008}, consider
  the set $\Dd$ defined by~\eqref{Dcal}.
  Then the  set $\lambda^{-1}(\mathcal{D})$
  is an analytic submanifold of $\Sn$ around
  $\bar X\in \la^{-1}(\bar x)$, with dimension
  \begin{equation}\label{kesaco}
  \dim \lambda^{-1}(\mathcal{D})=\frac{n(n+1)}{2} + d + n^{\reduc} - \kappa_*
  - \sum_{i=1}^{m} \frac{|I_{\kappa+i}|(|I_{\kappa+i}|+1)}{2}.
  \end{equation}
\end{theorem}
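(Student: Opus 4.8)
The strategy is to combine the three pieces already in hand: the factorization of $\Dd$ into its $F$-part and $M$-part (\lemref{jerome-facto}), the two lifting results for these parts (\lemref{main-res-simple-form-jeromeM} and \lemref{main-res-simple-form-jeromeF}), and the local canonical split of $\Sn$ induced by $\si_*$ (\propref{real-Fred}). Concretely, recall from \lemref{jerome-facto} that $\Dd = \{x^F\otimes x^M : x^F\in\Dd^F,\ x^M\in\Dd^M\}$, so that, because the lifting $\la^{-1}$ is determined blockwise once one fixes the ordering of eigenvalues, a matrix $X\in\Sn$ near $\bar X$ satisfies $\la(X)\in\Dd$ if and only if both $\Theta^F(X)\in \mathcal{S}^F$ and $\Theta^M(X)\in\mathcal{S}^M$, where $\Theta^F,\Theta^M$ are the analytic submersions of \propref{real-Fred} and $\mathcal{S}^F=\la^{-1}_{\si^F}(\Dd^F)$, $\mathcal{S}^M=\la^{-1}_{\si^M}(\Dd^M)$. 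The key point justifying this equivalence is \propref{real-Fred}(i): $\la(X)=\la_{\si^F}(\Theta^F(X))\otimes\la_{\si^M}(\Theta^M(X))$ for all $X$ in the neighborhood $W$, together with the fact that the canonical product $\otimes$ respects the decomposition $\Dd=\Dd^F\otimes\Dd^M$ and is a linear bijection.

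First I would make the equivalence above precise and note that it exhibits $\la^{-1}(\Dd)\cap W$ as the preimage of the product manifold $\mathcal{S}^F\times\mathcal{S}^M\subset {\bf S}^{\kappa_*}_{\si^F}\times{\bf S}^{n-\kappa_*}_{\si^M}$ under the analytic map $(\Theta^F,\Theta^M)\colon W\to {\bf S}^{\kappa_*}_{\si^F}\times{\bf S}^{n-\kappa_*}_{\si^M}$. By \lemref{main-res-simple-form-jeromeF} and \lemref{main-res-simple-form-jeromeM}, $\mathcal{S}^F$ and $\mathcal{S}^M$ are analytic submanifolds of their respective ambient spaces near $\bar X^F_{\si^F}$ and $\bar X^M_{\si^M}$; hence $\mathcal{S}^F\times\mathcal{S}^M$ is an analytic submanifold of the product. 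Since by \propref{real-Fred}(ii) the Jacobian of $(\Theta^F,\Theta^M)$ has full rank at $\bar X$ (the two Jacobians being separately surjective onto the respective factors, and the target splitting as a direct product), the map $(\Theta^F,\Theta^M)$ is a submersion near $\bar X$, and the preimage of a submanifold under a submersion is a submanifold. Therefore $\la^{-1}(\Dd)$ is an analytic submanifold of $\Sn$ near $\bar X$, of the same codimension in $\Sn$ as $\mathcal{S}^F\times\mathcal{S}^M$ has in ${\bf S}^{\kappa_*}_{\si^F}\times{\bf S}^{n-\kappa_*}_{\si^M}$.

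For the dimension formula, I would simply add up codimensions. The ambient space ${\bf S}^{\kappa_*}_{\si^F}\times{\bf S}^{n-\kappa_*}_{\si^M}$ is a direct product of blocks of symmetric matrices and its dimension is $\sum_{i=1}^{\kappa+m}|I_i|(|I_i|+1)/2$; a short computation splitting the $F$-blocks from the $M$-blocks, or more simply using $\dim\lambda^{-1}(\mathcal D)=\dim\Sn-\operatorname{codim}$, lets me express everything in terms of $n(n+1)/2$. The codimension of $\mathcal{S}^M$ in ${\bf S}^{n-\kappa_*}_{\si^M}$ is $\sum_{i=1}^m |I_{\kappa+i}|(|I_{\kappa+i}|+1)/2 - m$ by \lemref{main-res-simple-form-jeromeM}, and the codimension of $\mathcal{S}^F$ in ${\bf S}^{\kappa_*}_{\si^F}$ is $\kappa_*-(d+n^{\reduc}-m)$ by \lemref{main-res-simple-form-jeromeF}. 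Adding these codimensions and subtracting from $n(n+1)/2$ gives
\[
\dim\lambda^{-1}(\mathcal D)=\frac{n(n+1)}{2}-\Big(\kappa_*-(d+n^{\reduc}-m)\Big)-\Big(\sum_{i=1}^m\frac{|I_{\kappa+i}|(|I_{\kappa+i}|+1)}{2}-m\Big),
\]
which simplifies to \eqref{kesaco} after cancelling the two occurrences of $m$.

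The main obstacle I anticipate is not any single estimate but rather the careful bookkeeping needed to pass rigorously between the three different ambient spaces ($\RR^n$, $\RR^{\kappa_*}\times\RR^{n-\kappa_*}$, and the matrix spaces $\Sn$, ${\bf S}^{\kappa_*}_{\si^F}$, ${\bf S}^{n-\kappa_*}_{\si^M}$) while keeping track of how the canonical product $\otimes$ interleaves the $F$-coordinates and $M$-coordinates: in particular, one must verify that the condition ``$\la(X)\in\Dd$'' really does decouple into the two conditions ``$\Theta^F(X)\in\mathcal{S}^F$'' and ``$\Theta^M(X)\in\mathcal{S}^M$'', which is where \lemref{jerome-facto} and the eigenvalue-ordering statement of \lemref{jerome-Ialsocangivecrazynames} (applied inside \propref{real-Fred}) do the real work. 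Once that decoupling is established, the submersion argument and the dimension count are routine.
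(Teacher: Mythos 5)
Your proposal is correct and follows essentially the same route as the paper: invoke Proposition~\ref{real-Fred} to split $\Sn$ via $\Theta^F,\Theta^M$, use Lemmas~\ref{main-res-simple-form-jeromeM} and~\ref{main-res-simple-form-jeromeF} for the two pieces, and add codimensions. The only cosmetic difference is that the paper builds an explicit local equation $\Psi=(\Psi^F\circ\Theta^F)\times(\Psi^M\circ\Theta^M)$ whereas you phrase the argument as taking the preimage of the product submanifold $\mathcal S^F\times\mathcal S^M$ under the submersion $(\Theta^F,\Theta^M)$; both formulations rest on the same (tacit) fact that the combined Jacobian of $(\Theta^F,\Theta^M)$ at $\bar X$ is surjective onto the full product, which does not follow merely from the two component Jacobians being separately surjective but is true here because the block maps $\Theta_\ell$ from Lemma~\ref{lem-fred} act on disjoint eigenvalue clusters and together give a local parametrization of $\Sn$ near $\bar X$.
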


\begin{proof}
  Consider the $(F,M)$-decomposition of $\RR^n$ induced by $\si_*$, and
  apply \propref{real-Fred} to get a neighborhood $W$ of $\bar X$
  in $\Sn$ and analytic maps $\Theta^F$ and $\Theta^M$
  such that
  \begin{align}
    \label{bozakosmata}
    \lambda(X) =\lambda_{\sigma^F}(\Theta^F(X))
    \otimes \lambda_{\sigma^F}(\Theta^M(X))
    \qquad \mbox{for all }X \in W.
  \end{align}
  Set  $\bar X_{\si^F}^F := \Theta^F(\bar X) \in {\bf S}^{\kappa_*}_{\sigma^F}$
  and $\bar X_{\si^M}^M := \Theta^M(\bar X)\in {\bf S}^{n-\kappa_*}_{\sigma^M}$.
  Since $\bar x = \lambda(\bar X)
  = \lambda_{\sigma^F}(\bar X_{\si^F}^F) \otimes \lambda_{\sigma^M}(\bar X_{\si^M}^M)$,
  by the fact that the canonical product is well-defined,
  we deduce $\bar x^F = \lambda_{\sigma^F}(\bar X_{\si^F}^F)$
  and  $\bar x^M = \lambda_{\sigma^M}(\bar X_{\si^M}^M)$,
  concluding that $\bar X_{\si^F}^F \in \mathcal{S}^F$
  and $\bar X_{\si^M}^M \in \mathcal{S}^M$
  (recall Lemma~\ref{main-res-simple-form-jeromeF} and
  Lemma~\ref{main-res-simple-form-jeromeM}).
  Consider the respective codimensions
  \begin{align}
  \label{codim-s1}
    s_1:=\mbox{co-dim\,} \mathcal{S}^F&=\kappa_{\ast}-(d+n^{\reduc}-m), \mbox{ and }\\
\label{codim-s2}
    s_2:=\mbox{co-dim\,} \mathcal{S}^M&= \sum_{i=1}^{m} \frac{|I_{\kappa+i}|(|I_{\kappa+i}|+1)}{2} - m.
  \end{align}
  Since the maps $\Theta^F$ and $\Theta^M$ have Jacobians of full rank
  at $\bar X$, they are open around it.  By shrinking $W$ if
  necessary, we may assume there exist analytic maps
  $$
  \Psi^F\colon \Theta^F(W) \rightarrow \RR^{s_1}\,\qqandqq \Psi^M\colon
  \Theta^M(W) \rightarrow \RR^{s_2},
  $$
  with Jacobians having full rank at
  $\bar X_{\si^F}^F$ and $\bar X_{\si^M}^M$ respectively, such that
  $$
  \Psi^F(X_{\si^F}^F)=0 \,\Leftrightarrow\,X_{\si^F}^F \in
  \mathcal{S}^F \cap \Theta^F(W)
  \qquad\text{and}\qquad
  \Psi^M(X_{\si^M}^M)=0\,\Leftrightarrow\, X_{\si^M}^M \in \mathcal{S}^M \cap \Theta^M(W).
  $$
  Together, the two conditions above are equivalent to
  \[
  X_{\si^F}^F\times X_{\si^M}^M\in \Theta^F(W) \times \Theta^M(W)
  \qqandqq
  \lambda_{\sigma^F}(X_{\si^F}^F)\otimes \lambda_{\sigma^M}(X_{\si^M}^M)
  \in \mathcal{D}.
  \]
  We now define a local equation for $\lambda^{-1}(\mathcal{D})$
  around $\bar X$ as follows:
  \begin{equation*}
    \fonction{\Psi}{W\subset \Sn}{\mathbb{R}^{s_{1}}\times \mathbb{R}^{s_{2}}}
    {X}{(\Psi^F \circ \Theta^F)(X)\times (\Psi^M\circ\Theta^M)(X).}
  \end{equation*}
  Indeed, using (\ref{bozakosmata}), for all $X \in W$ we have
  \[
  \Psi(X)=0 \iff
  \lambda(X)=\lambda_{\sigma^F}(\Theta^F(X))
  \otimes\lambda_{\sigma^M}(\Theta^M(X)) \in \mathcal{D}
  \iff
  X\in \lambda^{-1}(\Dd).
  \]
  The fact that the Jacobian of $\Psi$ has full rank at $\bar X$
  follows from the chain rule and the fact that all the Jacobians
  $J\Theta^F(\bar X)$, $J\Theta^M(\bar X)$, $J\Psi^F(\bar X_{\si^F}^F)$, and
  $J\Psi^M(\bar X_{\si^M}^M)$ are of full rank. Thus, $\Psi$ is an analytic
  local equation of $\lambda^{-1}(\mathcal{D})$ around $\bar X$,
  which yields that $\lambda^{-1}(\mathcal{D})$ is a submanifold~$\Sn$~around~$\bar X$.
  We compute its dimension as follows
  \begin{align*}
    \dim \lambda^{-1}(\mathcal{D})
    &= \dim \Sn - \big(\mbox{co-dim\,} \mathcal{S}^F
    + \mbox{co-dim\,} \mathcal{S}^M\big)\\
    & = \frac{n(n+1)}{2} + d + n^{\reduc} - \kappa_*
    -\sum_{i=1}^{m} \frac{|I_{\kappa+i}|(|I_{\kappa+i}|+1)}{2},
  \end{align*}
  where \eqref{codim-s1} and \eqref{codim-s2} were used.
\end{proof}

Theorem~\ref{main-res-simple-form-2} is an important intermediate
result for the forthcoming Section~\ref{ssec-45}, which contains the
final step of the proof. Nonetheless, in the following particular
case, Theorem~\ref{main-res-simple-form-2} allows us to conclude
directly.

\pagebreak

\begin{example}
\label{aris-27mars09}
Fix a permutation $\sigma_* \in \Sigma^n$ with the property described in Theorem~\ref{consec-sets}. In view of Remark~\ref{rem-8Apr2009}, it is instructive to consider the particular case when
$$
\Mm = \bigcup_{{\scriptsize \begin{array}{c}\sigma \sim \si_{\ast}\\[-0.5ex] \si\prec  \hspace{-0.1cm} \sim \sigma_{\ast}\end{array}}} \hspace*{-1ex}\Delta(\sigma).
$$ Clearly, $\Mm$ is a locally symmetric manifold with characteristic
permutation $\sigma_*$ and relatively open in~$\Delta(\sigma_*)^{\bot\bot}$.  Moreover, for any $\bar x \in \Mm \cap
\Delta(\sigma)$, where $\sigma \sim \sigma_*$ or
$\si\prec \hspace{-0.1cm} \sim \sigma_{\ast}$, we have
$N_{\Mm}^{\reduc}(\bar x)=\{0\}$, that is $n^{\reduc} =0$.  This means
that the affine manifolds $\Mm$ and $\mathcal{D}$ coincide locally
around $\bar x$, see~\eqref{Dcal}. In this case
Theorem~\ref{main-res-simple-form-2} shows directly that
$\lambda^{-1}(\Mm)$ is a manifold in $\Sn$ with dimension given by
(\ref{kesaco}). At first glance, it appears that the dimension depends
on the particular choice of~$\bar x$. But since $\sigma \sim \sigma_*$
or $\si\prec \hspace{-0.1cm} \sim \sigma_{\ast}$ we recall that we
have $d = \kappa_*+ m_*$, $m = m_*$, and $|I_{\kappa+i}| =
|I^*_{\kappa_*+i}|$ for all $i=1,\ldots,m$. Thus, the dimension depends
only on $\sigma_*$. In fact, one can verify that (\ref{kesaco})
becomes
\begin{align*}
\dim \lambda^{-1}(\Mm) &= d + {\kappa_* \choose 2} + \kappa_*(n-\kappa_*) + \sum_{1 \le i < j \le m_*} |I^*_{\kappa_*+i}||I^*_{\kappa_*+j}| \\
&= d + \sum_{1 \le i < j \le \kappa_*+m_*} |I^*_{i}||I^*_{j}|.
\end{align*}
Thus, according to (\ref{aris-dime}), we have
$$
\dim \lambda^{-1}(\Mm) = \dim \lambda^{-1}(\Delta(\sigma_*)),
$$
and that is a particular case of the forthcoming general formula (\ref{dim-lamM}).\qed
\end{example}

In the situation of Example~\ref{aris-27mars09} the manifold $\Mm$
has a trivial reduced normal space. The following remark
sheds more light on this aspect.

\begin{remark}[Case of trivial reduced normal space ($N_{\Mm}^{\reduc}(\bar x)=\{0\})$]
Let $\Mm$ be a locally symmetric manifold, with characteristic permutation $\sigma_{\ast}$ and let $\bar{x}\in \Mm \cap \Delta(\sigma)$. Then, by (\ref{prop-30June07-2}) and (\ref{aris100}), it can be
easily seen that
\begin{align*}
  N_{M}^{\reduc}(\bar{x})=\{0\}  \, &\Longleftrightarrow \, \Mm\cap B(\bar{x},\delta)=(\bar x+T_{\mathcal{M}}(\bar x)) \cap B(\bar x, \delta),\, \text{for some } \delta>0.
\end{align*}
Applying Corollary~\ref{density-hss} to the left-hand side of the last equality,
 we see on the right-hand side that $(\bar x+T_{\mathcal{M}}(\bar x)) \cap B(\bar x, \delta) \cap  \Delta(\sigma_*)$ is dense in $(\bar x+T_{\mathcal{M}}(\bar x)) \cap B(\bar x, \delta)$. Inclusions (\ref{eqn-28Nov2007-1}) and (\ref{32}) show that $(\bar x+T_{\mathcal{M}}(\bar x)) \subset  \Delta(\sigma_*)^{\bot\bot}$, thus we obtain:
\begin{align*}
  N_{M}^{\reduc}(\bar{x})=\{0\}  \, &\Longleftrightarrow \, \Mm\cap B(\bar{x},\delta)= \Delta(\sigma_*)^{\bot\bot} \cap B(\bar x, \delta),\, \text{for some } \delta>0.
\end{align*}
There are two possibilities with respect to the position of $\bar x$:
\begin{itemize}
  \item If $\bar{x}\in\Delta(\sigma_{\ast})$, then we can shrink
  $\delta>0$ to get $\Mm\cap B(\bar x, \delta)=\Delta(\sigma_{\ast})\cap B(\bar x,\delta)$.
  This is the situation, for instance, in Example~\ref{jerome-lift}.
  \item If $\bar{x}\notin\Delta(\sigma_{\ast})$, then $\bar{x}\notin\Delta(\sigma)$ for some
$\sigma \prec \hspace{-0.1cm}\sim  \sigma_{\ast}$.
This is the situation, for instance, in  Example~\ref{aris-27mars09}.
\end{itemize}
\end{remark}

%**********************************************************************
\subsection{Transfer of local equations, proof of the main result}\label{ssec-45}
%**********************************************************************

This section contains the last step of our argument: we show
that \eqref{aris-D11} is indeed a local equation of $\Mm$ around
$\bar X \in \la^{-1}(\bar x)$.

\begin{lemma}[The Jacobian of $D\bar\Phi(\bar X)$]\label{lemma-45}
  The map $\bar \Phi$ defined in \eqref{aris-D11} is of class $C^2$ at
  $\bar X$. Denoting~by
  $$
  D\bar\Phi(\bar X)\colon T_{\lambda^{-1}(\mathcal{D})}(\bar X)\longrightarrow
  N^{\reduc}_{\mathcal{M}}(\bar x)
  $$
  the differential of $\bar \Phi$ at $\bar X$, we
  have for any direction $H \in T_{\lambda^{-1}(\mathcal{D})}(\bar X)$:
  \begin{align}
    \label{gradient-of-Phi}
    D\bar \Phi(\bar X)\,[H]
    =  D\phi\,(\bar \pi_T(\lambda(\bar X)))\,[\pi_T(\diag
      (\bar{U}\,H\,\trans{\bar{U}}))]\,
    - \,\pi^{\reduc}_{N}( \diag (\bar{U}\,H\,\trans{\bar{U}})),
  \end{align}
  where $\bar U \in \On$ is such that $\bar X = \trans{\bar
  U}(\Diag\,\lambda(\bar X)) \bar U$.
\end{lemma}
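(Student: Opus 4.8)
The plan is to deduce everything from the chain-rule machinery already in place, noting that $\bar\Phi = \bar\phi\circ\lambda$ where $\bar\phi$ is the locally symmetric local equation of $\Mm$ furnished by \thref{thm-40}, and $\lambda$ is restricted to the analytic manifold $\lambda^{-1}(\Dd)$, which is a genuine submanifold of $\Sn$ around $\bar X$ by \thref{main-res-simple-form-2}. First I would observe that $\bar\phi$ is built from the three pieces $x\mapsto \bar x$, $x\mapsto \phi(\bar\pi_T(x))$, and $x\mapsto \bar\pi^{\reduc}_N(x)$ (see \eqref{aris-D1}), all of which are locally symmetric around $\bar x$: the first trivially, the second by Corollary~\ref{corollary_pi} together with Lemma~\ref{important-observation} (exactly as invoked in the proof of \thref{thm-40}), and the third because $\bar x + N^{\reduc}_{\Mm}(\bar x) \subseteq \Delta(\sigma)^{\bot\bot}$ by the definition \eqref{defn-n1} of $N^{\reduc}_{\Mm}(\bar x)$, so $\sigma'\bar\pi^{\reduc}_N(x)=\bar\pi^{\reduc}_N(x)$ for all $\sigma'\succsim\sigma$, which combined with Corollary~\ref{corollary_pi} gives local symmetry.

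Next I would use \propref{real-Fred} to write, on a neighborhood $W$ of $\bar X$, $\lambda(X) = \lambda_{\sigma^F}(\Theta^F(X)) \otimes \lambda_{\sigma^M}(\Theta^M(X))$ with $\Theta^F,\Theta^M$ analytic; since $\bar\phi$ is locally symmetric, $\bar\phi\circ\lambda$ is — via this factorization and \lemref{jerome-Ialsocangivefreakingnames} applied block-wise — of the same differentiability class as $\bar\phi$, namely $C^2$ (and $C^\infty$ or $C^\omega$ if $\Mm$ is). Concretely, $\bar\phi\circ\lambda = \bar\phi\circ\lambda_{\sigma^F}(\Theta^F(\cdot))\otimes\lambda_{\sigma^M}(\Theta^M(\cdot))$, and each of the constituent maps of $\bar\phi$ composed with $\lambda$ is differentiable at $\bar X$ by the same reasoning used in \lemref{lem-26Nov2007-4} and in the proof of \thref{main-thm-4Dec2007-1}: the constant term contributes nothing, $\phi\circ\bar\pi_T\circ\lambda$ is differentiable because $\phi\circ\bar\pi_T$ is locally symmetric and $C^2$ (\lemref{jerome-Ialsocangivefreakingnames}), and $\bar\pi^{\reduc}_N\circ\lambda$ is differentiable by the same argument as \lemref{lem-26Nov2007-4}, with $\pi_N$ replaced by $\pi^{\reduc}_N$. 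This shows $\bar\Phi$ is $C^2$ at $\bar X$, and its differential on $T_{\lambda^{-1}(\Dd)}(\bar X)$ is well-defined.

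For the formula \eqref{gradient-of-Phi}, I would differentiate \eqref{aris-D11} term by term using \thref{thm-26Nov2007-1} (applied to each coordinate of $\phi$ and of $\bar\pi^{\reduc}_N$), exactly as in the proof of \thref{main-thm-4Dec2007-1}: for $H\in T_{\lambda^{-1}(\Dd)}(\bar X)$,
\[
D(\phi\circ\bar\pi_T\circ\lambda)(\bar X)[H] = D\phi(\bar\pi_T(\lambda(\bar X)))\big[\pi_T(\diag(\bar U H \trans{\bar U}))\big]
\]
by the chain rule together with \eqref{H1}, and
\[
D(\bar\pi^{\reduc}_N\circ\lambda)(\bar X)[H] = \pi^{\reduc}_N(\diag(\bar U H \trans{\bar U})),
\]
where $\bar U\in\On$ diagonalizes $\bar X$. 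Subtracting (the constant term drops) yields \eqref{gradient-of-Phi}. The one point requiring a little care — and the main obstacle — is that here $\lambda$ is restricted to the lower-dimensional manifold $\lambda^{-1}(\Dd)$ rather than to an open subset of $\Sn$ as in \thref{main-thm-4Dec2007-1}; so the derivative formulas from \thref{thm-26Nov2007-1}, which are stated for $F$ defined on a neighborhood in $\Sn$, must be read as restrictions to the tangent space $T_{\lambda^{-1}(\Dd)}(\bar X)$. This is legitimate because $\bar\phi\circ\lambda$ does extend to a differentiable map on a full $\Sn$-neighborhood of $\bar X$ (the expression \eqref{aris-D11} makes sense and is differentiable on all of $W$ by the argument above, since $\phi\circ\bar\pi_T$ and $\bar\pi^{\reduc}_N$ are defined and locally symmetric on a neighborhood of $\bar x$ in $\RR^n$, not merely on $\Dd$); one then simply restricts that global differential to $T_{\lambda^{-1}(\Dd)}(\bar X)$ to obtain $D\bar\Phi(\bar X)$ and the stated formula.
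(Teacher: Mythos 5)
Your proof is substantively correct and follows the same route as the paper: observe that $\phi\circ\bar\pi_T$ and $\bar\pi^{\reduc}_N$ are locally symmetric maps defined on a full $\RR^n$-neighborhood of $\bar x$ (not merely on $\Dd$), apply \thref{thm-26Nov2007-1} coordinate-wise to differentiate their compositions with $\la$, and then restrict the resulting differential to $T_{\la^{-1}(\Dd)}(\bar X)$; your concluding observation that this restriction step is what makes the argument legitimate is precisely the intended point. The one place where you overcomplicate (and slightly misuse the machinery) is the detour through \propref{real-Fred} and \lemref{jerome-Ialsocangivefreakingnames}: those concern the block-diagonal space $\Sn_\sigma$ and the map $\la_\sigma$, whereas here the composition is with the full $\la$ on $\Sn$, so they do not directly apply; the paper skips the factorization entirely and invokes \thref{thm-26Nov2007-1} directly (together with the adaptation of \lemref{lem-26Nov2007-4} for the $\bar\pi^{\reduc}_N$ piece), which is both cleaner and exactly what your "argument above" is really relying on. Your verification that $\bar\pi^{\reduc}_N$ is locally symmetric should, strictly speaking, cite that $N^{\reduc}_{\Mm}(\bar x)$ is $S^{\succsim}(\sigma)$-invariant (so that the projection onto it commutes with each $\sigma'\succsim\sigma$, in the spirit of Corollary~\ref{corollary_pi} rather than by literal application of it), but this is a presentational point, not a gap.
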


\begin{proof}
  We deduce from Corollary~\ref{corollary_pi} and
  Lemma~\ref{important-observation} that for any $\sigma' \succsim
  \sigma$ and $x \in \mathcal{D}$ we have
  \begin{align}
    \label{sym-phi}
    (\phi \circ \bar \pi_T)(\sigma'x)= (\phi \circ \bar \pi_T)(x).
  \end{align}
  In addition, the gradient of the $i$-th coordinate function $(\phi_i
  \circ \bar \pi_T)(x)$ at $\bar x$, applied to any direction $h\in
  T_{\mathcal{D}}(\bx)=T_{\Mm}^{\reduc}(\bx)\oplus\Delta(\si)^{\bot\bot}$, see (\ref{aris-new}), yields
  $$
  \nabla\,(\phi_i \circ \bar \pi_T)(\bar x)[h] = \nabla \phi_i(\bar
  \pi_T(\bar x))[\pi_T(h)].
  $$
  Thus, by Theorem~\ref{thm-26Nov2007-1},
  we obtain the following expression for the gradient at $\bar X$ of
  the function $X \mapsto (\phi_i \circ \bar \pi_T)(\lambda(X))$
  applied to the direction $H\in T_{\lambda^{-1}(\mathcal{D})}(\bar X)$
  $$
  \nabla\, (\phi_i \circ \bar \pi_T \circ \lambda)(\bar X)[H] =\,
  \nabla\phi_i\,(\bar \pi_T(\lambda(\bar X)))\,[\pi_T(\diag
    \,(\bar{U}H\trans{\bar{U}}))], \quad \mbox{for } i\in\N_n,
  $$
  where $\bar U \in \On$ is such that $\bar X = \trans{\bar
    U}(\Diag\,\lambda(\bar X)) \bar U$.
  Since
  $N^{\reduc}_{\mathcal{M}}(\bar x) \subseteq
  \Delta(\sigma)^{\perp\perp}$, we observe that the proof
  of \lemref{lem-26Nov2007-4}  can be readily adapted to find the
  Jacobian of  $\bar \pi^{\reduc}_{N}\circ\lambda$ at~$\bar X$.
  We thus obtain \eqref{gradient-of-Phi}.
\end{proof}

We now show that the differential of $\bar \Phi$ at $\bar X$ is of
full rank. We accomplish this without actually computing the tangent
space of the manifold $\lambda^{-1}(\mathcal{D})$ at $\bar X$.
Instead we show that the tangent space is sufficiently rich to
guarantee surjectivity.

\begin{lemma}[Surjectivity of $D \bar \Phi(\bar X)$]\label{lemma-46}
  The linear mapping (the differential of $\bar\Phi$ at $\bar X$)
  $$
  D\bar
  \Phi(\bar X) \colon T_{\lambda^{-1}(\mathcal{D})}(\bar X)\longrightarrow
  N^{\reduc}_{\mathcal{M}}(\bar x)
  $$
  is onto, and thus has full rank.
\end{lemma}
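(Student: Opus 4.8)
The plan is to prove surjectivity by exhibiting, for every $h \in N^{\reduc}_{\mathcal{M}}(\bar x)$, an explicit tangent vector $H \in T_{\lambda^{-1}(\mathcal{D})}(\bar X)$ with $D\bar \Phi(\bar X)[H] = -h$; since $\bar\Phi$ takes values in $N^{\reduc}_{\mathcal{M}}(\bar x)$ and the latter is a linear subspace, this shows the image of $D\bar\Phi(\bar X)$ is all of $N^{\reduc}_{\mathcal{M}}(\bar x)$. Mimicking the end of the proof of \thref{main-thm-4Dec2007-1}, the natural candidate is
$H := \trans{\bar U}(\Diag h)\bar U$, where $\bar U \in \On$ satisfies $\bar X = \trans{\bar U}(\Diag\,\lambda(\bar X))\bar U$. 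Granting for the moment that $H$ is tangent to $\lambda^{-1}(\mathcal{D})$ at $\bar X$, the conclusion is immediate from formula \eqref{gradient-of-Phi} of \lemref{lemma-45}: one has $\diag(\bar U H \trans{\bar U}) = \diag(\Diag h) = h$, so the first summand equals $D\phi(\bar \pi_T(\lambda(\bar X)))[\pi_T(h)] = 0$ because $h \in N_{\mathcal{M}}(\bar x)$ is orthogonal to $T_{\mathcal{M}}(\bar x)$, while the second summand equals $\pi^{\reduc}_{N}(h) = h$ because $h \in N^{\reduc}_{\mathcal{M}}(\bar x)$; hence $D\bar\Phi(\bar X)[H] = -h$.

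The real work, and the point where we avoid ever computing $T_{\lambda^{-1}(\mathcal{D})}(\bar X)$ explicitly, is to verify $H \in T_{\lambda^{-1}(\mathcal{D})}(\bar X)$. I would do this by producing a smooth curve in $\lambda^{-1}(\mathcal{D})$ through $\bar X$ with velocity $H$, namely $c(t) := \trans{\bar U}\bigl(\Diag(\lambda(\bar X) + t h)\bigr)\bar U$, so that $c(0) = \bar X$ and $c'(0) = H$. Two facts remain. First, $\lambda(c(t)) = \bar x + th$ for $|t|$ small: the eigenvalue multiset of $c(t)$ is the set of coordinates of $\bar x + th$, so $\lambda(c(t))$ is the non-increasing rearrangement of $\bar x + th$; since $\bar x \in \mathcal{M} \cap \R^n_{\ge} \cap \Delta(\sigma)$, \lemref{consec} forces every block of $P(\sigma)$ to consist of consecutive integers, whence $\bar x$ is constant on each block and strictly decreasing across consecutive blocks, and because $h \in \Delta(\sigma)^{\bot\bot}$ is also constant on each block (this is built into definition \eqref{defn-n1}), the vector $\bar x + th$ stays constant on blocks and, for $|t|$ small, stays strictly decreasing across blocks, so $\bar x + th \in \R^n_{\ge}$ and $\lambda(c(t)) = \bar x + th$. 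Second, $\bar x + th \in \mathcal{D}$: indeed $th \in N^{\reduc}_{\mathcal{M}}(\bar x)$, so $\bar x + th$ lies in the affine space $\mathcal{R}^{d+n^{\reduc}} = \bar x + T_{\mathcal{M}}(\bar x) \oplus N^{\reduc}_{\mathcal{M}}(\bar x)$, in which $\mathcal{D}$ is relatively open and contains $\bar x$ by \propref{jerome-all-about}, hence $\bar x + th \in \mathcal{D}$ for $|t|$ small. Therefore $c(t) \in \lambda^{-1}(\mathcal{D})$ on a neighborhood of $0$, and since $\lambda^{-1}(\mathcal{D})$ is a manifold around $\bar X$ by \thref{main-res-simple-form-2}, we get $H = c'(0) \in T_{\lambda^{-1}(\mathcal{D})}(\bar X)$.

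Putting the two steps together, the image of $D\bar\Phi(\bar X)$ contains $-h$ for every $h \in N^{\reduc}_{\mathcal{M}}(\bar x)$, hence all of $N^{\reduc}_{\mathcal{M}}(\bar x)$, so the linear map $D\bar\Phi(\bar X) \colon T_{\lambda^{-1}(\mathcal{D})}(\bar X) \to N^{\reduc}_{\mathcal{M}}(\bar x)$ is onto, and therefore of full rank. I expect the only genuinely delicate point to be the ordering argument giving $\lambda(c(t)) = \bar x + th$: this is exactly where the standing hypothesis $\mathcal{M} \cap \R^n_{\ge} \ne \emptyset$ is used — through \lemref{consec} and \thref{consec-sets} — together with the inclusion $N^{\reduc}_{\mathcal{M}}(\bar x) \subseteq \Delta(\sigma)^{\bot\bot}$; everything else reduces to a direct substitution into \eqref{gradient-of-Phi}.
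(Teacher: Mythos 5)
Your proof is correct and takes essentially the same route as the paper's: produce a smooth curve in $\lambda^{-1}(\mathcal{D})$ through $\bar X$ whose velocity $H$ satisfies $\diag(\bar U H\trans{\bar U})=h$, then read off $D\bar\Phi(\bar X)[H]=-h$ from \eqref{gradient-of-Phi}. You simplify modestly by taking $U(t)\equiv\bar U$ (the paper also allows $U(t)$ to vary with a skew-symmetric generator $A$ and then observes the $A$-contributions have zero diagonal, so the two choices give the same value of $\diag(\bar U H\trans{\bar U})$), and by verifying directly via \lemref{consec} that $\bar x+th\in\RR^n_{\ge}$, hence $\lambda(c(t))=\bar x+th$ with no reordering, rather than invoking the $S^{\succsim}(\sigma)$-invariance of $N^{\reduc}_{\mathcal{M}}(\bar x)$ as the paper does.
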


\begin{proof}
  Let $\bar U\in \On$ be such that
  $\bar X = \trans{\bar U} (\Diag \, \lambda(\bar X))\,\bar U$.
  The tangent space of $\On$ at $\bar U$ is
  $$
  \{\bar U A : A \mbox{ is an $n\times n$ skew-symmetric matrix}\}.
  $$
  Thus, for any $n \times n$ skew symmetric matrix $A$ there exists an
  analytic curve $t \mapsto U(t) \in \On$ such that
  \[
  U(0)=\bar U
  \qquad\text{and}\qquad \dot{U}(0):=\frac{d}{dt}U(0) = \bar UA.
  \]
  Fix now any vector $h \in N_{\mathcal{M}}^{\reduc}(\bar x)$. Consider
  the curve $t \mapsto \trans{U(t)} (\Diag \,(\bar x + t h))U(t)$. For
  all values of $t$ close to zero, this curve lies in
  $\lambda^{-1}(\mathcal{D})$ because $\bar x+ th$ lies in $\Dd$.
  Introduce the vector $x_t$ made of
  the entries of $\bar x + th$ reordered in decreasing way.
  Since the space
  $N_{\Mm}^{\reduc}(\bar x)$ is invariant under all permutations
  $\sigma' \succsim \sigma$ we see that $x_t$ lies in
  $\bx\,+\,N_{\mathcal{M}}^{\reduc}(\bar x)$, for $t$ close to
  zero. The derivative of this curve at $t=0$ ({\it i.e.} a tangent
  vector in $T_{\lambda^{-1}(\mathcal{D})}(\bar X)$) is
  \begin{align*}
    H&:= \trans{\dot U(0)} (\Diag \,\bar x )U(0) + \trans{U(0)} (\Diag \, h) U(0)
    + \trans{U(0)} (\Diag \,\bar x )\dot U(0) \\
    &= -A \trans{\bar U} (\Diag \,\bar x ) \bar U + \trans{\bar U} (\Diag \, h)\bar U
    + \trans{\bar U}(\Diag \,\bar x )\bar UA,
  \end{align*}
  where we use that $\trans{A}=-A$. Substituting the above expression of $H$ into
  \eqref{gradient-of-Phi}, and using the fact that $\bar U \trans{\bar
    U}= \trans{\bar U} \bar U= I$ and that $\bar U A \trans{{\bar U}}
  (\Diag \,\bar x )$ and $(\Diag \,\bar x )\bar UA\trans{U}$ have the
  same diagonal we obtain
  $$
  D \bar \Phi(\bar X)[H] = -h.
  $$
  This shows that $D\bar\Phi(\bar X)$ is
  surjective onto $N^{\reduc}_{\mathcal{M}}(\bar x)$, which completes the proof.
\end{proof}

\begin{theorem}[Main result: $\lambda^{-1}(\Mm)$ is a $C^2$ manifold in $\Sn$]\label{main-thm-4Dec2007-3}
  Suppose $\mathcal{M}$ is a locally symmetric $C^2$ submanifold of
  $\R^n$ of dimension $d$. Then $\lambda^{-1}(\mathcal{M})$ is a
  $C^2$ submanifold of $\Sn$ of dimension
  \begin{equation}\label{dim-lamM}
    \dim \lambda^{-1}(\Mm)\,=\,  d\, +\, \sum_{1 \le i < j \le
      \kmin+m_{\ast}} |\,I_{i}^{\ast}\,|\,|\,I_{j}^{\ast}\,|,
  \end{equation}
  where $\sigma_{\ast}$ is the characteristic permutation of $\Mm$ and
  $P(\si_{\ast})=\{I_{1}^{\ast},\ldots,I_{\kmin+m_{\ast}}^{\ast}\}$.
\end{theorem}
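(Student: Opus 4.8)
Since being a submanifold is a local property, the plan is to fix an arbitrary point $\bar X\in\lambda^{-1}(\mathcal{M})$, put $\bar x:=\lambda(\bar X)$ (so that $\bar x\in\mathcal{M}\cap\RR^n_{\ge}$ automatically, $\lambda$ being ordered), choose a permutation $\sigma$ with $\bar x\in\Delta(\sigma)$, and work in a small neighbourhood of $\bar X$. By \eqref{eqn-28Nov2007-1} we have $\sigma\sim\sigma_*$ or $\sigma\prec\hspace{-0.1cm}\sim\sigma_*$, so the $(F,M)$-decomposition $\sigma=\sigma^F\circ\sigma^M$ makes sense and the whole apparatus of \secref{sec:spectral} applies; in particular I would fix $\delta,\delta_1,\delta_2>0$ as in \asref{assmpt-21Nov2008}, which produces the affine ``reduced ambient space'' $\mathcal{D}$ of \eqref{Dcal}, the locally symmetric local equation $\bar\phi$ of $\mathcal{M}$ from \thref{thm-40}, and the associated spectral map $\bar\Phi$ of \eqref{aris-D11}. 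The strategy is then to use $\bar\Phi$ as a local equation of $\lambda^{-1}(\mathcal{M})$ --- but \emph{relative to the submanifold} $\lambda^{-1}(\mathcal{D})$ rather than to an open subset of $\Sn$.

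First I would invoke \thref{main-res-simple-form-2}, which says that $\lambda^{-1}(\mathcal{D})$ is an analytic submanifold of $\Sn$ near $\bar X$, of dimension \eqref{kesaco}. Next, \lemref{lemma-45} shows that $\bar\Phi\colon\lambda^{-1}(\mathcal{D})\to N_{\mathcal{M}}^{\reduc}(\bar x)$ is of class $C^2$ near $\bar X$, and \lemref{lemma-46} shows that its differential $D\bar\Phi(\bar X)$ is onto. Applying the regular value theorem \emph{inside} the manifold $\lambda^{-1}(\mathcal{D})$ then yields that, in a neighbourhood of $\bar X$, the zero set $\bar\Phi^{-1}(0)$ is a $C^2$ submanifold of $\lambda^{-1}(\mathcal{D})$ --- hence a $C^2$ submanifold of $\Sn$ --- of codimension $n^{\reduc}=\dim N_{\mathcal{M}}^{\reduc}(\bar x)$ within $\lambda^{-1}(\mathcal{D})$.

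The step that needs a little care is identifying $\bar\Phi^{-1}(0)$ with $\lambda^{-1}(\mathcal{M})$ near $\bar X$: this is where the fact that $\mathcal{D}$ is only relatively open in $\mathcal{R}^{d+n^{\reduc}}$, not open in $\RR^n$, must be handled, since $\bar\Phi$ lives on the lower-dimensional set $\lambda^{-1}(\mathcal{D})$. One inclusion is immediate from \eqref{local}. For the other, I would take $X$ close to $\bar X$ with $\lambda(X)\in\mathcal{M}$; continuity of the spectral mapping gives $\lambda(X)\in\mathcal{B}(\bar x,\delta_1,\delta_2)$, while the tangential parametrization \eqref{prop-30June07-2} together with \eqref{aris100} confines $\mathcal{M}\cap B(\bar x,\delta)$ to $\bar x+T_{\mathcal{M}}(\bar x)\oplus N_{\mathcal{M}}^{\reduc}(\bar x)$; hence $\lambda(X)\in\mathcal{D}$, i.e.\ $X\in\lambda^{-1}(\mathcal{D})$, and then \eqref{local} forces $\bar\Phi(X)=0$. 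Thus $\lambda^{-1}(\mathcal{M})=\bar\Phi^{-1}(0)$ near $\bar X$, and since $\bar X$ was arbitrary we conclude that $\lambda^{-1}(\mathcal{M})$ is a $C^2$ submanifold of $\Sn$.

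Finally I would compute the dimension and check connectedness. Subtracting $n^{\reduc}$ from \eqref{kesaco} and using $m=m_*$ and $|I_{\kappa+i}|=|I^*_{\kappa_*+i}|$ from \eqref{F1}--\eqref{equal-sets}, together with $n=\kappa_*+\sum_{i=1}^{m_*}|I^*_{\kappa_*+i}|$ from \eqref{part-of-sigmamin} and the elementary identity $\sum_{1\le i<j\le\kappa_*+m_*}|I^*_i||I^*_j|=\tfrac12\big(n^2-\sum_{j}|I^*_j|^2\big)$, a short computation turns $\dim\lambda^{-1}(\mathcal{M})=\dim\lambda^{-1}(\mathcal{D})-n^{\reduc}$ into \eqref{dim-lamM}; note that $n^{\reduc}$ cancels, so this value is independent of the chosen base point (which also reconciles the local answer with the global formula). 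For connectedness, since the stabilizer $\On_{\Diag(x)}$ meets both components of $\On$ (it contains $\Diag(-1,1,\dots,1)$), one has $\On.\Diag(x)=\mathbf{SO}^n.\Diag(x)$ for every $x$, so $\lambda^{-1}(\mathcal{M})$ is the image of the connected set $\mathbf{SO}^n\times\mathcal{M}$ under the continuous map $(U,x)\mapsto\trans{U}\,\Diag(x)\,U$, hence connected. The $C^{\infty}$ and $C^{\omega}$ versions follow along exactly the same lines, using the corresponding parts of \thref{thm-26Nov2007-1} in \lemref{lemma-45}--\lemref{lemma-46} and recalling, via \remref{anotherremark}, that the intermediate objects are in fact analytic. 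I expect no genuinely new obstacle here: the hard work has been done in \thref{jm-double} and \thref{main-res-simple-form-2}, and at this stage the only real subtlety is the zero-set matching of the third paragraph, forced by $\mathcal{D}$ being relatively --- but not absolutely --- open.
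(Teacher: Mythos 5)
Your proof of the stated theorem is correct and follows the same route as the paper's own: you work with $\bar\Phi$ on the submanifold $\lambda^{-1}(\mathcal{D})$ (Theorem~\ref{main-res-simple-form-2}), invoke Lemmas~\ref{lemma-45} and \ref{lemma-46} for the $C^2$ regularity and surjectivity of $D\bar\Phi(\bar X)$, apply the regular-value theorem \emph{inside} $\lambda^{-1}(\mathcal{D})$, and carry out the same dimension bookkeeping. The one place where you go into more detail than the paper, the identification of $\bar\Phi^{-1}(0)$ with $\lambda^{-1}(\mathcal{M})$ near $\bar X$ using \eqref{prop-30June07-2} and \eqref{aris100}, is exactly the right way to handle the fact that $\mathcal{D}$ is only relatively open, and is a point the paper's proof glides over.

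One caveat on the connectedness claim you append (which the theorem statement itself does not assert, though the introduction advertises it): the set $\lambda^{-1}(\mathcal{M})$ is in general \emph{not} the image of $\mathbf{SO}^n\times\mathcal{M}$ under $(U,x)\mapsto\trans{U}\,\Diag(x)\,U$. That image equals $\bigcup_{x\in\Mm}\On.\Diag(x)$, and for $x\in\Mm$ with $x\notin\RR^n_{\ge}$ the matrix $\trans{U}\Diag(x)U$ has $\lambda$-value the sorted rearrangement of $x$, which need not lie in $\Mm$ when $\Mm$ is merely \emph{locally} symmetric; so the image can strictly contain $\lambda^{-1}(\Mm)$. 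The correct identity is $\lambda^{-1}(\Mm)=$ image of $\mathbf{SO}^n\times(\Mm\cap\RR^n_{\ge})$, and the difficulty is that $\Mm\cap\RR^n_{\ge}$ need not itself be connected, so connectedness of the lift would require a separate argument (e.g.\ showing that the images of the components of $\Mm\cap\RR^n_{\ge}$ overlap). Since the theorem does not claim connectedness, this does not affect the validity of your proof of the statement as given.
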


\begin{proof}
  Fix any $\bar x \in \mathcal{M} \cap \R^n_{\ge}$ and $\bar X \in
  \lambda^{-1}(\bar x)$ and consider the spectral function $\bar
  \Phi$ introduced in~\eqref{aris-D11}. Equation \eqref{local} shows that $\bar \Phi$ is a
  local equation of $\Mm$.
  Lemmas~\ref{lemma-45} and \ref{lemma-46} prove
  that  $\bar \Phi$ is a $C^2$ local equation of $\la^{-1}(\Mm)$ around $\bar X$.
  Thus $\la^{-1}(\Mm)$ is a $C^2$ submanifold of $\Sn$ around~$\bar X$.
  Moreover, the dimension of $\lambda^{-1}(\Mm)$ is
  $$
  \dim \lambda^{-1}(\Mm)\,
  =\,\dim \lambda^{-1}(\mathcal{D})\,-\,\dim (N_{\Mm}^{\reduc}(\bx)).
  $$
  Using \eqref{defn-n1} and \thref{main-res-simple-form-2}, we get
  $$
  \dim \lambda^{-1}(\mathcal{M}) = d+ \frac{n(n+1)}{2}  - \kappa_*
  -\sum_{i=1}^{m} \frac{|I_{\kappa+i}|(|I_{\kappa+i}|+1)}{2}.
  $$
  Recall that $\si^M=\si^M_*$ (\propref{prop-arisd1}),
  so that $|I_{\kappa+i}| = |I^*_{\kappa_*+i}|$ for all
  $i=1,\ldots,m$, that $m = m_*$, and that $\sum_{i=1}^{m_*}
  |I^*_{\kappa_*+i}| = n - \kappa_*$.  Substituting this in the above equality,
  we obtain
  \begin{align*}
    \dim \lambda^{-1}(\mathcal{M}) &= d+ \frac{n^2}{2}  - \frac{\kappa_*}{2}
    -\sum_{i=1}^{m_*} \frac{|I^*_{\kappa_*+i}|^2}{2} \\
    &=  d+ \frac{n^2}{2}  - \frac{\kappa_*}{2}  - \frac{1}{2} \Big( \sum_{i=1}^{m_*} |I^*_{\kappa_*+i}| \Big)^2
    + \sum_{1 \le i < j \le m_{\ast}} |I^*_{\kappa_*+i}||I^*_{\kappa_*+j}| \\
    %&= d+ \frac{n^2}{2}  - \frac{\kappa_*}{2}  - \frac{1}{2} ( n- \kappa_*)^2
    %+ \sum_{1 \le i < j \le m_{\ast}} |I^*_{\kappa_*+i}||I^*_{\kappa_*+j}| \\
    &= d+\frac{\kmin(\kmin-1)}{2} + \kmin(n-\kmin)  + \sum_{1 \le i < j \le m_{\ast}} |I^*_{\kappa_*+i}||I^*_{\kappa_*+j}| \\
    &= d+\sum_{1 \le i < j \le \kmin+m_{\ast}} |I_{i}^{\ast}|\,|I_{j}^{\ast}|,
  \end{align*}
  the last equality coming from the fact that, by definition
  (\ref{part-of-sigmamin}), all the sets in
  $\{I_1^{\ast},\ldots,I_\kmin^{\ast}\}$ have size one.
\end{proof}

Notice that the dimension \eqref{dim-lamM}
of $\la^{-1}(\Mm)$ depends only on the dimension of
the underlying manifold $\Mm$ and its characteristic permutation $\si_*$.
This is not the case with the dimension \eqref{kesaco} of $\la^{-1}(\Dd)$
which also depends on the active permutation $\si$
(by $n^{\reduc}$, $\kappa$ and $m$).

\begin{remark}[Variants of the main result] Theorem~\ref{main-thm-4Dec2007-3}
  has been announced and proved for the $C^2$ case. Let us now see
  what can be said in other cases:
  \begin{itemize}
  \item[(i)] {\bf [\,$C^\infty$ and $C^{\omega}$\,]} The statement of
    Theorem~\ref{main-thm-4Dec2007-3} holds true in these two cases.
    In particular, we have:
    $\lambda^{-1}(\Mm)$ is a $C^{\infty}$ (respectively, analytic)
    submanifold of $\Sn$, whenever $\Mm$ is a $C^{\infty}$
    (respectively, analytic) locally symmetric submanifold of $\RR^n$.
    The proof is identical.
  \item[(ii)] {\bf [\,$C^k$ case, $k\notin\{1,2,\infty, \omega\}$\,]}
    It is not known whether or not the transfer principle of
    \thref{thm-26Nov2007-1} remains true for the general $C^k$ case,
    for $k\notin\{1,2,\infty\}$. If such a statement is true, then
    Theorem~\ref{main-thm-4Dec2007-3} will also hold for the $C^k$
    case ($k\geq 2$) with the same proof (as in (ii)).
  \item[(iii)] {\bf [\,$C^1$ case\,]} The $C^1$ case seems somehow compromised
    by the use of Lemma~\ref{lem-isoms} (Determination of isometries).
    Indeed, the aforementioned lemma uses the intrinsic Riemannian
    structure of $\Mm$ (which demands an at least $C^2$ differentiable
    structure for $\Mm$). Thus, our method does not apply for this case.
\end{itemize}
\end{remark}

%%****************************************************************************
%\subsection{Examples}
%%****************************************************************************

%In this section we assemble some examples of spectral manifolds that
%can be constructed by the lift-up procedure (as an applications of
%Theorem~\ref{main-thm-4Dec2007-3}).

%\begin{example}[Lift-up of strata and their closures]
%Let $\si\in\Sigma^n$. Then $\lambda^{-1}(\Delta(\si))$
%(Example~\ref{jerome-lift}) and
%$\lambda^{-1}(\Delta(\si)^{\bot\bot})$ (Example~\ref{aris-27mars09})
%are spectral manifolds. \qed
%\end{example}

\begin{example}[Matrices of constant rank in $\Sn$]
  Let $r\in \{0,1,\dots, n\}$ and let us consider the subspace $\Sn_r$
  of $\Sn$ consisting of all symmetric matrices of constant rank
  $r$. We show here that this set is a spectral manifold of dimension
  $r(2n-r+1)/2$ around a matrix $\bar X\in\Sn_r$.

  Let $\bar x\in \la(\bar X) \in \RR^n_{\geq}$ and set
  $I=\{i\in\NN_n:\ \bar x_i=0\}$.
  Let $\de = \min\{|\bar x_i|: i\in \NN_n\setminus I\}$
  and denote by~$\Nn$ the set of vectors
  of $\RR^n$ with exactly $r$ non-zero entries. Observe that the set
  $\Mm= \Nn \cap B(\bar x,\de/2)$ is a linear submanifold of $\RR^n$
  of dimension $r$ around $\bx$, with the
  $(n-r)$-local equations $x_i=0$ for $i\in I$ there.
  It is also locally symmetric with characteristic
  permutation $\sigma_{\ast}=(i_1, \dots, i_r)$ for $i_k\in I$ ($k=1,\ldots,r$).
  Thus, by Theorem~\ref{main-thm-4Dec2007-3}, $\la^{-1}(\Mm)$ is a
  submanifold of $\Sn$ around $\bar X$ with dimension
  \[
  \dim \la^{-1}(\Mm) = r + \frac{r(r-1)}{2} + r(n-r) =
  \frac{r(2n-r+1)}{2}.
  \]
  We retrieve in particular easily the dimensions of the particular
  cases $r=1$ (rank-one matrices) and $r=n$ (invertible matrices).
  \qed
\end{example}

\begin{remark}[The case $\kmin \in \{0,1\}$]
  If $\mathcal{M}$ is a connected, submanifold of $\RR^n$ of dimension
  $d$, such that $\kmin \in \{0,1\}$, then
  $\Mm\subset\Delta(\si_{\ast})$. The same arguments as in
  Example~\ref{jerome-lift} allow to conclude that $\la^{-1}(\Mm)$ is
  a spectral manifold of dimension given by \eqref{aris-dime}.\qed
\end{remark}

%%**********************************************************************
%%**********************************************************************
\section{Appendix: A few side lemmas}
%%**********************************************************************
%%**********************************************************************

This appendix section contains a few results that were not central to the development, but are necessary for the proof of the main theorem.

Let $y_1,\ldots,y_n$ be any reals and let $y=(y_1,\ldots,y_n)$.
Consider the $(n!+1)\times (n+1)$ matrix~$Y$ with first
row $(1,\ldots,1,0) \in \R^{n+1}$
and consecutive rows equal to $(\sigma y, 1)$ for each $\sigma \in \Sigma^n$. For example, when $n=2$ the matrix $Y$ is $3\times 3$ and equal to
$$
\left(
\begin{array}{ccc}
1 & 1  & 0 \\
y_1 & y_2 & 1 \\
y_2 & y_1 & 1 \\
\end{array}
\right).
$$

\begin{lemma}[Matrix of full rank]\label{lem-29Jul07-2}
If for $n \ge 2$ the numbers $y_1,\ldots,y_n$ are not all equal, then
the matrix $Y$ defined above has full rank.
\end{lemma}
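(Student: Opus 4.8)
The plan is to prove the statement by showing that the $n+1$ \emph{columns} of $Y$ are linearly independent; since $Y$ has $n!+1\ge n+1$ rows (as $n!\ge n$ for $n\ge 2$), this is precisely the assertion that $Y$ has full rank. So I would fix a vector $c=(c_1,\dots,c_n,c_{n+1})\in\R^{n+1}$ with $Yc=0$ and aim to deduce $c=0$. Writing $c'=(c_1,\dots,c_n)$, the first row of $Y$ contributes the equation $c_1+\dots+c_n=0$, while the row indexed by a permutation $\sigma\in\Sigma^n$ contributes $\langle \sigma y,c'\rangle+c_{n+1}=0$. In particular the quantity $\langle \sigma y,c'\rangle$ must be independent of $\sigma$.

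Next I would use only the rows indexed by the identity and by the transpositions $\tau=(j\,k)$. Recalling from \eqref{eqn-sigma} that $(\sigma y)_i=y_{\sigma^{-1}(i)}$, a transposition $\tau=(j\,k)$ simply swaps the $j$-th and $k$-th entries of $y$, so subtracting the identity equation from the $\tau$-equation gives
\[
0=\langle y,c'\rangle-\langle \tau y,c'\rangle=(y_j-y_k)(c_j-c_k)\qquad\text{for all }j,k\in\{1,\dots,n\}.
\]
Hence $c_j=c_k$ whenever $y_j\neq y_k$. Since the $y_i$ are not all equal, I would pick $j_0,k_0$ with $y_{j_0}\neq y_{k_0}$; then for an arbitrary index $i$ at least one of $y_i\neq y_{j_0}$ or $y_i\neq y_{k_0}$ holds, so $c_i=c_{j_0}=c_{k_0}$. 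Thus $c'$ is a constant vector, and together with $c_1+\dots+c_n=0$ this forces $c'=0$. Substituting back into $\langle \sigma y,c'\rangle+c_{n+1}=0$ yields $c_{n+1}=0$, hence $c=0$, which completes the argument.

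I do not anticipate a real obstacle here: the argument is short and elementary, and the only points needing a little care are the bookkeeping of the permutation action (so that a transposition really does just swap two coordinates of $y$) and the small combinatorial step concluding that $c'$ is constant from the relations $c_j=c_k$ ($y_j\neq y_k$) together with the non-constancy of $y$. It is also worth noting that only $1+\binom{n}{2}$ of the $n!+1$ rows of $Y$ are actually used, so the hypothesis $n\ge 2$ enters only through the existence of a transposition and of the index pair $j_0,k_0$.
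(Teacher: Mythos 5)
Your proof is correct and follows essentially the same route as the paper's: both pass to the null space of $Y$, observe that $\langle \sigma y,\cdot\rangle$ must be independent of $\sigma$, and take differences of two permutation rows differing by a transposition to extract a factorization of the form $(y_a-y_b)(c_p-c_q)=0$. The only variation is that the paper fixes once and for all two indices with $y_1\neq y_2$ and, for arbitrary $r,s$, picks two permutations $P,Q$ agreeing except on positions $1,2$ so that $\trans{y}(P-Q)x=(y_1-y_2)(x_r-x_s)$, thereby obtaining $x_r=x_s$ for all $r,s$ directly and bypassing the small combinatorial step you need to upgrade ``$c_j=c_k$ whenever $y_j\neq y_k$'' to the constancy of $c'$.
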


\begin{proof}
Suppose that $(x, \alpha) \in \R^n \times \R$ is in the null space
of $Y$. Then, $\trans{y} P x + \alpha = 0$ for all permutation
matrices $P$ and $x_1+\cdots + x_n = 0$. Hence, $\trans{y}(P-Q)x=0$
for all permutation matrices~$P$ and $Q$. Without loss of
generality, $y_1 \not= y_2$. For any distinct indices $r$ and $s$,
choose $P$ and $Q$ so that $(P-Q)x=(x_r-x_s,x_s-x_r,0,\ldots,0)$. This
shows that $x_s=x_r$. Since $r$ and $s$ are arbitrary, we deduce
$x=0$ and hence $\alpha=0$, as required.
\end{proof}

The following result is used
in the proof of Theorem~\ref{jm-double}.

\begin{corollary}\label{cor-29Jun07-1}
Let $x \in \Delta(\sigma)^{\perp}$ for some $\sigma \in \Sigma^n$
and let $P(\sigma)=\{I_1,\ldots,I_m\}$. Let $y \in \R^n$ be such that
each subvector $y_{I_i}$, $i\in\N_m$, has distinct
coordinates. Then, the existence of a constant $\alpha\in\RR$ such
that
\begin{equation}
  \label{eqn-29Jun07-6}
  \langle x, \sigma' y \rangle = \alpha \,\, \mbox{ for all } \sigma' \succsim \sigma,
\end{equation}
is equivalent to the fact that $x=0$ (and thus $\alpha=0$).
\end{corollary}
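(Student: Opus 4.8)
The plan is to reduce the statement to \lemref{lem-29Jul07-2}, applied separately on each block of the partition $P(\sigma)=\{I_1,\dots,I_m\}$. One implication is immediate: if $x=0$ then $\langle x,\sigma' y\rangle=0$ for every $\sigma'$, so \eqref{eqn-29Jun07-6} holds with $\alpha=0$. For the converse, the first thing I would record is that the permutations $\sigma'\succsim\sigma$ are exactly the elements of the subgroup $S^{\succsim}(\sigma)$, which acts blockwise: any such $\sigma'$ factors as $\sigma'_1\circ\cdots\circ\sigma'_m$ with $\sigma'_i$ permuting only the coordinates in $I_i$, and the $\sigma'_i$ vary independently over all permutations of $I_i$ (this is the content of \eqref{cardS} and the surrounding discussion, since $S^{\succsim}(\sigma)=\mathrm{Fix}(x)$ for $x\in\Delta(\sigma)$). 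Consequently $\langle x,\sigma' y\rangle=\sum_{i=1}^m\langle x_{I_i},\sigma'_i\,y_{I_i}\rangle$, where $x_{I_i},y_{I_i}$ denote the subvectors indexed by $I_i$.

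Next I would isolate a single block. Fixing $i\in\N_m$ and choosing $\sigma'_j=\mathrm{id}$ for $j\ne i$, hypothesis \eqref{eqn-29Jun07-6} forces $\langle x_{I_i},\sigma'_i\,y_{I_i}\rangle$ to be independent of $\sigma'_i$, say equal to a constant $\alpha_i$. Together with the defining relation $\sum_{j\in I_i}x_j=0$ of $\Delta(\sigma)^{\perp}$ (see \eqref{eqn-arisd3}), this says precisely that $(x_{I_i},-\alpha_i)$ lies in the null space of the matrix attached to $y_{I_i}$ in \lemref{lem-29Jul07-2} (its first-row equation is $\sum_{j\in I_i}x_j=0$, and each remaining equation is $\langle x_{I_i},\sigma'_i y_{I_i}\rangle-\alpha_i=0$). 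If $|I_i|=1$ the relation $\sum_{j\in I_i}x_j=0$ already gives $x_{I_i}=0$; if $|I_i|\ge 2$, then $y_{I_i}$ has distinct entries, hence is non-constant, so \lemref{lem-29Jul07-2} yields full rank of that matrix, whence $x_{I_i}=0$ and $\alpha_i=0$. Running this over all $i$ gives $x=0$, and then $\alpha=\langle x,\sigma' y\rangle=0$.

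This argument is essentially bookkeeping once \lemref{lem-29Jul07-2} is in hand; the only genuine point is the blockwise factorization of $S^{\succsim}(\sigma)$ and the observation that, restricted to one block, condition \eqref{eqn-29Jun07-6} is exactly the null-space equation of that lemma. The mild obstacle I would watch for is the degenerate case of singleton blocks, where \lemref{lem-29Jul07-2} does not apply (it requires $n\ge 2$) but the conclusion is trivial from the $\Delta(\sigma)^{\perp}$ condition; I would dispatch that case separately as above.
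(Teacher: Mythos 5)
Your argument is correct and uses exactly the same ingredients as the paper's proof: the factorization $\sigma'=\sigma'_1\circ\cdots\circ\sigma'_m$ over blocks and Lemma~\ref{lem-29Jul07-2} applied to each subvector $y_{I_i}$. The only difference is organizational --- the paper packages the blockwise reduction as an induction on $m$, whereas you isolate each block in one shot by freezing $\sigma'_j=\mathrm{id}$ for $j\neq i$, and you also dispatch singleton blocks separately (a case the paper leaves implicit, since $\Delta(\sigma)^{\perp}$ already forces $x_j=0$ there).
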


\begin{proof} The sufficiency part is obvious, so we need only prove the necessity.
We prove the claim by induction on $m$. If $m=1$ then $x
\in\Delta(\sigma)^{\perp}$ is equivalent to $x_1+\cdots+x_n=0$. This
together with (\ref{eqn-29Jun07-6}) implies that the
extended vector $\bar x := (x, -\alpha)$ is a solution to the linear
system $Y \bar x = 0$, where $Y$ is defined above.  By Lemma~\ref{lem-29Jul07-2}, $Y$ has full
column rank, which implies that $x=0$ and $\alpha=0$. Suppose now
that the result is true for $m-1$, we prove it for $m$. For each
$\sigma' \succsim \sigma$ we have the natural disjoint decomposition
$\sigma' = \sigma'_1 \circ \cdots \circ \sigma'_m$, where each permutation
$\sigma'_j \in \Sigma^{|I_j|}$ is the restriction of $\sigma'$ to the set $I_j$,
 $j \in \N_m$. Thus,
$$
\langle x, \sigma' y\rangle = \langle x_{I_1}, \sigma'_1 y_{I_1}\rangle +
\cdots + \langle x_{I_m}, \sigma'_m y_{I_m}\rangle.
$$
Fix a permutation
$\sigma_1' \in \Sigma^{|I_1|}$. Since
$$
\langle x_{I_2}, \sigma'_2 y_{I_2}\rangle +
\cdots + \langle x_{I_m}, \sigma'_m y_{I_m}\rangle
= \alpha - \langle x_{I_1}, \sigma'_1 y_{I_1}\rangle
$$
for any $\sigma'_j \in \Sigma^{|I_j|}$, $j=2,\ldots,m$, we conclude by
the induction hypothesis that $x_{I_2}=\cdots = x_{I_m}=0$ and that
$\alpha - \langle x_{I_1}, \sigma'_1 y_{I_1}\rangle=0$. But the
permutation $\sigma'_1$ was arbitrary, so we obtain
$$
\langle x_{I_1}, \sigma'_1 y_{I_1}\rangle = \alpha \,\, \mbox{ for all }
\sigma'_1 \in \Sigma^{|I_1|}.
$$
This, by the considerations in the base case of the induction, shows
that $x_{I_1}=0$ and $\alpha=0$.
\end{proof}

%***********************************************************************
                         %END TEXT
%***********************************************************************

%********************************************************************
\subsection*{Acknowledgment}
%********************************************************************
\noindent The authors wish to thank Vestislav Apostolov (UQAM,
Montreal, Canada), Vincent Beck (ENS Cachan, France), Matthieu
Gendulphe (University of Fribourg, Switzerland), and Joaquim Ro\'e
(UAB, Barcelona, Spain) for interesting and useful discussions on
early stages of this work. We especially thank  Adrian Lewis
(Cornell University, Ithaca, USA) for useful discussions, and in
particular for pointing out Proposition~\ref{Trans-alg}, as well as a shorter proof of Lemma~\ref{lem-29Jul07-2}.

%\bibliographystyle{plain}
%\bibliography{master}

\begin{thebibliography}{00}

\bibitem{BonShap2000}
\textsc{Bonnans, F. and Shapiro, A.}, \textit{Perturbation Analysis
of Optimization Problems}, (Springer, 2000).

\bibitem{Dadok:1982}
\textsc{Dadok, J.}, On the ${C}^{\infty}$ {C}hevalley's theorem,
{\em Advances in Mathematics} \textbf{44} (1982), 121--131.

\bibitem{DLMS08}
\textsc{Daniilidis, A., Lewis, A.S., Malick, J. and Sendov, H.},
Prox-regularity of spectral functions and spectral sets, \textit{
J. of Convex Anal.} \textbf{15} (2008), 547--560.

\bibitem{Carmo1993}
\textsc{Do Carmo, M.P.}, {\it Riemannian Geometry}, (Birkh\"auser,
1993).

\bibitem{HUYe:1995}
\textsc{Hiriart-Urruty, J.-B. and Ye, D.}, Sensitivity analysis of
all eigenvalues of a symmetric matrix, {\it Numer. Math.} {\bf 70}
(1992), 45--72.

\bibitem{kato:1976}
\textsc{Kato, T.},
{\it A {S}hort {I}ntroduction to {P}erturbation {T}heory for {L}inear
{O}perators}, {(Springer-Verlag, Berlin, 1976)}.

\bibitem{KobayashiNomizu1963}
\textsc{Kobayashi, S. and Nomizu, K.},
{\it Foundations of {D}ifferential {G}eometry. {V}ol I},
(John Wiley \& Sons, New York-London, 1963).

\bibitem{Lewis:1994b}
\textsc{Lewis, A.S.}, Derivatives of spectral functions, {\it Math.
Oper. Res.} {\bf 21} (1996), 576--588.

\bibitem{Lewis:1996}
\textsc{Lewis, A.S.}, {Nonsmooth analysis of eigenvalues}, {\it
Math. Programming} {\bf 84} (1999), 1--24.

\bibitem{LewisSendov:2000a}
\textsc{Lewis, A.S. and Sendov, H.}, Twice differentiable spectral
functions, {\it SIAM J. Matrix Anal. Appl.} {\bf 23} (2001),
368--386.

\bibitem{helmke-2005}
\textsc{Orsi, R., Helmke, U. and Moore, J.}, A {N}ewton-like method for
solving rank constrained linear matrix inequalities, {\it
Automatica} \textbf{42} (2006), 1875--1882.


\bibitem{poliquin-rockafellar-1996}
\textsc{Poliquin, R.A. and Rockafellar, R.T.}, Prox-regular functions in variational analysis.
{\em Trans. Amer. Math. Soc.} \textbf{348} (1996) 1805--1838.


\bibitem{sendov2007}
\textsc{Sendov, H.}, The higher-order derivatives of spectral
functions, {\it Linear Algebra Appl.} {\bf 424}
(2007), 240--281.

\bibitem{tropp-2005}
\textsc{Tropp, J.A., Dhillon, I.S., Heath, R.W. and Strohmer, T.},
Designing structured tight frames via in alternating projection
method, {\it IEEE Trans. on Inf. Theory} {\bf 51}
(2005), 188--209.

\bibitem{TsingFanVerriest:1994}
\textsc{Tsing, N.-K., Fan, M.K.H. and Verriest, E.I.}, On analyticity
of functions involving eigenvalues, {\it Linear Algebra Appl.} {\bf
207} (1994), 159--180.

\end{thebibliography}

\begin{center}
--------------------------------------------------
\end{center}

\bigskip

\noindent Aris DANIILIDIS\smallskip

\noindent Departament de Matem\`{a}tiques, C1/308\newline
Universitat Aut\`{o}noma de Barcelona\newline E-08193 Bellaterra
(Cerdanyola del Vall\`{e}s), Spain.\medskip

\noindent E-mail: \thinspace\texttt{arisd@mat.uab.es}\\
\texttt{http://mat.uab.es/\symbol{126}arisd}\smallskip

\noindent Research supported by the MEC Grant
No.~MTM2008-06695-C03-03 (Spain).

\vspace{0.8cm}

\noindent J\'{e}r\^{o}me MALICK\smallskip

\noindent CNRS, Laboratoire J. Kunztmann\\
Grenoble, France\medskip

\noindent E-mail: \texttt{jerome.malick@inria.fr}\newline
\texttt{http://bipop.inrialpes.fr/people/malick/}

\vspace{0.8cm}

\noindent Hristo SENDOV \smallskip

\noindent Department of Statistical \& Actuarial Sciences\\
The University of Western Ontario, London, Ontario, Canada
\medskip

\noindent E-mail: \texttt{hssendov@stats.uwo.ca}\newline
\texttt{http://www.stats.uwo.ca/faculty/hssendov/Main.html} \smallskip

\noindent Research supported by the NSERC of Canada.

\end{document}